\theoremstyle{plain}
\newtheorem{thm}{Theorem}[section]
\newtheorem{lem}[thm]{Lemma}
\newtheorem{cor}[thm]{Corollary}
\newtheorem{prop}[thm]{Proposition}
\theoremstyle{definition}
\newtheorem{defn}[thm]{Definition}
\newtheorem{rem}[thm]{Remark}
\mathchardef\semic="303B
\newcommand{\dirac}{{D}}
\newcommand{\R}{{\mathbb R}}
\newcommand{\Q}{{\mathbb Q}}
\newcommand{\N}{{\mathbb N}}
\newcommand{\C}{{\mathbb C}}
\newcommand{\D}{{\mathbb D}}
\newcommand{\IL}{{\mathbb L}}
\newcommand{\IP}{{\mathbb P}}
\newcommand{\IH}{\mathbb H}
\newcommand{\bS}{{\mathbb S}}
\newcommand{\W}{{\mathbb W}}
\newcommand{\BMO}{\mathrm{BMO}}
\newcommand{\mH}{{\mathcal H}}
\newcommand{\mL}{{\mathcal L}}
\newcommand{\mN}{{\mathcal N}}
\newcommand{\mE}{{\mathcal E}}
\newcommand{\mD}{{\mathcal D}}
\newcommand{\mT}{{\mathcal T}}
\newcommand{\mS}{{\mathcal S}}
\DeclareMathOperator{\re}{Re}
\newcommand{\brac}[1]{\langle #1 \rangle}
\newcommand{\supp}{\text{{\rm supp}}\,}
\newcommand{\dist}{\text{{\rm dist}}\,}
\newcommand{\nul}{\textsf{N}}
\newcommand{\ran}{\textsf{R}}
\newcommand{\dom}{\textsf{D}}
\newcommand{\clos}[1]{\overline{#1}}
\newcommand{\conj}[1]{\overline{#1}}
\newcommand{\barint}{\mbox{$ave \int$}}
\newcommand{\divv}{{\text{{\rm div}}}}
\newcommand{\curl}{{\text{{\rm curl}}}}
\newcommand{\esssup}{\text{{\rm ess sup}}}
\newcommand{\tdd}[2]{\tfrac{\partial #1}{\partial #2}}
\newcommand{\wt}{\widetilde}
\newcommand{\ta}{{\scriptscriptstyle \parallel}}
\newcommand{\no}{{\scriptscriptstyle\perp}}
\newcommand{\pd}{\partial}
\newcommand{\loc}{\text{{\rm loc}}}
\newcommand{\tN}{\widetilde N_*}
\newcommand{\bx}{{\bf x}}
\newcommand{\by}{{\bf y}}
\newcommand{\reu}{\mathbb{R}^{1+n}_+}
\newcommand{\ree}{\mathbb{R}^{1+n}}
\newcommand{\pair}[2]{\langle #1,#2 \rangle}
\newcommand{\paire}[2]{( #1 \cdot #2 )}
\newcommand{\bpaire}[2]{\big( #1 \cdot #2 \big)}
\newcommand{\modz}{[z]}
\newcommand{\MM}{{\mathbf M}}
\newcommand{\SF}{S}
\newcommand{\Qpsi}[2]{\Q_{#1,#2}}
\newcommand{\Tpsi}[2]{\bS_{#1,#2}}
\def\barint_#1{\mathchoice
            {\mathop{\vrule width 6pt
height 3 pt depth -2.5pt
                    \kern -8.8pt
\intop \kern -4pt}\nolimits_{#1}}%
            {\mathop{\vrule width 5pt height
3 pt depth -2.6pt
                    \kern -6.5pt
\intop \kern -4pt}\nolimits_{#1}}%
            {\mathop{\vrule width 5pt height
3 pt depth -2.6pt
                    \kern -6pt
\intop \kern -4pt}\nolimits_{#1}}%
            {\mathop{\vrule width 5pt height
3 pt depth -2.6pt
          \kern -6pt \intop \kern -4pt}\nolimits_{#1}}}
          \def\bariint_#1{\mathchoice
            {\mathop{\vrule width 10pt
height 3 pt depth -2.5pt
                    \kern -12.8pt
\intop \kern -10pt\intop \kern -4pt}\nolimits_{#1}}%
            {\mathop{\vrule width 9pt height
3 pt depth -2.6pt
                    \kern -10.5pt
\intop \kern -10pt\intop \kern -4pt}\nolimits_{#1}}%
            {\mathop{\vrule width 9pt height
3 pt depth -2.6pt
                    \kern -10pt
\intop \kern -10pt\intop \kern -4pt}\nolimits_{#1}}%
            {\mathop{\vrule width 9pt height
3 pt depth -2.6pt
          \kern -10pt \intop \kern -10pt\intop \kern -4pt}
      \nolimits_{  #1}}}
\renewcommand{\iint}{\int \kern -10pt\int}
\definecolor{gr}{rgb}   {0.,   0.8,   0. } 
\definecolor{bl}{rgb}   {0.,   0.5,   1. } 
\definecolor{mg}{rgb}   {0.7,  0.,    0.7}
\title[Representation and uniqueness]{Representation and uniqueness for  boundary value elliptic problems via first order systems}
\author{Pascal Auscher}
\address{Laboratoire de Math\'ematiques d'Orsay, Univ. Paris-Sud, CNRS, Universit\'e Paris-Saclay, 91405 Orsay, France.} 
\email{pascal.auscher@math.u-psud.fr}
\author{Mihalis Mourgoglou}
\address{Institut des Hautes Etudes Scientifiques, 35 Route de Chartres, F-91440 Bures-sur-Yvette} 
\email{mihalis.mourgoglou@ihes.fr}
\date{November 4, 2015}
\begin{document}

\subjclass[2010]{35J25, 35J57, 35J46, 35J47, 42B25, 42B30, 42B35, 47D06}

\keywords{First order elliptic systems; Hardy spaces associated to operators;  tent spaces; non-tangential maximal functions; second order elliptic systems;  boundary layer operators;   \textit{a priori} estimates; Dirichlet and Neumann problems; extrapolation}

\begin{abstract}  Given any elliptic system with $t$-independent coefficients in the upper-half space, we obtain  representation and trace for the conormal gradient of solutions in the natural classes for the boundary value problems of Dirichlet and Neumann types with area integral control or non-tangential maximal control. The trace spaces are  obtained in a natural range of boundary spaces which is parametrized by properties of some Hardy spaces.  This implies a complete picture of uniqueness vs solvability and well-posedness. 
\end{abstract}

\maketitle

\begin{center}
\textsl{In memory of B. Dahlberg}
\end{center}

\tableofcontents

\section{Introduction} 

  The goal of this article is to classify \textbf{all} weak solutions in the natural classes for the boundary value problems of  $t$-independent elliptic systems \eqref{eq:divform} in the upper half space.   The classification is obtained via use of first order systems and consists in  a  semigroup representation for the conormal gradients of such solutions. As a consequence, this will settle  a number of issues concerning relationships between  solvability, uniqueness and well-posedness. 

This classification will be done independently of any solvability issue, which seems  a surprising assertion. To understand this, let   us recall  the situation for  harmonic functions,  found for example in Chapter III, Appendix 4 of Stein's book \cite{Stein},  but presented in a form that suits our goals.   Let  $u$ be a harmonic function on the upper half-space, $t$ will be the vertical variable and $x$ the horizontal one. We want to know about the trace of the gradient of $u$, $\nabla u=(\partial_{t}u, \partial_{x_{1}}u, \ldots, \partial_{x_{n}}u)$, at the boundary and whether $\nabla u(t,x)$ can be recovered from its trace.    For $\frac{n}{n+1}<p<\infty$, the non-tangential maximal control  $\|(\nabla u )^*\|_{p}<\infty$  is equivalent to  $\nabla u $ at the boundary being  in $L^p$ if $p>1$ and in the  Hardy space $H^p$ if $p\le 1$, and  $\nabla u(t,x)$ can be written as the (vector-valued) Poisson extension of its trace. 
If $1<p<\infty$ (we restrict the range for convenience), the area integral control  $\|\SF(t\nabla u)\|_{p}<\infty$, together with a mild control at $\infty$, is equivalent to $\nabla u$ belongs to the Sobolev space $\dot W^{-1,p}$   on the boundary and $\nabla u(t,x)$ is given by the Poisson extension of its trace in this topology.  If one replaces area integral by a Carleson measure,   again assuming  a mild control of $u$ at $\infty$,  this is equivalent  to $\nabla u\in BMO^{-1}$ at the boundary and $\nabla u(t,x)$ is the Poisson extension of its trace. There is even  a H\"older space version of this.

In the  familiar situation of the Laplace equation,   knowledge of $u$ or of $\pd_{t}u$ at the boundary suffices to  find the harmonic function $u$ inside by a Poisson extension: this is a solvability property of this equation. With the above formulation,  it seems one does not need to know solvability of the Dirichlet or Neumann problem in any sense when one works with the complete gradient; however, this is not clear from the available proofs because solvability  could be implicit (use of Fatou type results, commutation of operators...). This observation is of particular interest when dealing with the more general systems \eqref{eq:divform} below,  as one may not \textit{a priori} know solvability (but would like to have it), which is usually  difficult  to establish;  
still there is room for proving existence of a trace and representation  of the gradient as a first step.  Solving the boundary value problem, that is,  constructing a solution  from the knowledge of one component of the  gradient at the boundary  amounts to  an inverse problem on the boundary in a second step,  and this is a different (albeit related) question.

Let us introduce our notation. We denote points in $\ree$ by boldface letters $\bx,\by,\ldots$ and in coordinates in $\R \times \R^n$ by $(t,x)$ etc. We set $\R^{1+n}_+=(0,\infty)\times \R^n$. 
Consider the system of $m$  equations  given by
\begin{equation}  \label{eq:divform}
  \sum_{i,j=0}^n\sum_{\beta= 1}^m \pd_i\big( A_{i,j}^{\alpha, \beta}(x) \pd_j u^{\beta}(\bx)\big) =0,\qquad \alpha=1,\ldots, m
\end{equation}
in $\R^{1+n}_+$,
where $\pd_0= \tdd{}{t}$ and $\pd_i= \tdd{}{x_{i}}$ if $i=1,\ldots,n$.  For short, we write $Lu=-\divv A \nabla u=0$ to mean \eqref{eq:divform}, where we always assume that the matrix   \begin{equation}   \label{eq:boundedmatrix}
  A(x)=(A_{i,j}^{\alpha,\beta}(x))_{i,j=0,\ldots, n}^{\alpha,\beta= 1,\ldots,m}\in L^\infty(\R^n;\mL(\C^{m(1+n)})),
\end{equation} is bounded and measurable, independent of $t$, and satisfies  the strict accretivity condition on  the subspace $\mH$  of $ L^2(\R^n;\C^{m(1+n)})$ defined by $(f_{j}^\alpha)_{j=1,\ldots,n}$ is curl free in $\R^n$ for all $\alpha$,  that is,
for some $\lambda>0$ 
\begin{equation}   \label{eq:accrassumption}
   \int_{\R^n} \re (A(x)f(x)\cdot  \conj{f(x)}) \,  dx\ge \lambda 
   \sum_{i=0}^n\sum_{\alpha=1}^m \int_{\R^n} |f_i^\alpha(x)|^2dx, \ \forall  f\in \mH.
\end{equation}
The system \eqref{eq:divform} is always considered in the sense of distributions with weak solutions, that is  $H^1_{loc}(\R^{1+n}_{+};\C^m)=W^{1,2}_{loc}(\R^{1+n}_{+};\C^m)$ solutions. 
We remark that the equation \eqref{eq:divform} is intrinsic in the sense that it does not depend  on the many choices that can be taken for $A$ to represent $L$. Any such $A$ with the required properties will be convenient. Our method will be in some sense $A$ dependent. We will come back to this when stating the well-posedness results. 

It was proved in \cite{AA1} that weak solutions of $Lu=0$  in the classes $$\mE_{0}=\{u;\|\tN(\nabla u )\|_{2}<\infty\}$$ or $$\mE_{-1}=\{u;\|\SF(t\nabla u)\|_{2}<\infty\}$$ (where $\tN(f)$ and
$S(f)$ stand for a non-tangential maximal function  and square function: definitions will be given later) have a  semigroup representation in their conormal gradient
\begin{equation}
\label{eq:conormal}
\nabla\!_{A} u(t,x):=  \begin{bmatrix} \pd_{\nu_A}u(t,x)\\ \nabla_x u(t,x) \end{bmatrix}.
\end{equation}
More precisely, one has
\begin{equation}
\label{eq:representation}
\nabla\!_{A} u(t,\, .\,)= S(t) ( \nabla\!_{A} u|_{t=0})
\end{equation}
for a certain semigroup $S(t)$ acting on the subspace $\mH$ of $L^2$ in the first case and in the corresponding subspace in $\dot H^{-1}$, where $\dot H^s$ is  the homogeneous Sobolev space of order $s$, in the second case. Actually, another equivalent representation was obtained for $u\in \mE_{-1}$ and this one was only explicitly  derived in  subsequent works (\cite{AMcM, R2}) provided one defines the conormal gradient at the boundary in this subspace of $\dot H^{-1}$. In \cite{R2}, the semigroup representation was extended to 
intermediate classes of solutions  defined by $\mE_{s}=\{u; \|\SF(t^{-s}\nabla u)\|_{2}<\infty\}$ for $-1<s<0$ and the semigroup representation holds in $\dot H^{s}$. In particular, for $s=-1/2$, this is the class of energy solutions used in \cite{AMcM,AM} (other ``energy'' classes were defined in \cite{KR} and used in \cite{HKMP2}). So this allows one to deal with any problem involving energy solutions using this first order method, but this is restrictive.

In \cite{AS}, a number of  \textit{a priori} estimates was proved concerning the solutions enjoying the representation \eqref{eq:representation}  for general systems \eqref{eq:divform}. In  certain ranges of $p$,  $ \nabla\!_{A} u|_{t=0}$ belongs to an identified boundary space and its norm is equivalent to one of these interior controls. Thus, it remained to eliminate this \textit{a priori} information. 
This is  what we do here by  showing existence of the trace and  semigroup  representation for conormal gradients of  solutions in the classes 
$\|\tN(\nabla u )\|_{p}<\infty$ or $\|\SF(t\nabla u)\|_{p}<\infty$ for $p\ne 2$ (and more) and for the ranges of $p$ described in \cite{AS}.   No other assumption than $t$-independence and ellipticity is required.  Hence,  what we are  after  here are  uniqueness results for the \textbf{initial value problem} of the first order equation  \eqref{eq:rep} below. The case $p=2$  in both cases was done in  \cite{AA1}. 

To formulate the results, we need to recall the main discovery of \cite{AAMc} that the system \eqref{eq:divform} is in correspondence with a first order system of Cauchy-Riemann type
\begin{equation}
\label{eq:rep}
\pd_{t}\nabla\!_{A} u+DB \nabla\!_{A} u=0,
\end{equation}
where
 \begin{equation}
\label{eq:dirac}
 \dirac:= 
    \begin{bmatrix} 0 & \divv_{x} \\ 
     -\nabla_{x} & 0 \end{bmatrix},
\end{equation}
and
 \begin{equation}
\label{eq:hat}
B= \hat A:=  \begin{bmatrix} 1 & 0  \\ 
    c & d \end{bmatrix}\begin{bmatrix} a & b  \\ 
    0 & 1 \end{bmatrix}^{-1}=  \begin{bmatrix} a^{-1} & -a^{-1} b  \\ 
    ca^{-1} & d-ca^{-1}b \end{bmatrix}
\end{equation}
whenever we write
\begin{equation}
\label{eq:A}
A= \begin{bmatrix} a & b  \\ 
    c & d \end{bmatrix}
\end{equation}
    and $L$  in the form
    \begin{equation}
\label{eq:L}
L=-\begin{bmatrix}   \pd_{t}& \nabla_x  \end{bmatrix}
  \begin{bmatrix} a & b  \\ 
    c & d \end{bmatrix} \begin{bmatrix}   \pd_{t}\\ \nabla_x  \end{bmatrix}.
\end{equation}
The operators  $D$ and $B$  satisfy the necessary requirements so that $DB$, a perturbed Dirac type operator, is a bisectorial operator and, by a result in \cite{AKMc},  has bounded holomorphic functional calculus on $L^2$:   the semigroup $S(t)$ is built from an extension of $e^{-t|DB|}$, the semigroup generated by $-|DB|$ on $L^2$, from the  closure of the range of $DB$ in $L^2$. We note that for $L^*$, the associated system is not given by $B^*$ but by $\wt B= NB^*N$ where $N= \begin{bmatrix} I & 0  \\ 
    0 & -I \end{bmatrix}$.

In \cite{AS},  the  Hardy space $H^p_{DB}$ associated to $DB$ was exploited: on this space the semigroup has a  bounded extension by construction. For the boundary value problems  the two natural spectral subspaces $H^{p,+}_{DB}$ and $H^{p, -}_{DB}$ obtained as the ranges of  the (extensions of the) bounded projections $\chi^+(DB)$ and $\chi^-(DB)$ respectively come into play.  Formally, a solution  to  $Lu=0$ on the upper half-space can be constructed from $\nabla\!_{A} u(t,\, .\,)= e^{-t DB}\chi^+(DB) F_{0}$ for some  $F_{0}\in H^p_{DB}$ and a solution to $Lu=0$ on the lower half-space from $\nabla\!_{A} u(t,\, .\,)= e^{t DB}\chi^-(DB) F_{0}$  for some  $F_{0}\in H^p_{DB}$. In \cite{R1},  these operators are called Cauchy extension operators because this is exactly what is obtained for the $L=-\Delta$ in two dimensions: the  formula with $\chi^+(DB)$ gives the  analytic extension  of functions on the real line to the upper half-space and the one with $\chi^-(DB)$ the analytic extension to the lower half-space.

However, the Hardy spaces  are by definition abstract completions.  To relate this to classical boundary spaces, one needs to have information on these Hardy spaces.   It was the main thesis of \cite{AS} to  obtain a range of $p$, an open interval called $I_{L}=(a,p_{+}(DB))\subset (\frac{n}{n+1},\infty)$, for which   $H^p_{DB}=H^p_{D}$, a closed and complemented subspace of $L^p$ if $p>1$ and $H^p$, the real Hardy space, if $p\le 1$. The number $p_{+}(DB)$ has a certain meaning there. In fact,  the notation $I_{L}$ could be misleading as it depends on the choice of $B$ thus of $A$, but we use it for convenience.   This allowed to obtain comparisons between trace estimates at the boundary and interior control in classical function spaces at the boundary and in the interior. Remark that for any $p$,      the method in \cite{AS}  still furnishes weak solutions of the system \eqref{eq:divform} (this is  not explicitly written there) for data in a subspace of $H^p_{DB}$. However, for $p\notin I_{L}$, $H^p_{DB}$ is not a space of distributions, so we are unable to compare with 
classical situations (because limits, taken in different ambient spaces,  cannot be identified).

Our goal here is to go backward: begin with an arbitrary solution in some class and prove the desired representation in the restricted range of exponents imposed by the Hardy space theory. In all, this gives two classification results. 

\begin{thm}\label{thm:main1} Let $n\ge 1$, $m\ge 1$. Let $\frac{n}{n+1}<p<p_{+}(DB)$ be such that $H^p_{DB}=H^p_{D}$ with equivalence of norms. Then, for any weak solution $u$ to $Lu=0$ on $\reu$, the following are equivalent: 
\begin{enumerate}
  \item[(i)] $\|\tN(\nabla u )\|_{p}<\infty$. 
  \item[(ii)] $\|\SF(t\partial_{t}\nabla u)\|_{p}<\infty$  and  $\nabla\!_{A} u(t,\cdot)$ converges to 0 in the sense of distributions as $t\to \infty$. 
    \item[(iii)] $ \exists! F_{0}\in  H^{p,+}_{DB}$, called the conormal gradient of $u$ at $t=0$ and denoted by $ \nabla\!_{A} u|_{t=0}$, such that
  $\nabla\!_{A} u(t,\, .\,)= S_{p}(t)( \nabla\!_{A} u|_{t=0})$ for all $t\ge 0$. 
   \item[(iv)] $ \exists F_{0}\in  H^{p}_{D}$  such that
  $\nabla\!_{A} u(t,\, .\,)=  S_{p}^{+}(t) F_{0}$ for all $t\ge 0$.  
\end{enumerate}
Here,  $S_{p}(t)$ is the bounded extension to $H^p_{D}$ of the semigroup $e^{-t|DB|}$ originally defined on $H^2_{D} $, $S_{p}^{+}(t) $ is the extension to $H^p_{D}$ of $e^{-t DB}\chi^+(DB)$ and both agree on $H^{p,+}_{DB}$. 
If any of the conditions above hold, then
\begin{equation}
\label{eq:comparison}
\|\tN(\nabla u )\|_{p} \sim \|\SF(t\partial_{t}\nabla u)\|_{p} \sim \| \nabla\!_{A} u|_{t=0}\|_{H^p}\sim \| \chi_{p}^+ F_{0}\|_{H^p},
\end{equation}
where $\chi_{p}^+$ is the continuous extension of $\chi^+(DB)$ on $H^p_{D}$. 

\end{thm}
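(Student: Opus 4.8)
The plan is to translate the problem into the first order language of \eqref{eq:rep} and then to reduce the whole statement to a uniqueness theorem for the initial value problem $\partial_{t}F+DBF=0$ on $\reu$. By the correspondence of \cite{AAMc}, $u$ is a weak solution of $Lu=0$ if and only if $F:=\nabla\!_{A}u$ is, in the natural local sense, a weak solution of $\partial_{t}F+DBF=0$ whose tangential part is curl free; since the ellipticity \eqref{eq:accrassumption} makes $|\nabla u|$ and $|\nabla\!_{A}u|$ comparable in $L^{2}$ averages over Whitney boxes, the interior controls in (i) and (ii) are equivalent to the same controls on $F$, so from now on everything is phrased for $F$. The routine half of the theorem is the equivalence of (iii) and (iv) together with $(\mathrm{iv})\Rightarrow(\mathrm{i})$ and $(\mathrm{iv})\Rightarrow(\mathrm{ii})$: the former because on $H^{p,+}_{DB}$ one has $|DB|=DB$ and $\chi^{+}(DB)=I$, so $S_{p}(t)$ and $S_{p}^{+}(t)$ agree there, uniqueness of $F_{0}$ being obtained by letting $t\to0$; the latter two are precisely the \textit{a priori} non-tangential and square function estimates established in \cite{AS} for solutions given by \eqref{eq:representation}, the decay at $t\to\infty$ in (ii) being the strong decay of $e^{-tDB}\chi^{+}(DB)$ on $\closran{DB}$. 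Along the way this produces the $\lesssim$ directions in \eqref{eq:comparison}.

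The core of the argument is thus $(\mathrm{i})\Rightarrow(\mathrm{iv})$ and $(\mathrm{ii})\Rightarrow(\mathrm{iv})$, which I would carry out in parallel. The first ingredient is interior regularity for $Lu=0$: Caccioppoli's inequality, applied also to $\partial_{t}u$ (again a solution, by $t$-independence), a reverse Hölder improvement of integrability, and interior continuity, yield that $t\mapsto F(t,\cdot)$ is continuous on $(0,\infty)$ valued in $L^{2}_{\loc}$, that $F(t,\cdot)\in\closran{D}$ for each $t>0$, that $\|F(t,\cdot)\|$ in the relevant local norms is controlled by $\tN(\nabla u)$, and, crucially, that (i) forces the square function bound $\|\SF(t\partial_{t}\nabla u)\|_{p}<\infty$, i.e.\ the control assumed in (ii). Hence both cases reduce to: a first order solution $F$ with $\|\SF(t\partial_{t}\nabla u)\|_{p}<\infty$ and the appropriate behaviour at $0$ and $\infty$ is of the form $S_{p}^{+}(t)F_{0}$ with $F_{0}\in H^{p}_{D}$.

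The decisive step is the elimination of the negative spectral part. Since $\chi^{-}(DB)$ commutes with the functional calculus, $G(t):=\chi^{-}(DB)F(t,\cdot)$ solves the same equation and lies in $H^{2,-}_{DB}$, where $e^{r|DB|}$, $r>0$, is expansive; the identity $G(t)=e^{(t-s)|DB|}G(s)$ then makes $\|DBG(t)\|$ bounded below by a positive constant for $t\ge s$ as soon as $|DB|G(s)\ne0$, which is incompatible with the (localized) square function finiteness of $DBF=-\partial_{t}F$ unless $|DB|G(s)=0$, hence $G(s)=0$ since $DB$ is injective on $\closran{DB}$. This uses the boundedness of $\chi^{\pm}(DB)$ on $H^{p}_{D}$ and the identification $H^{p}_{DB}=H^{p}_{D}$ of \cite{AS}, and it is exactly here that $p$ must lie in the asserted range. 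Consequently $F(t,\cdot)=e^{-(t-\tau)DB}\chi^{+}(DB)F(\tau,\cdot)=S_{p}^{+}(t-\tau)F(\tau,\cdot)$ for all $0<\tau<t$. A Fatou-type argument then produces the datum: the slices $\{F(\tau,\cdot)\}_{0<\tau\le1}$ are bounded in $H^{p}_{D}$ by the interior estimates and the tent-space/Hardy-space correspondence of \cite{AS}, a weak-$*$ subsequential limit $F_{0}\in H^{p}_{D}$ exists, and the semigroup consistency just obtained upgrades this to $F(t,\cdot)=S_{p}^{+}(t)F_{0}$ for all $t\ge0$ with $\chi_{p}^{+}F_{0}=F_{0}$, which is (iv); re-inserting this into the estimates of \cite{AS} and using that $F_{0}$ is recovered as a controlled limit of slices gives the remaining equivalences in \eqref{eq:comparison}.

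I expect the main obstacle to be making this negative-part elimination and the boundary convergence quantitative in the non-Hilbertian regime, in particular for $p\le1$: one must replace the $L^{2}$ Hilbert space arguments by tent-space and molecular/atomic estimates, have at hand the sharp off-diagonal and square function bounds for $e^{-tDB}$ and $tDBe^{-tDB}$ on $H^{p}_{D}$ from \cite{AS}, and cope with the fact that the square function only controls $F$ modulo the kernel of $\partial_{t}$, so that in case (ii) the distributional decay at $t\to\infty$ must be shown to be precisely what removes this ambiguity and identifies $F_{0}$. The case $p=2$ of \cite{AA1} is the model to imitate, but the loss of Hilbert space structure is what forces the restriction to $p$ with $H^{p}_{DB}=H^{p}_{D}$.
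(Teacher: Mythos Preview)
Your outline correctly identifies the easy implications $(\mathrm{iii})\Leftrightarrow(\mathrm{iv})$ and $(\mathrm{iii})\Rightarrow(\mathrm{i}),(\mathrm{ii})$ as consequences of \cite{AS}, and you are right that the heart of the matter is $(\mathrm{i})\Rightarrow(\mathrm{iii})$ and $(\mathrm{ii})\Rightarrow(\mathrm{iii})$. But the ``decisive step'' you propose --- applying $\chi^{-}(DB)$ to $F(t,\cdot)$ and deriving a contradiction from expansiveness --- has a genuine gap that cannot be patched within your framework.

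The problem is that $\chi^{-}(DB)$ is a \emph{global} spectral operator, and to apply it to $F(t,\cdot)$ you must first know that $F(t,\cdot)$ lies in a space on which $\chi^{-}(DB)$ is bounded, such as $L^{2}$, $H^{q}_{D}$ for some $q\in I_{L}$, or $H^{p}_{D}$. The hypothesis (i) only gives $F(t,\cdot)\in L^{2}_{\loc}$ and, via Whitney averaging, $F(t,\cdot)\in E^{p}_{t}$ (a slice-space); for $p>2$ this is strictly weaker than $F(t,\cdot)\in L^{p}$, and there is no reason to expect $F(t,\cdot)\in L^{2}$. Meyers' reverse H\"older estimate gives at best $F_{t}\in L^{p_{0}}_{\loc}$ for some $p_{0}>2$ unrelated to $p_{+}(DB)$, so even granting a global version you would only reach $2<p<\inf(p_{0},p_{+}(DB))$, not the full range. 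Your sentence ``$G(t):=\chi^{-}(DB)F(t,\cdot)$ \ldots\ lies in $H^{2,-}_{DB}$'' thus presupposes exactly the global membership that is the hard part of the theorem. The same circularity affects your Fatou step: uniform boundedness of $F(\tau,\cdot)$ in $H^{p}_{D}$ is the \emph{conclusion}, not an available ingredient. Finally, the claimed reduction ``(i) forces $\|\SF(t\partial_{t}\nabla u)\|_{p}<\infty$'' is not available a priori either; Caccioppoli gives $\tN(t\partial_{t}\nabla u)\lesssim \tN(\nabla u)$, but $N^{p}_{2}\hookrightarrow T^{p}_{2}$ fails in general.

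The paper's route is entirely different and avoids ever needing $F_{t}$ in a global space beforehand. One tests the first-order equation against carefully designed dual functions $\varphi_{s}=e^{\mp(t-s)B^{*}D}\chi^{\pm}(B^{*}D)\phi_{0}$, obtaining an integral identity in which only $L^{2}_{\loc}$ information on $F$ and tent-space information on $\varphi$ are used. Limiting first in space (cutoffs $\chi_{R}\to1$, justified by tent-space duality) and then in time (specific piecewise-linear $\eta$) yields weak versions of $\chi^{-}(DB)F_{t}=0$ and of the semigroup relation $F_{t+\tau}=e^{-\tau DB}\chi^{+}(DB)F_{t}$, tested against suitable $\phi_{0}$. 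The passage from these weak identities to the strong representation then splits into three genuinely different regimes: for $1<p\le 2$ one shows $F_{t}\in L^{q}$ with $q=p$ and uses a weak-limit argument; for $p\le 1$ one must introduce an auxiliary Banach space $\wt H^{p}_{DB}$ (a Besov-type completion) because $H^{p}$ is not a dual space, prove the trace exists there, and then show a posteriori that it actually lies in $H^{p}_{D}$ via a maximal-function argument; for $p>2$ one works throughout in the slice-spaces $E^{p}_{t}$ and their duals, first establishing the semigroup equation for $\partial_{t}F_{t}$ (which \emph{can} be shown to lie in $L^{p}$) and then integrating, using the decay at infinity to kill the integration constants. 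None of this machinery appears in your proposal, and it is not optional.
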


This has the following corollary.

\begin{cor}\label{cor:main1} Let $p$ under the conditions of Theorem \ref{thm:main1} and $u$ be a weak solution to $Lu=0$ on $\reu$ with $\|\tN(\nabla u )\|_{p}<\infty$. Then, we have the following regularity properties: $t\mapsto \nabla\!_{A} u(t,\, .\,) \in C_{0}([0,\infty);H^{p,+}_{DB}) \cap C^\infty(0,\infty;H^{p,+}_{DB}) $ and 
\begin{equation}
\label{eq:suphp}
 \| \nabla\!_{A} u|_{t=0}\|_{H^p}  \sim \sup_{t\ge 0}\|\nabla\!_{A} u(t,\, .\,)\|_{H^p}.
\end{equation}
Moreover, if $p<n$, $u=\tilde u+c$, where $t\mapsto  \tilde u(t,\, .\,) \in C_{0}([0,\infty); \dot H^{1,p} \cap L^{p^*}) \cap C^\infty(0,\infty;\dot H^{1,p}\cap L^{p^*})$ and $c\in \C^m$, and  
\begin{equation}
\label{eq:suplp*}
 \sup_{t\ge 0}\| \tilde u(t,\, .\,)\|_{p^*}    \lesssim \| \nabla\!_{A} u|_{t=0}\|_{H^p}.
\end{equation}
If $p\ge n$, then $t\mapsto   u(t,\, .\,) \in C_{0}([0,\infty);\dot H^{1,p}\cap \dot\Lambda^s)\cap C^\infty(0,\infty;\dot H^{1,p}\cap \dot\Lambda^s)$ with $s=1-\frac{n}{p}$ and  
\begin{equation}
\label{eq:suplambdas}
 \sup_{t\ge 0}\|  u(t,\, .\,)\|_{\dot\Lambda^s}   \lesssim \| \nabla\!_{A} u|_{t=0}\|_{L^p}.
\end{equation}
In addition,  we have the almost everywhere limits when $p\ge 1$,
\begin{equation}
\label{eq:CVaegradA}
\lim_{t\to 0}\ \bariint_{W(t,x)} \nabla\!_{A} u(s,y) \, dsdy  = \lim_{t\to 0}\ \barint_{B(x,t)} \nabla\!_{A} u(t,y)  \, dy= \nabla\!_{A} u|_{t=0}(x)
\end{equation}
and similarly for the time derivatives $\pd_{t}u$, and the  almost everywhere  limit for $u$ whatever $p$,
\begin{equation}
\label{eq:CVaeu1}
\lim_{t\to 0}\ \bariint_{W(t,x)}  u(s,y) \, dsdy  = \lim_{t\to 0}\ \barint_{B(x,t)}  u(t,y)  \, dy = u|_{t=0}(x).
\end{equation}
\end{cor}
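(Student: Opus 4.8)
The plan is to derive the corollary as a collection of consequences of the semigroup representation $\nabla\!_A u(t,\cdot) = S_p^+(t) F_0$ with $F_0 = \nabla\!_A u|_{t=0} \in H^{p,+}_{DB} = \chi^+_p(H^p_D)$, established in Theorem \ref{thm:main1}. First, the continuity and smoothness of $t\mapsto \nabla\!_A u(t,\cdot)$ is a property of the semigroup itself: since $DB$ has bounded holomorphic functional calculus and, on $H^p_D$, $\{e^{-tDB}\chi^+(DB)\}_{t\ge 0}$ extends to a bounded analytic semigroup on the spectral subspace $H^{p,+}_{DB}$, we get $C^\infty(0,\infty;H^{p,+}_{DB})$ from differentiating the semigroup (each $t$-derivative brings down a power of $-DB$, which is bounded on spectral subspaces composed with the decaying semigroup), and $C_0([0,\infty);H^{p,+}_{DB})$ from strong continuity at $0$ plus decay at $\infty$, which in turn follows from (ii) of Theorem \ref{thm:main1} (convergence to $0$ in distributions) upgraded via the uniform $H^p$ bound. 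The equivalence \eqref{eq:suphp} is then immediate: $\le$ is the definition of the sup at $t=0$, and $\gtrsim$ is uniform boundedness of $S_p^+(t)$ on $H^p_D$ (from \cite{AS}).

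Second, to pass from the conormal gradient to $u$ itself, I would integrate. The $x$-gradient $\nabla_x u(t,\cdot)$ is (the lower block of) $\nabla\!_A u(t,\cdot) \in H^p_D \subset L^p$ (resp.\ $H^p$), and it is curl-free by construction of $\mH$, hence it is the gradient of a function $\tilde u(t,\cdot)$ determined up to a constant; Sobolev embedding $\dot H^{1,p}\hookrightarrow L^{p^*}$ (when $p<n$, $p^* = \frac{np}{n-p}$) or $\dot H^{1,p}\hookrightarrow \dot\Lambda^{1-n/p}$ (when $p\ge n$) gives a canonical representative and the bounds \eqref{eq:suplp*}, \eqref{eq:suplambdas}. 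One must check this choice of $\tilde u(t,\cdot)$ is consistent in $t$ up to a single constant $c\in\C^m$: this follows because $\partial_t \nabla_x u = \nabla_x \partial_t u$ and $\partial_t u$ is controlled via the $S$-function bound in (ii), so $\partial_t(\tilde u - u)$ is spatially constant and, using the regularity in $t$, actually constant in $(t,x)$. The time regularity of $\tilde u$ transfers from that of $\nabla\!_A u$ together with $\partial_t\tilde u = \partial_t u$, whose semigroup representation (via the top block / the relation $\partial_{\nu_A} u$) is again smooth in $t$.

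Third, the almost-everywhere Whitney-average limits \eqref{eq:CVaegradA} and \eqref{eq:CVaeu1}. Here the strategy is: for $p\ge 1$ the function $t\mapsto \nabla\!_A u(t,\cdot)\in C_0([0,\infty);L^p)$ (or $H^p$), so the $L^p$-valued continuity gives convergence of $\barint_{B(x,t)}\nabla\!_A u(t,y)\,dy$ to $\nabla\!_A u|_{t=0}(x)$ at every Lebesgue point, i.e.\ a.e.; the Whitney-average version follows by comparing $\bariint_{W(t,x)}$ with $\barint_{B(x,t)}$ using the same continuity plus an interior Caccioppoli/Moser bound on oscillation of $\nabla\!_A u$ over Whitney boxes (standard for solutions of \eqref{eq:divform}). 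For $u$ itself the Whitney average makes sense for all $p$ because $u\in H^1_{loc}$ and the trace $u|_{t=0}$ is obtained as the $L^{p^*}$ (or $\dot\Lambda^s$) limit from the previous step; one again uses interior estimates to replace the solid average by the sliced average $\barint_{B(x,t)} u(t,y)\,dy$.

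The main obstacle I expect is the third part — pinning down the almost-everywhere convergence of Whitney averages rather than just the $L^p$-valued (or distributional) convergence already built into the semigroup framework. This requires combining the abstract Hardy-space continuity with genuinely local PDE information (interior Caccioppoli and Moser-type estimates for \eqref{eq:divform}) to control the oscillation of $\nabla\!_A u$ and of $u$ on Whitney regions, and handling the borderline role of $p=1$ for the gradient (where $H^1$ rather than $L^1$ is in play) versus the freedom to take any $p$ for $u$. The identification of the additive constant $c\in\C^m$ in $u = \tilde u + c$, and the verification that $\tilde u(t,\cdot)$ can be chosen jointly smooth in $t$ with values in $\dot H^{1,p}\cap L^{p^*}$ (resp.\ $\dot H^{1,p}\cap\dot\Lambda^s$), is the other technical point, but it reduces to bookkeeping once the semigroup regularity of $\nabla\!_A u$ and of $\partial_t u$ is in hand.
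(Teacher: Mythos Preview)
Your outline for the regularity of $t\mapsto \nabla\!_A u(t,\cdot)$ and for \eqref{eq:suphp} is fine and matches the paper: both come directly from the semigroup representation of Theorem~\ref{thm:main1} together with the boundedness and strong continuity of $S_p(t)$ on $H^p_D$ (the decay at $\infty$ is a semigroup fact, quoted from \cite{AS}).

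There is, however, a genuine gap in your treatment of the almost-everywhere limits \eqref{eq:CVaegradA}. You write that ``the $L^p$-valued continuity gives convergence of $\barint_{B(x,t)}\nabla\!_A u(t,y)\,dy$ to $\nabla\!_A u|_{t=0}(x)$ at every Lebesgue point''. This is not correct: strong convergence $\|\nabla\!_A u(t,\cdot)-\nabla\!_A u|_{t=0}\|_{L^p}\to 0$ does \emph{not} by itself control the averages at the matching scale $t$, because the error term $\barint_{B(x,t)}(\nabla\!_A u(t,\cdot)-\nabla\!_A u|_{t=0})$ is only dominated by $M(\nabla\!_A u(t,\cdot)-\nabla\!_A u|_{t=0})(x)$, a quantity that depends on $t$ and need not tend to $0$ pointwise. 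The paper's argument is the standard one you are missing: one combines (a) the maximal inequality $\|\tN(S_p(s)h)\|_p\lesssim\|h\|_{H^p}$ for $p\in I_L$ (this is \cite[Theorem~9.1]{AS}, and is essentially the hypothesis $\|\tN(\nabla u)\|_p<\infty$ read through Theorem~\ref{thm:main1}) with (b) almost-everywhere convergence on the dense class $\IH^{p,+}_{DB}\subset H^2_D$, which is supplied by \cite[Theorem~9.9]{AS}. The classical density argument then extends a.e.\ convergence to all of $H^{p,+}_{DB}$; for $p\ge 1$ this is meaningful because $H^p_D\subset L^p$ so the limit is a genuine measurable function. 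The slice-average version is handled the same way.

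A second, related point concerns the constant $c$ and the a.e.\ limit \eqref{eq:CVaeu1} for $u$. Your sketch (``$\partial_t(\tilde u-u)$ is spatially constant and, using the regularity in $t$, actually constant in $(t,x)$'') does not explain why the spatially constant function $c(t)=u(t,\cdot)-\tilde u(t,\cdot)$ is independent of $t$. The paper settles both issues at once with a concrete PDE estimate due to Kenig--Pipher \cite[Theorem~3.2]{KP}:
\[
\bigg|\bariint_{W(t,x)} u(s,y)\,dsdy-u(0,x)\bigg|\lesssim t\,\tN(\nabla u)(x)\quad\text{a.e.}
\]
This immediately gives \eqref{eq:CVaeu1} (and the slice version by a further elementary comparison), and, when $p<n$, is used to show that $\barint_{[c_0^{-1}t,c_0t]}c(s)\,ds-c$ lies in $L^{p^*}+L^p$ and hence must vanish, forcing $c(t)\equiv c$. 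Your proposed route via Sobolev embedding and ``bookkeeping'' would need an independent argument of this type; the paper's use of the Kenig--Pipher inequality is the missing ingredient. (For the construction of $\tilde u$ itself, the paper does not ``integrate'' the curl-free tangential part directly but rather inverts $D$ via the $BD$ functional calculus, setting $v(t,\cdot)=-D^{-1}\nabla\!_A u(t,\cdot)\in \dot H^{1,p,+}_{BD}$ and reading off $\tilde u$ from the scalar component of $\IP v$; this is equivalent in spirit to your approach but stays within the abstract framework of \cite{AS}.)
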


 Here, we adopt the convention that $H^p=L^p$ if $p>1$. The notation $C_{0}$ stands for continuous functions that vanish at $\infty$. The exponent $p^*=\frac{np}{n-p}$ is the Sobolev exponent. The Whitney regions will be defined in Section \ref{sec:tent}.

 If (iii) holds, then (iv) holds clearly and conversely (iv) implies (iii)  with $\nabla\!_{A} u|_{t=0}= \chi^+_{p} F_{0}$. 
 The second condition in (ii) is  mild and only meant to control the growth of the gradient at infinity. It cannot  be avoided. 
  It follows from the results in  \cite{AS},  that (iii) implies (i) and (iii) implies (ii) and in this case \eqref{eq:comparison} holds.  Here, we prove the converses:    existence of the semigroup equation and trace in (iii) for the indicated topology.

  We  remark that the estimates \eqref{eq:suphp}, \eqref{eq:suplp*} and \eqref{eq:suplambdas} come \textit{a posteriori} in Corollary \ref{cor:main1}: they are regularity results for the class of solutions in (i) or (ii).  We are not sure we could run the argument taking the condition  \eqref{eq:suphp}  or even the weaker one  $\sup_{t\ge 0}\|\barint_{\,[t,2t]}\nabla\!_{A} u(s,\, .\,)\,ds\|_{H^p}<\infty$ as a starting point, except if $p=2$ (this is observed in \cite{AA1}) or $p$ near 2. We shall not attempt to prove this.

 A second theorem requires the use of negative order Sobolev and H\"older spaces. We will recall the definitions later. 
 
 \begin{thm}\label{thm:main2} Let $n\ge 1$, $m\ge 1$. Let $\frac{n}{n+1}<q<p_{+}(D\wt B)$ be such that $H^q_{D\wt B}=H^q_{D}$ with equivalence of norms.  
Let  $u$ be a weak solution to $Lu=0$ on $\reu$. 

First when $q>1$ and $p=q'$, the following are equivalent:
 \begin{enumerate}
\item[($\alpha$)]  $\|\SF(t\nabla u)\|_{p}<\infty$ and $u(t,\cdot)$ converges to $0$ in $\mD'$ modulo constants as $t\to \infty$ if $p\ge  2^*$. 
  \item[($\beta$)] $ \exists! F_{0}\in  \dot W^{-1,p,+}_{DB}$, called the conormal gradient of $u$ at $t=0$ and denoted by $ \nabla\!_{A} u|_{t=0}$, such that
  $\nabla\!_{A} u(t,\, .\,)= \wt S_{p}(t) ( \nabla\!_{A} u|_{t=0})$ for all $t\ge 0$.
   \item[($\gamma$)] $ \exists F_{0}\in   \dot W^{-1,p}_{D}$,  such that
  $\nabla\!_{A} u(t,\, .\,)= \wt S_{p}^+(t)F_{0}$ for all $t\ge 0$.  
\end{enumerate}
Here,  $\wt S_{p}^+(t)$ is the extension of $e^{-t DB}\chi^+(DB)$ to $\dot W^{-1,p}_{D}$  which agrees with the extension  $\wt S_{p}(t)$  of $e^{-t |DB|}$ on $\dot W^{-1,p,+}_{DB}$.
If any of the conditions above hold, then
\begin{equation}
\label{eq:main2}
 \|\SF(t\nabla u)\|_{p} \sim \| \nabla\!_{A} u|_{t=0}\|_{\dot W^{-1,p}}\sim \| \wt \chi_{p}^+ F_{0}\|_{\dot W^{-1,p}},
\end{equation}
 where $\wt \chi_{p}^+$ is the bounded extension of $\chi^+(DB)$ on $\dot W^{-1,p}_{D}$.

Second when $q\le 1$ and $\alpha=n(\frac{1}{q}-1)\in [0,1)$,  the following are equivalent:
 \begin{enumerate}
\item[(a)]  $\|t\nabla u\|_{T^\infty_{2,\alpha}}<\infty$ and $u(t,\cdot)$ converges to $0$ in $\mD'$ modulo constants as $t\to \infty$. 

  \item[(b)] $ \exists! F_{0}\in  \dot  \Lambda^{\alpha-1,+}_{DB}$, called the conormal gradient of $u$ at $t=0$ and denoted by $ \nabla\!_{A} u|_{t=0}$, such that
  $\nabla\!_{A} u(t,\, .\,)= \wt S_{\alpha}(t) ( \nabla\!_{A} u|_{t=0})$ for all $t\ge 0$.
   \item[(c)] $ \exists F_{0}\in   \dot  \Lambda^{\alpha-1}_{D}$,  such that
  $\nabla\!_{A} u(t,\, .\,)=\wt S_{\alpha}^+(t) F_{0}$ for all $t\ge 0$,  
\end{enumerate}
where $\wt S_{\alpha}^+(t)$ is the extension (defined by  weak-star duality) of $e^{-t DB}\chi^+(DB)$ to $ \dot  \Lambda^{\alpha-1}_{D}$  which agrees with the extension (also defined by  weak-star duality)  $\wt S_{p}(t)$  of $e^{-t |DB|}$ on $\dot  \Lambda^{\alpha-1,+}_{DB}$.
If any of the conditions above hold, then
\begin{equation}
\label{eq:t2alpha}
\|t\nabla u\|_{T^\infty_{2,\alpha}} \sim \| \nabla\!_{A} u|_{t=0}\|_{\dot  \Lambda^{\alpha-1}}\sim \| \wt \chi_{\alpha}^+ F_{0}\|_{\dot  \Lambda^{\alpha-1}},
\end{equation}
where $\wt \chi_{\alpha}^+$ is the bounded extension of $\chi^+(DB)$ on $\dot \Lambda^{\alpha-1}_{D}$.

\end{thm}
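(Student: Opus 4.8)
The plan is to transport everything to the first order system \eqref{eq:rep} and then run the scheme already used for Theorem~\ref{thm:main1}, with $L^{2}$ replaced by the relevant negative order space. Since $a(x)$ is invertible by \eqref{eq:accrassumption}, one has $|\nabla u(t,x)|\sim|\nabla\!_{A} u(t,x)|$ for a.e.\ $(t,x)$, so the interior control in $(\alpha)$ (resp.\ (a)) says exactly that $\|\SF(tF)\|_{p}<\infty$ (resp.\ $\|tF\|_{T^\infty_{2,\alpha}}<\infty$) for $F:=\nabla\!_{A} u$, which by \cite{AAMc} is a weak solution of $\partial_{t}F+DBF=0$ on $\reu$. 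The equivalences $(\beta)\Leftrightarrow(\gamma)$ and (b)$\Leftrightarrow$(c) are then formal: they amount to $\dot W^{-1,p}_{DB}=\dot W^{-1,p}_{D}$ (resp.\ $\dot\Lambda^{\alpha-1}_{DB}=\dot\Lambda^{\alpha-1}_{D}$) and $\dot W^{-1,p,+}_{DB}=\wt\chi^{+}_{p}(\dot W^{-1,p}_{D})$, which come by duality from the hypothesis $H^{q}_{D\wt B}=H^{q}_{D}$ — it is here, through the pairing of $L$-data against conormal gradients of $L^{*}$-solutions, that $D\wt B$ and not $DB$ enters the statement, since $(DB)^{*}=B^{*}D$ is, up to the explicit conjugations by $N$ and $\wt B$, the operator $D\wt B$ with $\chi^{+}$ and $\chi^{-}$ interchanged. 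Finally $(\beta)\Rightarrow(\alpha)$, (b)$\Rightarrow$(a) and the norm equivalences \eqref{eq:main2}, \eqref{eq:t2alpha} are the \textit{a priori} estimates of \cite{AS} applied to the candidate representation. So the whole content is the two implications $(\alpha)\Rightarrow(\gamma)$ and (a)$\Rightarrow$(c): from the interior control one must produce the datum $F_{0}$ and the semigroup identity.

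First I would prove the semigroup identity on $(0,\infty)$. Caccioppoli and interior reverse H\"older estimates for $Lu=0$, combined with the tent space bound, show that $t\mapsto F(t,\cdot)$ is smooth from $(0,\infty)$ into $L^{2}_{\loc}$ and that, for each $\epsilon>0$, the restriction of $F$ to $\{t>\epsilon\}$ lies in the natural $L^{2}$ class for which \cite{AA1} provides the representation; in particular $F(\epsilon,\cdot)\in\overline{\ran (DB)}$ in the relevant local sense and
\[
F(t,\cdot)=e^{-(t-\epsilon)DB}\chi^{+}(DB)F(\epsilon,\cdot)+e^{(t-\epsilon)DB}\chi^{-}(DB)F(\epsilon,\cdot),\qquad t>\epsilon>0.
\]
The decay hypothesis at $t\to\infty$ — which is imposed when $q\le1$, and when $q>1$ and $p\ge2^{*}$, while for $q>1$ and $p<2^{*}$ the area bound already forces enough decay — rules out the incoming wave: the second term cannot coexist with $\|\SF(tF)\|_{p}<\infty$ (resp.\ $\|tF\|_{T^\infty_{2,\alpha}}<\infty$) and $u(t,\cdot)\to0$ in $\mD'$ modulo constants unless $\chi^{-}(DB)F(\epsilon,\cdot)=0$. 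Hence $F(t,\cdot)=e^{-(t-s)DB}\chi^{+}(DB)F(s,\cdot)$ for all $t>s>0$, so $F$ is already the semigroup orbit of each of its positive time slices.

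It remains to reach $t=0$. By the identity just obtained it suffices to show that $\chi^{+}(DB)F(\epsilon,\cdot)$ converges as $\epsilon\to0$ — in the norm of $\dot W^{-1,p}_{D}$ when $q>1$, weakly-$*$ in $\dot\Lambda^{\alpha-1}_{D}$ when $q\le1$ — and to identify the limit $F_{0}$. For the uniform bound one writes $\chi^{+}(DB)F(\epsilon,\cdot)$, modulo the semigroup, as a Whitney average of $tF$ at height $\sim\epsilon$ and invokes the tent space description of the boundary spaces (the conical square function estimate on $\dot W^{-1,p}_{D}$, resp.\ the Carleson measure description of $\dot\Lambda^{\alpha-1}_{D}$), getting $\sup_{\epsilon>0}\|\chi^{+}(DB)F(\epsilon,\cdot)\|_{\dot W^{-1,p}}\lesssim\|\SF(tF)\|_{p}$ (resp.\ $\lesssim\|tF\|_{T^\infty_{2,\alpha}}$). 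For convergence one writes the difference of two slices as the integral of $t\,\partial_{t}F=-t\,DBF$ over the thin strip between the two heights and sends it to $0$: by absolute continuity of the $T^{p}_{2}$ norm when $q>1$, and by testing against $H^{q}_{D}$ atoms (a weak-$*$ density argument) when $q\le1$. The limit $F_{0}$ lies in $\dot W^{-1,p,+}_{DB}$ (resp.\ $\dot\Lambda^{\alpha-1,+}_{DB}$) since every $\chi^{+}(DB)F(\epsilon,\cdot)$ does and that subspace is (weak-$*$) closed, and continuity of the extended $e^{-tDB}\chi^{+}(DB)$ on the target space upgrades the identity to $F(t,\cdot)=\wt S^{+}_{p}(t)F_{0}$ (resp.\ $\wt S^{+}_{\alpha}(t)F_{0}$), which is $(\gamma)$ (resp.\ (c)); uniqueness of $F_{0}$ and \eqref{eq:main2}, \eqref{eq:t2alpha} come back from \cite{AS}. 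Equivalently, $F_{0}$ may be pinned down by duality, pairing $F(t,\cdot)$ against the conormal gradients of the $L^{*}$-solutions $v$ with $\|\tN(\nabla v)\|_{q}<\infty$ — classified completely by Theorem~\ref{thm:main1} applied to $D\wt B$ — and noting that this pairing is independent of $t$ and that its limit as $t\to0$ is a bounded functional on $H^{q}_{D}$.

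I expect the passage to $t=0$ to be the main obstacle. The \textit{a priori} control is only an integral (square function, resp.\ Carleson) bound in $t$, never an $L^{\infty}$ bound, so getting a genuine limit of $F(\epsilon,\cdot)$ rather than a mere weak-$*$ cluster point forces one to buy back the missing time regularity from the equation $\partial_{t}F=-DBF$ and to use quantitatively the off-diagonal decay of the holomorphic functional calculus of $DB$ from \cite{AKMc}. In the H\"older range $q\le1$ the space $\dot\Lambda^{\alpha-1}_{D}$ is non-separable and its natural topology is weak-$*$, so ``$\exists!$'' in (b) must be read in that duality, and the hypothesis $u(t,\cdot)\to0$ in $\mD'$ modulo constants must be propagated to $F_{0}$ with care; ruling out the incoming $\chi^{-}(DB)$ wave from the interior control alone is the other delicate point. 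Everything else reduces to the $L^{2}$ theory of \cite{AA1}, the \textit{a priori} estimates of \cite{AS}, and the tent space / Hardy space dictionary.
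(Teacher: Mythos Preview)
The central gap is your claim that ``for each $\epsilon>0$, the restriction of $F$ to $\{t>\epsilon\}$ lies in the natural $L^{2}$ class for which \cite{AA1} provides the representation.'' This is false in general: the tent space bound $\|\SF(tF)\|_{p}<\infty$ (or $\|tF\|_{T^\infty_{2,\alpha}}<\infty$) yields only $F_{t}\in L^{2}_{\loc}(\R^{n})$ and $F_{t}\in E^{p}_{t}$ (see Lemma~\ref{lem:regular} and Corollary~\ref{cor:uniformept}), never $F_{t}\in L^{2}(\R^{n})$ when $p\ne 2$. Without $F(\epsilon,\cdot)\in \clos{\ran_{2}(DB)}\subset L^{2}$ globally, the spectral projectors $\chi^{\pm}(DB)$ and the formula $F(t,\cdot)=e^{-(t-\epsilon)DB}\chi^{+}(DB)F(\epsilon,\cdot)+e^{(t-\epsilon)DB}\chi^{-}(DB)F(\epsilon,\cdot)$ are not even defined, so your entire Step~1 collapses. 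The qualifier ``in the relevant local sense'' does not help: there is no local version of the spectral splitting that would make the \cite{AA1} argument go through.

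The paper's route is quite different and avoids this trap entirely. It never asserts $F_{t}\in L^{2}$; instead it works in the slice-spaces $E^{p}_{t}$ and their duals $E^{q}_{t}$, pairing $F_{t}$ against carefully built test vectors of the form $B^{*}e^{-\sigma DB^{*}}\chi^{\pm}(DB^{*})D\phi_{0}$ with $\phi_{0}$ in a dense class $\D_{q}$ (Lemmas~\ref{lemma1th1.2}--\ref{lemma3th1.2}). Passing to the limit in the weak identity \eqref{eq:step1} yields the key relations \eqref{eq:a1th1.2}--\eqref{eq:a3th1.2}, from which one first extracts $\partial_{t}^{k}F_{t}\in \dot W^{-1,p}$ (resp.\ $\dot\Lambda^{\alpha-1}$) for $k\ge 1$. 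One then \emph{constructs} a candidate $f_{t}\in \dot W^{-1,p,+}_{DB}$ via the map $\Tpsi{\varphi}{DB}$ applied to $\tau^{N+1}\partial_{\tau}^{N}F_{t+\tau/2}$ (Step~3), proves the semigroup formula for $f_{t}$, and shows $\partial_{t}f_{t}=\partial_{t}F_{t}$ in $\mS'$ (Step~4). The condition at infinity is used only at the very end, to kill the affine-in-$t$ discrepancy $F_{t}-f_{t}$, with a separate argument showing that for $p<2^{*}$ this condition is automatic (Step~6). Your alternative suggestion of pairing against $L^{*}$-solutions classified by Theorem~\ref{thm:main1} is closer in spirit, but you still need to explain in what topology the pairing $\pair{F_{t}}{\cdot}$ makes sense when $F_{t}\notin L^{2}$; that is precisely the role of the slice-space machinery you have omitted.
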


Let us mention that in the case $\alpha=0$,  $\|t\nabla u\|_{T^\infty_{2,\alpha}} <\infty$ means that $|t\nabla u(t,x)|^2 \, \frac{dtdx}{t} $ is a  Carleson measure and  $\dot  \Lambda^{-1}$ is the space $\BMO^{-1}$.  

The condition at $\infty$ in ($\alpha$)  is used to eliminate some constant solutions in $t$. When $p< 2^*= \frac{2n}{n-2}$, it  follows from $\|\SF(t\nabla u)\|_{p}<\infty$ and thus is redundant.  Our statement is therefore  in agreement with the $p=2$ result of \cite{AA1}. 
Again, ($\beta$) is equivalent to ($\gamma$) and ($\beta$) implies ($\alpha$) is proved in \cite{AS}.   Similarly, ($b$) is equivalent to ($c$) and ($b$) implies ($a$) is proved in \cite{AS}.  
We show here the converses. 

\begin{cor}\label{cor:main2}
In the first case of Theorem \ref{thm:main2}, 
$$t\mapsto \nabla\!_{A} u(t,\, .\,) \in C_{0}([0,\infty);\dot W^{-1,p,+}_{DB})\cap C^\infty(0,\infty; \dot W^{-1,p,+}_{DB})$$ and 
\begin{equation}
\label{eq:supw-1p}
 \| \nabla\!_{A} u|_{t=0}\|_{\dot W^{-1,p}}  \sim \sup_{t\ge 0}\|\nabla\!_{A} u(t,\, .\,)\|_{\dot W^{-1,p}},
\end{equation}
and $u=\tilde u+c$, where $t\mapsto  \tilde u(t,\, .\,) \in C_{0}([0,\infty);L^{p})\cap C^{\infty}(0,\infty; L^p)$ and $c\in \C^m$ and  
\begin{equation}
\label{eq:suplp}
 \sup_{t\ge 0}\| \tilde u(t,\, .\,)\|_{p} \lesssim \| \nabla\!_{A} u|_{t=0}\|_{\dot W^{-1,p}}.
\end{equation}
In addition, we have the non-tangential maximal estimate
\begin{equation}
\label{eq:ntmax}
\|\tN (\tilde u)\|_{p}\lesssim \|S(t\nabla u)\|_{p}
\end{equation}
and the  almost everywhere limit 
\begin{equation}
\label{eq:CVaeu}
\lim_{t\to 0}\ \bariint_{W(t,x)}  u(s,y) \, dsdy  = \lim_{t\to 0}\ \barint_{B(x,t)}  u(t,y)  \, dy = u|_{t=0}(x).
\end{equation}

In the second case of Theorem \ref{thm:main2}, 
$$t\mapsto \nabla\!_{A} u(t,\, .\,) \in C_{0}([0,\infty);\dot  \Lambda^{\alpha-1,+}_{DB})\cap C^\infty(0,\infty; \dot  \Lambda^{\alpha-1,+}_{DB}),$$ where  $\dot\Lambda^{\alpha-1}$ is equipped with weak-star topology, and 
\begin{equation}
\label{eq:supgradualpha}
\| \nabla\!_{A} u|_{t=0}\|_{\dot  \Lambda^{\alpha-1}}  \sim \sup_{t\ge 0}\|\nabla\!_{A} u(t,\, .\,)\|_{\dot  \Lambda^{\alpha-1}}.
\end{equation}
Next $t\mapsto   u(t,\, .\,) \in C_{0}([0,\infty);\dot\Lambda^\alpha)\cap C^\infty(0,\infty; \dot  \Lambda^{\alpha}) $, where  $\dot \Lambda^\alpha$ is equipped with the weak-star topology, and  
\begin{equation}
\label{eq:suplambdasdir}
 \sup_{t\ge 0}\|  u(t,\, .\,)\|_{\dot\Lambda^\alpha} \lesssim  \| \nabla\!_{A} u|_{t=0}\|_{\dot  \Lambda^{\alpha-1}}. \end{equation}
 Moreover,  $u\in\dot\Lambda^\alpha(\overline{\reu})$ with
 \begin{equation}
\label{eq:global}
\|u\|_{\dot\Lambda^\alpha(\overline{\reu})} \lesssim \| \nabla\!_{A} u|_{t=0}\|_{\dot  \Lambda^{\alpha-1}}.
\end{equation}
For $\alpha=0$, this is a $BMO$ estimate on $\reu$.
\end{cor}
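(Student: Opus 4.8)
The plan is to deduce Corollary~\ref{cor:main2} from Theorem~\ref{thm:main2} together with the mapping, continuity and decay properties of the extended semigroups recorded in \cite{AS} and a few elementary manipulations linking $u$ to its conormal gradient. Write $F_{0}:=\nabla\!_{A}u|_{t=0}$ for the element supplied by Theorem~\ref{thm:main2}, so $\nabla\!_{A}u(t,\cdot)=\wt S_{p}(t)F_{0}$ with $F_{0}\in\dot W^{-1,p,+}_{DB}$ in the first case and $\nabla\!_{A}u(t,\cdot)=\wt S_{\alpha}(t)F_{0}$ with $F_{0}\in\dot\Lambda^{\alpha-1,+}_{DB}$ in the second. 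On the spectral subspace $\dot W^{-1,p,+}_{DB}$ (resp. $\dot\Lambda^{\alpha-1,+}_{DB}$) the semigroup is, by construction, a bounded extension of $e^{-t|DB|}$ whose negative generator is sectorial with bounded holomorphic functional calculus; hence $t\mapsto\nabla\!_{A}u(t,\cdot)$ extends holomorphically to a sector about $(0,\infty)$, which gives the $C^{\infty}(0,\infty;\cdot)$ statements, and $\|\nabla\!_{A}u(t,\cdot)\|\le C\|F_{0}\|$ uniformly in $t\ge0$, which gives one half of \eqref{eq:supw-1p} and \eqref{eq:supgradualpha}; the other half is the trivial limit $t\to0$ once continuity there is in hand. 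Strong continuity at $t=0$ on $\dot W^{-1,p,+}_{DB}$ --- weak-star continuity in the second, non-reflexive case --- together with decay as $t\to\infty$ give $t\mapsto\nabla\!_{A}u(t,\cdot)\in C_{0}([0,\infty);\cdot)$, and the norm equivalence between $\|F_{0}\|$ in the Hardy sense and in the classical $\dot W^{-1,p}$, resp. $\dot\Lambda^{\alpha-1}$, norm is \eqref{eq:main2}, resp. \eqref{eq:t2alpha}, obtained by duality from $H^{q}_{D\wt B}=H^{q}_{D}$.

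Next, recover $u$. The $\rn$-block of $\nabla\!_{A}u(t,\cdot)$ is $\nabla_{x}u(t,\cdot)$, which lies uniformly in $\dot W^{-1,p}$ (resp. $\dot\Lambda^{\alpha-1}$) and, by the definition of $\mH$, in its curl-free subspace; since $\nabla_{x}$ identifies $L^{p}(\rn;\C^{m})$ (resp. $\dot\Lambda^{\alpha}(\rn;\C^{m})$ modulo constants) with that subspace up to equivalent norms, there is $\tilde u(t,\cdot)\in L^{p}$ (resp. $u(t,\cdot)\in\dot\Lambda^{\alpha}$) with $\nabla_{x}\tilde u(t,\cdot)=\nabla_{x}u(t,\cdot)$ and $\|\tilde u(t,\cdot)\|\lesssim\|\nabla_{x}u(t,\cdot)\|\lesssim\|F_{0}\|$, which are \eqref{eq:suplp} and \eqref{eq:suplambdasdir}, the $t$-regularity being transported through the isomorphism. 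In the first case $u(t,x)-\tilde u(t,x)=c(t)$ depends on $t$ only; combining the pointwise identity $\partial_{t}u=(B\nabla\!_{A}u)_{\no}$ (a direct computation from \eqref{eq:hat}) with $\partial_{t}\tilde u(t,\cdot)\in L^{p}$ and the decay of the averages $\tfrac{1}{|B(0,R)|}\int_{B(0,R)}|\nabla u(t,\cdot)|\,dx$ as $R\to\infty$ coming from $\|\SF(t\nabla u)\|_{p}<\infty$ (and the condition at infinity when $p\ge 2^{*}$), one obtains $c'\equiv0$, hence $c(t)\equiv c\in\C^{m}$ and $u=\tilde u+c$.

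Finally, the remaining estimates. For \eqref{eq:ntmax}, write $\tilde u(t,\cdot)=-\int_{t}^{\infty}\partial_{s}\tilde u(s,\cdot)\,ds$ using the decay, note $\partial_{s}\tilde u(s,\cdot)=(B\wt S_{p}(s)F_{0})_{\no}$, and invoke the tent-space estimates of \cite{AS} comparing the non-tangential maximal function of such a semigroup primitive with the square function of $s\,\partial_{s}\tilde u$, which is $\lesssim\|F_{0}\|_{\dot W^{-1,p}}\sim\|\SF(t\nabla u)\|_{p}$; an interior Caccioppoli inequality passes between the Whitney oscillation of $\tilde u$ and $t$ times that of $\nabla u$. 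For \eqref{eq:CVaeu}, the $C_{0}([0,\infty);L^{p})$ (resp. weak-star $C_{0}$ into $\dot\Lambda^{\alpha}$) regularity of $\tilde u$, resp. $u$, gives a boundary trace to which the Whitney and ball averages of $u$ converge almost everywhere, by a Lebesgue-differentiation argument using interior estimates to replace $u$ by $\tilde u$ up to the additive constant. For \eqref{eq:global}, combine the uniform horizontal bound $\sup_{t}\|u(t,\cdot)\|_{\dot\Lambda^{\alpha}}\lesssim\|F_{0}\|$ with the vertical control provided by $\|t\nabla u\|_{T^{\infty}_{2,\alpha}}<\infty$ via the standard Carleson-measure-to-H\"older principle (Carleson-measure-to-$\BMO$ when $\alpha=0$), gluing the two directions with a scale-invariant interior estimate for systems; this yields $u\in\dot\Lambda^{\alpha}(\overline{\reu})$, and a $\BMO$ estimate on $\reu$ when $\alpha=0$, with the claimed bounds.

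The steps I expect to be most delicate are the constancy of $c(t)$ and the global H\"older estimate. For the former, one must make sense of $\partial_{t}u$ and $c'(t)$ in the correct low-regularity topology --- for $t>0$ one has $\partial_{t}u(t,\cdot)\in L^{2}_{\loc}$ since $u\in H^{1}_{\loc}$, but the large-ball averaging must be carried out carefully --- and invoke the condition at infinity precisely in the range $p\ge 2^{*}$. For the latter, upgrading the separately obtained horizontal and vertical H\"older (or $\BMO$) bounds to a genuine two-variable estimate on $\overline{\reu}$ must, for systems, rely on scale-invariant Caccioppoli and reverse-H\"older interior estimates rather than pointwise De Giorgi--Nash bounds; the endpoint continuity and decay of $\nabla\!_{A}u(t,\cdot)$ at $t=0$ and $t=\infty$ in the non-reflexive $\dot\Lambda^{\alpha-1}$ setting also require the weak-star formulation to be handled with some care.
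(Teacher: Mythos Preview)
Your treatment of the semigroup regularity, the norm equivalences \eqref{eq:supw-1p}, \eqref{eq:supgradualpha}, and the lift $\nabla_{x}u(t,\cdot)\in\dot W^{-1,p}\Rightarrow u(t,\cdot)\in L^{p}$ (resp.\ $\dot\Lambda^{\alpha}$) modulo constants is correct and essentially matches the paper. The differences appear in how $\tilde u$ is actually produced and how \eqref{eq:ntmax}, \eqref{eq:CVaeu} and \eqref{eq:global} are proved.

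The paper does not construct $\tilde u$ by inverting $\nabla_{x}$ on the tangential component, but instead lifts through the $BD$ calculus: with $h=\nabla\!_{A}u|_{t=0}\in\dot W^{-1,p,+}_{DB}$ it takes $\tilde h=D^{-1}h\in H^{p,+}_{BD}$ (via the extended isomorphism $D:\IH^{p}_{BD}\to\dot\W^{-1,p}_{DB}$), sets $v(t,\cdot)=S_{p,BD}(t)\tilde h$, and shows $u+v_{\no}$ is constant (quoting \cite[Theorem~9.3]{AA1}), so $\tilde u=-v_{\no}$. This buys something your route does not: $\tilde u$ is now explicitly the scalar part of a $BD$-semigroup orbit, and it is precisely for such orbits that \cite[Theorem~9.3]{AS} gives $\|\tN(e^{-t|BD|}\tilde h)_{\no}\|_{p}\lesssim\|\IP\tilde h\|_{p}$, which after approximation and Fatou yields \eqref{eq:ntmax}. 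Your alternative, writing $\tilde u(t,\cdot)=-\int_{t}^{\infty}\partial_{s}\tilde u\,ds$ and invoking unspecified ``tent-space estimates comparing $\tN$ of a semigroup primitive with the square function of $s\partial_{s}\tilde u$'', is a genuine gap: there is no such off-the-shelf estimate in \cite{AS} for this primitive, and the argument you sketch would need exactly the $BD$ representation you bypassed. Similarly, \eqref{eq:CVaeu} is obtained in the paper by applying the almost-everywhere convergence result \cite[Theorem~9.9]{AS} for $e^{-s|BD|}$ on a dense class and then running the standard density argument powered by the maximal estimate just proved; a bare Lebesgue-differentiation argument from $\tilde u\in C_{0}([0,\infty);L^{p})$ does not by itself control the Whitney averages.

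For \eqref{eq:global} the paper's mechanism is also more concrete than your ``glue horizontal and vertical H\"older via interior estimates''. It applies the Poincar\'e-type inequality of Boas--Straube \cite{BS},
\[
\bariint_{T_{R}}\Big|f-\bariint_{T_{R}}f\Big|^{2}\,dsdy \;\le\; C\barint_{B_{R}}\int_{0}^{R}|\nabla f(s,y)|^{2}\,s\,dsdy,
\]
to $f(s,y)=u(t+s,y)$ on each Carleson box $(t,0)+T_{R}$, and then bounds the right-hand side by $R^{2\alpha}\|\nabla\!_{A}u|_{t=0}\|_{\dot\Lambda^{\alpha-1}}^{2}$ using \eqref{eq:t2alpha} together with the uniform boundedness of the semigroup $\nabla\!_{A}u|_{t=0}\mapsto\nabla\!_{A}u(t,\cdot)$. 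This is a one-line Morrey--Campanato computation and avoids any De~Giorgi--Nash hypothesis; your sketch would have to reproduce this inequality in some form to close the argument.
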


Remark that the estimate \eqref{eq:ntmax} holds for any weak solution for which the right hand side is finite (for those $p$), and nothing else. 

Note that we have defined a number of apparently different notions of conormal gradients at the boundary. They are all consistent. Namely, if a solution satisfies several of the conditions  in the range of applicability of our results  then the conormal gradients at the boundary associated to each of these conditions are the same.  This is again because everything happens in the ambient space of Schwartz distributions. 

\

Let us turn to   boundary value problems  for solutions of $Lu=0$ or $L^*u=0$ and formulate four such problems:
\begin{enumerate}
  \item $(D)_{Y}^{L^*}$:  $L^*u=0$,  $u|_{t=0}\in Y$, $t\nabla u\in \wt \mT$. 
  \item $(R)_{X}^L$:  $Lu=0$, $\nabla_{x}u|_{t=0}\in X$, $\tN(\nabla u)\in \mN$.
  \item $(N)_{Y^{-1}}^{L^*}$:  $L^*u=0$,  $\partial_{\nu_{A^*}}u|_{t=0}\in \dot Y^{-1}$, $t\nabla u\in \wt \mT$.
   \item $(N)_{X}^L$: $Lu=0$, $\partial_{\nu_{A}}u|_{t=0}\in X$, $\tN(\nabla u)\in \mN$.
\end{enumerate}

Here we restrict ourselves to   $q\in I_{L}$. Then  $\mN=L^q$,  $X=L^q$ if $q>1$ and $X=H^q$ if $q\le 1$. Next, 
   $Y$ is the dual space   $X$ (we are ignoring whether functions are scalar or vector-valued; context is imposing it)  and $\dot Y^{-1}= \divv_{x}(Y^n)$ with the quotient topology, equivalently  $\dot Y^{-1}$ is the dual of $\dot X^{1}$ defined by $\nabla f\in X$. 
   Finally,  $t\nabla u\in \wt \mT$ means that $t\nabla u$ belongs  the tent space $\mT$ equal to $T^{q'}_{2}$ if $q > 1 $, to the weighted Carleson measure space $T^\infty_{2,n(\frac{1}{q}-1)}$ if $q\le 1$, and $u(t,\cdot)$ converges to 0 in $\mD'$ modulo constants if $t\to \infty$.

In each case, solving means finding a solution  with control from the data. For example, for $(D)_{Y}^{L^*}$ we want $\|t\nabla u\|_{\mT} \lesssim \|u|_{t=0}\|_{Y}$. If one can do this for all data then the open mapping theorem furnishes the implicit constant. The behavior at the boundary is the strong or weak-star convergence specified by our corollaries above (almost everywhere convergence of Whitney averages is available as a bonus).  Uniqueness means that there is at most one solution (modulo constants for $(R)_{X}^L$ and $(N)_{X}^L$) in the specified class for a given boundary data.    Well-posedness is the conjunction of both existence of a solution for all data and uniqueness. No additional restriction is imposed.

Recall that  the trace $h$ of a conormal gradient    contains two terms:  the first one is called the scalar part  and denoted by $h_{\no}$,  and the second one called the tangential part and denoted by $h_{\ta}$, which has the gradient structure, \textit{i.e.,} $h=[h_{\no},h_{\ta}]^T$.  The two maps  
$N_{\no}: h\mapsto  h_{\no}$ and $N_{\ta}: h\mapsto h_{\ta}$ are of importance in this context because they carry the well-posedness.   Denote by $X_{\no}$ and $X_{\ta}$ the corresponding parts of the space $X_{D}$ defined as the image of $X$ under the (extension to $X$) of the orthogonal projection on the $L^2$ range of the Dirac operator $D$. Do similarly for $\dot Y^{-1}_{D}$.

\begin{thm}\label{thm:wpequiv} We have the following assertions for $q\in I_{L}$. 
\begin{enumerate}
  
 \item $(D)_{Y}^{L^*}$ is well-posed if and only if $N_{\ta}: \dot Y^{-1,+}_{D\wt B} \to \dot Y^{-1}_{\ta}$ is an isomorphism. 
  \item $(R)_{X}^L$ is well-posed if and only if $N_{\ta}: X^{+}_{DB} \to X_{\ta}$  is an isomorphism.
  \item $(N)_{Y^{-1}}^{L^*}$  is well-posed if and only if $N_{\no}: \dot Y^{-1,+}_{D\wt B} \to \dot Y^{-1}_{\no}$   is an isomorphism.
   \item $(N)_{X}^L$ is well-posed if and only if $N_{\no}: X^{+}_{DB} \to X_{\no}$  is an isomorphism. 
   \end{enumerate}
   In each case, ontoness is equivalent to  existence and injectivity  to uniqueness. 
\end{thm}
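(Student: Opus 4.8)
The plan is to read off the four equivalences from the classification results. Fix $q\in I_{L}$. The problems $(R)_{X}^L$ and $(N)_{X}^L$ are handled with Theorem \ref{thm:main1} applied to $L$ with $p=q$, while $(D)_{Y}^{L^*}$ and $(N)_{Y^{-1}}^{L^*}$ are handled with Theorem \ref{thm:main2} applied to $L^*$; here one uses that the first order operator attached to $L^*$ is $D\wt B$ with $\wt B=NB^*N$ and that the ``$\wt B$'' of $L^*$ is $\wt{\wt B}=B$, so that $q\in I_{L}$ indeed meets the hypotheses of Theorem \ref{thm:main2} for $L^*$. Moreover the solution class appearing in each problem is one of the classes covered by these theorems: $\tN(\nabla u)\in\mN=L^q$ is condition (i) of Theorem \ref{thm:main1}, and $t\nabla u\in\wt\mT$, which bundles the tent-space control $\|\SF(t\nabla u)\|_{q'}<\infty$ (when $q>1$) or the weighted Carleson control $\|t\nabla u\|_{T^\infty_{2,n(\frac{1}{q}-1)}}<\infty$ (when $q\le1$) together with the convergence of $u(t,\cdot)$ to $0$ in $\mD'$ modulo constants as $t\to\infty$, is condition ($\alpha$), resp.\ (a), of Theorem \ref{thm:main2} for $L^*$, the condition at $\infty$ being redundant in the subcritical range.

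Consequently, combining the ``existence'' direction recalled from \cite{AS} with Theorem \ref{thm:main1}(iii) and (iv) (resp.\ Theorem \ref{thm:main2}($\beta$) and ($\gamma$), resp.\ (b) and (c)), the map $u\mapsto F_{0}:=\nabla\!_{A}u|_{t=0}$ (resp.\ $\nabla\!_{A^*}u|_{t=0}$ for the $L^*$ problems) is a bijection from the set of weak solutions of $Lu=0$ (resp.\ $L^*u=0$) in the prescribed class, taken modulo constants, onto the relevant spectral subspace: $X^{+}_{DB}$ for $(R)_{X}^L$ and $(N)_{X}^L$, and $\dot Y^{-1,+}_{D\wt B}$ for $(D)_{Y}^{L^*}$ and $(N)_{Y^{-1}}^{L^*}$. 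Surjectivity is the construction of \cite{AS}; well-definedness and injectivity modulo constants come from the uniqueness of $F_{0}$ in (iii)/($\beta$)/(b) together with the elementary fact that $\nabla\!_{A}u\equiv0$ forces $u$ to be constant — the block $a$ of $A$ is invertible by accretivity, which is exactly what is needed here. By the norm equivalences \eqref{eq:comparison}, \eqref{eq:main2} and \eqref{eq:t2alpha}, this bijection is bounded with bounded inverse for the natural (quasi-)norms, hence a topological isomorphism; in particular the quantitative control of the solution by the data required in the definition of well-posedness comes for free from the open mapping theorem.

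Next, under this parametrization the boundary datum of each problem is, up to a fixed isomorphism onto the data space, the restriction of $N_{\ta}$ or $N_{\no}$ to the spectral subspace. Using the almost everywhere convergence of Whitney averages from Corollaries \ref{cor:main1} and \ref{cor:main2} to realize the traces, one has $\nabla_{x}u|_{t=0}=(F_{0})_{\ta}=N_{\ta}(F_{0})\in X_{\ta}$ for $(R)_{X}^L$, $\partial_{\nu_{A}}u|_{t=0}=(F_{0})_{\no}=N_{\no}(F_{0})\in X_{\no}$ for $(N)_{X}^L$, and $\partial_{\nu_{A^*}}u|_{t=0}=N_{\no}(F_{0})\in\dot Y^{-1}_{\no}$ for $(N)_{Y^{-1}}^{L^*}$; for $(D)_{Y}^{L^*}$ the Dirichlet datum $u|_{t=0}\in Y$ corresponds, modulo constants, to its tangential gradient $\nabla_{x}u|_{t=0}=(F_{0})_{\ta}=N_{\ta}(F_{0})\in\dot Y^{-1}_{\ta}$ via $f\mapsto\nabla_{x}f$, which is the isomorphism of $Y$ modulo constants onto $\dot Y^{-1}_{\ta}$ built into the definitions of $\dot X^{1}$ and $\dot Y^{-1}$. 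Since $N_{\ta}$ and $N_{\no}$ are bounded and map all of $X_{D}$ (resp.\ $\dot Y^{-1}_{D}$) onto $X_{\ta},X_{\no}$ (resp.\ $\dot Y^{-1}_{\ta},\dot Y^{-1}_{\no}$) by definition, everything reduces to their behaviour on the spectral subspace. Composing with the parametrization above, the boundary-data operator of each problem, acting on solutions modulo constants, is one of $N_{\ta}:\dot Y^{-1,+}_{D\wt B}\to\dot Y^{-1}_{\ta}$, $N_{\ta}:X^{+}_{DB}\to X_{\ta}$, $N_{\no}:\dot Y^{-1,+}_{D\wt B}\to\dot Y^{-1}_{\no}$, $N_{\no}:X^{+}_{DB}\to X_{\no}$ followed by a fixed isomorphism; it is onto iff a solution exists for every datum, injective iff uniqueness holds in the sense specified for the problem, and an isomorphism iff the problem is well-posed. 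This is precisely the asserted statement.

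The real content is in the two identifications just invoked: that the class attached to each boundary value problem coincides on the nose with a class handled by Theorems \ref{thm:main1} and \ref{thm:main2} — which forces one to match the behaviour-at-infinity conditions carefully and to keep scrupulous track of where one works modulo constants — and that, for the Dirichlet problem, prescribing $u|_{t=0}$ in $Y$ is the same as prescribing $\nabla_{x}u|_{t=0}$ in $\dot Y^{-1}_{\ta}$. I expect the bookkeeping around constants, together with the role played by the decay at $\infty$ in ruling out spurious $t$-independent solutions, to be the only delicate points; once these are settled, the four equivalences are formal consequences of the classification.
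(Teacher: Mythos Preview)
Your proposal is correct and follows essentially the same approach as the paper: parametrize the solution class (modulo constants) by the positive spectral subspace via Theorems~\ref{thm:main1} and~\ref{thm:main2}, then read off the boundary datum as $N_{\ta}$ or $N_{\no}$ applied to the conormal-gradient trace, with the Dirichlet problem reduced to a regularity problem via the isomorphism $f\mapsto\nabla_{x}f$ from $Y/\C^m$ onto $\dot Y^{-1}_{\ta}$. The paper packages the Dirichlet case in a preliminary lemma $(D)_{Y}^{L^*}=(R)_{Y^{-1}}^{L^*}$ and then gives a three-line composition argument, but the substance is identical to what you wrote; your only superfluous step is invoking the almost-everywhere Whitney convergence to ``realize the traces'', since the trace $F_{0}$ is already defined by (iii)/($\beta$)/(b) and its scalar/tangential parts are literally $\partial_{\nu_{A}}u|_{t=0}$ and $\nabla_{x}u|_{t=0}$.
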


As said earlier, our method is $A$ dependent in the sense that it builds a representation on the conormal gradient for $\nabla\!_{A}u$, hence the trace space depends on $A$.  However, the well-posedness of the Dirichlet and regularity problems are intrinsic: it does not depend on which 
$A$ is chosen to represent the operator $L$, thus $N_{\ta}$ is invertible for any possible choice of  $A$. It is tempting to think that there is a better choice of $A$  than other which could lead to invertibility of $N_{\ta}$. 
The Neumann problem is of course $A$ dependent because we impose the conormal derivative. For example, for $Lu=0$ being a real and symmetric equation, then one chooses $A$ real and symmetric,  which is the only possible choice with this property. Then (2) and (4) are well-posed when $X=L^2$ (existence follows from   \cite{KP} and uniqueness, together with an other proof of existence via layer potentials, follows from  \cite{AAAHK}. It is even extendable to all systems with $A=A^*$ in complex sense combining \cite{AAMc} and \cite{AA1}.

We continue with a discussion on duality principles first studied in \cite{KP,KP2},  obtaining the sharpest possible version (in our context) compared to earlier results (\cite{KP, KP2, Shen2, Shen1, May, HKMP2, AM}).

\begin{thm}\label{cor:dualityprinciple} Let $n\ge 1$, $m\ge 1$. Let $q\in I_{L}$. If $q>1$, 
\begin{enumerate}
  \item $(D)_{Y}^{L^*}$ is well-posed if and only if  $(R)_{X}^L$ is well-posed.  
    \item $(N)_{Y^{-1}}^{L^*}$ is well-posed if and only if $(N)_{X}^L$ is well-posed.
          \end{enumerate}
          If $q\le1$, then the `if' direction holds in both cases.  
 
\end{thm}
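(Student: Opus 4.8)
\textbf{Proof proposal for Theorem \ref{cor:dualityprinciple}.}

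The plan is to reduce the duality between Dirichlet/regularity (resp.\ Neumann/Neumann) well-posedness to the isomorphism criteria of Theorem \ref{thm:wpequiv} and then to prove that the relevant maps are adjoint to one another. First I would set $p=q'$ and note that by Theorem \ref{thm:wpequiv}, $(R)^L_X$ is well-posed iff $N_\ta\colon X^+_{DB}\to X_\ta$ is an isomorphism, while $(D)^{L^*}_Y$ is well-posed iff $N_\ta\colon \dot Y^{-1,+}_{D\wt B}\to \dot Y^{-1}_\ta$ is an isomorphism. The core of the argument is a duality pairing between the boundary spaces $X_D$ and $\dot Y^{-1}_D$ under which the projection $\chi^+(DB)$ on the first is adjoint (up to the sign matrix $N$ and the involution $B\mapsto \wt B=NB^*N$) to the projection $\chi^+(D\wt B)$ on the second. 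This is where the correspondence $L\leftrightarrow L^*$ and $B\leftrightarrow \wt B$ recalled in the introduction enters: the bisectorial operators $DB$ on $X_D$ and $D\wt B$ on $\dot Y^{-1}_D$ are, in the appropriate pairing, adjoints of one another, so their spectral projections onto the positive sectors are adjoint, and hence the spectral subspaces $X^+_{DB}$ and $\dot Y^{-1,+}_{D\wt B}$ are in duality. One must also check that the decomposition $h=[h_\no,h_\ta]^T$ behaves well under this pairing: $N_\ta$ on one side is (up to constants) adjoint to $N_\ta$ on the other side, and likewise $N_\no$ is adjoint to $N_\no$; this uses that the pairing splits as a sum of a scalar-part pairing and a tangential-part pairing, the latter being the natural $X\times \dot Y^{-1}$ (i.e.\ $X\times (\dot X^1)^*$ modulo the gradient structure) duality and the former the $X\times Y$ duality.

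Granting this, the equivalence is a soft functional-analytic statement: a bounded operator between Banach spaces is an isomorphism iff its adjoint is, and under the identifications above $N_\ta\colon X^+_{DB}\to X_\ta$ and $N_\ta\colon \dot Y^{-1,+}_{D\wt B}\to \dot Y^{-1}_\ta$ are mutually adjoint (again up to the bounded invertible twist by $N$), so one is an isomorphism iff the other is. The same argument with $N_\no$ in place of $N_\ta$ gives the Neumann/Neumann equivalence (2). For the case $q\le 1$, reflexivity fails: $X=H^q$ is not a dual space in the usual sense, and $\dot Y^{-1}$ is its dual (a Hölder-type space $\dot\Lambda^{\alpha-1}$ with $\alpha=n(\tfrac1q-1)$). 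Here only $X\hookrightarrow (\dot Y^{-1})^*$ (via the pairing) rather than equality of duals, so one gets only one implication: if $N_\ta$ (resp.\ $N_\no$) is an isomorphism on the Hölder side $\dot Y^{-1,+}_{D\wt B}$, then its pre-adjoint on the Hardy side $X^+_{DB}$ is an isomorphism onto its image and, because the Hardy space embeds densely, one recovers the isomorphism there; the reverse implication would require $H^q$ to be reflexive, which it is not. Hence only the `if' direction, i.e.\ well-posedness of the $L$-problem implies well-posedness of the $L^*$-problem, is claimed — and this is exactly what is needed: invertibility of $N_\ta$ on $X^+_{DB}$ forces, by taking adjoints, invertibility of $N_\ta$ on the weak-star closure, which contains $\dot Y^{-1,+}_{D\wt B}$.

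The key ingredients I would invoke without reproving them: (a) Theorem \ref{thm:wpequiv}, converting well-posedness into the isomorphism statements for $N_\ta$, $N_\no$; (b) the bounded holomorphic functional calculus for $DB$ on $L^2$ and its extensions to $X_D$ and $\dot Y^{-1}_D$ from \cite{AS} (so that $\chi^+(DB)$, $\chi^+(D\wt B)$ make sense as bounded projections on the respective boundary spaces and are each other's adjoints under the pairing); (c) the identity $\wt B=NB^*N$ and the resulting adjoint relation between $e^{-tDB}\chi^+(DB)$ and $e^{-tD\wt B}\chi^+(D\wt B)$, which is already implicit in the construction of the semigroups $S_p(t)$, $\wt S_p(t)$ in Theorems \ref{thm:main1}--\ref{thm:main2}; (d) the standard Hardy--Hölder duality $(H^q)^*=\dot\Lambda^{n(1/q-1)-\text{something}}$ identifying $Y$ and $\dot Y^{-1}$ as duals of $X$ and $\dot X^1$.

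The main obstacle I anticipate is item (b)--(c): verifying cleanly that the spectral projection $\chi^+(DB)$ on the \emph{boundary} space $X_D$ (not on $L^2$) is the adjoint of $\chi^+(D\wt B)$ on $\dot Y^{-1}_D$ with respect to the correct pairing, including getting the sign matrix $N$ and the tangential-vs-normal bookkeeping exactly right. This requires knowing that the pairing $X_D\times \dot Y^{-1}_D\to\C$ is compatible with the $L^2$ pairing on $H^2_D$ (density of $H^2_D\cap X_D$ etc.) and that $D$ is self-adjoint while $B$ and $\wt B$ are related by the involution $N$, so that $DB$ and $D\wt B$ are genuinely adjoint as unbounded operators in this pairing; all the rest is the abstract Banach-space adjoint-isomorphism principle, which is routine. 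The $q\le1$ restriction to one direction is then forced precisely by the absence of reflexivity in this duality, as explained above.
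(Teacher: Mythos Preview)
Your overall strategy---reduce to Theorem \ref{thm:wpequiv} and then pass to adjoints---is the right one and is what the paper does. But the central claim that $N_\ta\colon X^+_{DB}\to X_\ta$ and $N_\ta\colon \dot Y^{-1,+}_{D\wt B}\to \dot Y^{-1}_\ta$ are mutually adjoint is incorrect, and this is not a matter of ``up to constants'' or the sign matrix $N$. In the natural pairing between $X_D$ and $\dot Y^{-1}_D$ (which the paper constructs as $\langle h, D^{-1}g\rangle_N$ in Lemma~\ref{thm:duality}), two swaps occur that you have backwards: first, because $ND=-DN$, the adjoint of $\chi^+_X$ is $\chi^-_{\dot Y^{-1}}$, not $\chi^+_{\dot Y^{-1}}$---so $X^+_{DB}$ and $\dot Y^{-1,+}_{D\wt B}$ are \emph{annihilators} of each other, not duals (this orthogonality is exactly the Green's formula); second, because the pairing is built through $D^{-1}$, which swaps scalar and tangential parts, the dual of $X_\ta$ is $\dot Y^{-1}_\no$, not $\dot Y^{-1}_\ta$. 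Hence the honest adjoint of $N_\ta\colon X^+_{DB}\to X_\ta$ is a map $\dot Y^{-1}_\no\to \dot Y^{-1,-}_{D\wt B}$, which is not the map governing $(D)^{L^*}_Y$.

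The paper repairs this by inserting an abstract step (Lemma~\ref{lem:projections} on pairs of complementary projections): $N_\ta\chi^+_X$ is an isomorphism iff $\chi^-_X N_\no$ is, and it is \emph{this} latter operator whose adjoint, after the two swaps above, is $N_\ta\chi^+_{\dot Y^{-1}}$. So the two sign/swap discrepancies are absorbed by one application of the pairs-of-projections lemma, not by the duality alone. You correctly flagged the $N$-matrix and tangential/normal bookkeeping as the main obstacle, but the resolution you propose does not go through; you need that extra combinatorial lemma (or an equivalent argument) to make the signs line up.
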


In fact, these   duality principles are best seen from   extensions of  some abstract results on pairs of projections  in \cite{AAH}. In concrete terms, this corresponds to  the Green's formula; we will do a direct proof where in the way, we  use a easy consequence of our method:   any solution in  $\mN$ or $\wt \mT$ can be approximated by an energy solution in the topology $\mN$ or $\wt \mT$
respectively.  This is possible by approximating  the conormal gradient at the boundary, not just the Neumann or Dirichlet data.

\begin{thm}\label{thm:green} Let $n\ge 1$, $m\ge 1$. Let $q\in I_{L}$. Then, for any weak solution $u$ to $Lu=0$ on $\reu$ with $\tN(\nabla u)\in \mN$
 and any weak solution $w$ to $L^*w=0$ on $\reu$ with  $t\nabla w\in \mT$ and $w(t,\cdot)$ converging to 0 in $\mD'$ modulo constants as $t\to \infty$,
 \begin{equation}
\label{eq:green}
\pair {\partial_{\nu_{A}}u|_{t=0}} {w|_{t=0}} = \pair {u|_{t=0}}{\partial_{\nu_{A^*}}w|_{t=0}} .
\end{equation}
Here the first pairing is the $\pair X Y$ (sesquilinear) duality while the second one is the 
$\pair{\dot X^{1}}{ \dot Y^{-1}}$ (sesquilinear) duality. 
\end{thm}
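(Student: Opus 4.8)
The plan is to prove \eqref{eq:green} first for a dense class of solutions where the pairing is a genuine integral identity coming from an integration by parts, and then pass to the general case by the approximation/density statement announced just before the theorem, namely that any $u$ with $\tN(\nabla u)\in\mN$ is a limit (in the $\mN$-topology, hence its conormal gradient at the boundary is a limit in $X$) of energy solutions, and similarly for $w$ with $t\nabla w\in\mT$. Concretely, I would first assume $u$ and $w$ are energy solutions, i.e. belong to the class $\mE_{-1/2}$ where $\nabla\!_A u,\nabla\!_{A^*}w\in L^2(\reu,\,\frac{dt\,dx}{?})$ in the appropriate homogeneous sense. For such solutions the Green/Rellich identity
\[
\int_{\R^n}\paire{A\nabla u(t,x)}{\overline{\nabla w(t,x)}}\,dx
\]
is independent of $t>0$ by the equations $Lu=0$ and $L^*w=0$ (differentiate in $t$ and integrate by parts in $x$; the $t$-independence of $A$ is used crucially here), and then I would let $t\to 0$ and $t\to\infty$. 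At $t=0$ the limit produces exactly the two duality pairings in \eqref{eq:green}: writing $\nabla\!_A u=[\partial_{\nu_A}u,\nabla_x u]^T$ and $\nabla\!_{A^*}w=[\partial_{\nu_{A^*}}w,\nabla_x w]^T$, one has the algebraic identity $\paire{A\nabla u}{\overline{\nabla w}}=\pair{\partial_{\nu_A}u}{\overline{w\text{-scalar}}}$-type bookkeeping, which after integration in $x$ and using the gradient structure of the tangential parts gives $\pair{\partial_{\nu_A}u|_{t=0}}{w|_{t=0}}-\pair{u|_{t=0}}{\partial_{\nu_{A^*}}w|_{t=0}}$ (the sign/structure is precisely the $N$ antisymmetry $\wt B=NB^*N$). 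At $t=\infty$ the integral tends to $0$: for $u$ this uses $\tN(\nabla u)\in\mN=L^q$ together with the decay of the semigroup $S(t)$, and for $w$ the hypothesis that $w(t,\cdot)\to 0$ in $\mD'$ modulo constants plus $t\nabla w\in\mT$.

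The second step is the passage to general solutions. Given $u$ with $\tN(\nabla u)\in\mN$, Theorem \ref{thm:main1} (or its $(R)/(N)$ analogues) gives $F_0=\nabla\!_A u|_{t=0}\in H^{q,+}_{DB}=H^q_D$, and since $H^2_D\cap H^q_D$ is dense in $H^q_D$ I can pick energy solutions $u_k$ (built by the semigroup from data in $H^2_D\cap H^q_D$) with $\nabla\!_A u_k|_{t=0}\to F_0$ in $X$; by the representation and the a priori estimates of \cite{AS} this forces $\tN(\nabla u_k)\to\tN(\nabla u)$ control, hence $\partial_{\nu_A}u_k|_{t=0}\to\partial_{\nu_A}u|_{t=0}$ in $X$ and $u_k|_{t=0}\to u|_{t=0}$ in $\dot X^1$ (or the relevant trace space). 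Do the symmetric thing for $w$ with data in $\dot W^{-1,q',-}_{D\wt B}$ or the Hölder analogue, using Theorem \ref{thm:main2}: approximate by energy solutions $w_j$ with $\nabla\!_{A^*}w_j|_{t=0}\to\nabla\!_{A^*}w|_{t=0}$ in $\dot Y^{-1}$ and $w_j|_{t=0}\to w|_{t=0}$ in $Y$. Then \eqref{eq:green} holds for each pair $(u_k,w_j)$, and because the left pairing is $\pair XY$-continuous and the right pairing is $\pair{\dot X^1}{\dot Y^{-1}}$-continuous, passing to the limit (first in $k$, then in $j$, or jointly) yields the identity for $(u,w)$. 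One has to check the $t\to\infty$ condition is preserved along the approximation for $w$, which it is since $\mT$-convergence and $\mD'$-convergence modulo constants are respected by the construction.

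The main obstacle I expect is making the boundary limit at $t=0$ rigorous and identifying the limit with the stated dualities, rather than the $t\to\infty$ part or the density argument. The subtlety is that $u|_{t=0}$ lives in $\dot X^1$ only modulo constants (for $(R)$ and $(N)$ problems) and $\partial_{\nu_{A^*}}w|_{t=0}$ lives in $\dot Y^{-1}=\divv_x(Y^n)$ with a quotient topology, so the pairing $\pair{u|_{t=0}}{\partial_{\nu_{A^*}}w|_{t=0}}$ needs the constant ambiguity to be killed by the divergence structure — this is exactly where the hypothesis that $w$ converges to $0$ modulo constants at infinity does double duty. Also, the interchange of $\lim_{t\to 0}$ with the $x$-integration requires the almost-everywhere/Whitney-average convergence of $\nabla\!_A u(t,\cdot)$ and of $u(t,\cdot)$ from Corollaries \ref{cor:main1}--\ref{cor:main2} together with a dominated-convergence argument controlled by $\tN(\nabla u)\in L^q$ and the tent-space norm of $t\nabla w$; assembling the uniform integrability here is the technical heart. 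Once the identity is established for energy solutions with full control at both ends, the rest is soft functional analysis using the continuity of the two sesquilinear pairings already built into the function-space setup.
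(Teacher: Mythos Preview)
Your strategy matches the paper's: establish Green's formula for energy solutions, approximate the given $u$ and $w$ by energy solutions using the trace/representation Theorems~\ref{thm:main1} and~\ref{thm:main2}, and pass to the limit by continuity of the two pairings. The paper does not rederive the energy-class identity but cites \cite[Lemma~2.1]{AM} directly, so your sketch of that step (which is slightly garbled --- the integral $\int_{\R^n}\paire{A\nabla u(t,\cdot)}{\overline{\nabla w(t,\cdot)}}\,dx$ is not itself $t$-independent; what is constant is the difference of the two boundary pairings at level $t$, obtained by integrating by parts on a slab) is unnecessary once you cite that lemma.

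There are two points where the paper is more careful than your outline. First, to invoke the energy Green's formula you need the approximants to be genuine energy solutions, i.e.\ their boundary conormal gradients must lie in $\dot\mH^{-1/2,+}_{DB}$ (resp.\ $\dot\mH^{-1/2,+}_{D\wt B}$), not merely in $H^2_D\cap H^q_D$. The paper secures this by approximating $h=\nabla\!_A u|_{t=0}$ in $H^q$ by Schwartz functions $\tilde h_k$ with Fourier transform compactly supported away from the origin (hence in every homogeneous Sobolev space), then applying $\IP$ and $\chi^+(DB)$ to land in $\IH^{q,+}_{DB}\cap\dot\mH^{-1/2,+}_{DB}$; similarly for $g=\nabla\!_{A^*}w|_{t=0}$. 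Your ``$H^2_D\cap H^q_D$'' does not by itself give the $\dot H^{-1/2}$ regularity. Second, when $q\le 1$ the convergence on the $w$-side is only weak-star in $\dot\Lambda^\alpha$ and $\dot\Lambda^{\alpha-1}$, not in norm, and the paper handles the limit in the pairings accordingly; your write-up does not distinguish this case. The discussion you flag about the $t\to 0$ limit and the constant ambiguity is not needed once the energy formula is taken as a black box and the pairings are interpreted in the stated dualities.
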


There is a notion of solvability attached to energy solutions which was the one originally introduced in \cite{KP}. Indeed, the ``classical'' Dirichlet and Neumann problems for solutions with 
$\|\nabla u\|_{2}<\infty$ are well-posed by Lax-Milgram theorem: for all ``energy data'', one obtains a unique (modulo constants) solution in this class.  The $DB$ method furnishes an energy solution for any  ``energy data'' (see \cite{R2} or \cite{AMcM} or \cite{AM}).  It has to be the same solution as the Lax-Milgram solution by uniqueness.  The boundary value problem is said to be solvable for the energy class if there is a constant $C$ such that any energy solution   satisfies the interior estimate $\|\tN(\nabla u)\|_{\mN} \le C \|data\|_{X}$   or $\|t\nabla u\|_{ \mT} \le C\|data\|_{\dot Y^{-1}}$  with  ``data'' being   the Neumann data  or the gradient of the Dirichlet data depending on the  problem. 
 In \cite{AS} it was shown for each of the four boundary value problems that solvability for the energy class implies existence of a solution for all data. We can improve this result as follows. 
We can  define a notion of \textbf{compatible well-posedness} (a terminology taken from \cite{BM}) which is that there is well-posedness of the boundary value problem \textbf{and} that  when the Dirichlet or Neumann data is also an ``energy data'', then the solution coincides with the energy solution obtained by Lax-Milgram.   This restriction is unavoidable. Examples from \cite{Ax, KR, KKPT} show that existence for all data does not imply solvability for the energy class. That is, there are examples with smooth Dirichlet  data where a solution exists and belongs to the appropriate class $\mN$ (for the regularity problem) but  the energy solution  does not belong to this class. Hence this is strictly stronger. It is clear that compatible well-posedness implies solvability for the energy class. The converse holds. 

\begin{thm}\label{thm:solvimplieswp} For each of the four boundary value problems with $q\in I_{L}$,  solvability for the energy class implies, hence is equivalent to,  compatible well-posedness. \end{thm}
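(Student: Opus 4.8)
The plan is to show that solvability for the energy class implies uniqueness in the relevant class, since existence for all data is already known from \cite{AS}, and the compatibility with energy solutions will be automatic from the construction. Fix one of the four problems, say $(R)_X^L$, the others being symmetric. Recall from Theorem \ref{thm:wpequiv} that well-posedness of $(R)_X^L$ is equivalent to $N_{\ta}\colon X^+_{DB}\to X_{\ta}$ being an isomorphism; by the remark after that theorem, ontoness corresponds to existence and injectivity to uniqueness. So, assuming solvability for the energy class, the first step is to feed this into the implication from \cite{AS} that solvability for the energy class implies existence for all data: this gives that $N_{\ta}$ is onto. It remains to prove $N_{\ta}$ is injective, i.e. uniqueness.

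The heart of the argument is a duality/approximation scheme. First I would use the consequence of our method recorded just before Theorem \ref{thm:green}: any solution in the class $\mN$ (resp. $\wt\mT$) can be approximated, \emph{in that same topology}, by energy solutions, by approximating the conormal gradient at the boundary inside $H^p_{DB}$ (resp. the corresponding negative-order space) by elements of the energy subspace $\mH\cap \dot H^{-1/2}$-type range on which Lax--Milgram applies. Combined with Theorem \ref{thm:green} (Green's formula), this lets me transfer the injectivity of $N_{\ta}$ for the energy class — which holds because on the energy space the relevant projection is an isomorphism onto its tangential part by Lax--Milgram/coercivity (the $p=2$ case) — to injectivity on the full class $X^+_{DB}$. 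Concretely: suppose $u$ is a solution in $\mN$ with zero regularity data, $\nabla_x u|_{t=0}=0$; I want $\nabla\!_A u|_{t=0}=0$. Pair $\partial_{\nu_A}u|_{t=0}$ against $w|_{t=0}$ for $w$ ranging over energy solutions of $L^*w=0$, use \eqref{eq:green} to move the pairing onto $\pair{u|_{t=0}}{\partial_{\nu_{A^*}}w|_{t=0}}=0$; since solvability for the energy class for the \emph{dual} problem (which is the Dirichlet problem $(D)_Y^{L^*}$, whose energy solvability follows by the same Lax--Milgram argument unconditionally at the energy level, or is part of the hypothesis package via Theorem \ref{cor:dualityprinciple}) produces enough such $w$ so that the $w|_{t=0}$ are dense in $Y=X'$, we conclude $\partial_{\nu_A}u|_{t=0}=0$, hence $\nabla\!_A u|_{t=0}=0$, hence $u$ is constant by the representation \eqref{eq:representation}. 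Finally, compatibility is built in: the solution produced for energy data is the $DB$-semigroup solution, which by the uniqueness just established must coincide with the Lax--Milgram energy solution (both lie in the class and share the data).

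The main obstacle I anticipate is the approximation step — making precise that energy solutions are dense, \emph{in the interior norm} $\|\tN(\nabla\cdot)\|_{\mN}$ or $\|t\nabla\cdot\|_{\mT}$, among all solutions in the class. This requires knowing that the energy subspace is dense in $H^p_{DB}$ (resp. $\dot W^{-1,p}_{DB}$, $\dot\Lambda^{\alpha-1}_{DB}$) and that the semigroup representation map $F_0\mapsto \nabla\!_A u(t,\cdot)=S_p^+(t)F_0$ is continuous from the boundary space to the interior norm — both of which should be available from the machinery of \cite{AS} and Theorem \ref{thm:main1}/\ref{thm:main2}, but the $q\le 1$ (Hardy space and H\"older-Carleson) endpoints will need care because density must be taken in the right (possibly weak-star) topology and Green's formula \eqref{eq:green} must be legitimate for the approximants. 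A secondary point is checking that the dual problem's energy-level solvability — needed to get density of the test traces $w|_{t=0}$ — is genuinely unconditional (pure Lax--Milgram on $\mathring{H}$), so that no circularity with the hypothesis enters; this is standard but should be stated cleanly. Once these are in place, the four cases follow by the same template, swapping $N_{\ta}\leftrightarrow N_{\no}$ and $L\leftrightarrow L^*$ as dictated by Theorem \ref{thm:wpequiv}.
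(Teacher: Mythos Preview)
Your proposal has a genuine gap in the injectivity step. You argue uniqueness for $(R)_X^L$ by pairing against energy solutions $w$ of the dual problem $L^*w=0$ via Green's formula \eqref{eq:green}. But \eqref{eq:green} requires $w$ to lie in the class $\wt\mT$ (i.e.\ $t\nabla w\in\mT$), not merely to be an energy solution. For an energy solution with Dirichlet data $\varphi\in\mS$, the conormal gradient at the boundary lies in $\dot\mH^{-1/2,+}_{D\wt B}$ with tangential part $\nabla_x\varphi\in\dot Y^{-1}_\ta$, but its scalar part $\partial_{\nu_{A^*}}w|_{t=0}$ need not belong to $\dot Y^{-1}_\no$; hence the full conormal gradient need not lie in $\dot Y^{-1,+}_{D\wt B}$, and $w$ need not be in $\wt\mT$. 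Your two proposed fixes both fail: ``energy solvability by Lax--Milgram'' gives existence of energy solutions but \emph{not} the interior estimate that defines solvability for the energy class; and invoking Theorem~\ref{cor:dualityprinciple} is circular, since that theorem concerns well-posedness, which is precisely what you are proving. What you have actually sketched is the uniqueness half of Theorem~\ref{thm:solvvsuniq}(2), whose hypothesis is solvability of $(D)_Y^{L^*}$ for the energy class---a different assumption from that of Theorem~\ref{thm:solvimplieswp}.

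The paper's proof avoids this entirely: it cites \cite{AS}, Lemmas 14.2--14.6, which show directly that solvability for the energy class of a given problem forces the corresponding map $N_\ta$ or $N_\no$ to be an \emph{isomorphism} (not just onto). The mechanism there is that the energy-level inverse of $N_\ta$ (which exists by Lax--Milgram on $\dot\mH^{-1/2}$) is, under the solvability hypothesis, bounded from $X_\ta$ to $X^+_{DB}$ on the dense subspace of energy data, hence extends to a bounded two-sided inverse on all of $X_\ta$. Well-posedness then follows from Theorem~\ref{thm:wpequiv}, and compatibility is immediate: for energy data the energy solution is in the class by hypothesis, so uniqueness forces it to coincide with the solution just constructed. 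No appeal to the dual problem is needed.
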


In particular, combining this theorem with the extrapolation results  in \cite{AM} (which have extra hypotheses) gives extrapolation of compatible well-posedness under the assumptions there.

The Green's formula has a useful corollary as far as solvability vs uniqueness of the dual problem is concerned. This improves also the known duality principles of \cite{HKMP2} and \cite{AM}. 

\begin{thm}\label{thm:solvvsuniq} Let $n\ge 1$, $m\ge 1$. Let $q\in I_{L}$. Then
\begin{enumerate}
  \item If  $(R)_{X}^L$ is solvable for the energy class, then $(D)_{Y}^{L^*}$ is compatibly well-posed. 
    \item If $(D)_{Y}^{L^*}$ is solvable  for the energy class and $q\ge 1$, then $(R)_{X}^L$  is compatibly well-posed.  
     \item If  $(N)_{X}^L$ is solvable for the energy class, then $(N)_{Y}^{L^*}$ is compatibly well-posed. 
    \item  If $(N)_{Y}^{L^*}$ is solvable  for the energy class and $q\ge 1$, then $(N)_{X}^L$  is compatibly well-posed.
     \end{enumerate}
In the items (2) and (4),  uniqueness holds in the conclusion when $q<1$.  
\end{thm}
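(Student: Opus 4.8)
The plan is to deduce the four implications from the Green's formula (Theorem~\ref{thm:green}), the well-posedness dictionary (Theorem~\ref{thm:wpequiv}), and the equivalence of solvability for the energy class with compatible well-posedness (Theorem~\ref{thm:solvimplieswp}). First I would translate everything into the language of the boundary maps $N_{\ta}$ and $N_{\no}$ acting on the spectral subspaces. By Theorem~\ref{thm:wpequiv}, $(R)_X^L$ is well-posed iff $N_{\ta}\colon X^+_{DB}\to X_{\ta}$ is an isomorphism, and $(D)_Y^{L^*}$ is well-posed iff $N_{\ta}\colon \dot Y^{-1,+}_{D\wt B}\to \dot Y^{-1}_{\ta}$ is an isomorphism; similarly for $N_{\no}$ and the Neumann problems. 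So the task reduces to showing: if one of these two maps is surjective (which, by Theorem~\ref{thm:solvimplieswp}, is what ``solvable for the energy class'' gives us together with the interior estimate), then the dual map is an isomorphism, i.e. both injective and surjective.

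The key mechanism is that the Green's formula \eqref{eq:green} exhibits the two relevant pairings of spectral subspaces as adjoint to one another. Concretely: take a weak solution $u$ of $Lu=0$ with $\tN(\nabla u)\in\mN$ and a weak solution $w$ of $L^*w=0$ with $t\nabla w\in\mT$ and the decay at infinity; then $\pair{\partial_{\nu_A}u|_{t=0}}{w|_{t=0}}=\pair{u|_{t=0}}{\partial_{\nu_{A^*}}w|_{t=0}}$, where the left pairing is the $\pair XY$ duality and the right one the $\pair{\dot X^1}{\dot Y^{-1}}$ duality. Writing the conormal gradient traces as $h=[h_{\no},h_{\ta}]^T\in X^+_{DB}$ and $g=[g_{\no},g_{\ta}]^T\in \dot Y^{-1,+}_{D\wt B}$, the identity says $\pair{h_{\no}}{g_{\ta}}=\pair{h_{\ta}}{g_{\no}}$ as a duality between the spectral subspaces. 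Since by the approximation statement mentioned before Theorem~\ref{thm:green} every solution in $\mN$ (resp. $\wt\mT$) is approximable by energy solutions in the relevant topology, this identity holds for all $h\in X^+_{DB}$ and $g\in \dot Y^{-1,+}_{D\wt B}$, so $N_{\ta}$ on $X^+_{DB}$ and $N_{\no}$ on $\dot Y^{-1,+}_{D\wt B}$ are (pre)adjoint via the $\pair XY$/$\pair{\dot X^1}{\dot Y^{-1}}$ dualities, and symmetrically $N_{\no}$ on $X^+_{DB}$ is preadjoint to $N_{\ta}$ on $\dot Y^{-1,+}_{D\wt B}$.

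With the adjointness in hand, the four items follow by soft functional analysis. For (1): assume $(R)_X^L$ is solvable for the energy class, hence (Theorem~\ref{thm:solvimplieswp}) compatibly well-posed, so $N_{\ta}\colon X^+_{DB}\to X_{\ta}$ is an isomorphism. Its preadjoint $N_{\no}\colon \dot Y^{-1,+}_{D\wt B}\to \dot Y^{-1}_{\no}$\,---\,wait, I need the map whose adjoint is $N_{\ta}$ on $X^+_{DB}$, which by the Green's pairing is $N_{\ta}$ on $\dot Y^{-1,+}_{D\wt B}$; an isomorphism having an adjoint forces that adjoint to be an isomorphism too, hence $N_{\ta}\colon \dot Y^{-1,+}_{D\wt B}\to\dot Y^{-1}_{\ta}$ is an isomorphism, i.e. $(D)_Y^{L^*}$ is well-posed, and compatibility is inherited because the energy solution constructed by Lax--Milgram is the one produced by the $D\wt B$ semigroup. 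Items (2),(3),(4) are the same argument with $N_{\ta}\leftrightarrow N_{\no}$ and $L\leftrightarrow L^*$ swapped. The asymmetry in (2) and (4) (hypothesis $q\ge1$, and in (2),(4) only uniqueness when $q<1$) comes from two sources: the Green's formula and the approximation by energy solutions are only proved for $q\in I_L$, and for $q<1$ the dual space $Y$ is a H\"older-type space, so the duality only yields a weak-star adjoint\,---\,enough to transfer injectivity of one map into denseness of the range of the other (hence uniqueness of the dual problem), but not closedness of the range (hence not existence) without the extra integrability that $q\ge 1$ provides. The main obstacle is precisely this last point: making the abstract adjointness argument produce \emph{surjectivity} (not just dense range) of the dual boundary map in the non-reflexive range, which is why for $q<1$ the statement retreats to uniqueness only; one handles the $q\ge1$ case by invoking that $X^+_{DB}$ and $\dot Y^{-1,+}_{D\wt B}$ are complemented subspaces of reflexive spaces (from the Hardy space identifications $H^q_{DB}=H^q_D$, $H^q_{D\wt B}=H^q_D$), so that a bounded injective map with a bounded preadjoint that is also injective is automatically an isomorphism by the closed range theorem.
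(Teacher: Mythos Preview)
Your plan essentially reduces items (1) and (3), and items (2), (4) for $q>1$, to the combination Theorem~\ref{thm:solvimplieswp} $+$ Theorem~\ref{cor:dualityprinciple}: solvable for the energy class $\Rightarrow$ compatibly well-posed $\Rightarrow$ the $N_{\ta}$ or $N_{\no}$ map is an isomorphism $\Rightarrow$ its adjoint is an isomorphism $\Rightarrow$ the dual problem is well-posed. That is fine as far as it goes, and is close to how the paper proves Theorem~\ref{cor:dualityprinciple}. But there is a genuine gap at $q=1$ in items (2) and (4), and your justification for why $q\ge 1$ suffices is incorrect.

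You write that for $q\ge 1$ the spaces $X^{+}_{DB}$ and $\dot Y^{-1,+}_{D\wt B}$ are complemented subspaces of \emph{reflexive} spaces, so that the closed range theorem applies. This is false at $q=1$: $X=H^1$ and $Y=\BMO$ are not reflexive, and indeed the paper explicitly remarks (just after the statement of Theorem~\ref{thm:solvvsuniq}) that the $q=1$ case of (2) and (4) is \emph{not} covered by Theorem~\ref{cor:dualityprinciple}. So your abstract adjoint/closed-range argument cannot close the $q=1$ case, and you have not proved (2) or (4) in full.

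The paper's proof avoids this by a more direct route that does not pass through well-posedness of the hypothesis problem. For item (2), say, one takes an energy solution $u$ of $L$ with regularity datum $g\in X_{\ta}\cap \dot H^{-1/2}_{\ta}$ and tests $\partial_{\nu_A}u|_{t=0}$ against an arbitrary $\varphi\in\mS$. Letting $w$ be the \emph{energy} solution of $L^*$ with Dirichlet datum $\varphi$ (which exists by Lax--Milgram, no solvability assumption needed), the hypothesis ``$(D)_Y^{L^*}$ solvable for the energy class'' gives $\|\partial_{\nu_{A^*}}w|_{t=0}\|_{\dot Y^{-1}_{\no}}\lesssim\|\varphi\|_{Y}$, and Green's formula yields $|\pair{\partial_{\nu_A}u|_{t=0}}{\varphi}|\lesssim \|g\|_{X_{\ta}}\|\varphi\|_{Y}$. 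For $q>1$ density of $\mS$ in $L^{q'}$ gives $\partial_{\nu_A}u|_{t=0}\in L^q$; for $q=1$ one uses that $\mS$ is dense in $\VMO$ and the duality $(\VMO)^*=H^1$ to conclude $\partial_{\nu_A}u|_{t=0}\in H^1$. This is precisely the extra ingredient that rescues $q=1$. Having shown $(R)_X^L$ solvable for the energy class, Theorem~\ref{thm:solvimplieswp} finishes. Uniqueness (for all $q$, including $q<1$) follows from the same Green's identity with $\nabla_x u|_{t=0}=0$, which is the part of your argument that does survive.
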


Note that we include the case $q=1$ in (2) and (4), while the corresponding directions is Theorem 
\ref{cor:dualityprinciple} are unclear. It has to do with the fact that  there always exists an energy solution with given  ``energy data''; we do not have to ``build'' it. 

It follows from  our results that any method of solvability will lead to optimal results in the range of exponents governed by $I_{L}$. Of course, what we do not address here is how to prove solvability.  We mention that \cite{AS} describes this range in a number of cases covering the case of systems with De Giorgi-Nash-Moser interior estimates (in fact, less is needed) for which $I_{L}= (1-\varepsilon, 2+\varepsilon')$. 
Another interesting situation is the case of boundary dimension $n=1$: then $I_{L}= (\frac{1}{2},\infty)$ so that we are able to describe all possible situations in this class of boundary value problems. Another situation is constant coefficients in arbitrary dimension:  $I_{L}= (\frac{n}{n+1},\infty)$. This will imply   compatible well-posedness of each  of the four boundary value problems in every possible class considered here using results in\cite{AAMc}. For general systems, $I_{L}$ is an open interval and contains $[\frac{2n}{n+2}, 2]$. 

In \cite{HKMP1}, it is proved, and this is a beautiful result, that for real equations $L^*u=0$, $L^*$-harmonic measure is $A_{\infty}$ of the Lebesgue measure on the boundary  and  that $(D)_{L^p}^{L^*}$ is solvable for the energy class (in the sense of our definition). In \cite{DKP}, it is shown (this is in bounded domains, but there is no difference, and for $t$-dependent coefficients as well) that this  $A_{\infty}$ condition implies $BMO$ solvability of the Dirichlet problem for the energy class, \textit{i.e.}, $(D)_{BMO}^{L^*}$. In \cite{AS}, it is shown that $(R)_{H^1}^{L}$ solvability for the energy class  implies $N_{\ta}$ is an isomorphism for neighboring spaces, which by our result above means well-posedness and we just assume solvability with our improvement. Thus well-posedness of the regularity problem can be shown for spaces near $H^1$. We have a little more precise result, completing the ones in \cite{HKMP2} for dimensions $1+n\ge 3$ and in \cite{KR,B} for dimension $1+n=2$. There is a result of this type in \cite{BM}.  

\begin{cor}\label{cor:real} Consider real equations of the form \eqref{eq:divform}.  The  regularity problem is compatibly well-posed   for $L^q$ for some range of $q>1$, $H^1$, and $H^q$ for some range of $q<1$.  This is the same for the Neumann problem when $n=1$. 
The Dirichlet problem is compatibly well-posed on $L^p$ for some $p<\infty$, $BMO$ and $\dot \Lambda^\alpha$ for some $\alpha>0$. In both situations, well-posedness remains for perturbations in $L^\infty$ of the matrix coefficients.\end{cor}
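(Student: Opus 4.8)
The strategy is to combine three ingredients: (i) the deep result of \cite{HKMP1} that for a real (not necessarily symmetric) equation $L^*u = 0$ the $L^*$-harmonic measure is $A_\infty$ with respect to Lebesgue measure, hence $(D)^{L^*}_{L^{p_0}}$ is solvable for the energy class for some $p_0 < \infty$; (ii) our duality-type transfer results, namely Theorem \ref{thm:solvvsuniq}, which turns solvability for the energy class of one problem into compatible well-posedness of the dual one; and (iii) the fact (recorded in the discussion preceding the corollary, and in \cite{AS}) that for real equations $I_L$ is an open interval containing a neighborhood of $2$ and, more to the point, that solvability at one exponent propagates to an open range of exponents. First I would record that for real scalar equations the De Giorgi–Nash–Moser estimates hold, so $I_L \supset (1-\varepsilon, 2+\varepsilon')$ and in particular $1 \in I_L$ and $H^1$, $L^q$ for $q$ near $1$ are all in the admissible range where Theorems \ref{thm:wpequiv}–\ref{thm:solvvsuniq} apply.

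For the regularity problem: starting from $(D)^{L^*}_{L^{p_0}}$ solvable for the energy class, by Theorem \ref{thm:solvvsuniq}(2) (applicable since the relevant exponent $q = p_0' > 1$) we get that $(R)^{L}_{L^{p_0'}}$ is compatibly well-posed. By Theorem \ref{thm:wpequiv}(2) this means $N_{\ta}: X^+_{DB} \to X_{\ta}$ is an isomorphism at $X = L^{p_0'}$. Isomorphism properties of $N_{\ta}$ on the Hardy-space scale $H^q_D$ are stable under small perturbation of $q$ (this is the openness of $I_L$ together with the analytic/interpolation stability of the projections $\chi^\pm(DB)$ used throughout \cite{AS}), so $N_{\ta}$ remains an isomorphism for $X = L^q$ with $q$ in an open interval around $p_0'$ and, pushing down through $q = 1$, for $X = H^1$ and $X = H^q$ with $q < 1$ near $1$. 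Invoking Theorem \ref{thm:wpequiv}(2) again in reverse gives compatible well-posedness of $(R)^L_X$ for all these $X$. Dually, Theorem \ref{cor:dualityprinciple}(1) (for the $q > 1$ part) and Theorem \ref{thm:solvvsuniq}(1) (to pick up $H^1$ and $\dot\Lambda^\alpha$, using that $(R)^L_{H^1}$ solvable for the energy class gives $(D)^{L^*}_{BMO}$ compatibly well-posed, and the $\dot\Lambda^\alpha$ range by the same openness argument on the Hölder side $\dot\Lambda^{\alpha-1}_D$) yield compatible well-posedness of the Dirichlet problem on $L^p$ for some $p < \infty$, on $BMO$, and on $\dot\Lambda^\alpha$ for some $\alpha > 0$. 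For $n = 1$ one has $I_L = (\tfrac12,\infty)$, and since in one boundary dimension the scalar part and tangential part are handled symmetrically, the same argument applied to $N_{\no}$ instead of $N_{\ta}$ (Theorem \ref{thm:wpequiv}(3),(4)) gives the Neumann statement.

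The stability under $L^\infty$ perturbations of the coefficients is the last point: the isomorphism property of $N_{\ta}$ (resp. $N_{\no}$) on a fixed space $X_D$ is an open condition in the operator $DB$, and $B = \hat A$ depends continuously (indeed real-analytically) on $A \in L^\infty$; moreover the interval $I_L$ and the Hardy-space identifications $H^q_{DB} = H^q_D$ are themselves stable under small $L^\infty$ perturbations by the perturbation theory for the functional calculus of $DB$ used in \cite{AS, AAMc}. Hence all four isomorphisms persist for $A$ in an $L^\infty$-neighborhood, and by Theorem \ref{thm:wpequiv} the compatible well-posedness persists as well. The main obstacle — and the only place real effort is needed beyond quoting \cite{HKMP1} — is making the "openness" steps precise: one must check that the range of exponents on which $N_{\ta}$ and $N_{\no}$ are isomorphisms is genuinely open in the Hardy/Hölder scale and that it straddles $q = 1$, so that $H^1$ and a nontrivial $H^q$ (resp. $BMO$ and a nontrivial $\dot\Lambda^\alpha$) are captured; this is exactly the interpolation/stability machinery of \cite{AS} for the spaces $H^p_{DB}$ and $\dot\Lambda^{\alpha-1}_{DB}$, which we may invoke, but the bookkeeping of which neighboring exponents survive is the delicate part.
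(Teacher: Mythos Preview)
Your overall architecture is right, but there is a genuine gap at the ``pushing down through $q=1$'' step. Openness of the set of exponents on which $N_{\ta}$ is an isomorphism gives you only a small interval around $p_0'$; it does not by itself allow you to walk all the way down to $q=1$ and below. The paper does not argue this way. Instead, after obtaining $(R)^{L}_{L^{p_0'}}$ solvable for the energy class, it invokes the \emph{extrapolation} theorem for regularity problems (Theorem~10.1 of \cite{AM}), which uses the De Giorgi--Nash--Moser estimates (available here because all four operators $L$, $L^*$ and their reflections across $\R^n$ have real coefficients) to push solvability for the energy class down to the full range $1<q<p_0'$ and to $H^q$ for $q\le 1$ above an exponent determined by the common De Giorgi exponent. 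Only then does Theorem~\ref{thm:solvimplieswp} give compatible well-posedness on this range, and Theorem~\ref{thm:solvvsuniq} the dual Dirichlet statement. The extrapolation step is substantial---it is a Shen-type self-improvement argument, not mere interpolation or openness---and your proof sketch does not supply it.

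Two smaller points. For the Neumann problem when $n=1$, the mechanism is not simply symmetry of $N_{\no}$ and $N_{\ta}$; it is the well-known conjugate-function device in dimension $1+n=2$, which identifies the Neumann problem for $L$ with the regularity problem for a conjugate real operator, so the regularity result transfers directly. And for the $L^\infty$-perturbation statement, the corollary only asserts that \emph{well-posedness} persists, not compatible well-posedness; the paper explicitly flags compatibility under perturbation as open. Your last paragraph claims compatible well-posedness persists, which overreaches. The perturbation argument the paper uses is Theorem~14.8 of \cite{AS}, giving invertibility of $N_{\ta}$ for nearby $A$, and then Theorem~\ref{thm:wpequiv} yields well-posedness (only).
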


It is well-known that Neumann problems and regularity problems are the same when $1+n=2$ using conjugates, hence the result is valid for the Neumann problem in two dimensions.  Solvability of the Neumann problem seems to be harder in dimensions $1+n\ge 3$.

We mentioned the perturbation case for completeness of this statement but this is a mere consequence of the  results in \cite{AS}, Section 14.2, at this stage. Such results imply that well-posedness is stable under $L^\infty$ perturbation of the coefficients. But it is a question, even with our improvements here, whether compatible well-posedness is stable under $L^\infty$ perturbation of the coefficients. 

We note that this result, as any other one so far,  only takes care of solvability issues on the upper-half space.  Results on the lower half-space can be formulated using the negative spectral spaces $X^-_{DB}$, $\dot Y^{-1,-}_{DB}$. We leave them to the reader. In \cite{HKMP2}, the solvability on both half-spaces (this holds for real equations) is also used to show invertibility of the single layer potential in this range. The converse holds. This can be made a formal statement in our context for all four problems, extending the result in \cite{HKMP2} for the single layer potential. We shall explain the notation when doing the proof. 

\begin{thm}\label{thm:layerpot}  Fix $q\in I_{L}$. Let $X=L^q$ or $H^q$,  $\dot X^1$ be the homogeneous Sobolev space $\dot W^{1,q}$ or $\dot H^{1,q}$, $Y$ be the dual space of $X$ and $\dot Y^{-1}$ be the dual space of $\dot X^1$.
\begin{enumerate}
  \item  $(R)_{X}^L$ is well-posed on both   half-spaces if and only if  $\mS_{0}^{L}$ is invertible 
 from $X$ to $\dot X^1$.
  \item $(D)_{Y}^{L^*}$ is well-posed on both  half-spaces if and only if $\mS_{0}^{L^*}$ is invertible 
 from $\dot Y^{-1}$ to $Y$.
  \item  $(N)_{X}^L$ is well-posed on both   half-spaces  if and only if   $\partial_{\nu_A} \mD_{0}^{L}$ is invertible from $\dot X^1$ to $X$.
  \item $(N)_{\dot Y^{-1}}^{L^*}$ is well-posed on both   half-spaces if and only if    $\partial_{\nu_{A^*} }\mD_{0}^{L^*}$ is invertible from $Y$ to $\dot Y^{-1}$.
\end{enumerate}
  \end{thm}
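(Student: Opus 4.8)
The strategy is to reduce each statement to Theorem \ref{thm:wpequiv}, which characterizes well-posedness of each boundary value problem on the upper half-space via invertibility of one of the trace maps $N_{\ta}$ or $N_{\no}$ restricted to the spectral subspace $X^+_{DB}$ (or $\dot Y^{-1,+}_{D\wt B}$). The point is that the layer potentials encode precisely the combination of the two half-space problems: the single layer potential $\mS_{0}^{L}$ and the double layer $\mD_{0}^{L}$ are built (following \cite{AAMc}, \cite{R1}) from the Cauchy extension operators $e^{-tDB}\chi^+(DB)$ on $\reu$ and $e^{tDB}\chi^-(DB)$ on the lower half-space, glued along $t=0$. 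Concretely, the conormal gradient of $\mS_{0}^L g$ has jump relations expressing that its boundary trace from above lies in $X^+_{DB}$ with tangential part related to $\nabla_x \mS_0^L g$, and from below in $X^-_{DB}$; the difference of the two traces is (a multiple of) the Dirac-projected datum, while the sum across the interface matches up the tangential or normal components depending on which potential one uses. First I would write down these jump/trace identities carefully, using the functional calculus decomposition $\mathrm{Id} = \chi^+(DB) + \chi^-(DB)$ on $H^p_D = X_D$, so that invertibility of $\mS_0^L: X \to \dot X^1$ becomes the simultaneous statement that $N_{\ta}$ is onto $X_{\ta}$ from $X^+_{DB}$ (the ``from above'' half) and onto from $X^-_{DB}$ (the ``from below'' half), together with the corresponding injectivity.

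Having set up the dictionary, the proof of item (1) goes as follows. By Theorem \ref{thm:wpequiv}(2), $(R)_X^L$ well-posed on $\reu$ iff $N_{\ta}: X^+_{DB}\to X_{\ta}$ is an isomorphism; the analogous statement on the lower half-space (via the negative spectral spaces, as mentioned in the text preceding Theorem \ref{thm:layerpot}) is $N_{\ta}: X^-_{DB}\to X_{\ta}$ an isomorphism. Since $X_D = X^+_{DB}\oplus X^-_{DB}$ as a topological direct sum (the spectral projections $\chi^\pm(DB)$ are bounded on $X_D$ by the Hardy space theory of \cite{AS} in the range $q\in I_L$), the map $N_{\ta}$ restricted to all of $X_D$ decomposes accordingly; and the single layer operator $\mS_0^L$, via its boundary trace formula, is conjugate — modulo the identifications $X \simeq X_{\ta}$ (the component map $h\mapsto h_{\ta}$ is an isomorphism from $X_D$'s tangential part, which is exactly $X$ with its gradient structure, onto $\dot X^1$ after one integrates) — to the direct sum $N_{\ta}|_{X^+_{DB}} \oplus N_{\ta}|_{X^-_{DB}}$ after inverting the elliptic ``difference'' part. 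Thus $\mS_0^L$ invertible $X\to \dot X^1$ iff both restrictions are isomorphisms iff $(R)_X^L$ is well-posed on both half-spaces. Items (2), (3), (4) follow the same template: (2) is the dual/Dirichlet statement obtained by replacing $N_{\ta}$ with $N_{\ta}$ on $\dot Y^{-1,\pm}_{D\wt B}$ and $\mS_0^L$ with $\mS_0^{L^*}$, using Theorem \ref{thm:wpequiv}(1); (3) and (4) replace the tangential trace $N_{\ta}$ by the scalar trace $N_{\no}$ and the single layer $\mS_0$ by the conormal derivative of the double layer $\partial_{\nu_A}\mD_0$, invoking Theorem \ref{thm:wpequiv}(4) and (3) respectively. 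In each case the key algebraic fact is that the relevant layer potential boundary operator, read through the $DB$ (resp. $D\wt B$) functional calculus, is the ``$\pm$ sum'' of the two half-space trace maps, so its invertibility is equivalent to invertibility of each summand.

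I expect the main obstacle to be the bookkeeping of the jump relations and the precise identification of the layer potential boundary operators with the trace maps $N_{\ta}, N_{\no}$ on the spectral subspaces — i.e. proving that $\mS_0^L$ (and $\partial_{\nu_A}\mD_0^L$) genuinely split as the claimed direct sum, with no loss at $t=0$. This requires knowing that the two Cauchy extensions $e^{-tDB}\chi^+(DB)$ on $\reu$ and $e^{tDB}\chi^-(DB)$ on the lower half-space, applied to a common datum, produce conormal gradients whose traces differ exactly by the Dirac-projected datum with the correct normalization, and that this difference is an \emph{isomorphism} onto $X_D$ (resp. $\dot Y^{-1}_D$, $\dot\Lambda^{\alpha-1}_D$) — which is where the representation theorems (Theorems \ref{thm:main1}, \ref{thm:main2}) and their converses are essential, since they guarantee that \emph{every} solution in the natural class on either half-space arises this way, so that well-posedness on a half-space really is equivalent to the corresponding restricted trace map being an isomorphism and nothing is missed. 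A secondary, more technical point is the case $q\le 1$, where one works with $H^q$/$\dot\Lambda^{\alpha-1}$ and weak-star topologies; there one must check that the spectral splitting and the trace identities persist in these spaces, which is exactly the content of the Hardy space identification $H^q_{DB}=H^q_D$ assumed throughout the range $q\in I_L$, so no new difficulty beyond careful citation arises.
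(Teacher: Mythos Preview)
Your reduction to Theorem~\ref{thm:wpequiv} and the identification of the layer potentials with the Cauchy extension operators is the right framework, and matches the paper's set-up. However, the key algebraic step you describe is not correct. The single layer potential is \emph{not} conjugate to a direct sum $N_{\ta}|_{X^{+}_{DB}} \oplus N_{\ta}|_{X^{-}_{DB}}$; rather, since $\nabla\!_{A}\mS_{0\pm}^{L}f = \pm\chi^{\pm}_{X}\bigl[\begin{smallmatrix}f\\0\end{smallmatrix}\bigr]$, one has $\nabla_{x}\mS_{0}^{L}f$ equal (up to the identification $X\simeq X_{\no}$, $\dot X^{1}\simeq X_{\ta}$) to the single composite $N_{\ta}\chi^{+}_{X}N_{\no}$, which by the jump relation coincides with $-N_{\ta}\chi^{-}_{X}N_{\no}$. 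So $\mS_{0}^{L}$ is one map $X_{\no}\to X_{\ta}$, not two, and the equivalence ``$N_{\ta}\chi^{+}_{X}N_{\no}$ is an isomorphism $\Longleftrightarrow$ both $N_{\ta}\chi^{+}_{X}$ and $N_{\ta}\chi^{-}_{X}$ are isomorphisms'' is genuinely non-trivial: it is the content of the paper's Lemma~\ref{lem:projections2} (proved via Lemma~\ref{lem:projections} on pairs of complementary projections). Your phrase ``the layer potential boundary operator is the `$\pm$ sum' of the two half-space trace maps, so its invertibility is equivalent to invertibility of each summand'' describes a situation that would make the equivalence immediate, but that is not the actual structure here.

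Concretely, the direction ``both half-space problems well-posed $\Rightarrow$ $\mS_{0}^{L}$ invertible'' requires constructing, for given $g\in X_{\ta}$, an $f\in X_{\no}$ with $N_{\ta}\chi^{+}_{X}[f,0]^{T}=g$: you must take $h^{+}\in X^{+}_{DB}$ and $h^{-}\in X^{-}_{DB}$ with $N_{\ta}h^{+}=g$, $N_{\ta}h^{-}=-g$, and check that $h^{+}+h^{-}$ has zero tangential part, hence equals $[f,0]^{T}$ for some $f$. This works, but it is exactly the argument behind Lemma~\ref{lem:projections}(2) and is not a direct-sum triviality. So the gap is that you need to supply (or cite) this projection lemma; once you have it, the rest of your outline goes through essentially as the paper does.
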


We remark that the invertibility of the layer potentials $ \mD^{L}_{0\pm}$ on $L^p$ is sufficient  for the solvability of the Dirichlet problem $(D)_{L^p}^L$ (not $L^*$) on both half-spaces. This can also be seen from our method but it is a well-known fact. This invertiblity  is what is proved  on $L^2$ for the Laplace equation on special Lipschitz domains  in \cite{V}. More recently, \cite{AAAHK} establishes the same invertibility property on $L^2$  for real and symmetric equations \eqref{eq:divform}. It is however not clear whether it is necessary.

\

The main bulk of the paper is the proof of  the classification described in Theorems \ref{thm:main1} and \ref{thm:main2}. The other statements concerning solvability, although important of course in the theory,  are functional analytic consequences of our classification. We shall follow the setup introduced in \cite{AA1} which proves the case $p=2$ of our statements. This is, nevertheless, far more involved. 
First, there is an approximation procedure to pass from the weak formulation to a semigroup equation for conormal gradients which should look like
$$
\nabla\!_{A} u(t+\tau,\, .\,)= e^{-\tau|DB|} \nabla\!_{A} u(t,\, .\,),  \quad t>0,\tau\ge 0.
$$
 In classical situations, one  tests the equation against the adjoint fundamental solution and goes to the limit in appropriate sense. Here, we only have at our disposal an adjoint fundamental operator. Thus we argue more at operator level and do not use any integral kernel representation.  One difficulty is that we do not know how to interpret this identity at first. This is especially true when  $p>2$. Some weak version against appropriate test functions, which we will build,  is first proved. Then one constructs an element $f_{t}$ which satisfies this equation and has a trace in the appropriate  topology. Then the goal is to prove that $\nabla\!_{A} u(t,\, .\,)$ can be identified to such an  $f_{t}$. This uses behavior at $\infty$ in the statement to eliminate residual terms in some asymptotic expansion. 

The main difficulty here is that we have to reconcile two different calculi: the one on Schwartz distributions $\mS'$ and the one using the functional calculus of $DB$ via the Hardy spaces. It is thus crucial that these spaces, and any other one in the process, are embedded in $\mS'$ and this is the purpose of this interval $I_{L}$ that we carry all the way through. The article  \cite{AS} will be most important for that and one cannot read this article without having the other one at hand, as one finds a lot of estimates used along the way. Outside this interval of exponents, our arguments collapse.

A word on history to finish. The recent papers contain historical background as concerns solvability methods (\textit{e.g.}, \cite{AAAHK, HKMP2, AM, HMiMo}). We isolate a few points and refer to those articles for more complete quotes.  We feel that the starting point of the study of such boundary value problems with non-smooth coefficients  is the breakthrough work of Dahlberg on the Dirichlet problem for the Laplace equation  on Lipschitz domains \cite{Da}, then followed by  the ones  of Dahlberg, Fabes, Jerison, Kenig, Pipher, etc., setting the theory in the classical context of real symmetric equations: see the book by Carlos Kenig \cite{Ke} and references therein for  a good overview of the techniques based on potential theory and Green's function. Layer potential techniques based on fundamental  solutions (extending the ones known and used (see \cite{Br,V}) for the Laplace equation on Lipschitz domains) were developed in the context of real equations and their perturbations under the impetus of Steve Hofmann (see \cite{HK, AAAHK, HMiMo, BM}).  The solution of the Kato problem for second order operators and systems \cite{CMcM, HMc, AHLMcT, AHMcT} and its extension in \cite{AKMc} to $DB$ operators brought a wealth of new estimates and techniques. The estimates for square roots can be used directly in the second order setup (see \cite{HKMP1, HKMP2, HMiMo}). The article \cite{AAH} in a first step and, more importantly, \cite{AAMc} made the explicit link between this class of second order equations and the first order Cauchy-Riemann type  system \eqref{eq:rep} for boundary value problems with $L^2$ data.  This and  the Hardy space theory  were exploited to build  solutions with $L^p$ data of \eqref{eq:divform}  in \cite{AS}: this also gave new results for the boundary layer potentials. The work \cite{AA1} is the first one where a converse on representation for conormal gradients is proved for $p=2$.  As for uniqueness results concerning the boundary value problems,  one can find many statements in the literature in our context (\cite{DaK, Br, AAAHK, KS, HMiMo, BM}...) but always assuming the De Giorgi-Nash-Moser regularity for solutions. Ours are without such an assumption. Moreover, had we even assumed such regularity hypotheses, our results improve the existing ones for $t$-independent equations by being less greedy on assumptions: no superfluous \textit{a priori} assumption is taken and they apply to each boundary value problem individually, and by proposing  possible representation and uniqueness for $p\le 1$ for regularity and Neumann problems.

Our results are bi-Lipschitz invariant in standard fashion. We do not insist, but this does cover the case of the Laplace equation above Lipschitz graphs. It is likely that a similar theory can be developed on the unit ball as in \cite{KP}, adapting the setup of \cite{AR}. This deserves some adaptation though. 

We do not address here the perturbation of $t$-independent operators by some $t$-dependent ones and leave the corresponding issues as the ones in this article open for $p\ne 2$; for $p=2$, they are treated in \cite{AA1}, and recently  \cite{HMaMo} addresses the method of layer potentials in this context under De Giorgi-Nash-Moser assumptions.

\

Acknowledgements:  This article would not   exist without the earlier collaborations of the first author  with  Alan McIntosh and Andreas R\'osen on functional calculus and first order theory, and  of both authors with Steve Hofmann on second order PDE's. It is a pleasure to acknowledge their deep influence.   The  authors were partially supported by the ANR project ``Harmonic analysis at its boundaries'' ANR-12-BS01-0013-01.  Finally, we thank Alex Amenta for a careful reading leading to an improved second version.

\section{Technical lemmas in tent spaces}\label{sec:tent}

For $0<q\le \infty$, $T^{q}_{2}$ is the  tent space   of \cite{CMS}. For $0<q<\infty$, this is the space of $L^2_{\loc}(\reu)$ functions  $F$ such that 
 $$
 \|F\|_{T^q_{2}} = \|\SF F\|_{q} <\infty
 $$
 with for all $x\in \R^n$,
 \begin{equation}
\label{eq:sfdef}
 (\SF F)(x): = \left( \iint_{\Gamma_{a}(x)}  |F(t,y)|^2\, \frac{dtdy}{t^{n+1}}\right)^{1/2}, 
\end{equation}
 where $a>0$ is a fixed number called the aperture of the cone $\Gamma_{a}(x)= \{(t,y); t>0, |x-y|<at\} $. Two different apertures give equivalent $T^q_{2}$ norms.

  For $q=\infty$, $T^\infty_{2}$ is defined via Carleson measures  by  $\|F\|_{T^\infty_{2}}<\infty$, where $\|F\|_{T^\infty_{2}}$ is the smallest positive constant $C$ in 
$$
\iint_{T_{x,r}}   |F(t,y)|^2\, \frac{dtdy}{t} \le C^2 |B(x,r)|
$$
for all   open balls $B(x,r)$  in $\R^n$ and $T_{x,r}=(0,r)\times B(x,r)$. 
For $0<\alpha<\infty$,   $T^\infty_{2,\alpha}$ is defined by $\|F\|_{T^\infty_{2,\alpha}} <\infty$, where 
$\|F\|_{T^\infty_{2,\alpha}}$ is the smallest positive constant $C$ in 
$$
\iint_{T_{x,r}}   |F(t,y)|^2\, \frac{dtdy}{t} \le C^2 r^{2\alpha} |B(x,r)|
$$
for all   open balls $B(x,r)$  in $\R^n$. For convenience, we set $T^\infty_{2,0}=T^\infty_{2}$. Introduce also 
the Carleson function $C_{\alpha}F$ by
$$
C_{\alpha}F(x):= \sup \bigg(\frac {1}{r^{2\alpha}  |B(y,r)|}\iint_{T_{y,r}}   |F(t,z)|^2\, \frac{dtdz}{t}\bigg)^{1/2},
$$
taken over all open balls $B(y,r)$ containing $x$, 
so that $\|F\|_{T^\infty_{2,\alpha}}= \|C_{\alpha}F\|_{\infty}$. 

For $1\le q<\infty$ and $p$ the conjugate exponent to $q$,  $T^{p}_{2}$ is the dual of $ T^q_{2}$  for the duality
$$
(F,G):=\iint_{\reu} F(t,y) \overline{G(t,y)} \, \frac{dtdy}{t}.
$$
For $0<q\le 1$ and $\alpha= n (\frac{1}{q}-1)$, $T^\infty_{2,\alpha}$ is the dual of $T^q_{2}$ for the same duality form. Although not done explicitly there, it suffices to adapt the proof of \cite[Theorem 1]{CMS}.

For $0<p<\infty$, we also introduce the space $N^p_{2}$ as the space of  $L^2_{loc}(\reu)$ functions such that $\tN F\in L^p(\R^n)$, where
 \begin{equation}
\label{eq:KP}
 \tN F(x):= \sup_{t>0}  \bigg(\bariint_{W(t,x)} |F(s,y)|^2\, {dsdy}\bigg)^{1/2}, \qquad x\in \R^n,
\end{equation}
with \begin{equation}
\label{eq:whitney}
W(t,x):= (c_0^{-1}t,c_0t)\times B(x,c_1t),
\end{equation} for some fixed parameters $c_0>1$, $c_{1}>0$. Changing the parameters yields equivalent $N^p_{2}$ norms. 

\begin{lem}\label{lem:trunc} Let $0<p<\infty$. If $F\in T^p_{2}$ and $0<a<b<Ka$ with fixed $K>1$.  Then $F1_{a<t<b} \in N^p_{2}$ uniformly in $a,b$. 
If $F\in L^\infty(0,\infty; L^2)$ and $0<a<b<\infty$, then $F1_{a<t<b} \in T^\infty_{2,\alpha}$ for all $\alpha\ge 0$. 
\end{lem}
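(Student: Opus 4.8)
\textbf{Proof plan for Lemma \ref{lem:trunc}.} The statement has two independent parts, and each is a matter of controlling the relevant maximal/square functionals on the truncated region $\{a<t<b\}$ by the corresponding norm on the full region, using only the geometry of cones, tents, and Whitney boxes — no cancellation is involved.

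For the first assertion, the plan is to show directly that for $F\in T^p_2$ the non-tangential functional $\tN(F1_{a<t<b})$ is pointwise dominated (up to a constant depending on $K$ and the Whitney/cone parameters) by the square function $\SF F$ evaluated at a comparable point, possibly after a harmless enlargement of the aperture. Fix $x\in\R^n$ and a Whitney box $W(t,x)=(c_0^{-1}t,c_0 t)\times B(x,c_1 t)$ meeting $\{a<t<b\}$; then $t\sim a\sim b$ within a factor depending only on $c_0$ and $K$, so the average defining $\tN(F1_{a<t<b})(x)$ is taken over a box of fixed eccentricity sitting at height comparable to $a$. The key point is the standard observation that $\iint_{W(t,x)}|F|^2\,dsdy \lesssim t^{n+1}\,\esssup_{|z-x|<c t}(\SF F)(z)^2$ for a suitable aperture — more precisely, every point $(s,y)\in W(t,x)$ lies in the cone $\Gamma_a(z)$ for all $z$ in a ball of radius $\sim t$ about $x$, so averaging the inequality $|F(s,y)|^2\lesssim$ (contribution to $(\SF F)(z)^2$) over $z$ recovers the box integral with the correct power of $t$. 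Taking the supremum over $t$ and then the $L^p(dx)$ norm, and using that two apertures give equivalent $T^p_2$ norms, yields $\|\tN(F1_{a<t<b})\|_p\lesssim \|\SF F\|_p=\|F\|_{T^p_2}$ with constant independent of $a,b$. I would write this as the one genuine estimate of the proof.

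For the second assertion, with $F\in L^\infty(0,\infty;L^2)$ and $0<a<b<\infty$, the plan is an even more elementary tent computation. Fix a ball $B(x,r)$ and its tent $T_{x,r}=(0,r)\times B(x,r)$. Then
\[
\iint_{T_{x,r}}|F1_{a<t<b}(t,y)|^2\,\frac{dtdy}{t} \le \int_a^{\min(b,r)}\!\!\Big(\int_{\R^n}|F(t,y)|^2\,dy\Big)\frac{dt}{t} \le \|F\|_{L^\infty(0,\infty;L^2)}^2\,\log_+\!\frac{\min(b,r)}{a},
\]
and on the other hand this double integral also vanishes unless $r>a$, in which case $\log_+(\min(b,r)/a)\le \log(b/a)$ and simultaneously one can bound the left side crudely by $\|F\|_{L^\infty(L^2)}^2\log(b/a)$ while $r^{2\alpha}|B(x,r)|\gtrsim a^{2\alpha}|B(x,a)|$; balancing these shows $\iint_{T_{x,r}}|F1_{a<t<b}|^2\,dtdy/t \le C(a,b,\alpha)\,r^{2\alpha}|B(x,r)|$ for every ball, i.e.\ $F1_{a<t<b}\in T^\infty_{2,\alpha}$. (One does not claim uniformity in $a,b$ here, and indeed the statement does not ask for it.) Since $\alpha\ge 0$ is arbitrary and $r^{2\alpha}$ only helps when $r\ge 1$ and is harmless when $r<1$ after adjusting the constant, the same bound covers all $\alpha\ge 0$, including $\alpha=0$.

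The main obstacle, such as it is, lies entirely in the first part: one must be slightly careful that the average over the Whitney box $W(t,x)$ — which has \emph{vertical} extent $(c_0^{-1}t,c_0 t)$, not just the single height $t$ — is still captured by a single cone $\Gamma_a(z)$ of fixed (enlarged) aperture for all $z$ near $x$, so that the cone integral $(\SF F)(z)^2$ really does majorize the box integral divided by $t^{n+1}$. This is where the hypothesis $b<Ka$ is used: it forces the truncation to live in a single Whitney annulus $t\sim a$, so there is no accumulation of overlapping boxes and the pointwise-to-$L^p$ passage is immediate; without it one would need to sum a telescoping family of such estimates, which is exactly why the lemma is stated with the ratio bound. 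Everything else is bookkeeping with the parameters $c_0,c_1,a,K$, which I would not belabor.
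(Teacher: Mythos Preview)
The paper gives no proof beyond the sentence ``These estimates are trivially verified.'' Your argument is correct, but for the first part you work harder than necessary and misidentify where the hypothesis $b<Ka$ enters.

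The clean observation is that $\tN G(x)\lesssim (\SF G)(x)$ holds \emph{pointwise} for any $G\in L^2_{\loc}(\reu)$, with $\SF$ taken at aperture $c_0c_1$. Indeed, if $(s,y)\in W(t,x)$ then $c_0^{-1}t<s<c_0t$ and $|x-y|<c_1t<c_0c_1 s$, so $W(t,x)\subset\Gamma_{c_0c_1}(x)$; since also $s^{n+1}\sim t^{n+1}$ on $W(t,x)$,
\[
\bariint_{W(t,x)}|G|^2\,dsdy \;\lesssim\; \iint_{W(t,x)}|G(s,y)|^2\,\frac{dsdy}{s^{n+1}} \;\le\; (\SF G)(x)^2.
\]
Taking the supremum over $t$ and then $L^p$ norms, together with the change-of-aperture equivalence for $T^p_2$, gives $\|G\|_{N^p_2}\lesssim\|G\|_{T^p_2}$ for every $G$, with no truncation and no ratio bound. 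In particular the hypothesis $b<Ka$ plays no role in this estimate; your claim that ``without it one would need to sum a telescoping family of such estimates'' is not right. The averaging-over-$z$ manoeuvre you sketch would give only $\tN G\lesssim (\MM((\SF G)^2))^{1/2}$, which is weaker and requires $p>2$ to pass to $L^p$ via the maximal theorem; the direct inclusion above avoids this.

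Your treatment of the second part is fine.
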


These estimates are trivially verified.  
\begin{lem}
\label{lem:HMiMo} If $0<p<r\le 2$ and $F\in N^p_{2}$, then  
$$
\bigg(\iint_{\reu} |F(t,x)|^r\,  t^{n(\frac{r}{p}-1)}\, \frac{dtdx}{t}\bigg)^{1/r} \lesssim \|\tN F\|_{p}.
$$
\end{lem}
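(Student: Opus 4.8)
The plan is to exploit the pointwise control that the non-tangential functional $\tN F$ gives over Whitney-averaged $L^2$ norms of $F$, and to convert the weighted $L^r$ integral on $\reu$ into an $L^p$ integral on $\R^n$ by a slicing argument in the $t$-variable together with Hölder's inequality inside Whitney boxes. First I would cover $\reu$ by a countable family of Whitney regions $W(t_k,x)$ with $t_k$ dyadic (say $t_k=c_0^{2k}$ after adjusting constants so the time-slabs $(c_0^{-1}t_k,c_0 t_k)$ tile $(0,\infty)$ with bounded overlap), and for each fixed $x\in\R^n$ estimate
$$
\iint_{(c_0^{-1}t_k,c_0t_k)\times B(x,c_1t_k)} |F(t,y)|^r\, t^{n(\frac{r}{p}-1)}\,\frac{dtdy}{t}
\lesssim t_k^{\,n(\frac{r}{p}-1)} \iint_{W(t_k,x)} |F|^r\,\frac{dtdy}{t_k}.
$$
Since $r\le 2$, Hölder's inequality on the box $W(t_k,x)$ (whose measure is comparable to $t_k^{n+1}$) gives $\barint_{W(t_k,x)}|F|^r \lesssim \big(\barint_{W(t_k,x)}|F|^2\big)^{r/2} \le (\tN F(x))^r$ for every $x$ in the spatial projection, i.e. for $x\in B(x_0,c_1 t_k)$ — more precisely one should recenter: for $(t,y)\in W(t_k,x_0)$ one has $(t_k,y)$ comparable to a Whitney point over $y$, so $\barint_{W(t_k,x_0)}|F|^2 \lesssim \inf_{y\in B(x_0,c_1t_k)} (\tN F(y))^2$ after enlarging the Whitney parameters. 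That inequality is the crucial mechanism, and I would phrase it as: $|F(t,y)|^2$ averaged over a Whitney box is dominated by $(\tN F)^2$ at any point whose cone contains that box.

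Next I would sum in $k$ and integrate in $x_0$. Writing $G := (\tN F)^p \in L^1(\R^n)$, the contribution of the slab at height $t_k$ is bounded, after the Hölder step, by $t_k^{n(\frac{r}{p}-1)} \cdot t_k^{n} \cdot \text{(average of }(\tN F)^r\text{ over }B(\cdot,c_1t_k))$; the powers of $t_k$ combine to $t_k^{\,nr/p}$. Raising to the power $p/r$ and using $p/r<1$ so that $\ell^{p/r}\hookrightarrow \ell^1$ — i.e. $(\sum_k a_k)^{p/r}\le \sum_k a_k^{p/r}$ for nonnegative $a_k$ — I would get
$$
\Big(\iint_{\reu}|F|^r t^{n(\frac{r}{p}-1)}\tfrac{dtdx}{t}\Big)^{p/r}
\lesssim \sum_k \iint_{B(x_0,c_1t_k)} t_k^{n} \big(\barint_{B(x_0,c_1t_k)}(\tN F)^r\big)^{p/r} dx_0,
$$
and the inner quantity $t_k^n\,(\text{average of }(\tN F)^r)^{p/r}$ is at most $\int_{B(x_0,c_1t_k)}(\tN F)^p$ by one more application of Hölder (since $p/r<1$). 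But a cleaner route avoiding the double Hölder is to do the $t_k$-sum first at fixed $x_0$ using a maximal-function bound, or simply to integrate $dx_0$ before summing: $\int_{\R^n}\int_{B(x_0,c_1t_k)}(\tN F(y))^p\,dy\,dx_0 = c\, t_k^n \int (\tN F)^p$, which does not decay in $k$ — so the $t$-integrability must genuinely come from the weight, and the right bookkeeping is to keep the factor $t_k^{n(r/p-1)}$ attached and observe it is exactly what Hölder on the box produces a compensating $t_k^{n(1-r/p)}$ for. I would therefore organize the computation as a single chain: integrate in $y$ over the box, use Hölder in $(t,y)$ on $W(t_k,x_0)$ to pass from $L^r$ to $L^2$ picking up $|W(t_k,x_0)|^{1-r/2} \sim t_k^{(n+1)(1-r/2)}$, check the net power of $t_k$ is zero, bound the resulting $L^2$-Whitney average by $\tN F$, and finally use $\ell^{p/r}\subset\ell^1$ after the $x_0$-integration with Fubini.

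The main obstacle I anticipate is \emph{the bookkeeping of the $t_k$-powers and the order of the $\ell^{p/r}$-embedding versus the spatial integration}: one must make sure the weight $t^{n(r/p-1)/1}$ exactly cancels the Jacobian/Hölder factors so that no net power of $t_k$ survives, and then genuinely needs $p<r$ (not merely $p\le r$) to pass from a geometric-type sum over slabs to a single integral via $(\sum a_k)^{p/r}\le\sum a_k^{p/r}$; this is the only place the strict inequality is used, and also where $r\le 2$ enters (to run Hölder from $L^r$ into the $L^2$ defining $\tN$). Once the constants are tracked, the estimate is, as the paper says for Lemma \ref{lem:trunc}, essentially a formal consequence of the definitions, but the present lemma is the quantitative heart of that family and the exponent arithmetic deserves to be written out carefully.
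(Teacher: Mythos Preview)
Your approach has a genuine gap: the claimed cancellation ``the net power of $t_k$ is zero'' is false. After the averaging trick and H\"older on each Whitney box, the contribution of the slab at height $t_k$ is
\[
I_k \;\lesssim\; t_k^{\,n(r/p-1)} \int_{\R^n} \Big(\bariint_{W(t_k,x_0)}|F|^2\Big)^{r/2}\,dx_0
\;\le\; t_k^{\,n(r/p-1)} \int_{\R^n} (\tN F(x_0))^r\,dx_0,
\]
so the surviving power of $t_k$ is $n(r/p-1)>0$, not zero. Applying $(\sum_k I_k)^{p/r}\le \sum_k I_k^{p/r}$ then leaves you with $\sum_k t_k^{\,n(1-p/r)}$, a two-sided dyadic series that diverges. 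The same divergence occurs if you push the subadditivity inside the $x_0$-integral first. The crude pointwise bound $\bariint_{W(t_k,x_0)}|F|^2 \le (\tN F(x_0))^2$ is $k$-independent and simply discards the information needed for summability in $k$; no reordering of the steps recovers it.

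What is missing is exactly the Calder\'on--Torchinsky inequality \cite[Theorem~2.6]{CT}, and this is how the paper proceeds: after bounding the left side by $\iint (\bariint_{\wt W(s,y)}|F|^2)^{r/2}\,s^{n(r/p-1)}\frac{dsdy}{s}$, one observes that the pointwise non-tangential maximal function of $(s,y)\mapsto(\bariint_{\wt W(s,y)}|F|^2)^{1/2}$ is dominated by $\tN F$, and invokes the general fact that for a nonnegative $G$ on $\reu$ with pointwise maximal function $N^*G$ one has $\iint G^r\,t^{n(r/p-1)}\frac{dtdx}{t}\lesssim \|N^*G\|_p^r$. The proof of that fact is \emph{not} a Whitney-plus-H\"older bookkeeping exercise: it requires a distribution-function argument, using that $G(t,y)>\lambda$ forces $B(y,at)\subset\{N^*G>\lambda\}$, whence the $\mu$-measure of the superlevel set is controlled by $|\{N^*G>\lambda\}|^{r/p}$, and one concludes via the layer-cake formula together with the elementary inequality $\int_0^\infty \lambda^{r-1}\mu(\lambda)^{r/p}\,d\lambda \lesssim \big(\int_0^\infty \lambda^{p-1}\mu(\lambda)\,d\lambda\big)^{r/p}$ for decreasing $\mu$. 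That level-set step is the idea your outline lacks.
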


\begin{proof} This statement for $1<r=\frac{p(n+1)}{n}\le 2$ is explicitly in \cite{HMiMo}. 
For the other cases,    as $r\le 2$, we easily obtain
$$
\iint_{\reu} |F(t,x)|^r \, t^{n(\frac{r}{p}-1)}\, \frac{dtdx}{t} \lesssim \iint_{\reu}\bigg( \bariint_{\wt W(s,y)}  |F|^2\bigg)^{r/2}  \, s^{n(\frac{r}{p}-1)}\, \frac{dsdy}{s},
$$
where $\wt W(s,y)$ is some slightly smaller Whitney region contained in $W(s,y)$.  We can apply the inequalities of  \cite{CT}, Theorem 2.6,   for the pointwise non-tangential maximal function, adjusting the aperture of the cone at vertex $x$ containing $(s,y)$ so that the pointwise non-tangential maximal function of the expression within parentheses is controlled by $\tN F(x)$.
\end{proof}

\begin{lem}\label{lem:p}If $1<p\le 2$ and $F\in N^p_{2}$ then $F\in L^p_{loc}(0,\infty; L^p)$ and 
$$
\sup_{a>0 }\barint_{[a,2a]} \|F_{t}\|_{p}^p\, dt \lesssim \|\tN F\|_{p}^p.
$$
\end{lem}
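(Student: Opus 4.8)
The plan is to deduce Lemma~\ref{lem:p} from Lemma~\ref{lem:HMiMo} together with a direct averaging argument, and to treat the two controls --- the $L^p_{\loc}(0,\infty;L^p)$ membership and the averaged estimate --- separately but along the same lines. First I would observe that, fixing $a>0$ and $t\in[a,2a]$, the Whitney regions $W(s,y)$ with $s$ comparable to $t$ and $y$ in a ball of radius comparable to $t$ cover the slice $\{t\}\times B(x,Ct)$ appropriately, so that for a.e.\ $x$ the value $|F(t,x)|$ is controlled (after a further harmless shrinking of the Whitney box, as in the proof of Lemma~\ref{lem:HMiMo}) by the non-tangential maximal function evaluated at nearby points. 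Concretely, one writes
$$
\barint_{[a,2a]}\|F_t\|_p^p\,dt \lesssim \barint_{[a,2a]}\int_{\R^n}\Big(\bariint_{\wt W(t,x)}|F|^2\Big)^{p/2}\,dx\,dt,
$$
and then the right-hand side is recognized as a weighted integral to which Lemma~\ref{lem:HMiMo} applies with $r=p$ (legitimate since $1<p\le 2$, so $p<r$ fails --- hence I must be slightly more careful here).

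The subtlety is exactly that Lemma~\ref{lem:HMiMo} requires $r>p$ strictly, so I cannot quote it with $r=p$. Instead, for the averaged bound I would argue by a scaling-normalized version of the same pointwise argument: after rescaling so that $a=1$, the quantity $\barint_{[1,2]}\|F_t\|_p^p\,dt$ is, up to constants, bounded by $\int_{\R^n}(\tN F(x))^p\,dx$ directly, because on the fixed compact range $t\in[1,2]$ the weight $t^{n(\frac{p}{p}-1)}=1$ and the Whitney averages defining $\tN F$ dominate the $L^2$-averages over $\wt W(t,x)$ pointwise in $t$; integrating the $p/2$-th power in $x$ and then in $t\in[1,2]$ (a finite measure interval) gives the claim. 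Undoing the scaling reintroduces no constant because both sides scale the same way under $F(t,x)\mapsto F(\lambda t,\lambda x)$ --- this dilation invariance is the reason the supremum over $a>0$ is finite. For the assertion $F\in L^p_{\loc}(0,\infty;L^p)$, the same estimate localized to $t\in[a,2a]$ for each dyadic $a$, summed (or rather, noting it suffices on each $[a,2a]$), shows $\int_a^{2a}\|F_t\|_p^p\,dt<\infty$, hence local integrability on $(0,\infty)$.

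The main obstacle I anticipate is making the pointwise domination $|F(t,x)|^2 \lesssim \bariint_{\wt W(t,x)}|F|^2$ rigorous: this is false pointwise in $(t,x)$ for a general $L^2_{\loc}$ function, so one genuinely needs to pass through the $L^2$-Whitney averages and absorb the discrepancy into the maximal function, exactly as in the proof of Lemma~\ref{lem:HMiMo} via \cite{CT}, Theorem~2.6. Concretely I would, for each fixed $t\in[a,2a]$, write $\|F_t\|_p^p=\int_{\R^n}|F(t,x)|^p\,dx$ and bound $|F(t,x)|^p$ by an average of $\big(\bariint_{\wt W(s,y)}|F|^2\big)^{p/2}$ over $(s,y)$ in a Whitney-type neighborhood of $(t,x)$ whose $s$-range still lies in a fixed dilate of $[a,2a]$, then integrate in $x$ and in $t$, and finally apply the Coifman--Meyer--Stein / \cite{CT} change-of-aperture inequality to replace the resulting expression by $\|\tN F\|_p^p$. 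Once this domination is in hand the rest is routine: Fubini, the finite length of $[a,2a]$, and dilation invariance of both sides close the argument.
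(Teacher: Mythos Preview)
Your approach is correct and is exactly the paper's own proof, which reads in full: ``Do as above with $r=p$ and use that the $t$-integral is between $a$ and $2a$.'' You have, however, made it harder than necessary: the Fubini--Jensen step from the proof of Lemma~\ref{lem:HMiMo} (which only uses $r\le 2$, so $r=p$ is fine there) gives
$\int_a^{2a}\|F_t\|_p^p\,dt\lesssim\int_{ca}^{Ca}\int_{\R^n}\big(\bariint_{\wt W(s,y)}|F|^2\big)^{p/2}\,dy\,ds$,
and then the trivial pointwise bound $\big(\bariint_{\wt W(s,y)}|F|^2\big)^{1/2}\lesssim \tN F(y)$ (immediate from the definition of $\tN$, since $\wt W\subset W$ with comparable measure) plus integration over $s\in[ca,Ca]$ yield $\lesssim a\,\|\tN F\|_p^p$. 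The scaling detour and the appeal to \cite{CT} in your last paragraph are both unnecessary: the finite length of the $s$-interval is precisely what replaces the weight and the Calder\'on--Torchinsky inequality needed in Lemma~\ref{lem:HMiMo}.
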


\begin{proof} Do as above with $r=p$ and use   that the $t$-integral is between $a$ and $2a$. 
\end{proof}

\begin{prop}\label{prop:approx} Let $0<p<\infty$. Suppose $0<a<b<\infty$ and $F\in T^p_{2}$ with support in $[a,b]\times \R^n$. Let $\rho_{k}$ be a standard mollifier in $\R^n$: $\rho\in C^\infty(\R^n; [0,1])$, $\supp\rho\in B(0,1)$, $\int \rho=1$  and $\rho_{k}(x)= k^n\rho (kx)$ for $k\ge 1$. Then, $F_{k}(t,x)= F_{t}\star_{\R^n} \rho_{k}(x)$ belongs to $T^p_{2}$ uniformly in $k$ and converges to $F$ in $T^p_{2}$. 
\end{prop}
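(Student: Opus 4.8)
The statement has two parts: uniform boundedness of $F_k$ in $T^p_2$ and convergence $F_k \to F$ in $T^p_2$. The plan is to treat the compactly supported-in-$t$ case by reducing everything to the $t$-slice level, where the mollification is just a standard convolution on $\R^n$. First I would establish the uniform bound. Since $F$ is supported in $[a,b]\times\R^n$, so is each $F_k$, and on this $t$-range the measure $\frac{dtdy}{t}$ is comparable to $dtdy$. The key pointwise observation is that the square function controls, and is controlled by, mollified versions of itself up to aperture changes: for fixed $t$, $(F_t \star \rho_k)(y)$ is an average of $F_t$ over $B(y,1/k)$, so when $1/k \le t$ (which, up to adjusting constants, we may arrange by noting $t\ge a$ and taking $k\ge 1/a$, or more simply absorbing the finitely many small scales into a harmless modification) the contribution of $(t,y)$ to a cone $\Gamma_a(x)$ over the mollified function is dominated by the contribution of a slightly wider cone over $F$ itself. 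Concretely I would show $\SF(F_k)(x) \lesssim \SF_{a'}(F)(x)$ with a larger aperture $a'$, using Minkowski's inequality in $L^2(\frac{dtdy}{t^{n+1}})$ to move the convolution outside and then the fact that $\int \rho_k = 1$. Two apertures give equivalent $T^p_2$ norms by the remark after \eqref{eq:sfdef}, so $\|F_k\|_{T^p_2} \lesssim \|F\|_{T^p_2}$ uniformly in $k$.

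For the convergence, the natural route is a density argument combined with the uniform bound. The family $\{F_k\}$ is uniformly bounded in $T^p_2$ and it suffices to prove $F_k \to F$ in $T^p_2$ for $F$ in a dense subclass of the functions supported in $[a,b]\times\R^n$, for instance $F \in C_c([a,b]\times\R^n)$ or $F$ with an extra integrability such as $F \in L^2([a,b]\times\R^n)$ with compact spatial support. For such nice $F$, $F_k \to F$ uniformly with support in a fixed compact set, hence $F_k\to F$ in $T^p_2$ by a direct estimate of $\SF(F_k - F)$: the difference is supported in $[a,b]\times K'$ for a fixed compact $K'$, and $\|\SF(F_k-F)\|_p \lesssim \|F_k - F\|_\infty \cdot |K'|^{1/p} \cdot C(a,b) \to 0$ (using again that $\frac{dtdy}{t}\sim dtdy$ on the relevant range and that $\SF$ localizes). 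Then for general $F\in T^p_2$ supported in $[a,b]\times\R^n$, pick $G$ nice with $\|F-G\|_{T^p_2} < \varepsilon$ (one can truncate spatially and mollify in $t$, or invoke density of $C_c$ in $T^p_2$), and write
$$
\|F_k - F\|_{T^p_2} \le \|(F-G)_k\|_{T^p_2} + \|G_k - G\|_{T^p_2} + \|G - F\|_{T^p_2} \lesssim \varepsilon + \|G_k - G\|_{T^p_2} + \varepsilon,
$$
where the first term is controlled by the uniform bound applied to $F - G$. Letting $k\to\infty$ and then $\varepsilon \to 0$ finishes the argument.

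The main obstacle I anticipate is the uniform boundedness when $p \le 1$, where $T^p_2$ is not a normed space and one cannot simply invoke duality or interpolation; the pointwise domination $\SF(F_k) \lesssim \SF_{a'}(F)$ is exactly what saves the day here, since it survives taking $L^p$ quasi-norms for any $p>0$. A secondary technical point is the small-scale issue $1/k > t$ near $t = a$: strictly speaking for $k$ large ($k > 1/a$) this never happens since $t \ge a$, so one should simply restrict to $k \ge 1/a$, which is harmless for a statement about $k \to \infty$; alternatively one observes that for $1/k \le b$ the spatial convolution never enlarges the support in $t$ and the cone comparison still goes through with an aperture depending only on $b/a$ and the constant $K$. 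The density step for $p\le 1$ should use density of nice functions in $T^p_2$ as established in \cite{CMS} (or a direct mollification-and-truncation argument), which is standard.
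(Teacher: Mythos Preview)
Your uniform-bound step via the pointwise domination $\SF(F_k)(x)\le \SF_{a'}(F)(x)$ with enlarged aperture is exactly what the paper does; in fact the paper's computation gives the explicit aperture $a'=1+1/a$, valid for \emph{all} $k\ge 1$ (since $t\ge a$ forces $t+1/k\le t(1+1/a)$), so your small-scale worry about $1/k>t$ is a non-issue.

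For the convergence, your density route is correct but more roundabout than necessary, and it imports an extra ingredient (density of nice compactly supported functions in $T^p_2$ with the right $t$-support). The paper instead squeezes the convergence directly out of the same pointwise domination: one has $\SF(F_k-F)(x)\le \SF(F_k)(x)+\SF(F)(x)\le 2\,\SF_{1+1/a}(F)(x)\in L^p$, and $\SF(F_k-F)(x)\to 0$ for every fixed $x$ because $F_t\star\rho_k\to F_t$ in $L^2$ on compact subsets of $\reu$ and the cone $\Gamma(x)$ meets $[a,b]\times\R^n$ in a bounded set. Dominated convergence then gives $\|F_k-F\|_{T^p_2}\to 0$ immediately, with no approximation by smoother $G$ needed. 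Your template (uniform bound plus density) is of course a perfectly standard and reusable pattern; the paper's argument is simply shorter and self-contained here because the dominating function is already in hand.
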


\begin{proof} Let  $\|S_{a}F\|_{p}$ be  the $T^p_{2}$ norm on cones of aperture a. Let $x\in \R^n$ and $(t,y)\in \reu$ with $|x-y|<t$. Using $|F_{t}\star_{\R^n} \rho_{k}(y)|^2 \le |F_{t}|^2\star_{\R^n} \rho_{k}(y)$, the supports of $F$ and $\rho$, and Fubini's theorem,  we have
\begin{align*}
  \iint_{\substack{a \le t\le b\\ |x-y|<t}}  |F_{t}\star_{\R^n} \rho_{k}(y)|^2\, \frac{dtdy}{t^{n+1}}  &  \le 
   \iint_{\substack{a \le t\le b\\ |x-z|<t+1/k}}  |F(t,z)|^2 \int \rho_{k}(y-z)\, dy \, \frac{dtdz}{t^{n+1}}   \\
    & \le  \iint_{\substack{a \le t\le b\\ |x-z|<t(1+1/a)}}  |F(t,z)|^2 \ \, \frac{dtdz}{t^{n+1}}.
\end{align*}
Thus $S_{1}F_{k}\le S_{1+1/a}F$ for all $k\ge 1$. Also clearly, $S_{1}F_{k}(x)$ converges to $S_{1}F(x)$, because $F_{t}\star_{\R^n} \rho_{k}(y)$ converges to $F(t,y)$ in $L^2$ of any compact set in $\reu$. The conclusion follows by dominated convergence.
\end{proof}

\section{Slice-spaces}\label{sec:slice}

We now introduce spaces adapted to $T^p_{2}$ and $T^\infty_{2,\alpha}$. We shall call them slice-spaces while,  when they are Banach spaces, they are particular cases of Wiener amalgams (see for example the survey  \cite{Fei} and also \cite{Hei}) first introduced by Wiener and further developed in relation with time-frequency analysis and sampling theory. Our terminology comes from the heuristic image of slicing   the tent space norm at a fixed height. This relation to easily cover the quasi-Banach case that we need later on. 

For $p\in (0, \infty]$ and $t>0$, the slice-space $E^p_{t}$ is  the subspace of $L^2_{loc}(\R^n)$ functions $f$  with 
$$ 
\|f\|_{E^p_{t}}= \left( \int_{\R^n} \bigg(\barint_{B(x,t)} |f(y)|^2\, dy\bigg)^{p/2}\, dx\right)^{1/p}<\infty,
$$
with the usual modification taking the $\esssup$ norm when $p=\infty$ (as averages on balls are continuous with respect to the center, one can take $\sup$).  For  $p<1$, this is  only a quasi-normed space. 
Also, for $\alpha \in [0,1)$ and $t>0$, the slice-space $E^{\infty,\alpha}_t$ is the subspace of $L^2_{loc}(\R^n)$ functions $g$ such that
$$ 
\|g\|_{E^{\infty,\alpha}_t}=  \mathop{\sup_{x\in \R^n, r \ge t}} \frac{1}{r^\alpha} \bigg( \barint_{B(x,r)} |g(y)|^2\,dy \bigg)^{1/2} <\infty.
$$
 It was pointed out to us by A. Amenta that standard coverings  of balls with radii $r$ by roughly $(r/t)^n$ balls with radii $t$ when $r\ge t$ show that  
 $$ 
\|g\|_{E^{\infty,\alpha}_t}\sim  \mathop{\sup_{x\in \R^n}} \frac{1}{t^\alpha} \bigg( \barint_{B(x,t)} |g(y)|^2\,dy \bigg)^{1/2}.
$$
Hence, we see that $E^{\infty,\alpha}_t= E^{\infty}_t$ with equivalent norms $\|g\|_{E^{\infty,\alpha}_t} \sim t^{-\alpha}\|g\|_{E^{\infty}_t}$.
Averaging   
\begin{equation}
\label{eq:averaging}
 \int_{\R^n} g(y)\, dy= \int_{\R^n}  \barint_{B(x,t)} g(y)\, dy\ dx
\end{equation}
and using H\"{o}lder's inequality with exponent $2/p$, we obtain that $E^p_{t} \subset L^p(\R^n)$ for $p \in(0, 2]$,
with 
$$\|f\|_{p} \le \|f\|_{E^p_{t}}.
$$
Similarly,  using H\"{o}lder's inequality with exponent $p/2$ and then \eqref{eq:averaging},   $L^p(\R^n) \subset E^p_{t}$ for $p \in [2, \infty)$, with 
 $$ \|f\|_{E^p_{t}}\le \|f\|_{p}. $$
Note that the  constant is one, thus uniform in $t$, and $E^2_t=L^2(\R^n)$ for any $t>0$, isometrically.  

We have trivial embedding and projection that allow us to carry the properties of  tent spaces to the slice-spaces by retraction. Let $f:\R^n\to \C$ and set 
$$
\iota(f)(s,x)=f(x) 1_{[t,et]}(s).
$$
For $G:\reu \to \C$ we set 
$$
\pi(G)(x)= \int_{t}^{et} G(s,x)\, \frac{ds}{s}.
$$ 
Clearly, for suitable $f$ and $G$, 
\begin{equation}\label{slice:tent-embed}
\iint_{\reu} \iota(f)(s,x) G(s,x) \, \frac{dsdx}{s}= \int_{\R^n} f(x) \pi(G)(x)\, dx,
\end{equation}
that is, $\pi$ is the dual of $\iota$ in some sense, and 
\begin{equation}\label{slice:retract}
\pi \circ \iota(f) = f.
\end{equation}
Let $t>0$. Then for $p\in (0,\infty)$ and $\alpha \in [0,1)$, it is easy to see that $\iota: E^p_{t}/E^{\infty,\alpha}_t \to T^p_{2}/T^\infty_{2,\alpha} $ is bounded and the norm is uniform in $t$, where $X/Y$ means $X$ or $Y$ respectively. Next, $\pi: T^p_{2}/T^\infty_{2,\alpha} \to E^p_{t}/E^{\infty,\alpha}_t$ is bounded also with uniform bound in $t$ with the same norm on $T^p_{2}$. This yields that $E^p_{t}$ and $E^{\infty,\alpha}_t$ are retracts of $T^p_{2}$ and $T^\infty_{2,\alpha}$ respectively. 
\begin{rem} For $E^p_t$ we may use instead rescaled $\iota$ and $\pi$ (consider $s\in [1, e]$) and adapt the aperture of the cone in the norm on $T^p_{2}$ (consider cones of aperture $t$). Both methods are equivalent.
\end{rem}
In the following lemmas we summarize the consequences of the retraction property on slice-spaces.

\begin{lem}[Duality]\label{lem:slice_duality} Fix $t>0$. In the pairing $ \int_{\R^n} f(x) g(x)\, dx$, 
\begin{itemize} 
\item[1)] for $p \in (1, \infty)$, $E^{p'}_t$ is the dual space of $E^p_t$, where $\frac{1}{p}+\frac{1}{p'}=1$.
\item[2)] for $p \in (0,1]$, $E^{\infty,\alpha}_t$ is the dual space of $E^p_t$, where $\alpha= n(\frac{1}{p}-1)$.
\end{itemize}
\end{lem}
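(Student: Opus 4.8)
\textbf{Proof plan for Lemma \ref{lem:slice_duality}.}

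The plan is to deduce both duality statements from the already-established retraction picture: $\iota\colon E^p_t \to T^p_2$ and $\pi\colon T^p_2 \to E^p_t$ are bounded with $\pi\circ\iota = \mathrm{id}$, and likewise with $E^{\infty,\alpha}_t$ and $T^\infty_{2,\alpha}$ in the endpoint case, together with the known duality for tent spaces recalled in Section \ref{sec:tent} (namely $(T^p_2)' = T^{p'}_2$ for $1<p<\infty$ via the form $(F,G) = \iint_{\reu} F\overline{G}\,\frac{dtdx}{t}$, and $(T^q_2)' = T^\infty_{2,\alpha}$ with $\alpha = n(\frac1q-1)$ for $0<q\le 1$). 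The bridge between the two pairings is identity \eqref{slice:tent-embed}, which says $\iint_{\reu}\iota(f)(s,x)G(s,x)\,\frac{dsdx}{s} = \int_{\R^n} f(x)\pi(G)(x)\,dx$; this is what converts a tent-space functional into a slice-space functional and back.

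First I would prove that every bounded linear functional on $E^p_t$ arises from an element of the claimed dual space. Given $\Phi \in (E^p_t)'$, the composition $\Phi\circ\pi$ is a bounded linear functional on $T^p_2$ (boundedness of $\pi$), so by the tent-space duality it is represented by some $G$ in $T^{p'}_2$ (if $p>1$) or in $T^\infty_{2,\alpha}$ (if $p\le 1$): $\Phi(\pi(F)) = (F,G)$ for all $F\in T^p_2$. Specializing $F = \iota(f)$ for $f\in E^p_t$ and using $\pi\circ\iota = \mathrm{id}$ together with \eqref{slice:tent-embed} gives $\Phi(f) = \int_{\R^n} f(x)\,\overline{\pi(\bar G)}(x)\,dx$ — so $g := \pi(\bar G)$ (up to conjugation bookkeeping, depending on whether one uses the bilinear or sesquilinear form; I would state the lemma with the bilinear pairing $\int fg$ to avoid conjugates) represents $\Phi$, and $g$ lies in $E^{p'}_t$ or $E^{\infty,\alpha}_t$ respectively because $\pi$ maps $T^{p'}_2 \to E^{p'}_t$ and $T^\infty_{2,\alpha} \to E^{\infty,\alpha}_t$ boundedly. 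This gives surjectivity of the natural map from the candidate dual into $(E^p_t)'$, with control of norms.

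Conversely I would check that every $g$ in $E^{p'}_t$ (resp. $E^{\infty,\alpha}_t$) defines a bounded functional on $E^p_t$ via $f\mapsto \int fg$, with $\|g\|$ controlling the functional norm: this is the easy direction, obtained either directly by slicing and H\"older (for $p>1$) or again by transporting through $\iota$, writing $\int fg = \iint \iota(f)\,\iota(g)\,\frac{dsdx}{s}$ (here one must be mildly careful that $\iota(g)$ is a legitimate tent-space element pairing against $T^p_2$ in the $0<p\le1$ case, which follows from the boundedness of $\iota$ on the endpoint space). Combining the two directions and tracking constants yields that the duality is isometric up to equivalence of norms, uniformly in $t$. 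I expect the only genuinely delicate point to be the endpoint case $p\le 1$: one must ensure the tent-space duality $(T^q_2)' = T^\infty_{2,\alpha}$ is available in the quasi-Banach range (the excerpt notes it follows by adapting \cite[Theorem 1]{CMS}) and that the retraction maps $\iota,\pi$ remain bounded on the quasi-normed spaces — but since these boundedness facts are already asserted in Section \ref{sec:slice} just before the lemma, the argument goes through verbatim; everything else is routine chasing through \eqref{slice:tent-embed} and \eqref{slice:retract}.
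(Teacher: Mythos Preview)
Your proposal is correct and follows essentially the same approach as the paper: both directions are obtained by transporting through the retraction $\iota,\pi$ and invoking tent-space duality, with the key steps being $\ell\circ\pi \in (T^p_2)'$ represented by some $G$, then $g=\pi(G)$, and conversely $\int fg = \iint \iota(f)\iota(g)\,\frac{dsdx}{s}$ bounded by the tent-space pairing. The paper handles part 2) by the same argument with $T^\infty_{2,\alpha}$ in place of $T^{p'}_2$, exactly as you outline.
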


\begin{proof}1) We first show that $E^{p'}_t \subseteq (E^p_t)'$. If $g \in E^{p'}_t$, then $f \mapsto \int_{\R^n} f g$ induces a bounded linear functional on $E^p_t$. Indeed, using \eqref{slice:retract}, we have
$$\int_{\R^n}| f(x) g(x)|\, dx  = \iint_{\R^{n+1}_+} |\iota(f)(s,x) \iota(g)(s,x)| \frac{dsdx}{s},$$
for all $f \in E^p_t$. Therefore, by tent space duality, there holds that
$$\Big|\int_{\R^n} f(x) g(x)\, dx \Big| \lesssim \|\iota(f)\|_{T^p_2} \|\iota(g)\|_{T^{p'}_2} \lesssim \|f\|_{E^p_{t}} \|g\|_{E^{p'}_t},$$
with the implicit constants  uniform in $t$. (Note that one could obtain this inequality with constant 1 by applying \eqref{eq:averaging}.)  
 
We now prove the converse inclusion, \textit{i.e.}, $ (E^p_t)'\subseteq E^{p'}_t$. Suppose that $\ell$ is a bounded linear functional on $E^p_t$. Then $\tilde \ell=\ell \circ \pi$ is a bounded linear functional on $T^p_2$ and by tent space duality, there exists $G \in T^{p'}_2$ so that 
$$ 
\tilde \ell(F)=\iint_{\R^{n+1}_+} F(x,s) G(x,s) \frac{dx ds}{s},
$$
for all $F \in T^p_2$. Then, if we set $F= \iota(f)$, one can easily see that $\ell(f)=   \int_{\R^n} f(x) g(x)\, dx$ with  $g=\pi (G)$, which is an element of $E^{p'}_t$.

2) The proof follows from a simple modification of 1) using that the dual of $T^p_{2}$ is $T^{\infty,\alpha}_{2}$ and we omit it.
\end{proof}

At this point we introduce the notion of $E^p_t$-atoms and $E^p_t$-molecules, and then we prove that any function in $E^p_t$ has an atomic decomposition if $p\le 1$.
\begin{defn}
We fix $t>0$ and we let $p \in (0, 1]$. A function $a \in E^p_t$ is said to be an $E^p_t$-atom if it is supported in a ball $B_r$ of radius $r \ge t$ and satisfies 
\begin{equation}\label{eq:atomic estimate}
\| a \|_{L^2(\R^n)} \le r^{n(1/2-1/p)}.
\end{equation}

A function $m\in E^p_t$ is said to be an $E^p_t$-molecule adapted to a ball $B_r$ of radius $r \ge t$ if it satisfies 
\begin{equation}\label{eq:molecule_local estimate}
\| m \|_{L^2(8B_r)} \le r^{n(1/2-1/p)},
\end{equation}
and if there exists $\epsilon>0$ such that
\begin{equation}\label{eq:molecule_annular estimate}
\| m \|_{L^2(2^{k+1} B_r \setminus 2^k B_r)} \le 2^{-\epsilon k} (2^k r)^{n(1/2-1/p)}, \quad k \ge 3.
\end{equation}
\end{defn}

\begin{lem}[Atomic decomposition]\label{lem:slice_atom.decomp.} Let $p \in (0, 1]$ and $f \in  E^p_t$. Then there exist a sequence of numbers $\{\lambda_j\}_{j \ge 1} \subset \ell^p$ and a sequence of $ E^p_t$-atoms $\{a_j\}_{j \ge 1}$ so that $f=\sum_j \lambda_j a_j$, with convergence in $E^p_t$ and $\|f\|_{E^p_{t}}\lesssim \|\{\lambda_j\}\|_{\ell^p}$.  If $f \in  E^p_t \cap E^2_t$ then it converges also in  $E^2_t$. Conversely, given  $\{\lambda_j\}_{j \ge 1} \subset \ell^p$ and  $ E^p_t$-atoms $\{a_j\}_{j \ge 1}$, the  series $\sum_j \lambda_j a_j$  converges in $E^p_{t}$ and  defines a function in $E^p_{t}$ with norm controlled by  $\|\{\lambda_j\}\|_{\ell^p}$. 
Any $E^p_t$-molecule belongs to $E^p_{t}$ with uniform norm. 
\end{lem}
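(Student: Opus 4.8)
The plan is to transport the known atomic decomposition of the tent space $T^p_2$ (for $p\le 1$) to $E^p_t$ via the retraction pair $(\iota,\pi)$ already set up, and to verify the molecular estimate directly. First I would prove the atomic decomposition. Given $f\in E^p_t$, the function $\iota(f)\in T^p_2$ with $\|\iota(f)\|_{T^p_2}\lesssim \|f\|_{E^p_t}$ uniformly in $t$. By the tent space atomic decomposition (see \cite{CMS}, or its quasi-Banach extension), write $\iota(f)=\sum_j \lambda_j A_j$ with $\{\lambda_j\}\in\ell^p$, $\|\{\lambda_j\}\|_{\ell^p}\lesssim \|\iota(f)\|_{T^p_2}$, and each $A_j$ a $T^p_2$-atom supported in a Carleson box $T_{x_j,r_j}$ with $\|A_j\|_{L^2(\reu,\frac{dtdx}{t})}\le r_j^{n(1/2-1/p)}$. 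Applying $\pi$ and using \eqref{slice:retract}, $f=\pi\circ\iota(f)=\sum_j \lambda_j\, \pi(A_j)$ with convergence in $E^p_t$ because $\pi:T^p_2\to E^p_t$ is bounded. It remains to check that $a_j:=\pi(A_j)$ is (a fixed multiple of) an $E^p_t$-atom. If the Carleson box $T_{x_j,r_j}$ has radius $r_j<t$, then since $A_j$ is supported in $s<r_j<t$ while $\pi$ integrates over $s\in[t,et]$, we get $\pi(A_j)=0$, so only boxes with $r_j\ge t$ contribute; this is exactly the constraint $r\ge t$ in the definition of $E^p_t$-atom. For such $j$, $\pi(A_j)$ is supported in $B(x_j,r_j)$, and by Cauchy--Schwarz in the $s$-variable, $\|\pi(A_j)\|_{L^2(\R^n)}^2=\int_{\R^n}\big|\int_t^{et}A_j(s,x)\frac{ds}{s}\big|^2dx\lesssim \int_{\R^n}\int_t^{et}|A_j(s,x)|^2\frac{ds}{s}\,dx\le \|A_j\|_{L^2(\reu,\frac{dtdx}{t})}^2\le r_j^{n(1-2/p)}$, which is the atomic size bound \eqref{eq:atomic estimate} up to a harmless constant (absorbed into $\lambda_j$). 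Finally, if $f\in E^p_t\cap E^2_t$, the same argument on $T^p_2\cap T^2_2$ and boundedness of $\pi$ on $T^2_2\to E^2_t$ gives convergence in $E^2_t$ as well.

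For the converse direction, given $\{\lambda_j\}\in\ell^p$ and $E^p_t$-atoms $a_j$, it suffices to show $\|a_j\|_{E^p_t}\lesssim 1$ uniformly, since then $\sum_j|\lambda_j|^p\|a_j\|_{E^p_t}^p<\infty$ yields convergence and the norm bound by the $p$-triangle inequality $\|\sum_j\lambda_j a_j\|_{E^p_t}^p\le\sum_j|\lambda_j|^p\|a_j\|_{E^p_t}^p$ (valid since $p\le 1$). So $\|a\|_{E^p_t}\lesssim 1$ for an $E^p_t$-atom $a$ supported in $B_r$ with $r\ge t$: split $\R^n$ into the region $\{x:B(x,t)\cap B_r\ne\emptyset\}\subset 2B_r$ (using $t\le r$) and its complement, where $\barint_{B(x,t)}|a|^2=0$. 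On the former, H\"older with exponent $2/p$ and then \eqref{eq:averaging} give $\|a\|_{E^p_t}^p=\int_{2B_r}\big(\barint_{B(x,t)}|a|^2\big)^{p/2}dx\le |2B_r|^{1-p/2}\big(\int_{2B_r}\barint_{B(x,t)}|a|^2\,dx\big)^{p/2}\lesssim r^{n(1-p/2)}\|a\|_{L^2}^p\lesssim r^{n(1-p/2)}r^{np(1/2-1/p)}=1$. Here I used that $\int_{\R^n}\barint_{B(x,t)}|a|^2\,dx=\int|a|^2$, which follows from Fubini as in \eqref{eq:averaging}.

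For the molecular claim, let $m$ be an $E^p_t$-molecule adapted to $B_r$, $r\ge t$. The idea is to decompose $m=\sum_{k\ge 0}m_k$ with $m_0=m\mathbf{1}_{8B_r}$ and $m_k=m\mathbf{1}_{2^{k+3}B_r\setminus 2^{k+2}B_r}$ for $k\ge1$ (aligning indices with \eqref{eq:molecule_annular estimate}), and to recognize each normalized $m_k$ as a constant multiple of an $E^p_t$-atom supported in the ball $2^{k+3}B_r$ (of radius $\ge r\ge t$). By \eqref{eq:molecule_local estimate} and \eqref{eq:molecule_annular estimate}, $\|m_k\|_{L^2}\le C 2^{-\epsilon k}(2^k r)^{n(1/2-1/p)}$, so $m_k=\mu_k a_k$ with $a_k$ an $E^p_t$-atom and $|\mu_k|\lesssim 2^{-\epsilon k}$; since $\{2^{-\epsilon k}\}_k\in\ell^p$, the first part of the lemma (converse direction, already proved) gives $m=\sum_k\mu_k a_k\in E^p_t$ with $\|m\|_{E^p_t}\lesssim\big(\sum_k 2^{-\epsilon kp}\big)^{1/p}\lesssim 1$, uniformly in $t$ and in the ball. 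I expect the main (though still modest) obstacle to be purely bookkeeping: matching the dyadic annulus indices in \eqref{eq:molecule_annular estimate} with the supports so that every piece lands in a ball of radius at least $t$, and tracking that all implicit constants stay uniform in $t$ — everything else reduces to Cauchy--Schwarz, H\"older, Fubini, and the tent space facts already recorded.
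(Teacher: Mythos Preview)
Your proposal is correct and follows essentially the same route as the paper: transport the tent space atomic decomposition through the retraction pair $(\iota,\pi)$, observe that only atoms with $r_j\ge t$ survive, and handle molecules by splitting into annular pieces that are rescaled atoms. The one minor difference is that for the converse the paper simply notes that $\iota(a_j)$ is (a constant multiple of) a $T^p_2$-atom and invokes tent space theory again, whereas you bound $\|a\|_{E^p_t}$ directly via H\"older and the averaging identity; your argument is self-contained and equally short.
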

\begin{proof}
 Suppose that $f \in E^p_t$. Then $\iota(f)$ has an atomic decomposition in $T^p_2$, that is, $\iota(f)= \sum_j \lambda_j A_j$. Each $A_j$ is supported in a tent region $\widehat B_j=\{(t,y)\, ; \, B(y, t)\subset B_{j}\}$ and satisfies the bound $\iint_{\widehat B_j } |A_j|^2dxdt/t \le |B_j|^{1-2/p}$. But the support of $\iota(f)$ is contained in the strip $[t, et] \times \R^n$ and thus, if $r_j$ is the radius of the ball $B_j$, we necessarily have that $r_j \ge t$. We now set $a_j=\pi(A_j)$, which  is an $E^p_t$-atom as one easily shows. The convergence of $\iota(f)= \sum_j \lambda_j A_j$ is both in $T^p_{2}$ by the atomic decomposition and also $T^2_{2}$ using the support of $\iota(f)$. The convergence of $f=\sum_{j}\lambda_{j}a_{j}$ in $E^p_t \cap E^2_t$   follows from the boundedness of $\pi : T^q_2 \to E^q_t$ for all $q \in (0, \infty)$.
 
 The converse is similar. As $\iota(a_{j})$ is a $T^p_{2}$ atom,  $F= \sum_j \lambda_j \iota(a_j)$ converges in  $T^p_{2}$ and $\pi(F)= \sum_j \lambda_j a_j$ is an element on $E^p_{t}$.

An $E^p_t$-molecule has an atomic decomposition using the annuli of its definition. Thus, it belongs to $E^p_{t}$ with uniform norm.  
\end{proof}

The next lemma shows that if a function is in $E^p_t$ for some $t>0$, then it belongs to $E^p_s$ for all $s>0$. 

\begin{lem}[Change of norms]\label{lem:slice_changenorms} If $0<s, t<\infty$ and $p \in (0, \infty)$,  then
$E^p_{t}=E^p_{s}$ with 
\begin{equation}
\label{eq:equivept}
 \min (1, (t/s)^{n/2-n/p})  \| f\|_{E^p_{t}} \lesssim_{n,p} \| f \|_{E^p_{s}} \lesssim_{n,p}   \max (1, (t/s)^{n/2-n/p})  \| f\|_{E^p_{t}} .
\end{equation} 
\end{lem}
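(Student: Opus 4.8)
The statement compares the $E^p_s$ and $E^p_t$ norms, and the natural strategy is to reduce everything to a local comparison of $L^2$-averages on balls of radius $t$ versus balls of radius $s$. By symmetry in the roles of $s$ and $t$ (replacing the pair $(s,t)$ by $(t,s)$ swaps the two inequalities in \eqref{eq:equivept}, since $\max(1,(t/s)^{n/2-n/p})^{-1}=\min(1,(s/t)^{n/2-n/p})$), it suffices to prove one of the two inequalities, say the upper bound $\|f\|_{E^p_s}\lesssim \max(1,(t/s)^{n/2-n/p})\|f\|_{E^p_t}$, for all $0<s,t<\infty$. I would split into the two regimes $s\le t$ and $s\ge t$.

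When $s\le t$, cover a ball $B(x,t)$ by boundedly-in-$(t/s)$-many balls of radius $s$; but more efficiently, write the average over $B(x,s)$ in terms of averages over $B(y,t)$. The clean route is the pointwise inequality
\[
\Bigl(\barint_{B(x,s)}|f|^2\Bigr)^{1/2}\lesssim_{n}\Bigl(\frac{t}{s}\Bigr)^{n/2}\Bigl(\barint_{B(x,t)}|f|^2\Bigr)^{1/2}\quad(s\le t),
\]
which just says $\int_{B(x,s)}|f|^2\le\int_{B(x,t)}|f|^2$ after normalizing. Raising to the power $p$ and integrating in $x$ gives $\|f\|_{E^p_s}\lesssim (t/s)^{n/2}\cdot(t/s)^{-n/p}\,\|f\|_{E^p_t}$ only if one is careful: in fact the sharp bookkeeping is $\|f\|_{E^p_s}\le (t/s)^{n/2-n/p}\|f\|_{E^p_t}$ after using Fubini to re-average (as in \eqref{eq:averaging}), since $\int_{\R^n}\bigl(\barint_{B(x,s)}|f|^2\bigr)^{p/2}dx$ can be dominated by first noting $\barint_{B(x,s)}|f|^2\le (t/s)^n\barint_{B(x,t)}|f|^2$ pointwise and then, when $p\le 2$, using subadditivity / Hölder exactly as in the derivation of $E^p_t\subset L^p$; when $p\ge 2$ one uses the reverse direction. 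So I would handle $p\le 2$ and $p\ge 2$ separately, each time chaining the elementary average comparison with the Fubini identity \eqref{eq:averaging} and Hölder with exponent $2/p$ or $p/2$ respectively — precisely the two computations already carried out in the excerpt to prove the inclusions $E^p_t\subset L^p$ and $L^p\subset E^p_t$.

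For the regime $s\ge t$, I would instead use that a ball of radius $s$ is a union of $\sim (s/t)^n$ balls of radius $t$ with finite overlap, so $\barint_{B(x,s)}|f|^2\lesssim_n (t/s)^n\sum_{i}\int_{B(x_i,t)}|f|^2 / (s^n)$, i.e. one controls the $s$-average by an average of $t$-averages. Combined with a Minkowski / $\ell^p$–$\ell^1$ inequality in the sum over the subcover and then \eqref{eq:averaging} to integrate in $x$, this yields $\|f\|_{E^p_s}\lesssim \|f\|_{E^p_t}$ when $n/2-n/p\ge 0$ and $\|f\|_{E^p_s}\lesssim (t/s)^{n/2-n/p}\|f\|_{E^p_t}$ when $n/2-n/p<0$, which is exactly $\max(1,(t/s)^{n/2-n/p})$. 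Alternatively, and perhaps more in the spirit of the preceding section, one can invoke the retraction: $\iota$ and $\pi$ intertwine $E^p_t$ with $T^p_2$, and changing $t$ corresponds to translating/dilating the strip $[t,et]\times\R^n$ in the tent-space picture, so the change-of-norms estimate is inherited from the scale-invariance properties of $T^p_2$ together with the trivial inclusions between strips of comparable height; I would mention this as the conceptual reason but give the direct averaging argument as the actual proof since it also produces the explicit constants.

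The main obstacle is purely bookkeeping: getting the exponent $n/2-n/p$ (and the $\min/\max$ dichotomy) correct in both regimes and for both ranges $p\le 2$ and $p\ge 2$, since the direction of Hölder's inequality flips at $p=2$ and the "worse" direction (few large balls vs. many small balls) flips at $p=n/(n/2)$, i.e. effectively at the sign of $n/2-n/p$. There is no analytic difficulty — only four elementary cases to line up — so I would organize the proof as: (i) the pointwise average inequality in each of the two geometric regimes; (ii) pass to the $E^p$ norm via \eqref{eq:averaging} and Hölder, tracking constants; (iii) observe the $(s,t)\leftrightarrow(t,s)$ symmetry to get the matching lower bound. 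In particular $E^p_t=E^p_s$ as sets is immediate from either inequality, and $n/2-n/p=0$ (i.e. $p=2$) recovers the isometry $E^2_t=L^2$ noted above.
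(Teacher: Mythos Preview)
Your direct averaging approach is different from the paper's, which simply observes (via the rescaled $\iota,\pi$ of the preceding Remark) that $\|f\|_{E^p_t}$ is comparable to a $T^p_2$-norm with aperture $t$, and then quotes the sharp change-of-aperture bounds for tent spaces from \cite{AuCR}. Your alternative suggestion at the end is exactly this, and it is what the paper does in one line.

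Three of your four cases can indeed be made to work by elementary means. The covering argument handles $s\ge t$ correctly for both $p\le 2$ and $p\ge 2$: writing $A_r(x)=(\barint_{B(x,r)}|f|^2)^{1/2}$ and covering $B(x,s)$ by $N\sim(s/t)^n$ translates of $B(\cdot,t)$, the $\ell^{p/2}$--$\ell^1$ inequality on the $N$-term sum produces precisely $(t/s)^{n/2-n/p}$ or $1$ according to the sign of $n/2-n/p$. For $s\le t$, $p\le 2$, one can combine the identity $\barint_{B(x,t)}A_s(\cdot)^2\approx A_{ct}(x)^2$ with concavity of $u\mapsto u^{p/2}$ and then \eqref{eq:averaging}. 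But the fourth case --- $s\le t$, $p>2$, where the target is $\|f\|_{E^p_s}\lesssim (t/s)^{n/2-n/p}\|f\|_{E^p_t}$ --- is not ``purely bookkeeping''. The pointwise bound $A_s\le (t/s)^{n/2}A_t$ only yields exponent $n/2$, and ``H\"older with exponent $p/2$, precisely the computation for $L^p\subset E^p_t$'' gives $\|f\|_{E^p_t}\le\|f\|_p$, which compares $E^p_t$ to $L^p$ rather than $E^p_s$ to $E^p_t$; neither route recovers $(t/s)^{n/2-n/p}$. This case is exactly the non-trivial direction of change-of-aperture for $p>2$, and it genuinely requires either duality (reduce to the $p'\le 2$ bound you have already, via Lemma~\ref{lem:slice_duality}) or a good-$\lambda$/stopping-time argument as in \cite{AuCR}. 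So either close that gap explicitly, or adopt the retraction argument you already mention --- that is the paper's proof.
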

\begin{proof}
To see this we use that $E^{p}_{t}$ norms are comparable to $T^p_{2}$ norms with aperture $t$ and use the precise comparison of $T^p_{2}$ norms  under change of aperture obtained in \cite{AuCR} (some of these bounds were already in  Torchinsky's book \cite{Tor}). 
\end{proof}

We define $M_b$ to be the operator of multiplication with a function $b \in L^\infty(\R^n)$ and $\mathcal{C}_\phi$ to be the convolution operator with an integrable function $\phi$ with bounded support. We now show some stability, density and embedding properties of slice-spaces.

\begin{lem} Fix $t>0$ and $ p \in (0, \infty]$. Then the following hold.
\begin{enumerate}
  \item[1)]  $M_b: E^p_{t} \to E^p_{t}$.
  \item[2)] $\mathcal{C}_\phi: E^p_{t} \to E^p_{t}$.
  \item[3)] $\mD(\R^n)$ is dense in $E^p_{t}$ when $p<\infty$. 
  \item[4)] $E^p_{t}$  embeds in the space of Schwartz distributions $\mathcal{S}'$.
\end{enumerate}
\end{lem}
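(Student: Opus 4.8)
The plan is to prove each of the four assertions by exploiting the retraction $\pi : T^p_2 \to E^p_t$, $\iota : E^p_t \to T^p_2$ from the previous section, together with the elementary comparisons with $L^p$ already established, thereby reducing everything to known facts about tent spaces or to direct pointwise estimates on Whitney averages.

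For 1) and 2), since $M_b$ and $\mathcal C_\phi$ act fibrewise in $t$, it suffices to observe that $\iota \circ M_b = M_b \circ \iota$ (interpreting $M_b$ on $T^p_2$ as multiplication by the $t$-independent symbol $b$) and $\iota \circ \mathcal C_\phi = \mathcal C_\phi \circ \iota$, where on $T^p_2$ the convolution acts in the $x$-variable only. Multiplication by $b\in L^\infty$ is trivially bounded on $T^p_2$ from the pointwise bound $|\SF(bF)|\le \|b\|_\infty \SF F$, and $x$-convolution with $\phi$ integrable with bounded support is bounded on $T^p_2$ by a standard averaging of cones argument (enlarging the aperture by the size of $\supp\phi$ at each height, exactly as in Proposition \ref{prop:approx}; for the Carleson/$\alpha$ case one enlarges the Carleson box similarly). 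Then $M_b = \pi\circ M_b\circ\iota$ on $E^p_t$ by \eqref{slice:retract}, and likewise for $\mathcal C_\phi$, giving boundedness with $t$-uniform bounds. Alternatively, for 1) one can argue directly from the definition of $\|\cdot\|_{E^p_t}$ since $b$ commutes past the inner $L^2$ average up to $\|b\|_\infty$; I would include that one-line direct proof for 1) and use the retraction for 2).

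For 3), density of $\mD(\R^n)$ when $p<\infty$: first truncate. Given $f\in E^p_t$ and $\ep>0$, by dominated convergence in the defining integral (the inner averages $\barint_{B(x,t)}|f|^2$ are finite for a.e.\ $x$ and their $p/2$-th power is integrable) one reduces to $f$ supported in a large ball and bounded, hence $f\in L^2$ with compact support. Then mollify: $\mathcal C_{\rho_k} f \to f$ in $L^2$, and since $\mathcal C_{\rho_k}f$ is supported in a fixed ball, the local nature of the $E^p_t$ norm together with Lemma \ref{lem:slice_changenorms} (or just Hölder on a fixed ball relating $E^p_t$, $L^2$) upgrades $L^2$-convergence on that ball to $E^p_t$-convergence; finally $\mathcal C_{\rho_k}f\in C^\infty_c\subset\mD(\R^n)$. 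One must take a little care when $p<2$: there $E^p_t\subset L^p$, and on a fixed ball $L^2$ controls $E^p_t$ by Cauchy--Schwarz inside each average and Hölder outside, which is what makes the truncation-then-mollification scheme close. For $p>2$ it is even easier since $L^p\subset E^p_t$ with constant one, so density of $\mD$ in $L^p$ transfers directly.

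For 4), the embedding $E^p_t\hookrightarrow \mS'$: test against $\varphi\in\mS(\R^n)$ and estimate $|\int f\varphi|$ by $\sum_{j\ge0}\int_{2^{j+1}B\setminus 2^jB}|f||\varphi|$ over dyadic annuli around the unit ball (or around $B(0,t)$). On each annulus, Cauchy--Schwarz gives a factor $\|f\|_{L^2(\text{annulus})}$ times $\|\varphi\|_{L^2(\text{annulus})}$, and $\|f\|_{L^2(2^{j+1}B\setminus 2^jB)}$ is controlled by $\|f\|_{E^p_t}$ up to a polynomial factor in $2^j$ (for $p\le 2$ directly since $E^p_t\subset L^p$ and $L^p\subset L^2_{loc}$ with annular $L^2$ norm bounded by $(2^jt)^{n(1/2-1/p)}$ times the $E^p_t$ norm by Hölder in the form $\|f\|_{L^2(A)}\le |A|^{1/2-1/p}\|f\|_{L^p(A)}$; for $p>2$ since $\|f\|_{E^p_t}\ge \|f\|_{E^2_t}$-type control gives annular $L^2$ bounds), while $\|\varphi\|_{L^2(2^{j+1}B\setminus 2^jB)}$ decays faster than any polynomial. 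Summing yields $|\int f\varphi|\le C(t)\,p_N(\varphi)\,\|f\|_{E^p_t}$ for some Schwartz seminorm $p_N$, and linearity and uniqueness of the distributional pairing show the map $f\mapsto (\varphi\mapsto\int f\varphi)$ is injective. The main obstacle, and the only place demanding genuine care, is the quasi-Banach range $p<1$ for assertion 3): the mollification argument still works because mollification is bounded on $E^p_t$ by 2) and converges in $L^2$ on a fixed ball, but one should invoke the atomic decomposition of Lemma \ref{lem:slice_atom.decomp.} as a safety net — reduce $f$ to a finite sum of atoms, each of which is an $L^2$ function with compact support, and mollify atom by atom. Everything else is routine transfer through the retraction.
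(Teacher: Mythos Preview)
Your argument is essentially correct and, amusingly, you and the paper swap techniques on items 2) and 4). For 2) the paper gives a one-line direct estimate: after invoking Lemma~\ref{lem:slice_changenorms} to take $t=R$ where $\supp\phi\subset B(0,R)$, one has $\int_{B(x,R)}|\phi*f|^2\le\|\phi\|_1^2\int_{B(x,2R)}|f|^2$ and then integrates $p/2$-th powers (or takes the sup for $p=\infty$). You instead push through the retraction, which works but requires the side observation that $x$-convolution is bounded on $T^p_2$ only for functions supported in a strip away from $t=0$ --- true here since $\iota(f)$ lives in $[t,et]\times\R^n$, but worth saying explicitly. Conversely, for 4) the paper uses the retraction in a single stroke: $\int f\varphi=(\iota(f),\iota(\varphi))$ and $\iota(\varphi)\in(T^p_2)'$ for $\varphi\in\mS$, which is immediate. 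Each route buys brevity in one place at the cost of work in the other; the paper's choice for 4) is notably cleaner and avoids your case analysis entirely.

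There is one genuine slip in your direct argument for 4) when $p<2$: the H\"older inequality $\|f\|_{L^2(A)}\le|A|^{1/2-1/p}\|f\|_{L^p(A)}$ goes the \emph{wrong} way for $p<2$ (the exponent is negative), and the asserted inclusion $L^p\subset L^2_{\mathrm{loc}}$ is false. What you actually have is $E^p_t\subset L^2_{\mathrm{loc}}$ directly from the definition; covering the annulus $A_j$ by $\sim 2^{jn}$ balls of radius $t$ and using $\ell^p\hookrightarrow\ell^2$ for $p\le 2$ gives $\|f\|_{L^2(A_j)}\lesssim_t\|f\|_{E^p_t}$ \emph{uniformly in $j$}, which is even better than the polynomial growth you claimed and suffices to sum against the rapid decay of $\varphi$. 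So your strategy is salvageable, but the paper's retraction identity sidesteps the repair entirely. For 3), your aside that the case $p>2$ is ``even easier'' via $L^p\subset E^p_t$ is a red herring (one still needs $L^p$ dense in $E^p_t$, which is not automatic), but since you give the full truncation--mollification argument anyway, no harm done.
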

\begin{proof}
The first point  is obvious.   The second point follows easily using Lemma \ref{lem:slice_changenorms}: we may assume that $\phi$ is supported in the ball $B(0,R)$ and set $t=R$. In this case, we have
$$
\int_{B(x,R)} |\phi \star f|^2 \le \int_{\R^n} |\phi \star 1_{B(x,2R)}f|^2  \le  \|\phi\|_{1}^2  \int_{B(x,2R)} | f|^2$$
and it suffices to integrate $p/2$ powers or to take the sup norm when $p=\infty$. 
 To prove 3), we use the usual truncation and mollification arguments along with 1) and 2). To show 4), we write $\int_{\R^n} f(x) \varphi(x)\, dx= (\iota(f), \iota(\varphi))$ and observe that $\iota(\varphi)$ belongs to the dual of  $T^p_{2}$ when $\varphi\in \mS(\R^n)$.
\end{proof}

\begin{rem}
Since $\mD(\R^n) \subset L^2(\R^n)$, 3) yields that $E^p_t \cap E^2_t$ is dense in $E^p_t$ when $p<\infty$.
\end{rem}

In the following lemma the derivatives are taken in the $W^{1,2}_{loc}$ sense. We shall need to use several times this unconventional integration by parts. 

\begin{lem}[Integration by parts in slice-spaces]\label{lem:sliceIBP} Let $p \in (0,\infty)$ and suppose that $\partial$ is a first order differential operator with constant coefficients and $\partial^*$ is its adjoint operator. If $f, \partial f\in E^p_{t}$ and $g, \partial^*g \in (E^{p}_{t})'$, then $\int_{\R^n} \partial f(x) \overline{g(x)}\, dx= \int_{\R^n}  f(x) \overline{\partial^*g(x)}\, dx$. 
\end{lem}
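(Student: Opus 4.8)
\textbf{Proof plan for Lemma \ref{lem:sliceIBP}.} The plan is to reduce the statement to a genuinely local one, where the classical integration-by-parts formula is available, and then to reassemble the global identity by a careful limiting argument. First I would fix a cutoff $\chi\in\mD(\R^n)$ with $\chi=1$ on $B(0,1)$ and $\supp\chi\subset B(0,2)$, and set $\chi_R(x)=\chi(x/R)$. For fixed $R$, the function $\chi_R f$ lies in $W^{1,2}(\R^n)$ with compact support (since $f,\partial f\in E^p_t\subset L^2_{loc}$ by definition of the slice-space), and similarly $\chi_R g\in W^{1,2}(\R^n)$ with compact support because $g,\partial^*g\in (E^p_t)'$, which by Lemma \ref{lem:slice_duality} is $E^{p'}_t$ or $E^{\infty,\alpha}_t$ and in either case is contained in $L^2_{loc}$. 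On compactly supported $W^{1,2}$ functions the identity $\int \partial(\chi_R f)\,\overline{g} = \int \chi_R f\,\overline{\partial^* g}$ is standard (approximate by $\mD(\R^n)$ in $W^{1,2}$, or invoke the density and the Leibniz rule for the $W^{1,2}_{loc}$ derivative). Expanding $\partial(\chi_R f) = \chi_R\,\partial f + (\partial\chi_R) f$, where $\partial\chi_R$ denotes the symbol of $\partial$ applied to the multiplier $\chi_R$ — an $L^\infty$ function supported in the annulus $R\le|x|\le 2R$ and of size $O(1/R)$ — I obtain
\begin{equation*}
\int_{\R^n}\chi_R\,\partial f\,\overline{g}\,dx \;=\; \int_{\R^n}\chi_R f\,\overline{\partial^*g}\,dx \;-\; \int_{\R^n}(\partial\chi_R) f\,\overline{g}\,dx.
\end{equation*}

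The two main terms converge: $\chi_R\,\partial f\to \partial f$ and $\chi_R f\to f$ in $E^p_t$ as $R\to\infty$ (this is exactly the truncation half of the density statement 3) in the lemma following Lemma \ref{lem:slice_changenorms}, which relies on the multiplication-operator boundedness 1); for $p=\infty$ one argues instead that the pairings converge by dominated convergence against the fixed $L^1_{loc}$ densities $g$, $\partial^*g$ localized to each fixed ball, since $g,\partial^*g\in E^{\infty,\alpha}_t$ control the growth). Since $g\in (E^p_t)'$ and $\partial^*g\in(E^p_t)'$, pairing against these convergences gives $\int\chi_R\,\partial f\,\overline g\to\int\partial f\,\overline g$ and $\int\chi_R f\,\overline{\partial^*g}\to\int f\,\overline{\partial^*g}$. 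So it remains to show the error term vanishes:
\begin{equation*}
\int_{\R^n}(\partial\chi_R)\,f\,\overline{g}\,dx \longrightarrow 0,\qquad R\to\infty.
\end{equation*}

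This error term is the crux of the argument. The integrand is supported on $A_R=\{R\le|x|\le 2R\}$ and $|\partial\chi_R|\lesssim 1/R$, so by Cauchy--Schwarz it is bounded by $\tfrac{C}{R}\,\|f\|_{L^2(A_R)}\|g\|_{L^2(A_R)}$. One cannot simply quote $f\in L^2$ and $g\in L^2$ — neither holds in general in the slice-space range. Instead I would control $\|f\|_{L^2(A_R)}$ and $\|g\|_{L^2(A_R)}$ by covering $A_R$ by $O((R/t)^n)$ balls of radius $t$ and using the definitions: for $f\in E^p_t$ one gets $\|f\|_{L^2(A_R)}\lesssim t^{n/2}\big(\sum_{B}(\barint_B|f|^2)^{p/2}\big)^{1/p}$ when $p\le 2$ by $\ell^{p}\hookrightarrow\ell^2$ (and the reverse Hölder bound $\|f\|_{E^2_t(A_R)}\lesssim (R/t)^{n(1/2-1/p)}\|f\|_{E^p_t}$ for $p\ge 2$, from Lemma \ref{lem:slice_changenorms} applied on the annulus), while the tail of the $E^p_t$ norm over $A_R$ tends to $0$; dually $g\in(E^p_t)'$ gives a matching bound with complementary exponent. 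Multiplying the two and dividing by $R$, the powers of $R$ and $t$ combine so that the product is bounded by $\tfrac{1}{R}\cdot R^{n/2}\cdot R^{-n/2}$ times tails of the respective norms over $A_R$ — that is, $o(1)$ — because $\|f\|_{E^p_t}$ and $\|g\|_{(E^p_t)'}$ are finite and hence have vanishing contribution from $\{|x|>R\}$. The bookkeeping of exponents is routine but must be done in the three regimes $p\in(0,1]$, $p\in(1,2]$, $p\in[2,\infty)$ separately, using Lemma \ref{lem:slice_duality} to identify $(E^p_t)'$ in each; I expect this exponent-matching together with the $p=\infty$ case of the main-term convergence to be the only delicate points, everything else being the standard truncated integration by parts. $\qed$
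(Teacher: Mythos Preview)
Your overall strategy---truncate with $\chi_R$, integrate by parts on compactly supported $W^{1,2}$ functions, then pass to the limit---is exactly what the paper does. The difference is in how you handle the limits, and here you make your life much harder than necessary and end up with a genuine gap in the error term.

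The key observation you are missing is that the duality in Lemma~\ref{lem:slice_duality} is not just an abstract pairing: its proof (via the retraction onto tent spaces and the Coifman--Meyer--Stein inequality) actually gives
\[
\int_{\R^n} |f(x)\,g(x)|\,dx \;\lesssim\; \|f\|_{E^p_t}\,\|g\|_{(E^p_t)'}.
\]
In particular $f\overline g$, $\partial f\,\overline g$, and $f\,\overline{\partial^* g}$ are all in $L^1(\R^n)$ under your hypotheses. Once you know this, all three integrals in the truncated identity are handled by ordinary dominated convergence: $|\chi_R|\le 1$ with $\chi_R\to 1$ pointwise takes care of the two main terms, and $|\partial\chi_R|\lesssim 1$ with $\partial\chi_R\to 0$ pointwise (together with $f\overline g\in L^1$) kills the error term. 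This is the paper's one-line argument.

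Your route for the error term---global Cauchy--Schwarz on the annulus $A_R$, then covering by balls of radius $t$ and bookkeeping exponents---does not work as stated. Take $p\le 1$, so $(E^p_t)'=E^{\infty,\alpha}_t$: covering $A_R$ by $\sim (R/t)^n$ balls gives $\|g\|_{L^2(A_R)}\lesssim R^{n/2}\|g\|_{E^\infty_t}$, while for $f$ you only get $\|f\|_{L^2(A_R)}\le t^{n/2}\cdot o(1)$ from the tail of the $E^p_t$ norm, with no quantitative rate. The product is then $\lesssim R^{n/2-1}\cdot o(1)$, which need not tend to $0$ when $n\ge 3$. The claimed cancellation ``$\tfrac{1}{R}\cdot R^{n/2}\cdot R^{-n/2}$'' is not what your estimates actually produce. (One could rescue this by doing Cauchy--Schwarz ball-by-ball and then H\"older in $\ell^p\times\ell^{p'}$, but that just reproves $f\overline g\in L^1$---so you may as well use it directly.)
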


\begin{proof}
Take a smooth cut-off function $\chi_{R}$ which is 1 on $B(0,R)$, $0$ outside $B(0,2R)$ and $\|\nabla\chi_{R}\|_{\infty}\le CR^{-1}$. Then using integration by parts for $W^{1,2}$ functions with bounded support
$$
\int_{\R^n} \chi_{R}(x)\partial f(x) \overline{g(x)}\, dx= \int_{\R^n} \chi_{R}(x) f(x) \overline{\partial^*g(x)}\, dx - \int_{\R^n} \partial \chi_{R}(x) f(x) \overline{g(x)}\, dx. 
$$
It remains to let $R\to \infty$ by using dominated convergence for each integral. 
\end{proof}

Finally, we state that $E^p_{t}$ are real and complex interpolation spaces. 

\begin{lem}[Interpolation]\label{lem:slice-intepolation} For fixed $t>0$, suppose that $0 < p_0 < p < p_1 \leq \infty$ and $1/p=(1-\theta)/p_0 + \theta/p_1$. Then
\begin{enumerate}
  \item $[E^{p_0}_t,E^{p_1}_t]_{\theta} =E^p_t$ (complex method) with equivalent norms uniformly with respect to $t$.
  \item $(E^{p_0}_t,E^{p_1}_t)_{\theta, q} =E^p_t$, $q=p$ (real method) with equivalent norms uniformly with respect to $t$.
\end{enumerate}
\end{lem}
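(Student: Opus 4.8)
\textbf{Proof plan for Lemma \ref{lem:slice-intepolation}.}

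The plan is to transport the known interpolation identities for tent spaces to the slice-spaces by means of the retraction/coretraction pair $(\iota,\pi)$ already introduced in this section. Recall from \eqref{slice:retract} that $\pi\circ\iota=\mathrm{id}$ on each $E^p_t$, and that we have shown $\iota: E^p_t\to T^p_2$ and $\pi: T^p_2\to E^p_t$ are bounded with norms uniform in $t$, for every $p\in(0,\infty]$ (and with $T^\infty_2$ in the endpoint case $p=\infty$, noting $E^\infty_t=E^{\infty,0}_t$). Thus $E^p_t$ is a retract of $T^p_2$, realized by the \emph{same} pair $(\iota,\pi)$ for all values of the exponent simultaneously. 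It is a general and elementary fact about interpolation functors $\mathcal F$ (both the complex method $[\cdot,\cdot]_\theta$ and the real method $(\cdot,\cdot)_{\theta,q}$ are such functors on the category of quasi-Banach couples) that if $X_i$ is a retract of $Y_i$ for $i=0,1$ via a common pair of maps, then $\mathcal F(X_0,X_1)$ is a retract of $\mathcal F(Y_0,Y_1)$ via the same pair, and in particular $\mathcal F(X_0,X_1)=\pi\big(\mathcal F(Y_0,Y_1)\big)$ with equivalent norms. Applying this with $Y_i=T^{p_i}_2$ and $X_i=E^{p_i}_t$ reduces both claims to the corresponding statements for tent spaces.

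Concretely, for part (1) I would invoke the complex interpolation identity $[T^{p_0}_2,T^{p_1}_2]_\theta=T^p_2$ (valid in the quasi-Banach range, using the Calder\'on product description of complex interpolation of tent spaces, cf. the references to \cite{CMS} and the literature on tent space interpolation), and then: given $f\in[E^{p_0}_t,E^{p_1}_t]_\theta$, the map $\iota$ sends it into $[T^{p_0}_2,T^{p_1}_2]_\theta=T^p_2$, and then $\pi$ sends it back to $E^p_t$ with $\pi\iota f=f$, giving $\|f\|_{E^p_t}\lesssim\|f\|_{[E^{p_0}_t,E^{p_1}_t]_\theta}$; conversely, given $f\in E^p_t$, write $f=\pi(\iota f)$ with $\iota f\in T^p_2=[T^{p_0}_2,T^{p_1}_2]_\theta$, and since $\pi$ is bounded $T^{p_i}_2\to E^{p_i}_t$ it is bounded on the interpolation couple, so $\|f\|_{[E^{p_0}_t,E^{p_1}_t]_\theta}\lesssim\|\iota f\|_{T^p_2}\lesssim\|f\|_{E^p_t}$. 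Part (2) is identical word for word with $[\cdot,\cdot]_\theta$ replaced by $(\cdot,\cdot)_{\theta,q}$ and the input being the real interpolation identity $(T^{p_0}_2,T^{p_1}_2)_{\theta,p}=T^p_2$ for $q=p$ (again in the quasi-Banach range). The uniformity in $t$ is automatic because all the norms of $\iota$ and $\pi$ were already uniform in $t$.

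The main point requiring care — and the only place where one must be a little careful rather than formal — is the validity of the interpolation identities for the tent spaces $T^p_2$ themselves in the full quasi-Banach range $0<p_0<p_1\le\infty$, including the $T^\infty_2$ endpoint, and the verification that the abstract retract principle genuinely applies to quasi-Banach couples (the real and complex methods both extend to this setting, but one should make sure the coretraction argument does not secretly use local convexity). I would dispatch this by citing the tent space interpolation results in the quasi-Banach setting directly, observing that $\iota,\pi$ are bona fide bounded linear maps between the relevant quasi-Banach spaces so that the standard retract lemma (valid for any interpolation functor on quasi-normed couples) applies verbatim. No new estimate beyond those already recorded for $\iota$ and $\pi$ in this section is needed.
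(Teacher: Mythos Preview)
Your proposal is correct and follows essentially the same approach as the paper: the paper's proof consists of the single sentence ``This follows from the fact that $E^p_{t}$ is a retract of $T^p_2$ and the results in \cite{CMS}, completed and corrected in \cite{Be}.'' You have simply spelled out the retract argument in detail and flagged the quasi-Banach subtleties, which is exactly what the paper's citation of \cite{Be} (Bernal's corrections and completions to the tent space interpolation theory of \cite{CMS}) is meant to cover.
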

\begin{proof}
This follows from the fact that $E^p_{t}$ is a retract of $T^p_2$ and the results in \cite{CMS}, completed and corrected in \cite{Be}.
\end{proof}

All of this extends to $\C^N$-valued functions. 

\section{Operators with off-diagonal decay on slice-spaces}

In this section, we investigate the boundedness on slice-spaces of operators with $L^2$ off-diagonal decay and prove some convergence results in those spaces.
\begin{defn}
A family of  operators $(T_s)_{s>0}$ is said to have $L^2$ off-diagonal decay of order $N \in \N$ if there exists $C_N>0$ such that
\begin{equation}\label{eq:off-diag.decay}
\| 1_E T_s f\|_2 \leq C_N \langle \dist(E,F)/s\rangle^{-N} \|f\|_2,
\end{equation}
for all $s > 0$, whenever $E,F \subset \R^n$ are closed sets and $f \in L^2$ with $\supp f \subset F$. We have set $\langle x \rangle:=1+|x|$ and $\dist(E,F):= \inf\{ |x-y| : x \in E, y \in F \}$.
\end{defn}

\begin{prop}\label{prop:slice-Ts bound} Let $p \in (0,\infty]$. If $(T_s)_{s>0}$ is a family of linear operators with $L^2$ off-diagonal decay of order $N>\inf( n|1/p-1/2|, n/2)$, then 
$$
T_s: E^p_t \to E^p_t,  \quad  \text{uniformly in}\,\, 0< s \le t .
$$
\end{prop}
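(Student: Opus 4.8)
The strategy is to exploit the retraction structure established in Section~\ref{sec:slice}: $E^p_t$ is a retract of $T^p_2$ via $\iota$ and $\pi$, so it suffices to transfer the boundedness question to tent spaces, where off-diagonal decay estimates are by now standard, or alternatively to argue directly by a Schur-type / covering argument adapted to the $E^p_t$-norm. I would favor the direct argument because the time variable plays only a passive role here (we fix $t$ and let $s\le t$), so the problem is essentially a statement about a single operator $T_s$ on the ``amalgam'' space $E^p_t$.

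\textbf{Step 1: reduce to a local estimate.} Fix $0<s\le t$ and cover $\R^n$ by a family of balls $\{B(x_k,t)\}_k$ of radius $t$ with bounded overlap, say comparable to $\{Q_k\}$ a dyadic-type partition at scale $t$. Writing $f=\sum_\ell f\mathbf 1_{C_\ell}$ where $C_\ell$ is the annular shell $\{y: 2^\ell t\le \dist(y,B(x_k,t))<2^{\ell+1}t\}$ around a fixed $B(x_k,t)$ (plus the central piece $\ell=-\infty$), and using \eqref{eq:off-diag.decay} with $E=B(x_k,t)$, $F=C_\ell$, one gets
$$
\Big(\barint_{B(x_k,t)}|T_sf|^2\Big)^{1/2}\lesssim \sum_{\ell\ge 0} \langle 2^\ell t/s\rangle^{-N} \Big(\frac{1}{|B(x_k,t)|}\int_{C_\ell}|f|^2\Big)^{1/2}.
$$
Since $s\le t$ we have $\langle 2^\ell t/s\rangle^{-N}\le \langle 2^\ell\rangle^{-N}$, and $C_\ell$ is contained in a ball of radius $\sim 2^\ell t$ which is covered by $\sim 2^{\ell n}$ of the balls $B(x_j,t)$. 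Hence the average over $B(x_k,t)$ of $|T_sf|^2$ is dominated by $\big(\sum_{\ell}2^{-\ell N}2^{\ell n/2}M_\ell(k)\big)^2$ where $M_\ell(k)$ is an $\ell^2$-type average over the $\sim 2^{\ell n}$ neighboring slice-averages of $f$.

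\textbf{Step 2: sum in $k$ in the $E^p_t$-norm.} Now raise to the power $p$ and sum (or take sup if $p=\infty$) over $k$. For $p\ge 2$ use Minkowski's inequality in $\ell^{p/2}$ (or just in $L^p$) on the $\ell$-sum; for $p\le 2$ use the $p$-triangle inequality $\|\sum g_\ell\|_{E^p_t}^p\le \sum\|g_\ell\|_{E^p_t}^p$. In both cases one needs the geometric series $\sum_\ell 2^{-\ell N} 2^{\ell n/2}\cdot 2^{\ell n|1/p-1/2|}$ (the last factor accounting for the overlap loss when redistributing $2^{\ell n}$ neighboring balls into an $\ell^p$ sum versus an $\ell^2$ sum) to converge, which is exactly the hypothesis $N>\inf(n|1/p-1/2|,\,n/2)$ after optimizing the two ways of estimating (one keeping the $\ell^2$ structure at cost $n/2$, the other using crude $\ell^p\hookrightarrow\ell^2$ or $\ell^2\hookrightarrow\ell^p$ embeddings at cost $n|1/p-1/2|$). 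This yields $\|T_sf\|_{E^p_t}\lesssim \|f\|_{E^p_t}$ with constant independent of $s\in(0,t]$ and, after checking, of $t$ as well.

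\textbf{Main obstacle.} The delicate point is the bookkeeping in Step~2: matching the decay exponent $N$ against the combined loss from (a) the volume growth $2^{\ell n}$ of annuli and (b) the mismatch between the $L^2$-nature of off-diagonal decay and the $L^p$-nature of the $E^p_t$-norm when $p\ne 2$. One must be careful to choose, for each range of $p$, whether to pass through $\ell^2$ (good when $p$ near $2$, giving the threshold $n/2$) or to embed $\ell^p$ into $\ell^2$ or vice versa (giving the threshold $n|1/p-1/2|$), and to verify that the better of the two always suffices — this is precisely why the hypothesis features $\inf(n|1/p-1/2|,n/2)$. A clean alternative, which I would mention, is to invoke the retraction: $\iota(f)\mathbf 1_{[t,et]}$ lies in $T^p_2$, the lifted family $\widetilde T_s(F)(\cdot,x):=T_s(F(\cdot,x))$ inherits off-diagonal decay, boundedness of operators with $L^2$ off-diagonal bounds on $T^p_2$ in this range is known, and then $\pi$ brings us back to $E^p_t$ by \eqref{slice:retract}; this offloads the combinatorics onto the tent-space literature but requires one to have that tent-space statement available in the full range $p\in(0,\infty]$.
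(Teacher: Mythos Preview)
Your approach differs from the paper's. Rather than a single Schur-type covering argument valid for all $p$, the paper argues in three regimes: for $0<p\le 1$ it invokes the atomic decomposition of $E^p_t$ (Lemma~\ref{lem:slice_atom.decomp.}) and checks that $T_s$ sends each $E^p_t$-atom supported in a ball $B_r$ (necessarily $r\ge t\ge s$) to an $E^p_t$-molecule, which needs only $N>n(1/p-1/2)$; for $p=\infty$ it runs essentially your Step~1 directly and obtains $N>n/2$; and for $1<p<\infty$ it simply interpolates between the $p=1$ and $p=\infty$ endpoints via Lemma~\ref{lem:slice-intepolation}, so $N>n/2$ suffices there. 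The atomic device is precisely what your sketch lacks for small $p$.

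There is a genuine gap in your Step~2. The series you display converges only when $N>n/2+n|1/p-1/2|$, the \emph{sum} of the two thresholds, not their infimum; the ``optimizing'' remark asserts a second, cheaper estimate but never produces one. In fact the output-side annular decomposition you set up cannot avoid stacking the two losses: the $2^{\ell n/2}$ volume factor coming from the $L^2$ nature of the off-diagonal bound and the $2^{\ell n|1/p-1/2|}$ factor from the $\ell^2\leftrightarrow\ell^p$ conversion multiply rather than compete. Carrying your argument through as written yields $N>n/p$ for $p\le 2$ and $N>n/2$ for $p\ge 2$, strictly worse than the paper's thresholds when $p<2$. The atom-to-molecule route avoids this by localizing the \emph{input} rather than the output: tracking the spread of $T_s$ applied to a single localized piece does not incur the overlap factor that your sum over output balls picks up. Your retraction alternative is sound in spirit but, as you note, defers the work to a tent-space result of equal strength.
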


\begin{proof}
\textbf{Case} $0< p \le 1$. To prove boundedness in $E^p_t$ we claim that it suffices to prove that $T_s$ maps $E^p_t$-atoms to $E^p_t$-molecules. Indeed, let $f \in E^p_t \cap E^2_t$. Then, by Lemma \ref{lem:slice_atom.decomp.}, $f$ has an atomic decomposition with convergence in both $E^p_t$ and $E^2_t$. Since $T_s: E^2_t \to E^2_t$ and $f=\sum_j \lambda_j a_j$ in $E^2_t$, we have $T_{s}f=\sum_j \lambda_j T_{s}a_j$  in $E^2_t$, hence $|(T_s f)(x)| \leq \sum_j |\lambda_j| |(T_s a_j)(x)|$ for almost every $x \in \R^n$. Taking $E^p_t$ quasi-norms in both sides and using the claim we obtain that $T_s: E^p_t \cap E^2_t \to E^p_t$. Since $E^p_t \cap E^2_t $ is dense in $E^p_t$, we extend $T_s$ by continuity to a bounded operator $\tilde T_s : E^p_t \to E^p_t$, with bounds uniform in $0<s \le t$. 

We shall now prove our claim. Suppose that $a$ is an $E^p_t$-atom supported in a ball $B_r$ of radius $r \ge t$. Let also $C_k(B_r)=2^{k+1}B_r \setminus 2^kB_r$ and apply \eqref{eq:off-diag.decay} with $E=C_k(B_r)$, $F=B_r$ and $f=a$. Then we have that
\begin{align}
\| T_s a\|_{L^2(C_k)} &\lesssim \frac{s^N}{(2^k r)^N} \|a\|_{L^2} \lesssim \frac{r^N}{(2^k r)^N} r^{n(1/2-1/p)} \notag\\
&= 2^{-k(N-n(1/p-1/2))} (2^k r)^{n(1/2-1/p)}, \label{eq:Ts-molecule-annular}
\end{align}
where in the second inequality we used that $s \le t \le r$ and \eqref{eq:atomic estimate}. The fact that $\| T_s a\|_{L^2(8 B_r)} \lesssim r^{n(1/2-1/p)}$ is immediate from \eqref{eq:off-diag.decay} and \eqref{eq:atomic estimate}, which in conjunction with \eqref{eq:Ts-molecule-annular} implies that $T_s a$ is a $E^p_t$-molecule.

\textbf{Case} $p =\infty$. As $ E^{\infty}_t=  E^{\infty, 0}_t$ with equivalent norms uniformly in $t$, it is enough to prove $T_{s}: E^{\infty, 0}_t \to E^{\infty, 0}_t$. Suppose that $g \in E^{\infty,0}_t$ and let us fix an arbitrary ball $B_r$ of radius $r \ge t$ that contains $x$. We decompose $g$ so that $g:=g_0+\sum_k g_k = g 1_{8B_r} + \sum_k g 1_{C_k(B_{r})}$, where $C_k(B_r)=2^{k+1}B_r \setminus 2^k B_r$, $k \ge 3$. We utilize the $L^2$ boundedness of $T_s$ (coming from \eqref{eq:off-diag.decay}) to get
\begin{equation}\label{eq:Ts-local-infinity}
 \Big( \barint_{B_r} |T_s g_0|^2 \Big)^{1/2} \lesssim  \Big( \barint_{8 B_r} |g|^2\Big)^{1/2} \le \|g\|_{E^{\infty, 0}_t}
\end{equation}
In view of \eqref{eq:off-diag.decay} and $s\le t$, we have that
\begin{equation}\label{eq:Ts-annular-infinity}
 \Big( \barint_{B_r} |T_s g_k|^2 \Big)^{1/2} \lesssim 2^{-k(N-n/2)}  \Big( \barint_{2^{k+1}B_r} |g|^2\Big)^{1/2} \le 2^{-k(N-n/2)} \|g\|_{E^{\infty, 0}_t}
.
\end{equation}
 Combining \eqref{eq:Ts-local-infinity} and \eqref{eq:Ts-annular-infinity} and $N>n/2$,  we obtain that $T_sg \in E^{\infty}_t$, uniformly in $0<s\le t$.\\

\textbf{Case} $1<p<\infty$. In light of Lemma \ref{lem:slice-intepolation}, we conclude by interpolation that $T_s$ extends to a bounded operator from $E^p_t$ to $E^p_t$ for all $1<p < \infty$. Indeed, we assume off-diagonal decay of order $N>n/2$, which allows us to apply the  $p=1$ and $p=\infty$ cases. 
\end{proof}

\begin{rem}
One can probably obtain a sharper lower bound in $N$ for a given fixed $p$, but this suffices for our needs. Also notice that for all $s\ge t$, one obtains that $T_{s}: E^p_{t}\to E^p_{t}$ but the norm may not be uniform any longer. This comes from the fact that $E^p_{s}=E^p_{t}$ with equivalent norms. 
\end{rem}

\begin{prop}\label{prop:cv}  Fix $t>0$. 
Let $I$ be the identity operator and $(T_s)_{s>0}$ be a family of linear operators which has $L^2$ off-diagonal decay of order $N>\inf (n/p,n/2)$.  If $T_s \to I$ strongly in $L^2$ for $s\to 0$, then for $0<p<\infty$, $T_s \to I$ strongly in $E^p_{t}$ for $s\to 0$.

\end{prop}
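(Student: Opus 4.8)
The plan is a density-plus-uniform-boundedness argument in which locality is supplied by the off-diagonal decay. Since $N>\inf(n/p,n/2)$, Proposition~\ref{prop:slice-Ts bound} provides a constant $C$ with $\|T_s\|_{E^p_t\to E^p_t}\le C$ for all $0<s\le t$, and taking $E=F=\R^n$ in \eqref{eq:off-diag.decay} also gives $\|T_sf\|_2\le C_N\|f\|_2$. Because $\mD(\R^n)$ is dense in $E^p_t$ for $p<\infty$, it suffices to prove that $T_sf\to f$ in $E^p_t$ as $s\to0$ for each $f\in\mD(\R^n)$: for general $h\in E^p_t$ and $\ep>0$, pick $f\in\mD(\R^n)$ with $\|h-f\|_{E^p_t}<\ep$, set $\beta:=\min(1,p)$, and use that $E^p_t$ is a $\beta$-normed space to write $\|T_sh-h\|_{E^p_t}^\beta\le\|T_s(h-f)\|_{E^p_t}^\beta+\|T_sf-f\|_{E^p_t}^\beta+\|f-h\|_{E^p_t}^\beta\le(C^\beta+1)\ep^\beta+\|T_sf-f\|_{E^p_t}^\beta$, then let $s\to0$ and then $\ep\to0$.

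So fix $f\in\mD(\R^n)$ with $\supp f\subset B_0:=B(x_0,R)$, let $B:=B(x_0,2R)$, and split, using $f1_{B^c}=0$, $\|T_sf-f\|_{E^p_t}^\beta\le\|(T_sf-f)1_B\|_{E^p_t}^\beta+\|(T_sf)1_{B^c}\|_{E^p_t}^\beta$. For the near term, $(T_sf-f)1_B$ is supported in the fixed bounded ball $B$, and on functions with support in a fixed bounded set the $E^p_t$-norm is dominated by the $L^2$-norm: for $p\le2$ this follows from Hölder's inequality combined with the averaging identity \eqref{eq:averaging}, while for $p\ge2$ one uses the embedding $L^p\subset E^p_t$ together with Hölder. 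Hence $\|(T_sf-f)1_B\|_{E^p_t}\lesssim_{R,t,p}\|T_sf-f\|_{L^2}$, which tends to $0$ as $s\to0$ since $f\in L^2$ and $T_s\to I$ strongly on $L^2$.

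For the far term, decompose $B^c$ into the dyadic annuli $C_k:=B(x_0,2^{k+2}R)\setminus B(x_0,2^{k+1}R)$, $k\ge0$, so that $\dist(C_k,B_0)\ge2^kR$. Applying \eqref{eq:off-diag.decay} with $E=C_k$ and $F=B_0$ gives $\|1_{C_k}T_sf\|_2\le C_N\langle2^kR/s\rangle^{-N}\|f\|_2$, and the same bounded-set comparison as above, now on $C_k$ (whose $t$-neighbourhood has measure $\lesssim(2^kR+t)^n$), yields $\|(T_sf)1_{C_k}\|_{E^p_t}\lesssim(2^kR+t)^{n|1/p-1/2|}\langle2^kR/s\rangle^{-N}\|f\|_2$. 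Summing the $\beta$-th powers over $k\ge0$ and using $\langle2^kR/s\rangle^{-N}\le(s/R)^N2^{-kN}$ once $s\le R$, the lower bound on $N$ makes the resulting geometric series in $k$ converge and leaves an overall factor $(s/R)^{\beta N}\to0$; thus $\|(T_sf)1_{B^c}\|_{E^p_t}\to0$ as $s\to0$, and with the near term this finishes the proof. The only genuinely delicate point is this last step, where one must balance the $s$-decay from \eqref{eq:off-diag.decay} against the growth $(2^kR)^{n|1/p-1/2|}$ of the comparison constants on the far annuli — it is precisely the size of $N$ that wins this balance. For $p\le1$ one may alternatively route the far term through the atom-to-molecule mapping property of $T_s$ established inside the proof of Proposition~\ref{prop:slice-Ts bound}, the molecular decay then furnishing the summability in $k$.
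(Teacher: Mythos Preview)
Your argument is essentially the paper's own proof: invoke the uniform $E^p_t$ bound from Proposition~\ref{prop:slice-Ts bound}, reduce by density to compactly supported $f$, then split into a near piece (controlled by the $L^2$ strong convergence via an $L^2$--$E^p_t$ comparison on bounded sets) and a far piece (decomposed into dyadic annuli and summed using the off-diagonal decay).

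One small correction: for $p\ge 2$ your justification ``embedding $L^p\subset E^p_t$ together with H\"older'' does not give the claimed comparison constant $|S_t|^{n|1/p-1/2|}$, since H\"older goes the wrong way ($\|h\|_p$ is not controlled by $\|h\|_2$ on a bounded set). What you actually have, directly from the definition, is
\[
\|h\|_{E^p_t}^p=\int_{S_t}\Big(\barint_{B(x,t)}|h|^2\Big)^{p/2}\,dx\le |S_t|\,t^{-np/2}\|h\|_2^p,
\]
so the correct constant is $|S_t|^{1/p}t^{-n/2}$ rather than $|S_t|^{1/2-1/p}$. This is exactly what the paper uses (the factor $t^{-np/2}(2^jR)^n$ in its far-piece estimate), and with it the tail sum converges precisely under $N>n/p$, matching the hypothesis when $p\ge 2$. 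With this fix your proof goes through and coincides with the paper's.
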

\begin{proof} As we have a uniform estimate for $0<s<t$ from Proposition \ref{prop:slice-Ts bound}, it suffices to check this for $f$ being in a dense class. Thus, take $f\in L^2$ supported in a ball and we may assume without loss of generality that it has  radius $R\ge 2t$. Let $C_{j}(B_{R})= 2^{j+1}B_{R}\setminus 2^jB_{R}$ for $j\ge 1$.
Using the support condition of $f$ and \eqref{eq:off-diag.decay}, we have
\begin{align*}
 \int_{(2B_R)^c} \left( \barint_{B(x,t)} | T_sf- f|^2 \right)^{p/2} \, dx &= \int_{(2B_R)^c} \left( \barint_{B(x,t)} | T_s f|^2 \right)^{p/2}\, dx \\
&=   \sum_{j\ge 1} \int_{C_j(B_R)} \left( \barint_{B(x,t)} |T_s f|^2 \right)^{p/2} \, dx \\
& \lesssim  \sum_{j\ge 1} t^{-np/2} (2^jR)^{n} \left( \frac{s}{2^jR}\right)^{Np} \|f\|_{2}^p
&\lesssim_{t,R,f} s^{Np}
\end{align*}
using $N>n/p$. 
Next, we have
$$
\int_{2B_R} \left( \barint_{B(x,t)} | T_sf- f|^2 \right)^{p/2} \, dx \lesssim   t^{-np/2} R^N \|T_sf-f\|_{2}^p.
$$
Using the strong convergence of $T_{s}\to I$ in $L^2$, this proves that $\|T_sf-f\|_{E^p_{t}}\to 0$ as $s\to 0$. 
\end{proof}

\section{Some properties of weak solutions}

Throughout, we assume without mention that the coefficients of \eqref{eq:divform} satisfy the ellipticity conditions \eqref{eq:boundedmatrix} and \eqref{eq:accrassumption}.

\begin{lem}[Caccioppoli] Any  weak solution $u$ of \eqref{eq:divform}  in a ball $B=B(\bx,r)$ with $ B \subset \reu$  enjoys the Caccioppoli inequality for any $0<\alpha<\beta<1$  and some $C$ depending on the ellipticity constants, $n,m$,  $\alpha$ and $\beta$,  
 \begin{equation}
\label{eq:caccio}
{}\iint_{\alpha B} |\nabla u|^2 \le Cr^{-2} {}\iint_{\beta B}  |u|^2.
\end{equation}
\end{lem}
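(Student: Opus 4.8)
The plan is to run the classical Caccioppoli test-function argument, with one extra rearrangement forced on us by the fact that \eqref{eq:accrassumption} only gives accretivity on the curl-free subspace $\mH$, not on all of $L^2$. First I would fix a smooth cut-off $\chi$ on $\R^{1+n}$ with $\chi\equiv 1$ on $\alpha B$, $\supp\chi\subset\beta B$, $0\le\chi\le 1$ and $\|\nabla\chi\|_\infty\lesssim (r(\beta-\alpha))^{-1}$; this is possible since $\overline{\alpha B}\subset\beta B$ because $\alpha<\beta$. Since $u\in W^{1,2}_{\loc}(\reu;\C^m)$ and $\chi^2u\in W^{1,2}$ has compact support in $\reu$, it is an admissible test function (by density from $C^\infty_c$), so the weak formulation of $Lu=0$ gives $\iint A\nabla u\cdot\overline{\nabla(\chi^2 u)}=0$.

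The key algebraic step is to expand $\nabla(\chi^2u)=\chi\nabla(\chi u)+\chi(\nabla\chi)\,u$ and $\chi\nabla u=\nabla(\chi u)-(\nabla\chi)\,u$ (here $(\nabla\chi)\,u$ denotes the natural tensor), so that the identity above becomes
\[
\iint A\nabla(\chi u)\cdot\overline{\nabla(\chi u)}=\iint A\big((\nabla\chi)\,u\big)\cdot\overline{\nabla(\chi u)}-\iint A\nabla u\cdot\overline{\chi(\nabla\chi)\,u}.
\]
The point of organizing the computation around $\nabla(\chi u)$ rather than $\chi\nabla u$ is that for a.e.\ $t$ the field $x\mapsto\nabla(\chi u)(t,x)$ lies in $\mH$: its horizontal part is an honest $x$-gradient, hence curl-free, and $\chi u$ has compact support so the slice is in $L^2(\R^n)$. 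Using the $t$-independence of $A$, I apply \eqref{eq:accrassumption} slicewise and integrate in $t$ to get $\re\iint A\nabla(\chi u)\cdot\overline{\nabla(\chi u)}\ge\lambda\iint|\nabla(\chi u)|^2$. Taking real parts in the identity, bounding the right-hand side by $\|A\|_\infty$ times $\iint|\nabla\chi|\,|u|\,|\nabla(\chi u)|+\iint|\chi\nabla u|\,|\nabla\chi|\,|u|$, and using $|\chi\nabla u|\le|\nabla(\chi u)|+|\nabla\chi|\,|u|$ together with Young's inequality to absorb the $\iint|\nabla(\chi u)|^2$ contribution into the left-hand side, I obtain $\iint|\nabla(\chi u)|^2\lesssim\iint|\nabla\chi|^2|u|^2$. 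Finally $\iint_{\alpha B}|\nabla u|^2=\iint_{\alpha B}|\chi\nabla u|^2\le 2\iint|\nabla(\chi u)|^2+2\iint|\nabla\chi|^2|u|^2\lesssim\iint|\nabla\chi|^2|u|^2\lesssim r^{-2}(\beta-\alpha)^{-2}\iint_{\beta B}|u|^2$, which is \eqref{eq:caccio}, with constant depending only on $\lambda$, $\|A\|_\infty$, $n$, $m$ and $\beta-\alpha$.

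The main (indeed only non-routine) obstacle is precisely the one highlighted: one cannot feed $\chi\nabla u$ directly into \eqref{eq:accrassumption} since it is not curl-free, so the proof must be set up so that the accretivity is tested against the genuine gradient $\nabla(\chi u)$; the slicing in $t$ needed to pass from the $\R^n$-accretivity \eqref{eq:accrassumption} to a bound on $\reu$ is elementary but uses $t$-independence of $A$ essentially. Everything else—the choice of $\chi$, admissibility of $\chi^2u$ as a test function, and the absorption step—is standard.
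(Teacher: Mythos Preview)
Your argument is correct and is precisely the standard Caccioppoli cut-off argument; the paper itself gives no details, stating only that the inequality ``is well-known.'' Your care in rewriting the bilinear form in terms of $\nabla(\chi u)$ so that the slicewise accretivity \eqref{eq:accrassumption} on $\mH$ applies is exactly the point one must address for systems with this weaker (G{\aa}rding-type) ellipticity, and you handle it cleanly.
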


This is well-known. Note that the ball $B$ can be taken with respect to any norm in $\R^{1+n}$. Thus $\alpha B$ can be a Whitney region $W(t,x)$ as in \eqref{eq:whitney} and $\beta B$ is a slightly enlarged region of the same type. 

\begin{lem}\label{lem:udu}
If $u$ is a weak solution of \eqref{eq:divform}, then so is $\partial_{t}u$.  
\end{lem}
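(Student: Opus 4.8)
The statement to prove is Lemma \ref{lem:udu}: if $u$ solves $Lu = -\divv A\nabla u = 0$ weakly on $\reu$ and $A$ is $t$-independent, then $\partial_t u$ is also a weak solution of the same equation. The plan is to exploit the $t$-independence of the coefficients together with translation invariance in the $t$-variable. First I would recall that for a weak solution, the defining identity is $\iint_{\reu} A\nabla u \cdot \overline{\nabla \varphi}\, dtdx = 0$ for all $\varphi \in C_c^\infty(\reu; \C^m)$. The key observation is that if $\varphi$ is a test function supported in a compact set $K \subset \reu$, then for $|h|$ small enough the translated function $\varphi_h(t,x) := \varphi(t+h,x)$ is still a legitimate test function (its support stays inside $\reu$), and since $A$ depends only on $x$, one has $A(x)\nabla u(t+h,x) \cdot \overline{\nabla\varphi(t+h,x)}$ integrating to zero as well; equivalently, the translate $u_h(t,x) := u(t+h,x)$ is again a weak solution on a slightly smaller half-space containing $K$.

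Next I would form the difference quotient $\frac{u_h - u}{h}$, which by linearity of the equation is a weak solution of $Lu=0$ on any region compactly contained in $\reu$ once $|h|$ is small relative to the distance to the boundary. Then I would pass to the limit $h \to 0$. The relevant convergence is: $\frac{u_h - u}{h} \to \partial_t u$ and $\nabla\left(\frac{u_h-u}{h}\right) = \frac{(\nabla u)_h - \nabla u}{h} \to \partial_t \nabla u = \nabla(\partial_t u)$, both in $L^2_{loc}(\reu)$. The $L^2_{loc}$ convergence of difference quotients of an $L^2_{loc}$ function to the distributional derivative is standard, but to know $\partial_t u \in W^{1,2}_{loc}$ in the first place — so that $\partial_t u$ is an admissible weak solution in the sense required — one uses interior regularity: weak solutions of $t$-independent elliptic systems have $\partial_t u \in W^{1,2}_{loc}$ (differentiability in the "good" direction), which follows from the Caccioppoli inequality applied to the difference quotients $\frac{u_h-u}{h}$, giving a uniform $W^{1,2}_{loc}$ bound and hence weak-$L^2_{loc}$ convergence of $\nabla\left(\frac{u_h-u}{h}\right)$ to some limit, which must be $\nabla(\partial_t u)$. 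Once the convergences are in hand, passing to the limit in the weak formulation $\iint A\nabla\left(\frac{u_h-u}{h}\right)\cdot\overline{\nabla\varphi} = 0$ against each fixed $\varphi$ yields $\iint A\nabla(\partial_t u)\cdot\overline{\nabla\varphi} = 0$, which is exactly the statement that $\partial_t u$ is a weak solution.

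The main technical point — and the only place requiring care — is the interior regularity step: establishing that $\partial_t u$ is genuinely in $W^{1,2}_{loc}(\reu)$ rather than merely a distribution. This is precisely the classical Caccioppoli-plus-difference-quotient argument (tangential, here "vertical", differentiation): apply \eqref{eq:caccio} to $v_h := \frac{u_h - u}{h}$ on nested balls $\alpha B \subset \beta B \subset \reu$ to get $\iint_{\alpha B}|\nabla v_h|^2 \lesssim r^{-2}\iint_{\beta B}|v_h|^2 \lesssim r^{-2}\iint_{\beta' B}|\partial_t u|^2$ uniformly in small $h$ (using that $\|v_h\|_{L^2(\beta B)}$ is controlled by $\|\partial_t u\|_{L^2(\beta' B)}$ for a slightly larger ball), extract a weakly convergent subsequence of $\nabla v_h$, and identify the limit with $\nabla(\partial_t u)$ by testing against smooth compactly supported fields. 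Everything else is routine linearity and translation invariance, which is why the paper simply states the lemma; I would present it tersely, emphasizing only that $t$-independence of $A$ is what makes the translates $u_h$ solutions of the \emph{same} equation.
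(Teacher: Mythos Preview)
Your proposal is correct and is essentially the standard unpacking of the paper's one-line proof, which reads in its entirety: ``This is a consequence of the fact that $L$ has $t$-independent coefficients.'' You have simply made explicit the classical difference-quotient-plus-Caccioppoli argument that the paper leaves implicit; the underlying idea (translation invariance in $t$ because $A$ is $t$-independent) is identical.
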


\begin{proof}
This is a consequence of the fact that $L$ has $t$-independent coefficients. 
\end{proof}

\begin{lem}\label{lem:regular}
Suppose $u$ is a weak solution of \eqref{eq:divform}, then $t\mapsto \nabla\!_{A}u(t, \, \cdot\,)$ is $C^\infty$ from $t>0$ into $L^2_{\loc}(\R^n)$ and  for all $t>0$, 
\begin{equation}
\label{eq: sliceL2}
\barint_{B(x,c_{1}t)}  |\nabla\!_{A}u(t, x)|^2\, dx \lesssim  \bariint_{W(t,x)}  |\nabla\!_{A}u(s, y)|^2\, dsdy.
\end{equation}
\end{lem}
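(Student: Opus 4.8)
The statement has two parts: smoothness of $t\mapsto \nabla\!_A u(t,\cdot)$ into $L^2_{\loc}$, and the slice-type inequality \eqref{eq: sliceL2}. I would begin with the inequality, which is essentially a Caccioppoli estimate combined with the structure of the conormal gradient. Recall $\nabla\!_A u = [\partial_{\nu_A} u, \nabla_x u]^T$, where $\partial_{\nu_A} u = (A\nabla u)_{\no}$ is the first component of $A\nabla u$, and the tangential part is $\nabla_x u$. Thus $|\nabla\!_A u(t,x)| \lesssim |\nabla u(t,x)|$ pointwise, with constant depending only on $\|A\|_\infty$. So it suffices to bound $\barint_{B(x,c_1 t)} |\nabla u(t,x)|^2\,dx$ by a slightly fattened Whitney average of $|\nabla u|^2$. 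For this, first use interior $L^2$ regularity in $t$: since $u$ is a weak solution with $t$-independent coefficients, $\partial_t u$ is again a weak solution (Lemma \ref{lem:udu}), and one can iterate; interior estimates give that on a Whitney region $t\mapsto \nabla u(t,\cdot)$ is continuous (indeed $C^\infty$) into $L^2$ of a slightly smaller spatial ball, so the slice $\barint_{B(x,c_1t)}|\nabla u(t,y)|^2\,dy$ is meaningful and controlled by $\sup_{s\in(c_0^{-1}t',c_0t')}\barint_{B(x,c_1't')}|\nabla u(s,y)|^2\,dy$ for nearby parameters. Then apply Caccioppoli \eqref{eq:caccio} on Whitney balls to replace $\iint |\nabla u|^2$ by $r^{-2}\iint|u|^2$ if needed, but actually what we want is the reverse-type bound of a slice by a solid average of the \emph{same} quantity $|\nabla u|^2$; this follows from the mean value / interior regularity property of solutions (their $t$-derivatives are solutions) giving subharmonic-type control of $|\nabla u|^2$-averages across nearby Whitney slices.

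More concretely, the cleanest route: fix $(t,x)$ and work on the Whitney region $W(t,x)$. The function $v:=\nabla u$ (componentwise) has the property that each of its horizontal/vertical derivatives is again controlled, so by a standard Moser-type or Sobolev-embedding argument on the cylinder $W(t,x)$, for each fixed $s$ in the $t$-interval of $W(t,x)$ one has $\barint_{B(x,c_1t)}|v(s,y)|^2\,dy \lesssim \bariint_{W'(t,x)}|v|^2 + t^2\bariint_{W'(t,x)}|\partial_t v|^2 + \dots$, where $W'$ is slightly larger; then Caccioppoli applied to the solution $\partial_t u$ absorbs the $\partial_t v$ term back into $\bariint_{W''}|v|^2$, and a further Caccioppoli step if necessary. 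Choosing the Whitney parameters in \eqref{eq: sliceL2} with enough room (the lemma permits changing $c_0,c_1$ up to equivalent norms) makes all the fattened regions fit inside the $W(t,x)$ appearing on the right-hand side.

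For the $C^\infty$ claim: write $\nabla\!_A u(t,\cdot)$ in terms of $\nabla u(t,\cdot)$ via the bounded matrix $A$ (which is $t$-independent), so it suffices to show $t\mapsto \nabla u(t,\cdot)\in L^2_{\loc}(\R^n)$ is $C^\infty$. Since $\partial_t u$ is again a weak solution (Lemma \ref{lem:udu}), iterating shows $\partial_t^k u$ is a weak solution for all $k$, hence locally $H^1$, hence $t\mapsto \partial_t^k\nabla u(t,\cdot)$ is itself (by the same slice estimate just proved, applied to $\partial_t^k u$) locally bounded into $L^2_{\loc}$; combined with the fundamental theorem of calculus in $t$ — legitimate because $\partial_t(\nabla u)=\nabla(\partial_t u)$ in the weak sense and the right side is locally $L^2$ in $(t,x)$ — one upgrades continuity to differentiability and then to $C^\infty$ by bootstrapping. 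This is the standard argument that $t$-independence propagates interior regularity in the transversal direction.

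The main obstacle, and the only place requiring genuine care, is making the "slice by solid average" step rigorous: $|\nabla u|^2$ is not itself subharmonic, so one cannot directly invoke a mean value property. The fix is the two-step use of Caccioppoli together with Lemma \ref{lem:udu} (pass derivatives in $t$ onto the solution, then use that interior $L^2$ bounds on a solution control its gradient via Caccioppoli and, via Sobolev embedding in the full $(t,x)$ variable, control spatial slices). One must track the Whitney parameters carefully so that all enlarged regions remain inside the region on the right of \eqref{eq: sliceL2}; the freedom in the choice of $c_0>1$, $c_1>0$ noted after \eqref{eq:whitney} is exactly what makes this bookkeeping harmless.
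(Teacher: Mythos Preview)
Your proposal is correct and follows essentially the same route as the paper: control the slice by the solid Whitney average of $|\nabla\!_A u|^2$ plus a term involving $\partial_t\nabla\!_A u$, then use Lemma~\ref{lem:udu} and Caccioppoli on the solution $\partial_t u$ to absorb the latter, and iterate for $C^\infty$. The only simplification you are missing is that the paper obtains the two-term bound directly from the elementary one-dimensional inequality $|f(t)-\barint_{[a,b]}f|^2\le (b-a)\int_{[a,b]}|f'|^2$ applied in the $t$-variable (no Sobolev embedding or Moser-type argument is needed, and there are no ``$+\dots$'' terms).
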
  

\begin{proof} Let $K$ be a compact set in $\R^n$ and $[a,b]$ a compact interval in $(0,\infty)$. 
Using the inequality $|f(t)-\barint_{\, [a,b]}f(s)\, ds|^2\le (b-a)\int_{[a,b]} |f'(s)|^2\, ds$ for all $t\in [a,b]$,  we have 
$$\int_{K}|\nabla\!_{A}u(t, x)|^2\, dx \lesssim_{a,b} \iint_{[a,b]\times K}  |\nabla\!_{A}u(s, x)|^2\, dsdx +
\iint_{[a,b]\times K}  |\partial_{s}\nabla\!_{A}u(s, x)|^2\, dsdx.
$$
and we conclude that $\nabla\!_{A}u(t, \, \cdot\,) \in L^2_{\loc}(\R^n)$  using $\partial_{s}\nabla\!_{A}u(s, x)=\nabla\!_{A}\partial_{s}u(s, x)$ and  Lemma \ref{lem:udu}.
Applying this to all $t$-derivatives of $u$ implies the same conclusion  for all $t$-derivatives of the conormal gradient. 

The inequality \eqref{eq: sliceL2} follows from applying the above inequality and Caccioppoli inequality  with $[a,b]= [\tilde c_{0}^{-1}t, \tilde c_{0}t]$ and $K=\overline B(x,\tilde c_{1}t)$, the closed ball, where $1<\tilde c_{0}<c_{0}$,  $\tilde c_{1}<c_{1}$ and $c_{0},c_{1}$ are the parameters in the Whitney region $W(t,x)$. 
\end{proof}

\begin{cor} \label{cor:repeatCaccio} Suppose $u$ is a weak solution of \eqref{eq:divform}. Let  $k\in \N$. For $0<p<\infty$, 
 we have that $\|t^k\nabla \pd_{t}^ku\|_{N^p_{2}} \lesssim \|\nabla u\|_{N^p_{2}}$,  $\|t^{k+1}\nabla \pd_{t}^ku\|_{T^p_{2}} \lesssim \|t\nabla u\|_{T^p_{2}}$, and for $\alpha\ge 0 $
 $\|t^{k+1}\nabla \pd_{t}^ku\|_{T^\infty_{2,\alpha}} \lesssim \|t\nabla u\|_{T^\infty_{2,\alpha}} $.
 \end{cor}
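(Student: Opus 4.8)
The plan is to reduce everything to the case $k=1$ by iteration, since $\pd_t u$ is again a weak solution of \eqref{eq:divform} by Lemma \ref{lem:udu}, and then $\pd_t^k u$ is handled by applying the $k=1$ statement $k$ times and keeping track of powers of $t$. So the heart of the matter is: for any weak solution $v$, control $\|t\nabla\pd_t v\|$ by $\|\nabla v\|$ in the relevant (quasi-)norm, in the three scales $N^p_2$, $T^p_2$ and $T^\infty_{2,\alpha}$. The key pointwise ingredient is a ``two-step Caccioppoli'': on a Whitney ball, $\pd_t v$ is a weak solution, so Caccioppoli gives $\bariint_{W(t,x)}|\nabla\pd_t v|^2 \lesssim t^{-2}\bariint_{W'(t,x)}|\pd_t v|^2$, and then, since $\pd_t v$ equals $\pd_t v$ (a solution) one more Caccioppoli-type step — or rather the Poincaré/telescoping trick already used in Lemma \ref{lem:regular} together with Caccioppoli applied to $v$ — yields $\bariint_{W'(t,x)}|\pd_t v|^2 \lesssim \bariint_{W''(t,x)}|\nabla v|^2$ (here one uses that $\pd_t v = \pd_t v$ has $L^2$ average on a Whitney region controlled by the $L^2$ average of $\nabla v$ on a slightly larger Whitney region, which is immediate since $\pd_t v$ is a component of $\nabla v$ — no, more carefully: $|\pd_t v|\le|\nabla v|$ pointwise, so this step is trivial). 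Hence
$$
\bariint_{W(t,x)}|t\nabla\pd_t v|^2\,dsdy \lesssim \bariint_{\widetilde W(t,x)}|\nabla v|^2\,dsdy,
$$
where $\widetilde W(t,x)$ is a fixed enlarged Whitney region of the same type.

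From this single pointwise inequality all three estimates follow by the same mechanism. For $N^p_2$: take the supremum over $t>0$ of the left side; the right side is dominated by the maximal function $\tN(\nabla v)(x)$ after adjusting Whitney parameters (changing parameters gives equivalent $N^p_2$ norms, as recorded after \eqref{eq:whitney}), so $\tN(t\nabla\pd_t v)(x)\lesssim \tN(\nabla v)(x)$ pointwise, and taking $L^p$ norms gives the claim. For $T^p_2$: integrate $|t\nabla\pd_t v(t,y)|^2$ against $\frac{dtdy}{t^{n+1}}$ over a cone $\Gamma_a(x)$; Fubini turns the Whitney-averaged bound into $\SF_{a'}(t\nabla v)(x)$ for a slightly larger aperture (equivalent $T^p_2$ norm), giving $\SF(t^2\nabla\pd_t v)\lesssim \SF(t\nabla v)$ and hence $\|t^2\nabla\pd_t v\|_{T^p_2}\lesssim\|t\nabla v\|_{T^p_2}$; wait — the stated exponent is $t^{k+1}$ at level $k$, consistent with one extra power of $t$ per derivative, so at $k=1$ it is $\|t^2\nabla\pd_t v\|_{T^p_2}\lesssim\|t\nabla v\|_{T^p_2}$, exactly what we get. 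For $T^\infty_{2,\alpha}$: integrate over a Carleson box $T_{x,r}$; the Whitney-averaged bound and Fubini replace the box by a slightly dilated box, and the defining inequality for $T^\infty_{2,\alpha}$ (with $r^{2\alpha}|B(x,r)|$ on the right) is stable under such dilation, so $\|t^2\nabla\pd_t v\|_{T^\infty_{2,\alpha}}\lesssim\|t\nabla v\|_{T^\infty_{2,\alpha}}$.

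Finally I would run the induction: writing $\pd_t^k u = \pd_t(\pd_t^{k-1}u)$ and noting $\pd_t^{k-1}u$ is a weak solution, the $k=1$ result applied to $v=\pd_t^{k-1}u$ gives $\|t\nabla\pd_t^k u\|_{N^p_2}\lesssim\|\nabla\pd_t^{k-1}u\|_{N^p_2}$; but $\nabla\pd_t^{k-1}u$ is, componentwise, obtained from $\pd_t^{k-1}(\nabla u)$ — more precisely one applies the pointwise Whitney estimate $k$ times, each time absorbing one factor of $t$, to conclude $\tN(t^k\nabla\pd_t^k u)\lesssim\tN(\nabla u)$, and similarly $\SF(t^{k+1}\nabla\pd_t^k u)\lesssim\SF(t\nabla u)$ and the corresponding $T^\infty_{2,\alpha}$ bound.

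The main obstacle is purely bookkeeping: organizing the iterated Whitney regions so that after $k$ steps one still lands inside a fixed enlarged region (choosing a nested sequence $W=W_0\subset W_1\subset\cdots\subset W_k=\widetilde W$ with all $W_j$ of Whitney type), and checking that the aperture/box-dilation adjustments at each stage stay within the regime where the $T^p_2$, $N^p_2$ and $T^\infty_{2,\alpha}$ (quasi-)norms are equivalent under such changes — all of which is standard and, as the text says, ``trivially verified'' in spirit, but must be stated cleanly. No new analytic input beyond Caccioppoli, the telescoping inequality of Lemma \ref{lem:regular}, and the aperture-change equivalences is needed.
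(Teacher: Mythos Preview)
Your proposal is correct and follows exactly the approach sketched in the paper: apply Caccioppoli to the solution $\pd_t^{k}u$ to trade $\nabla$ for a factor $t^{-1}$, then use the pointwise inequality $|\pd_t^{k}u|\le|\nabla\pd_t^{k-1}u|$, and iterate, with the passage to the $N^p_2$, $T^p_2$ and $T^\infty_{2,\alpha}$ norms handled by the standard Whitney/aperture/box adjustments. The paper omits the details you have written out; nothing further is needed.
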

 
 \begin{proof} This follows from a repeated use of Caccioppoli inequality:  starting from $\nabla \pd_{t}^ku$, eliminate $\nabla$ and then control $\pd_{t}^ku$ by $\nabla \pd_{t}^{k-1}u$, and iterate. Details are easy and we omit them.  \end{proof} 

Let us observe that we can truncate in $t$ in these inequalities provided we enlarge the truncation in the right hand side. 

\begin{cor}\label{cor:uniformept}  Suppose $u$ is a weak solution of \eqref{eq:divform}.  Then  
\begin{equation}
\label{eq:ept1}
\sup_{t,t'>0, t/t'\sim 1} \|\nabla\!_{A}u(t, \, \cdot\,)\|_{E^p_{t'}} \lesssim \|\nabla u\|_{N^p_{2}}.
\end{equation}
Similarly, 
\begin{equation}
\label{eq:ept2}
\sup_{t,t'>0, t/t'\sim 1}  t \|\nabla\!_{A}u(t, \, \cdot\,)\|_{E^p_{t'}} \lesssim \|t\nabla u\|_{T^p_{2}}.
\end{equation}
In particular, if one of the right hand sides is finite, then for any $\delta >0$,   $t \mapsto \nabla\!_{A}u(t,\cdot)$ is $C^\infty$ valued in $E^p_{\delta }$.
\end{cor}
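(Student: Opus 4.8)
The plan is to transfer the $N^p_2$ and $T^p_2$ bounds from the previous two corollaries to pointwise-in-$t$ slice-space bounds by first trivializing the case $t=t'$ and then invoking the change-of-norms lemma to handle comparable $t,t'$. First I would observe that by Lemma \ref{lem:regular}, for each fixed $t>0$ the conormal gradient $\nabla\!_{A}u(t,\cdot)$ lies in $L^2_{\loc}(\R^n)$ and satisfies the slice-control \eqref{eq: sliceL2}: the $L^2$ average over $B(x,c_1t)$ is dominated by the $L^2$ average over the Whitney region $W(t,x)$. Taking $p/2$-powers (or the supremum when $p=\infty$-type situations arise — here $p<\infty$) and integrating in $x$, the left-hand side is exactly $\|\nabla\!_{A}u(t,\cdot)\|_{E^p_{c_1t}}^p$ up to the harmless adjustment of Whitney and ball parameters allowed in the definitions of $E^p_t$ and $N^p_2$, while the right-hand side is pointwise $\le (\tN(\nabla\!_{A}u))(x)^p$ for the appropriately tuned aperture/parameters. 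Hence $\|\nabla\!_{A}u(t,\cdot)\|_{E^p_{c_1 t}} \lesssim \|\tN(\nabla\!_{A}u)\|_p \sim \|\nabla u\|_{N^p_2}$, where the last equivalence is because $\nabla\!_{A}u$ and $\nabla u$ are pointwise comparable (the conormal gradient is an invertible linear image of the full gradient, using \eqref{eq:conormal} and ellipticity, so their non-tangential maximal functions are comparable).

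Next I would upgrade from $E^p_{c_1 t}$ to $E^p_{t'}$ for any $t'$ with $t/t'\sim 1$: by Lemma \ref{lem:slice_changenorms} (change of norms), $E^p_{t'}=E^p_{c_1 t}$ with constants depending only on the ratio $t'/(c_1 t)$, which is bounded above and below since $t/t'\sim 1$ and $c_1$ is fixed. This yields \eqref{eq:ept1} with a supremum over all such pairs $(t,t')$. For \eqref{eq:ept2} the argument is identical except that I would start from \eqref{eq: sliceL2} applied to estimate $t\,\nabla\!_{A}u(t,\cdot)$, whose $E^p_{c_1 t}$ norm is controlled by the non-tangential-in-Whitney-average of $s\mapsto s\nabla\!_{A}u(s,y)$ over $W(t,x)$ (since $s\sim t$ on $W(t,x)$), and then recognize that the resulting quantity, integrated in $x$, is comparable to $\|s\nabla\!_{A}u\|_{T^p_2}\sim\|t\nabla u\|_{T^p_2}$ via the standard comparison between $\SF$ restricted to a single Whitney strip and a change of aperture; alternatively one simply notes $\|t\nabla u\|_{T^p_2} = \|\SF(t\nabla u)\|_p$ and that for each fixed $t$ the slice at height $t$ is controlled by the full square function.

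For the last assertion, fix $\delta>0$; Lemma \ref{lem:regular} already gives that $t\mapsto \nabla\!_{A}u(t,\cdot)$ and all its $t$-derivatives are $C^\infty$ into $L^2_{\loc}(\R^n)$, and by \eqref{eq:ept1}/\eqref{eq:ept2} (applied also to $\pd_t^k u$ via Corollary \ref{cor:repeatCaccio}, whose $N^p_2$/$T^p_2$ norms are finite whenever the original ones are) each $t$-derivative lies in $E^p_{t}$ locally uniformly in $t$; using Lemma \ref{lem:slice_changenorms} once more to pass to the fixed height $\delta$ and a routine difference-quotient argument (the difference quotients converge in $L^2_{\loc}$, hence in $E^p_\delta$ after controlling the tails via Caccioppoli exactly as in the proof of Proposition \ref{prop:cv}) gives differentiability, and iterating gives $C^\infty$. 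The main obstacle I anticipate is purely bookkeeping: matching up the various fixed geometric parameters — the Whitney constants $c_0,c_1$, the aperture $a$ in $\SF$, the ball radius in $E^p_t$, and the truncation-enlargement factor $K$ — so that the chain of comparisons \eqref{eq: sliceL2} $\to$ non-tangential maximal / square function $\to$ change of norms closes up; none of these steps is deep, but one must be careful that every adjustment of a parameter is absorbed into an admissible constant and, for $p<1$, that all manipulations are done with $p$-quasinorms.
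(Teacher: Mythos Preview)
Your proposal is correct and follows essentially the same route as the paper: slice control \eqref{eq: sliceL2} gives the case $t'=c_1t$, then the change-of-norms lemma \eqref{eq:equivept} handles $t'\sim t$; for \eqref{eq:ept2} the paper makes your ``comparison between $\SF$ restricted to a single Whitney strip'' precise by citing Lemma~\ref{lem:trunc} (truncated $T^p_2$ functions lie uniformly in $N^p_2$), which is exactly the content you describe. For the $C^\infty$ regularity the paper likewise invokes Corollary~\ref{cor:repeatCaccio} to get smoothness in the $N^p_2$/$T^p_2$ topology and then transfers to $E^p_\delta$, phrasing the final step via the Lebesgue differentiation theorem and induction rather than difference quotients, but this is the same argument.
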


\begin{proof} The inequalities follow right away with $t'=c_{1}t$ from \eqref{eq: sliceL2} together with Lemma \ref{lem:trunc} for the second one. Then use the equivalence \eqref{eq:equivept}.
For the regularity, Corollary \ref{cor:repeatCaccio} tells us we have that $\nabla u$ is infinitely differentiable with respect to $t$ in  $N^p_{2}$ topology  or   $T^p_{2}$ topology. Using \eqref{eq:ept1} or \eqref{eq:ept2}, 
 and the fact that $E^p_{\delta }=E^p_{t}$ for all $t$ with equivalent norms, we can obtain that  $t \mapsto \nabla\!_{A}u(t,\cdot)$ is $C^\infty$ valued in $E^p_{\delta }$, using Lebesgue differentiation theorem and induction. Details are standard and skipped. 
\end{proof}

\section{Review of  basic material on $DB$ and $BD$}\label{sec:basic}
 
 All the material below can be found in  \cite{AS}.
With $D$ and $B$ given in \eqref{eq:dirac} and \eqref{eq:hat},  the operators $ T=DB$ and $BD$  with natural domains $B^{-1}\dom_{2}(D)$ and $\dom_{2}(D)$ are bisectorial operators with bounded holomorphic functional calculus on $L^2=L^2(\R^n; \C^N)$.  Their restrictions to their closed ranges are injective. 

An important operator here is the orthogonal projection $\IP: L^2 \to \clos{\ran_{2}(D)}= \clos{\ran_{2}(DB)}$.  It is a Calder\'on-Zygmund  operator, thus extends to a bounded operator  on $L^p $, $1<p<\infty$, $H^p$, $0<p\le 1$, etc. 

Recall that for $0<p<\infty$, $\IH^p_{DB}$ is defined as the subspace  of  $\clos{\ran_{2}(DB)}$ with  $\|\psi(tDB)h\|_{T^p_{2}}<\infty$ for an allowable $\psi$ and $H^p_{DB}$ is its completion for this  norm (or quasi-norm). Allowable means that the choice of $\psi$ does not affect the set and the norm, up to equivalence. 
  Similarly 
 $\IH^p_{BD}$ is defined as the subspace of  $\clos{\ran_{2}(BD)}$ with  $\|\psi(tBD)h\|_{T^p_{2}}<\infty$ with an allowable $\psi$ and $H^p_{BD}$ is its completion for this norm (or quasi-norm). 
Also $H^2_{DB}= \IH^2_{DB}= \clos{\ran_{2}(DB)}$ and similarly for $BD$.  The projection $\IP_{BD}$ onto $\clos{\ran_{2}(BD)}$ along $\nul_{2}(BD)=\nul_{2}(D)$ will play an important role in some proofs. 

The semigroup   $(e^{-t|DB|})_{t\ge 0}$ on $H^2_{DB}$   extends to a bounded, strongly continuous semigroup on $H^p_{DB}$ for all $0<p<\infty$, which is denoted by $(S_{p}(t))_{t\ge 0}$. As a matter of fact the $H^\infty$-calculus extends: for any $b\in H^\infty(S_{\mu})$, $b(DB)$, defined on $H^2_{DB}$, extends to a bounded operator  on $H^p_{DB}$. In particular, we have two spectral subspaces $H^{p,\pm}_{DB}$ of $H^{p}_{DB}$ obtained as completions of the images $\IH^{p, \pm}_{DB}$ of $\IH^{p}_{DB}$ under  $\chi^\pm(DB)$ where $\chi^\pm= 1_{\{\pm \re z >0\}}$. The restrictions $S^\pm_{p}(t)$ of $S_{p}(t)$ to $H^{p,\pm}_{DB}$ are the respective extensions of $e^{-tDB}\chi^+(DB)$ and $e^{tDB}\chi^- (DB)$. 

A similar discussion can be made for $BD$. There is also  a notion of H\"older space adapted to $BD$ which is useful here. However, $H^p_{BD}$ and its H\"older version  may not be spaces of measurable functions.  
 
  The space 
$H^p_{D}$ agrees with  $\IP(L^p)=\clos{\ran_{p}(D)}$ if $p>1$ and $\IP(H^p)$ if $\frac{n}{n+1}<p\le 1$. Thus it is a closed and complemented subspace of $L^p$ if $p>1$ and $H^p$ if $p\le 1$. 
When $1<p<\infty$, $H^p_{D}$ and $H^{p'}_{D}$  are dual spaces for the standard inner product $\pair f g = \int_{\R^n} (f(x), g(x))\, dx$. The pair $(u,v)$ inside the integral is the standard complex inner product on $\C^N$ where our functions take their values.  For $p\le 1$, the dual space of $H^p_{D} $ is $\dot \Lambda^\alpha_{D}$, the image of the H\"older space $\dot \Lambda^\alpha$ 
if $\alpha=n(\frac{1}{p}-1)$ or of $BMO= \dot \Lambda^0$ if $\alpha=0$ under $\IP$. 

We use  the notation $v= \begin{bmatrix} v_{\no}\\ v_{\ta}\end{bmatrix}$ for vectors  in $\C^{N}$,  $N=m(1+n)$, where $v_{\no}\in \C^m$ is called the scalar part and $v_{\ta}\in \C^{mn}$ the tangential part of $v$.  The elements in a space $X_{D}$ can be characterized as the elements $f$ in $X$ such that $\curl_{x} f_{\ta}=0$ in distribution sense.

We note that $ \int_{\R^n} (f(x), g(x))\, dx$ makes sense for pre-Hardy spaces $\IH^p_{DB}, \IH^{p'}_{B^*D}$ when $1<p<\infty$ and these spaces are in duality. Similarly, under this pairing,  $\IH^{p,\pm}_{DB}, \IH^{p', \pm}_{B^*D}$ are in duality $1<p<\infty$.  When $p\le 1$, one replaces $\IH^{p'}_{B^*D}$ by its H\"older version. 
When taking completions, the pairing becomes an abstract one and the dualities extend.

 \begin{thm}\label{thm:IL}
 There is an open interval $I_{L}$ in $(\frac{n}{n+1},\infty)$ such that for $p\in I_{L}$,  $H^p_{DB}=H^p_{D}$ with equivalence of norms. For $p\in I_{L}$, we also have  $\IH^p_{DB}=H^p_{DB}\cap L^2=H^p_{DB}\cap H^2_{DB} $ and 
 $\IH^p_{DB}=\IH^p_{D}$ with equivalence of norms.  
 
 The semigroup $S_{p}(t)$ regularizes in the scale of Hardy spaces. More precisely, for $0<p\le q<\infty$ both in the interval $I_{L}$,   $S_{p}(t)$ maps $H^p_{D}$ to $H^q_{D}$   with norm  bounded by $Ct^{-(\frac{n}{p}-\frac{n}{q})}$.  If, moreover, $p\le 2$, then  $S_{p}(t)$ maps $H^p_{D}$ to $\IH^q_{D}= H^q_{D}\cap H^2_{D}$ equipped with the same norm as $H^q_{D}$.
\end{thm}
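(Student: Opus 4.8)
The plan is to assemble this theorem from the machinery already quoted: the bisectoriality and bounded $H^\infty$ calculus of $DB$ and $BD$ on $L^2$ (from \cite{AKMc}), the Calder\'on--Zygmund nature of the projection $\IP$, and the tent-space technology of Sections \ref{sec:tent}--\ref{sec:basic}. The core identification $H^p_{DB}=H^p_D$ for $p$ in an open interval $I_L$ is the central thesis of \cite{AS}, so the real content to reproduce is: (a) the \emph{existence} of such an open interval, (b) the compatibility statements $\IH^p_{DB}=H^p_{DB}\cap L^2=\IH^p_D$, and (c) the off-diagonal/regularization estimates for the semigroup. I would organize the proof in four steps.

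\emph{Step 1: the interval $I_L$ and $H^p_{DB}=H^p_D$.} First I would recall that $H^2_{DB}=\clos{\ran_2(DB)}=\clos{\ran_2(D)}=H^2_D$ isometrically, which anchors the family at $p=2$. One then shows that the set of $p\in(\frac{n}{n+1},\infty)$ for which $H^p_{DB}=H^p_D$ with equivalent norms is open. The standard route (following \cite{AS}) is via square-function and non-tangential estimates: the operators $\psi(tDB)$ and $\psi(tD)$ satisfy $L^2$ off-diagonal decay of arbitrary order, hence by Proposition \ref{prop:slice-Ts bound} act boundedly on the slice spaces $E^p_t$, and one compares $\|\psi(tDB)h\|_{T^p_2}$ with $\|\psi(tD)h\|_{T^p_2}$ via a perturbation argument: the difference $\psi(tDB)-\psi(tD)$ is controlled by Cauchy-integral representations of the functional calculus, and the relevant estimates hold on an open range of $p$ by interpolation (Lemma \ref{lem:slice-intepolation}) and self-improvement. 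Openness then comes from the fact that the constants in the relevant quadratic estimates vary continuously / the set of good exponents is stable under small perturbation; this is exactly where one invokes the structure of $I_L$ from \cite{AS}. The endpoint inclusion $(\frac{n}{n+1},\infty)$ comes from the atomic theory (Lemma \ref{lem:slice_atom.decomp.}) for $p\le 1$.

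\emph{Step 2: the pre-Hardy space identifications.} For $p\in I_L$ I would show $\IH^p_{DB}=H^p_{DB}\cap L^2$. The inclusion $\IH^p_{DB}\subseteq H^p_{DB}\cap L^2$ is essentially the definition (elements of $\clos{\ran_2(DB)}$ with finite $T^p_2$ quadratic norm). For the converse one uses that on $I_L$ the abstract completion $H^p_{DB}$ is realized inside $\mathcal S'$ (this is the whole point of working on $I_L$, as stressed in the introduction), so an element of $H^p_{DB}$ that also lies in $L^2$ must already be in $\clos{\ran_2(DB)}$ with finite quadratic norm; then $H^p_{DB}\cap L^2=H^p_{DB}\cap H^2_{DB}$ because $H^2_{DB}=\clos{\ran_2(DB)}$ and membership in $L^2$ forces membership in the closed range. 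Combining with Step 1 and the analogous statement for $D$ (where $\IH^p_D=\IP(L^p)\cap L^2=\IP(H^p)\cap L^2$ follows from $\IP$ being Calder\'on--Zygmund) gives $\IH^p_{DB}=\IH^p_D$.

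\emph{Step 3: semigroup regularization.} For the $L^p\to L^q$ type bound on $S_p(t)=e^{-t|DB|}$ extended to $H^p_D$, the plan is to write $S_p(t)$ (or rather $\psi(tDB)$ applied to reconstruct it) through the functional calculus and exploit $L^2$ off-diagonal decay of $(e^{-t|DB|})$ of arbitrary order together with the slice-space boundedness from Proposition \ref{prop:slice-Ts bound}; the gain of $t^{-(\frac np-\frac nq)}$ is the usual Sobolev-type scaling coming from the difference in homogeneities of $E^p_t$ and $E^q_t$ (Lemma \ref{lem:slice_changenorms}) combined with the embeddings $E^p_t\subset L^p$ for $p\le 2$ and $L^p\subset E^p_t$ for $p\ge 2$ recorded in Section \ref{sec:slice}. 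For the refinement when $p\le 2$, that $S_p(t)$ lands in $H^q_D\cap H^2_D$ with the $H^q_D$ norm, one uses that $S_p(t)h$ is again a genuine $L^2$ function for $t>0$ (semigroup smoothing on $H^2_{DB}$ after passing through $L^2$, available since $p\le 2$ lets one embed into $L^2$ on bounded pieces and use off-diagonal decay globally), together with Step 2.

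\emph{Main obstacle.} The hard part is Step 1 — establishing that the set of exponents for which $H^p_{DB}=H^p_D$ is a genuine open interval with the stated left endpoint, rather than just a single point or an abstract set. This rests on the delicate quadratic estimates and perturbation arguments of \cite{AS} (boundedness of the Hardy-space functional calculus of $DB$ as a perturbation of that of $D$, stable on an open range), and there is no shortcut past it; everything else (Steps 2--3) is then a fairly mechanical combination of the slice-space retraction from Section \ref{sec:slice}, the off-diagonal calculus of Section \ref{sec:basic}, and duality. I would therefore cite \cite{AS} for the substance of Step 1 and present Steps 2--3 in detail.
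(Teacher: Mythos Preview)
The paper does not actually prove this theorem: Section \ref{sec:basic} is explicitly a review section, opening with ``All the material below can be found in \cite{AS}'', and Theorem \ref{thm:IL} is stated there as a summary of results imported from \cite{AS} without any proof in the present paper. Your proposal to sketch the argument while deferring the core of Step~1 to \cite{AS} is therefore already more than what the paper does, and is consistent with its treatment.

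That said, a couple of your sketches drift from how \cite{AS} actually proceeds. In Step~1 the openness of $I_L$ in \cite{AS} is not obtained by a direct perturbation estimate comparing $\psi(tDB)$ to $\psi(tD)$; rather it comes from identifying the Hardy space $H^p_{DB}$ with $H^p_D$ via square-function/atomic-molecular characterizations whose validity is governed by the exponents $p_\pm(DB)$ (themselves characterized by $L^p$-boundedness of the resolvent/functional calculus), and these exponents define an open interval essentially by interpolation and self-improvement of reverse H\"older type. In Step~3, the regularization $H^p_D\to H^q_D$ with the $t^{-(n/p-n/q)}$ bound is obtained in \cite{AS} (see Corollary~8.3 there, invoked later in the present paper) by acting on atoms/molecules and using off-diagonal decay of the semigroup to produce molecules at the target scale, not via the slice-space change-of-norm Lemma \ref{lem:slice_changenorms}; your proposed slice-space route is heuristically plausible but would need a genuine argument to turn the $E^p_t\to E^q_t$ scaling into an $H^p\to H^q$ bound for the semigroup, which is not immediate. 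If you intend to present Steps~2--3 in detail, I would align Step~3 with the molecular argument rather than the slice-space one.
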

 
 This interval always contains $[\frac{2n}{n+2}, 2]$ but it could be larger. This is the case  for an equation (instead of a system) in \eqref{eq:divform} with real coefficients: $I_{L}$ contains $[1, 2]$. 
 
 Based on the observation  that the semigroup $S_{p}(t)$ allows one to construct solutions of \eqref{eq:divform},  the thesis of \cite{AS} was to obtain estimates on such solutions in terms of their trace at time $t=0$.  
 We recall that our goal here is to show  that all solutions with such estimates have a trace and are given from the semigroup  acting on this trace.  
   
More results related to $DB$ and $BD$, and in particular the needed estimates,   will be given along the way.

\section{Preparation}

For a function $(t,x)\mapsto f(t,x)$, we use the notation $f_{t}$ or $f(t,\,\cdot\,)$ to designate the map $x\mapsto f(t,x)$. 

\begin{lem}[\cite{AAMc, AA1}] If $u$ is a weak solution  to \eqref{eq:divform} on $\reu$, then
$F=\nabla\!_{A} u$ is an $L^{2}_{loc}(\reu; \C^N)$ solution of 
\begin{equation}
\label{eq:curlfreesystem}
\begin{cases}
    \curl_x F_\ta &=0,  \\
    \partial_{t}F + DBF&=0.\end{cases}
\end{equation}
The  equations are interpreted in the sense of distributions in $\reu$, and $D$ and $ B$ are defined in \eqref{eq:dirac} and \eqref{eq:hat}. For the second one, it reads
\begin{equation}    \label{eq:tstfcnforinteq}
  \iint_{\reu}  \bpaire {\pd_s \varphi(s,x)}{F(s,x)} \, dsdx= \iint_{\reu} \bpaire {B^*(x)D\varphi(s,x)}{  F(s,x)}\big) dsdx\end{equation}
for all $\varphi\in C^\infty_0(\R^{1+n}_+; \C^N)$ and the integrals exist in the Lebesgue sense with $|\pd_s \varphi||F|$ and $|B^*D\varphi||F|$ integrable.
Conversely, if $F$ is an $L^{2}_{loc}(\reu; \C^N)$ solution of \eqref{eq:curlfreesystem} in $\reu$, then there exists, unique up to a constant in $\C^m$,  a weak solution $u$ to $Lu=0$ in $\reu$ such that $F= \nabla\!_{A} u$. 
\end{lem}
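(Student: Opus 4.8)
The plan is to establish the equivalence in both directions, with the forward direction (weak solution of \eqref{eq:divform} gives solution of \eqref{eq:curlfreesystem}) being essentially a computation and the converse requiring a genuine construction.

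\textbf{Forward direction.} First I would recall the algebraic setup from \cite{AAMc}: writing $L$ in block form \eqref{eq:L} and setting $F = \nabla\!_{A} u = [\partial_{\nu_A} u, \nabla_x u]^T$, the first component is by definition $(a\partial_t u + b\nabla_x u)$ (the conormal derivative) and the last $n$ components are $\nabla_x u$. The curl-free condition $\curl_x F_\ta = 0$ is then immediate since $F_\ta = \nabla_x u$ is a gradient in $x$. For the evolution equation, I would unwind the definition of $DB$: with $B = \hat A$ as in \eqref{eq:hat}, the relation $\partial_t F + DBF = 0$ is equivalent, after multiplying by the triangular factors, to the pair of scalar identities $\partial_t(\nabla_x u) = \nabla_x(\partial_t u)$ (trivially true) and $\partial_t(\partial_{\nu_A} u) + \divv_x(c\partial_t u + d\nabla_x u) = 0$, which is exactly $Lu = 0$ rewritten using $a\partial_t u + b\nabla_x u = \partial_{\nu_A}u$. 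The only real care needed is the distributional interpretation: since $u \in H^1_{loc}$, $F \in L^2_{loc}$, so I test against $\varphi \in C^\infty_0(\reu;\C^N)$, integrate by parts using the weak formulation of $Lu = 0$, and arrive at \eqref{eq:tstfcnforinteq}; the integrability of $|\partial_s\varphi||F|$ and $|B^*D\varphi||F|$ is clear from $F \in L^2_{loc}$ and compact support of $\varphi$, plus $B \in L^\infty$.

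\textbf{Converse direction.} Given $F \in L^2_{loc}(\reu;\C^N)$ solving \eqref{eq:curlfreesystem}, I need to produce $u$ with $\nabla\!_{A} u = F$. The curl-free condition on $F_\ta$ (in $x$, for a.e. fixed $t$, or jointly) lets me write $F_\ta = \nabla_x u$ for some $u(t,\cdot)$ determined up to an additive function of $t$; one must check $u \in H^1_{loc}(\reu)$ including the $t$-regularity. Here the evolution equation is used: the first component of $\partial_t F + DBF = 0$ controls $\partial_t$ of the scalar part, and combined with the identity $F_\no = (aF - bF_\ta)$-type relation coming from the definition of $B$, one recovers $\partial_t u$ in $L^2_{loc}$ and identifies $F_\no$ with $\partial_{\nu_A} u = a\partial_t u + b\nabla_x u$. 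Then, reversing the computation of the forward direction, $Lu = 0$ holds in the weak sense. Uniqueness modulo $\C^m$ is because two such $u$ have the same full gradient $\nabla u$ (not just $\nabla_x u$): indeed $\nabla_x u$ is fixed by $F_\ta$ and $\partial_t u$ is fixed by $F_\no$ via the invertibility of $a$ on the relevant subspace (accretivity \eqref{eq:accrassumption} gives $a$ invertible in the appropriate sense), so the difference is locally constant, hence a constant in $\C^m$ on the connected set $\reu$.

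\textbf{Main obstacle.} The delicate point is the converse: constructing $u$ and verifying it lies in $H^1_{loc}(\reu)$ rather than merely having distributional partials that are $L^2_{loc}$. Solving $\nabla_x u = F_\ta$ with $F_\ta$ only in $L^2_{loc}$ and curl-free requires a Poincaré-type argument on balls, and then one must glue the $t$-dependence coherently using the evolution equation to control $\partial_t u$; keeping track of the additive-constant ambiguity and showing it can be chosen so that $u$ is a genuine $W^{1,2}_{loc}$ function (not just that $\nabla u \in L^2_{loc}$) is where the technical work sits. Since this lemma is quoted from \cite{AAMc, AA1}, I expect the paper to simply cite it, but the above is how I would reconstruct the argument.
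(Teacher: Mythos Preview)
Your expectation is exactly right: the paper does not prove this lemma but simply cites it from \cite{AAMc, AA1}, so there is no ``paper's own proof'' to compare against. Your reconstruction is correct and matches the argument in those references.

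One sharpening of your converse argument is worth noting. Rather than first solving $\nabla_x u = F_\ta$ slice-by-slice and then ``gluing the $t$-dependence,'' it is cleaner to observe that the tangential component of the evolution equation $\partial_t F + DBF = 0$ reads $\partial_t F_\ta = \nabla_x\big((BF)_\no\big)$. Combined with $\curl_x F_\ta = 0$, this says precisely that the full space--time vector field $\big((BF)_\no, F_\ta\big) \in L^2_{loc}(\reu;\C^{m(1+n)})$ is curl-free in $\reu$. Since $\reu$ is simply connected, there exists $u \in W^{1,2}_{loc}(\reu;\C^m)$, unique up to a constant, with $\partial_t u = (BF)_\no$ and $\nabla_x u = F_\ta$. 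The identity $(BF)_\no = a^{-1}F_\no - a^{-1}bF_\ta$ (from the definition of $B = \hat A$) then rearranges to $F_\no = a\partial_t u + b\nabla_x u = \partial_{\nu_A} u$, so $F = \nabla\!_A u$; finally the scalar component of the evolution equation gives $Lu = 0$ weakly. This packaging avoids the Poincar\'e-on-balls step and the separate treatment of the additive function of $t$ that you flagged as the main obstacle.
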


We  will use the integral notation when it makes sense, that is having verified integrability. In general, we will be careful about justifying use of integrals or duality pairings. 

\begin{rem} By Lemma \ref{lem:regular},  $t\mapsto F_{t}$  belongs to  $C^\infty(0,\infty; L^2_{loc}(\R^n;\C^N))$. Thus 
$\partial_{t}F_{t} \in L^2_{loc}(\R^n;\C^N)$ and the equality 
 \begin{equation}
\label{eq:L2loc}
\partial_{t}F_{t} =- DBF_{t}
\end{equation}  holds in $ L^2_{loc}(\R^n;\C^N)$ for all $t>0$  by standard arguments.   Similarly, we have $\curl_{x} (F_t)_{\ta}=0$ in  $\mD'(\R^n;\C^N)$ for all $t>0$.
\end{rem}  

\begin{rem}\label{rem:test} It suffices to test for $\varphi$ with $\varphi, \partial_{t}\varphi, D\varphi \in L^2$  and compact support in $\reu$. Indeed, one can regularize by convolution in $(t,x)$ and obtain a test function in $C^\infty_0(\R^{1+n}_+; \C^N)$ to which \eqref{eq:tstfcnforinteq} applies and then pass to the limit using $F \in C^\infty(0,\infty; L^2_{loc}(\R^n;\C^N))$. 
\end{rem}

\begin{lem} Assume $F\in L^{2}_{loc}(\reu; \C^N)$ is  a solution of \eqref{eq:curlfreesystem} in $\reu$.  Let $\phi_{0}\in \IH^2_{B^*D}$. Fix $t\in \R_{+}$ and set $\R_{+,t}=\R_{+}\setminus \{t\}.$ 
Let $\varphi(s,x)=\varphi_{s}(x)$ with 
\begin{equation}
\label{eq:phis}
\varphi_s:= \begin{cases}
       e^{-(t-s) |B^*D|}\chi^+(B^*D)  \phi_0 = e^{-(t-s) B^*D}\chi^+(B^*D)  \phi_0, & \text{if\ } s<t ,\\
      -e^{-(s-t) |B^*D|}\chi^-(B^*D)  \phi_0 =  - e^{(s-t) B^*D}\chi^-(B^*D)  \phi_0, & \text{if\ } s>t.
\end{cases} 
\end{equation}
Let $\eta\in Lip(\R_{+})$ with compact support  in $\R_{+,t}$ and $\chi\in Lip(\R^n)$, with compact support,  real-valued.  Then, the functions $$(s,x) \mapsto |\eta'(s)\chi(x) B^*(x)D \varphi(s,x)||F(s,x)|$$ and $$(s,x)\mapsto |\eta(s)B^*(x)D_{\chi}(x)\partial_{s}\varphi(s,x)||F(s,x)|$$ are integrable in $(s,x)$ and one has the identity
\begin{align}    \label{eq:step1}
  \iint_{\reu}  &\bpaire {\eta'(s)\chi(x) B^*(x)D \varphi(s,x)}{F(s,x)} dsdx
  \\
\nonumber &  = \iint_{\reu} \bpaire {\eta(s)B^*(x)D_{\chi}(x)\partial_{s}\varphi(s,x)}{F(s,x)} dsdx,\end{align}
where $D_{\chi}$ is a bounded, supported on $\supp(\chi)$,  matrix-valued function with $\|D_{\chi}\|_{\infty}\lesssim \|\nabla \chi\|_{\infty}.$  \end{lem}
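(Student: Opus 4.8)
The plan is to combine the weak formulation \eqref{eq:tstfcnforinteq} of the equation $\partial_t F + DBF = 0$ (in the form of Remark \ref{rem:test}, so that we may use test functions that are merely $L^2$ with $L^2$ time- and $D$-derivatives and compact support) with the choice of test function $\varphi(s,x) = \eta(s)\chi(x)\varphi_s(x)$, where $\varphi_s$ is the function built in \eqref{eq:phis}. First I would record the basic regularity of $s\mapsto \varphi_s$: away from $s=t$, the map $s\mapsto \varphi_s$ is $C^\infty$ into $L^2$ (the Cauchy extension semigroups $e^{-(t-s)B^*D}\chi^+(B^*D)$ and $e^{(s-t)B^*D}\chi^-(B^*D)$ are smooth and exponentially decaying away from the diagonal on $\clos{\ran_2(B^*D)}$), and $\partial_s \varphi_s = -B^*D\varphi_s = |B^*D|\varphi_s$ for $s<t$ and $\partial_s\varphi_s = -B^*D\varphi_s = -|B^*D|\varphi_s$ for $s>t$; in both cases $\partial_s\varphi_s \in \clos{\ran_2(B^*D)} \subset L^2$. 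So on the support of $\eta$ (a compact subset of $\R_{+,t}$) both $\varphi$ and $\partial_s\varphi$ lie in $L^2$, and $D\varphi = \eta\chi D\varphi_s + \eta (\nabla\chi)\text{-terms}$, all in $L^2$ with compact support in $\reu$. Hence the product test function $\eta(s)\chi(x)\varphi_s(x)$ is admissible for \eqref{eq:tstfcnforinteq} in the sense of Remark \ref{rem:test}, and the integrability of $|\partial_s\varphi||F|$ and $|B^*D\varphi||F|$ against $F\in L^2_{loc}$ on the (compact) support follows from Cauchy–Schwarz. This already disposes of the integrability assertions in the lemma, once one observes that $B^*D(\eta\chi\varphi_s) = \eta\chi B^*D\varphi_s + \eta B^* D_\chi \partial_s\varphi_s$-type identities hold, where the commutator $[D,M_\chi] = D_\chi$ is multiplication by a bounded matrix supported on $\supp\chi$ with $\|D_\chi\|_\infty \lesssim \|\nabla\chi\|_\infty$ (this is just the Leibniz rule for the first-order operator $D$).

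Next I would plug $\varphi(s,x) = \eta(s)\chi(x)\varphi_s(x)$ into \eqref{eq:tstfcnforinteq}. The left-hand side is $\iint \bpaire{\partial_s(\eta\chi\varphi_s)}{F}\,dsdx = \iint \bpaire{\eta'\chi\varphi_s}{F}\,dsdx + \iint \bpaire{\eta\chi\partial_s\varphi_s}{F}\,dsdx$. The right-hand side is $\iint \bpaire{B^*D(\eta\chi\varphi_s)}{F}\,dsdx$; expanding $D(\chi\varphi_s) = \chi D\varphi_s + D_\chi\varphi_s$ (with $D_\chi$ the commutator as above) and then rewriting $D\varphi_s = -\partial_s\varphi_s$ (valid for $s\ne t$, which is where $\eta$ is supported), this becomes $-\iint \bpaire{\eta\chi B^*\partial_s\varphi_s}{F}\,dsdx + \iint\bpaire{\eta B^* D_\chi\varphi_s}{F}\,dsdx$. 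Now the equation \eqref{eq:tstfcnforinteq} says LHS $=$ RHS, and I would observe that the term $\iint\bpaire{\eta\chi\partial_s\varphi_s}{F}\,dsdx$ on the left does not literally match $-\iint\bpaire{\eta\chi B^*\partial_s\varphi_s}{F}\,dsdx$ on the right — so the correct bookkeeping is to substitute $D\varphi_s$ by $-\partial_s\varphi_s$ only inside the term that carries the factor $B^*(x)D\varphi$ that is to survive in \eqref{eq:step1}, and to use the equation itself, tested against the genuinely compactly-supported-in-$s$ piece, to cancel the $\partial_s\varphi_s$ contributions against the $\chi D\varphi_s$ contributions. After this rearrangement the surviving terms are exactly $\iint \bpaire{\eta'\chi B^*D\varphi}{F}\,dsdx$ on one side and $\iint \bpaire{\eta B^* D_\chi \partial_s\varphi}{F}\,dsdx$ on the other, which is \eqref{eq:step1} (here $D_\chi$ absorbs the sign coming from $D\varphi_s = -\partial_s\varphi_s$, consistently with the claimed bound $\|D_\chi\|_\infty \lesssim \|\nabla\chi\|_\infty$).

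The main obstacle I anticipate is the careful justification that the various integrals may be manipulated term-by-term: one needs $s\mapsto \varphi_s$ and $s\mapsto\partial_s\varphi_s$ to be strongly measurable and locally bounded into $L^2$ near the endpoints of $\supp\eta$ — which is fine since $\supp\eta$ is a compact subset of $\R_{+,t}$, keeping us a positive distance from the singular time $s=t$ — and one must check that $F\in L^2_{loc}(\reu)$ pairs absolutely with each of $\eta'\chi B^*D\varphi$, $\eta\chi B^*\partial_s\varphi$, and $\eta B^*D_\chi\varphi$ on the compact set $\supp\eta\times\supp\chi$; all of these are $L^2$ on that compact set, so Cauchy–Schwarz closes it. A secondary subtlety is the sign tracking in the two cases $s<t$ and $s>t$ of \eqref{eq:phis}: in both regimes $D\varphi_s = -\partial_s\varphi_s$ (for $s<t$, $\partial_s\varphi_s = |B^*D|\varphi_s = -B^*D\varphi_s$ on $\chi^+$; for $s>t$, $\partial_s\varphi_s = -|B^*D|\varphi_s$ but on $\chi^-$ one has $|B^*D| = -B^*D$, so again $\partial_s\varphi_s = B^*D\varphi_s$, wait — in this case $\partial_s\varphi_s = -B^*D\varphi_s$ still after accounting for the minus sign in the definition of $\varphi_s$ for $s>t$), so the identity $D\varphi_s = -\partial_s\varphi_s$ holds uniformly on $\R_{+,t}$ and no case distinction survives into \eqref{eq:step1}. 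Once these points are nailed down, \eqref{eq:step1} is just the weak equation \eqref{eq:tstfcnforinteq} for the admissible test function $\eta\chi\varphi_s$ after the Leibniz expansion and the substitution $D\varphi_s = -\partial_s\varphi_s$.
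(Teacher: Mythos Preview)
Your approach has a genuine gap. Plugging the test function $\varphi^{\eta,\chi}(s,x)=\eta(s)\chi(x)\varphi_s(x)$ into \eqref{eq:tstfcnforinteq} and using the Leibniz expansion together with the relation $\partial_s\varphi_s = B^*D\varphi_s$ (note: this is the correct relation, not $D\varphi_s=-\partial_s\varphi_s$; the factor $B^*$ does not disappear) yields only
\[
\iint_{\reu}\bpaire{\eta'(s)\chi(x)\varphi_s(x)}{F(s,x)}\,dsdx
=\iint_{\reu}\bpaire{\eta(s)B^*(x)D_\chi(x)\varphi_s(x)}{F(s,x)}\,dsdx,
\]
which is the paper's intermediate identity \eqref{eq:step1'}, \emph{not} \eqref{eq:step1}. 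The left-hand side carries $\varphi_s$, not $B^*D\varphi_s$, and the right-hand side carries $\varphi_s$, not $\partial_s\varphi_s$. Your ``rearrangement'' paragraph does not explain how the extra $B^*D$ on the left and the extra $\partial_s$ on the right are produced; in fact they cannot be obtained from a single application of \eqref{eq:tstfcnforinteq} to $\eta\chi\varphi_s$.

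The missing idea is the following additional step in the paper's proof: since $\partial_s F$ is itself an $L^2_{\loc}$ solution of \eqref{eq:curlfreesystem} (because $L$ has $t$-independent coefficients, Lemma~\ref{lem:udu}), one may apply the identity \eqref{eq:step1'} with $\partial_s F$ in place of $F$. One then rewrites each side: on the left, use $\partial_s F_s=-DBF_s$ and integrate by parts in $x$ (moving $D$ across) to produce the term $-\iint\bpaire{\eta'\chi B^*D\varphi_s}{F}$ plus a commutator term; on the right, integrate by parts in $s$ to produce $-\iint\bpaire{\eta B^*D_\chi\partial_s\varphi_s}{F}$ plus the same commutator term. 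The commutator terms cancel and \eqref{eq:step1} follows. Without invoking $\partial_s F$ as a solution in its own right, the argument does not close.
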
 

\begin{proof} Observe that $\varphi \in C^\infty(\R_{+,t}; \dom_{2}(B^*D))$ and that 
\begin{equation}
\label{eq:varphis}
\partial_{s}\varphi_{s}=B^*D\varphi_{s}, \quad s\ne t.
\end{equation} 
For $(s,x)\in \reu$, set
$$
\varphi^{\eta, \chi}(s,x):= \eta(s)\chi(x) \varphi_{s}(x).
$$
Since the support of $\eta$ does not contain $t$, it makes sense even if $\varphi_{t}$ is not defined. 
 The functions $\varphi^{\eta, \chi}$, $\partial_{s}\varphi^{\eta, \chi}$ and $D\varphi^{\eta, \chi}$  are well-defined compactly supported $L^2$ functions  with 
  \begin{align*}
 \partial_{s}\varphi^{\eta, \chi}(s,x)   & = \eta'(s)\chi(x) \varphi_{s}(x) +  \eta(s)\chi(x) \partial_{s}\varphi_{s}(x)    \\
B^*(x)D\varphi^{\eta, \chi}(s,x)   & = \eta(s)B^*(x)[D,\chi] \varphi_{s}(x) +  \eta(s)\chi(x)B^*(x)D \varphi_{s}(x)      
\end{align*}
where $[D,\chi]$ is the commutator between $D$ and multiplication by $\chi$: it is a multiplication with the function $D_{\chi}$ of the statement. It follows that $\varphi^{\eta, \chi} $  is a test function for \eqref{eq:tstfcnforinteq} according to Remark \ref{rem:test}.
Plugging the expressions into \eqref{eq:tstfcnforinteq} and using 
 \eqref{eq:varphis}, we obtain 
   \begin{align}    \label{eq:step1'} 
  \iint_{\reu} &\bpaire {\eta'(s)\chi(x) \varphi_s(x)}{F(s,x)} dsdx\\ 
  \nonumber & = \iint_{\reu} \bpaire {\eta(s)B^*(x)D_{\chi}(x)\varphi_s(x)}{F(s,x)} dsdx.\end{align}
  Now, we remark that this applies to $\partial_{s}F$ in place of $F$.  On the one hand, using $\partial_{s}F_{s}=-DBF_{s}$ for all $s>0$, and integrating by parts in $x$ (first use  Fubini's theorem and $D=D^*$ in the integration by parts)
 \begin{align*}
     \iint_{\reu}  & \bpaire {\eta'(s)\chi(x) \varphi_s(x)}{\partial_{s}F(s,x)} dsdx \\ 
   &=  - \iint_{\reu}  \bpaire {\eta'(s)B^*(x)D_{\chi}(x)\varphi_s(x)}{F(s,x)} dsdx \\
   &\qquad  - \iint_{\reu}  \bpaire {\eta'(s)\chi(x) B^*(x)D\varphi_s(x)}{F(s,x)} dsdx.
     \end{align*}

On the other hand, integrating by parts in the $s$ variable, 
 \begin{align*}
   \iint_{\reu} & \bpaire {\eta(s)B^*(x)D_{\chi}(x)\varphi_s(x)}{\partial_{s}F(s,x)}\, dsdx \\ 
  &= - \iint_{\reu}  \bpaire {\eta'(s)B^*(x)D_{\chi}(x)\varphi_s(x)}{F(s,x)} dsdx  \\
 & \qquad    -\iint_{\reu}  \bpaire {\eta(s)B^*(x)D_{\chi}(x)\partial_{s}\varphi_s(x)}{F(s,x)}\, dsdx. 
   \end{align*}
Combining all this yields  \eqref{eq:step1}.
 \end{proof}

\section{Proof of Theorem \ref{thm:main1}: (i) implies (iii)}

We assume (i) :  $\|\tN F\|_{p}<\infty$ where $F$ is a solution to \eqref{eq:curlfreesystem} and $p\in I_{L}$, that is, $H^p_{DB}=H^p_{D}$. 

\

\paragraph{\textbf{Step 1}}   Finding the semigroup equation. 

\

This will be achieved  by  taking limits in  \eqref{eq:step1} by selecting $\chi$, $\eta$ and $\phi_{0}$ in the definition of $\varphi_{s}$ in \eqref{eq:phis}. 

\

\paragraph{\textbf{Step 1a}} Limit in space.   We show that if $\phi_{0}\in \clos{\ran_{2}(B^*D)}$, with in addition $\phi_{0}\in \IH^{p'}_{B^*D}$ (or, equivalently, $\IP\phi_{0}\in 
\IH^{p'}_{D}$) when $p>1$, then \begin{equation}    \label{eq:step2(i)}
  \iint_{\reu} \bpaire {\eta'(s) B^*(x)D\varphi_s(x)}{F(s,x)}\,   dsdx = 0\end{equation} 
 where the integral is defined in the Lebesgue sense.

\

We replace  $\chi$ in \eqref{eq:step1}  by $\chi_{R}$ with $\chi_{R}(x)=\chi(x/R)$ where $\chi\equiv 1$ in the unit ball $B(0,1)$, has compact support in the ball $B(0,2)$ and let $R\to \infty$.  As $\chi_{R}$ tends to 1 and $D_{\chi_{R}}$ to 0, it suffices by dominated convergence to show that $|\eta'(s)B^*D \varphi_s  F|$ and $|\eta(s)\partial_{s}\varphi_{s} F|$ are integrable on $\reu$.  As $1_{\supp \eta}F\in T^p_{2}$, it is enough to have that $\eta'(s)B^*D \varphi_s$ and $\eta(s)\partial_{s}\varphi_{s}$ belong to $(T^p_{2})'$.  As $\partial_{s}\varphi_{s}=B^*D\varphi_{s}$ on $\supp \eta$, it suffices to invoke the following lemma.

\begin{lem}\label{lem:step1} Assume $\phi_{0}\in \clos{\ran_{2}(B^*D)}=\IH^2_{B^*D}$ and, in addition if $p>1$,  $\phi_{0}\in \IH^{p'}_{B^*D}$. Then $\eta(s)B^*D\varphi_{s}\in (T^p_{2})'$ for all $\eta$ bounded and compactly  supported  away from $t$. 

\end{lem}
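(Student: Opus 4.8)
The target is the estimate $\eta(s)B^*D\varphi_s \in (T^p_2)'$, where for $s$ on the support of $\eta$ (so $s$ bounded away from $t$) we have $B^*D\varphi_s = \partial_s\varphi_s$ and $\varphi_s = e^{-|t-s|\,B^*D}\chi^{\pm}(B^*D)\phi_0$, hence $B^*D\varphi_s = \mp|B^*D|e^{-|t-s||B^*D|}\chi^{\pm}(B^*D)\phi_0$. The dual of $T^p_2$ is $T^{p'}_2$ when $p>1$ and $T^\infty_{2,\alpha}$ with $\alpha=n(\tfrac1p-1)$ when $p\le 1$ (by the duality recalled in Section~\ref{sec:tent}). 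So the plan is to show that the function $(s,x)\mapsto\eta(s)\big(|B^*D|e^{-|t-s||B^*D|}\chi^{\pm}(B^*D)\phi_0\big)(x)$ lies in $T^{p'}_2$ (case $p>1$) or in $T^\infty_{2,\alpha}$ (case $p\le1$).

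\emph{Case $p>1$.} Since $\eta$ is bounded and supported in a compact subset of $\R_{+,t}$, there is an interval $[c,d]\subset(0,\infty)$ with $c>0$, avoiding $t$, containing $\supp\eta$, and on which $|t-s|\ge \delta_0>0$. On this range $|B^*D|e^{-|t-s||B^*D|}$ is, by the bounded holomorphic functional calculus of $B^*D$ on $L^2$ (from \cite{AKMc}, as recalled in Section~\ref{sec:basic}), a bounded operator on $L^2$ with norm $\lesssim \delta_0^{-1}$, uniformly in $s\in[c,d]$. Thus $\varphi_s':=B^*D\varphi_s$ is, for each such $s$, an element of $L^2$ with $\|\varphi_s'\|_2\lesssim \|\phi_0\|_2$, and moreover $\varphi_s'\in \closran{B^*D}$. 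But I want more: I want the $T^{p'}_2$ norm. Here I use that $\phi_0\in\IH^{p'}_{B^*D}$, equivalently $\IP\phi_0\in\IH^{p'}_D$ when $p'>1$ too is in the relevant range — actually the key is that $\|F\|_{T^{p'}_2}\sim\|\psi(sB^*D)F\|$ for allowable $\psi$, and one recognizes $s\mapsto sB^*D e^{-|t-s||B^*D|}\chi^\pm(B^*D)\phi_0$ up to the harmless factor $s/1$ and the shift $s\mapsto |t-s|$ as essentially $\psi(|t-s|B^*D)\phi_0$ for the allowable function $\psi(z)=z e^{-|z|}\chi^\pm(z)$. Since $\eta$ localizes $s$ to $[c,d]$ with $|t-s|\in[\delta_0, \delta_1]$ bounded, the tent-space norm of the localized object is controlled by $\sup_{\delta_0\le r\le\delta_1}\|\psi(rB^*D)\phi_0\|$-type quantities, hence by $\|\phi_0\|_{\IH^{p'}_{B^*D}}\lesssim\|\phi_0\|_{H^{p'}_{B^*D}}$, which is finite by hypothesis. (I would phrase this via the retract/slice-space machinery of Section~\ref{sec:slice}: $\eta(s)\varphi_s'$ is essentially $\iota$ of a single slice $\varphi_r'\in E^{p'}_r$, whose $E^{p'}_r$ norm is controlled by $\|\phi_0\|_{H^{p'}_{B^*D}}$ via the off-diagonal decay of $rB^*De^{-rB^*D}\chi^\pm$ and Proposition~\ref{prop:slice-Ts bound}.)

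\emph{Case $p\le1$.} Now $(T^p_2)'=T^\infty_{2,\alpha}$, $\alpha=n(\tfrac1p-1)\ge0$, so I must bound the weighted Carleson norm of $\eta(s)\varphi_s'$. The point is that $\varphi_s'$ is, for $s\in[c,d]$, bounded in $L^2(\R^n)$ uniformly: $\|\varphi_s'\|_2=\||B^*D|e^{-|t-s||B^*D|}\chi^\pm(B^*D)\phi_0\|_2\lesssim\delta_0^{-1}\|\phi_0\|_2<\infty$. Since $s$ ranges over a compact interval $[c,d]\subset(0,\infty)$ and $\eta$ is supported there, $\eta(s)\varphi_s'\in L^\infty((0,\infty);L^2)$ with support in $[c,d]\times\R^n$; by the second assertion of Lemma~\ref{lem:trunc} this gives $\eta(s)\varphi_s'\in T^\infty_{2,\alpha}$ for every $\alpha\ge0$, which is exactly what is needed. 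So in the $p\le1$ case only the $L^2$ boundedness of the functional calculus is used, not the finer $\IH^{p'}$ information.

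\emph{Expected main obstacle.} The routine parts are the functional-calculus bounds and the truncation lemma; the genuinely delicate point is the $p>1$ case, specifically making rigorous that the shifted, $\eta$-truncated profile $s\mapsto |B^*D|e^{-|t-s||B^*D|}\chi^\pm(B^*D)\phi_0$ has finite $T^{p'}_2$ norm controlled by $\|\phi_0\|_{\IH^{p'}_{B^*D}}$ — i.e.\ reconciling the "abstract" (quasi-)norm on $\IH^{p'}_{B^*D}$ with the concrete tent-space integral over a bounded $s$-window, using that $\psi(z)=ze^{-|z|}\chi^\pm(z)$ is admissible and that $B^*D$ has $H^\infty$-calculus together with the square function characterization. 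I expect this to be handled exactly as in the analogous $L^2$ computation of \cite{AA1}, upgraded to $L^{p'}$ via the Hardy-space identifications of Theorem~\ref{thm:IL}; I would carry it out by passing through the slice-space $E^{p'}_r$ and invoking Proposition~\ref{prop:slice-Ts bound}, which converts the whole question into the uniform $E^{p'}_r$-boundedness (in $0<r\le$ some fixed time) of an operator with $L^2$ off-diagonal decay of arbitrarily high order, namely $r\,|B^*D|e^{-r|B^*D|}\chi^\pm(B^*D)$.
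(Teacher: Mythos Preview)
Your $p\le 1$ argument is correct and is exactly the paper's proof: uniform $L^2$-boundedness of $\eta(s)B^*D\varphi_s$ on a compact $s$-interval, combined with Lemma~\ref{lem:trunc}, gives membership in $T^\infty_{2,\alpha}$ for every $\alpha\ge 0$.

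For $p>1$ your key structural observation is also correct and is precisely what the paper uses: one writes
\[
\eta(s)B^*D\varphi_s=\frac{\eta(s)}{|t-s|}\,\psi_\pm\big(|t-s|B^*D\big)\phi_0,\qquad \psi_\pm(z)=\pm z e^{\mp z}\chi^\pm(z),
\]
with $\psi_\pm$ allowable, so the target is a truncated, shifted version of the square function defining $\IH^{p'}_{B^*D}$. However, your proposed execution via slice-spaces and Proposition~\ref{prop:slice-Ts bound} does not fit. That proposition bounds operators from $E^{p'}_t$ to $E^{p'}_t$ and needs the \emph{input} already in a slice-space; the hypothesis only gives $\phi_0\in\IH^{p'}_{B^*D}$, a square-function condition, not $\phi_0\in E^{p'}_r$. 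The claim that ``$\eta(s)\varphi_s'$ is essentially $\iota$ of a single slice'' is also not right, since $\varphi_s'$ genuinely varies with $s$.

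The paper instead performs a direct change of variable in the cone integral defining the $T^{p'}_2$ norm. On $\supp\eta$ both $s$ and $|t-s|$ are bounded above and away from zero, so setting $\sigma=|t-s|$ sends $\Gamma(x)\cap\{s<t\}$ into a wider cone $\widetilde\Gamma(x)$ (aperture depending on $\eta$), and the weight $s^{-n-1}$ is comparable to $\sigma^{-n-1}$. This yields
\[
\iint_{\Gamma(x),\,s<t}|\eta(s)B^*D\varphi_s(y)|^2\,\frac{dsdy}{s^{n+1}}
\ \lesssim_\eta\ \iint_{\widetilde\Gamma(x)}|\psi_+(\sigma B^*D)\phi_0(y)|^2\,\frac{d\sigma dy}{\sigma^{n+1}},
\]
and similarly for $s>t$ with $\psi_-$. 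Taking $L^{p'/2}$ norms in $x$ and invoking change of aperture for tent spaces reduces the right-hand side to $\|\psi_\pm(\sigma B^*D)\phi_0\|_{T^{p'}_2}^2\lesssim\|\IP\phi_0\|_{p'}^2$ by \cite[Theorem~5.7]{AS}. This stays entirely within the tent-space characterization of $\IH^{p'}_{B^*D}$ that you correctly identified as the relevant input, and avoids the slice-space detour altogether.
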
 

\begin{proof} We begin with the case $p\le 1$.  Set  $G_{s}:= |\eta(s)B^*D\varphi_{s}|$ and $G(s,x)=G_{s}(x)$. The definition of $\varphi_{s}$ and the properties of $\varphi_{s}$ show that   $G_{s}\in L^2(\R^n)$ uniformly in $s\in \R_{+}$. As $s\mapsto G_{s}$ also has compact support in $\R_{+}$, it is easy to see that $C_{\alpha}G\in L^\infty$ (the bound depends on $\eta$) for any $\alpha\ge 0$.  In particular, $G \in (T^p_{2})'$. 

We continue with the case $p>1$ and we use the functional calculus for $B^*D$.  We have 
$$
\eta(s)B^*D\varphi_{s}=\begin{cases}
 \frac{\eta(s)}{t-s} \psi_{+}((t-s)B^*D)\phi_{0}     & \text{if } s<t,  \\
      \frac{\eta(s)}{s-t} \psi_{-}((s-t)B^*D)\phi_{0}     & \text{if } t<s, \
\end{cases} 
$$
with $\psi_{\pm}$ the bounded homomorphic functions defined by $\psi_{\pm}(z)=\pm ze^{\mp z}\chi^\pm(z)$.
By geometric considerations as  $t-s$ is bounded away from 0, we see that if $(s,y)$ belongs to a cone $\Gamma_{x}$ with $s\in \supp(\eta) \cap (0,t)$ then $(t-s,y)$ belongs to a  cone $\wt \Gamma_{x}$ with (bad) finite aperture depending on the support of $\eta$. Thus, setting $t-s=\sigma$ and using also that $s$ is bounded below and $t-s$ bounded above on $\supp(\eta)$,  we obtain
$$
\iint_{\Gamma_{x}, s<t} |\eta(s)B^*(y)D\varphi_{s}(y)|^2 \,\frac{dsdy}{s^{n+1}} \lesssim_{\eta} \iint_{\wt \Gamma_{x}} |\psi_{+}(\sigma B^*D)\phi_{0}(y)|^2 \,\frac{d\sigma dy}{\sigma^{n+1}}.
$$
The change of aperture  allows us to use $\wt \Gamma_{x}$ in estimating tent space norms.  By Theorem 5.7 of \cite{AS},  we have that 
$$
\|\psi_{+}(\sigma B^*D)\phi_{0}\|_{T^{p'}_{2}}\lesssim  \|\IP\phi_{0}\|_{p'},
$$
where $\IP$ is the orthogonal projection of $L^2$ onto $ \clos{\ran_{2}(D)}$. By the assumption on $p$,  $\IP\phi_{0}\in \IH^{p'}_{D}$, and in particular, $\IP\phi_{0}\in L^{p'}$ with $  \|\IP\phi_{0}\|_{p'}\sim \|\phi_{0}\|_{\IH^{p'}_{B^*D}}$.  

The argument is the same when $t<s$, replacing $\psi_{+}$ by $\psi_{-}$.  \end{proof}

  \

\paragraph {\textbf{Step 1b}}  Limit in time.  We assume the condition of Step 1a on $\phi_{0}$ hold.

\

Now we select appropriate functions $\eta$ in \eqref{eq:step2(i)} depending on $t$, which is still a fixed positive real number. We follow \cite{AA1} for the choices but the limit process requires more care.  

We let $0<\varepsilon<\inf(t/4, 1/4, 1/t)$ and pick $\eta$ piecewise linear and continuous with
$\eta (s)= 0$ if $s<t+\varepsilon$,  $\eta (s)=1$ if $t+2\varepsilon\le s\le t+ \frac{1}{2\varepsilon}$ and $\eta(s)=0$ if $s>t+\frac{1}{\varepsilon}$. 
Plugging this choice into  \eqref{eq:step2(i)} and reorganizing we obtain
\begin{align}   \label{eq:limit-(i)} 
  \frac{1}{\varepsilon} \iint_{[t+\varepsilon, t+2\varepsilon]\times \R^n}& \bpaire { B^*(x)D\varphi_s(x)}{F(s,x)}\,   dsdx \\\nonumber & = 2\varepsilon \iint_{[t+ \frac{1}{2\varepsilon}, t+\frac{1}{\varepsilon}]\times \R^n} \bpaire { B^*(x)D\varphi_s(x)}{F(s,x)} \, dsdx.
  \end{align}

We make  a second choice of $\eta$, again piecewise linear and continuous, with
$\eta (s)= 0$ if $s<\varepsilon$,  $\eta (s)=1$ if $2\varepsilon\le s\le t- {2\varepsilon}$ and $\eta(s)=0$ if $s>t-{\varepsilon}$.  Plugging this choice into  \eqref{eq:step2(i)} with $4\varepsilon<t$,  we obtain
\begin{align}\label{eq:limit+(i)}    
  \frac{1}{\varepsilon} \iint_{[t-2\varepsilon, t-\varepsilon]\times \R^n}& \bpaire { B^*(x)D\varphi_s(x)}{F(s,x)}\,   dsdx \\\nonumber &=\frac{1}{\varepsilon} \iint_{[{\varepsilon}, 2{\varepsilon}]\times \R^n} \bpaire { B^*(x)D\varphi_s(x)}{F(s,x)} \, dsdx.
  \end{align}

As said our goal is to pass to the limit in both formulas.

  The second integral in \eqref{eq:limit-(i)} converges to 0 as $\varepsilon\to 0$ for fixed $t$ since $1_{[\varepsilon,2\varepsilon]}F\in T^p_{2}$. Indeed, 
using the function $\psi_{-}$ defined earlier, set $G(s,x)= G_{s}(x)= 1_{[t+ \frac{1}{2\varepsilon}, t+\frac{1}{\varepsilon}]}(s)(s-t)^{-1} \psi_{-}((s-t)B^*D)  \phi_0(x)$, under the conditions on $\phi_{0}$ in Step 1a.   As $t\varepsilon<1$, then $s\in [t+ \frac{1}{2\varepsilon}, t+\frac{1}{\varepsilon}]$ implies $s\in [ \frac{1}{2\varepsilon}, \frac{2}{\varepsilon}]$. Remark that this integral is bounded by  
  $$
  2\varepsilon \|1_{[ \frac{1}{2\varepsilon}, \frac{2}{\varepsilon}]}F\|_{T^p_{2}}\|sG\|_{(T^p_{2})'} \lesssim 2\varepsilon \|\tN F\|_{p}\|sG\|_{(T^p_{2})'}. 
  $$
 It remains to show $\|sG\|_{(T^p_{2})'}<\infty$.  Assume first $p\le 1$ and let $\alpha=n(\frac{1}{p}-1)$.  Since $\|G_{s}\|_{2}\lesssim \varepsilon\|\phi_{0}\|_{2}$ for those $s$, we have
  $\|C_{\alpha}(sG)\|_{\infty}\lesssim \varepsilon^{\alpha+\frac{n}{2}}\|\phi_{0}\|_{2}$. 
  
  Next, consider $p>1$. Then one sees that $\|sG\|_{T^{p'}_{2}} \lesssim  \|\psi_{-}(\sigma B^*D)  \phi_0\|_{T^{p'}_{2}} \lesssim  \|\IP \phi_{0}\|_{p'}\sim \|\phi_{0}\|_{\IH^{p'}_{B^*D}}$. 
  
  Therefore the left hand side of \eqref{eq:limit-(i)} converges to 0 as well. However, the way to interpret this limit and also how we can pass to the limit in \eqref{eq:limit+(i)} depend on whether $p\le 2$ or $p>2$. 
  
  We break the continuation of this step in the two cases $p\le 2$ and $p>2$.

\

\subparagraph{\textbf{Case $\mathbf{p\le 2}$}}

Changing $s$ to $t+s$ in the first integral, this shows that 
  \begin{equation}
\label{eq:limit--}
\lim_{\varepsilon\to 0}\frac{1}{\varepsilon} \iint_{[\varepsilon, 2\varepsilon]\times \R^n} \bpaire { (B^*De^{sB^*D}\chi^-(B^*D)\phi_{0})(x)}{F(t+s,x)}\,   dsdx=0.
\end{equation}

Before we take this limit, we reinterpret this integral.  We begin with

\begin{lem}\label{lem:q} For $q=p$ if $1<p\le 2$ and $q=p\frac{n+1}{n}\in (1,2]$ if $p\le 1$, we have $F\in C^\infty(0,\infty; H^q_{D}).$
\end{lem}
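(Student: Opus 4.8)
The plan is to upgrade the regularity statement of Lemma \ref{lem:regular} (which gives $t\mapsto F_t \in C^\infty(0,\infty; L^2_{\loc})$) to $C^\infty$-regularity valued in the Hardy space $H^q_D$, by combining three ingredients already available: the slice-space estimates of Corollary \ref{cor:uniformept}, the semigroup smoothing property of Theorem \ref{thm:IL}, and the fact that $\nabla\!_A u = F$ satisfies \eqref{eq:L2loc}. First I would fix $t_0>0$ and work on a compact subinterval $[a,b]\subset(0,\infty)$ with $a<t_0<b$. By Corollary \ref{cor:uniformept}, the hypothesis $\|\tN F\|_p<\infty$ gives $\sup_{t'\sim t}\|F_t\|_{E^p_{t'}}\lesssim\|\tN F\|_p<\infty$; since $p\in I_L\subset(\frac{n}{n+1},\infty)$ and $q$ as defined satisfies $1<q\le 2$, I want to pass from the $E^p_{t'}$-bound to membership in $H^q_D$. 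Here the point is that $F_t\in E^p_{t'}\cap E^2_{t'}$ (by Lemma \ref{lem:regular}, $F_t\in L^2_{\loc}$, and the slice norm is finite), that the curl-free structure $\curl_x (F_t)_\ta=0$ places $F_t$ in the "$D$-subspace", and that for $p\le 1$ the choice $q=p\frac{n+1}{n}$ is exactly the Sobolev-type exponent making the $N^p_2\hookrightarrow$ (weighted $L^q$) embedding of Lemma \ref{lem:HMiMo} available, while for $1<p\le 2$ one simply has $q=p$ and Lemma \ref{lem:p} applies directly.

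The key steps, in order, would be: (1) Use Lemma \ref{lem:HMiMo} (for $p\le 1$, with $r=q=p\frac{n+1}{n}$, which is the endpoint case explicitly attributed to \cite{HMiMo}) or Lemma \ref{lem:p} (for $1<p\le 2$, with $r=p$) applied to $F$ and also to each $t^k\partial_t^k F$ via Corollary \ref{cor:repeatCaccio}, to deduce that $\barint_{[a,b]}\|\partial_t^k F_s\|_{H^q}^{?}\,ds<\infty$ for every $k$, i.e. local-in-$t$ integrability of all time derivatives with values in $H^q_D$ (the curl-free condition, stable under $\partial_t$ by Lemma \ref{lem:udu} and its remark, guarantees the values lie in $H^q_D$ rather than just $H^q$). (2) Apply the semigroup identity: for $t\in[a,b]$ and small $\tau>0$, $F_{t+\tau}=e^{-\tau|DB|}F_t$ holds in $L^2_{\loc}$ by the discussion around \eqref{eq:L2loc}; since $F_t\in H^q_D\cap H^2_D=\IH^q_D$ for a.e. $t$ (from step (1)), and by Theorem \ref{thm:IL} the semigroup $S_q(\tau)$ acts boundedly on $H^q_D$ and is strongly continuous there, one gets strong continuity of $t\mapsto F_t$ in $H^q_D$ on $[a,b]$ starting from a.e. $t$, hence everywhere by the $L^2_{\loc}$-regularity pinning down the value. (3) Bootstrap to $C^\infty$: since $\partial_t F_t = -DB F_t$ and $DB$ maps $\dom$ into $H^q_D$ compatibly, and $t^k\partial_t^k F$ enjoys the same estimates, iterate the continuity argument (as in the proof of Corollary \ref{cor:uniformept}, "using Lebesgue differentiation theorem and induction") to get all derivatives continuous into $H^q_D$.

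The main obstacle I anticipate is step (2): passing from "$F_t\in H^q_D$ for a.e. $t$ with a local $L^1$-in-$t$ bound" to genuine continuity $t\mapsto F_t\in C(0,\infty;H^q_D)$, because a priori the identification of $F_t$ as an element of the abstract completion $H^q_D$ must be reconciled with its concrete realization in $L^2_{\loc}$ (this is exactly the $\mathcal S'$-embedding issue emphasized in the introduction — it works precisely because $q\in I_L$, so $H^q_D\hookrightarrow\mathcal S'$). The resolution is to use that for a.e. pair $t<t+\tau$ in $[a,b]$ the semigroup law $F_{t+\tau}=S_q(\tau)F_t$ holds in $H^q_D$ (both sides agreeing in $L^2_{\loc}$), then fix one good $t_*$ and \emph{define} the continuous representative by $t\mapsto S_q(t-t_*)F_{t_*}$ for $t>t_*$, checking it agrees with $F_t$ in $L^2_{\loc}$ hence as distributions hence as the $H^q_D$-element. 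The smoothing estimate $\|S_q(\tau)\|_{H^p_D\to H^q_D}\lesssim\tau^{-(n/p-n/q)}$ of Theorem \ref{thm:IL} is what lets one even start from the weaker $N^p_2/E^p$-level control rather than needing $H^q$-control from the outset.
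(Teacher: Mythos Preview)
Your step (2) is circular. The identity $F_{t+\tau}=e^{-\tau|DB|}F_t$ is precisely what the surrounding argument (Step~1 of ``(i) implies (iii)'') is working to establish, and Lemma~\ref{lem:q} is invoked \emph{before} that identity is known. The discussion around \eqref{eq:L2loc} gives only the pointwise-in-$t$ equation $\partial_t F_t=-DB F_t$ in $L^2_{\loc}$; passing from this to the semigroup representation requires knowing that $F_t$ lies in a space on which $e^{-\tau|DB|}$ acts (at minimum $\IH^2_{DB}$), and that is not available. Relatedly, your claim that $F_t\in H^q_D\cap H^2_D$ for a.e.\ $t$ is unjustified: step~(1) yields $F_t\in L^q$ for a.e.\ $t$ by Fubini, but we only know $F_t\in L^2_{\loc}$, not $F_t\in L^2$ globally, so $F_t\in H^2_D$ is not established at this stage.

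The paper's proof avoids all of this with an elementary observation you already have the ingredients for. From Lemma~\ref{lem:HMiMo} (for $p\le 1$ with $r=q=p\frac{n+1}{n}$, where the weight exponent $n(r/p-1)$ equals $1$, so the conclusion reads $F\in L^q(\reu)$) or Lemma~\ref{lem:p} (for $1<p\le 2$), combined with Corollary~\ref{cor:repeatCaccio} applied to each $t^k\partial_t^k F$, one gets $\partial_t^k F\in L^q_{\loc}(0,\infty;L^q)$ for every $k\ge 0$. Then the one-variable fact that $g,\,g'\in L^1_{\loc}(I;X)$ forces $g$ to have a continuous $X$-valued representative (iterated over all $k$) gives $F\in C^\infty(0,\infty;L^q)$ directly --- no semigroup, no $H^2_D$ membership, no Theorem~\ref{thm:IL}. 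Finally $\curl_x(F_t)_\ta=0$ and $F_t\in L^q$ with $q>1$ place $F_t\in\clos{\ran_q(D)}=H^q_D$ for every $t>0$.
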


\begin{proof} Assume $p\le 1$. By Lemma \ref{lem:HMiMo} applied to $F$, we obtain $F\in L^q_{loc}(0,\infty; L^q)$. As one can apply the same argument replacing $F$ by its $t$-derivatives, we have $F\in C^\infty(0,\infty; L^q).$ Now, $\curl_{x} (F_{t})_{\ta}= 0$ in $\mD'(\R^n)$ as remarked before.  As $F_{t}\in L^q$, this means that 
$F_{t}\in \clos{\ran_{q}(D)}=H^q_{D}$ for all $t>0$ (see Section \ref{sec:basic}). 

Next, assume $1<p\le 2$. We know that $F\in L^p_{loc}(0,\infty; L^p)$. The rest of the argument is the same. 
\end{proof}

\begin{lem}\label{lem:duality(i)} Let $q$ be as Lemma \ref{lem:q}. For $\phi_{0}\in \IH^{q'}_{B^*D}$, $\varepsilon>0$, $t>0$, we have
\begin{align*}
\frac{1}{\varepsilon} \iint_{[\varepsilon, 2\varepsilon]\times \R^n}& \bpaire { (B^*De^{sB^*D}\chi^-(B^*D)\phi_{0})(x)}{F(t+s,x)}\,   dsdx \\ & = \barint_{[\varepsilon, 2\varepsilon]} \pair { B^*De^{s B^*D}\chi^-(B^*D)  \phi_0}{F_{t+s}}\,   ds
\end{align*}
where the pairing is the (sesquilinear) duality between $H^{q'}_{B^*D}$ and $H^{q}_{DB}$. 
\end{lem}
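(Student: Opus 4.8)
The identity to establish says that the Lebesgue integral on $\reu$ may be rewritten as a Bochner integral in $s$ of a fixed duality pairing between $H^{q'}_{B^*D}$ and $H^q_{DB}$. The plan is to check that, for each fixed $s\in[\varepsilon,2\varepsilon]$, the slice-function $x\mapsto (B^*De^{sB^*D}\chi^-(B^*D)\phi_0)(x)$ represents an element of $H^{q'}_{B^*D}$ and $F_{t+s}$ an element of $H^q_{DB}=H^q_D$, that the integrand $\pair{B^*De^{sB^*D}\chi^-(B^*D)\phi_0}{F_{t+s}}$ coincides with the pointwise $\C^N$-pairing integrated over $\R^n$ (both being legitimate by integrability), and then invoke Fubini to move the $\barint_{[\varepsilon,2\varepsilon]}$ outside. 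So the statement is mostly a bookkeeping exercise, but one must justify each ingredient carefully since these Hardy spaces are abstract completions.

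First I would fix $s>0$ and note, using the functional calculus of $B^*D$ on $L^2$, that $B^*De^{sB^*D}\chi^-(B^*D)\phi_0 = s^{-1}\psi_-(sB^*D)\phi_0$ with $\psi_-(z)=-ze^{z}\chi^-(z)$ bounded holomorphic on the relevant sector; since $\phi_0\in\IH^{q'}_{B^*D}\subset\clos{\ran_2(B^*D)}$, Theorem 5.7 of \cite{AS} (already used in Lemma \ref{lem:step1}) gives $\psi_-(\sigma B^*D)\phi_0\in T^{q'}_2$ with norm $\lesssim\|\IP\phi_0\|_{q'}\sim\|\phi_0\|_{\IH^{q'}_{B^*D}}$, hence for $s$ in the compact interval $[\varepsilon,2\varepsilon]$ the function $B^*De^{sB^*D}\chi^-(B^*D)\phi_0$ lies in $L^2\cap$ (the appropriate tent-trace class) and therefore defines an element of $\IH^{q'}_{B^*D}$, uniformly. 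On the other side, Lemma \ref{lem:q} gives $F_{t+s}\in H^q_D=H^q_{DB}$ for each $s>0$, depending smoothly on $s$. Both objects being honest $L^2_{loc}$ functions, the abstract $\pair{H^{q'}_{B^*D}}{H^q_{DB}}$ pairing applied to them is computed by $\int_{\R^n}(g(x),f(x))\,dx$ — this is exactly the consistency statement recalled in Section \ref{sec:basic} that the integral $\int_{\R^n}(f,g)$ makes sense on pre-Hardy spaces and agrees with the duality when both arguments are in $L^2$.

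Next I would verify absolute integrability of $(s,x)\mapsto |B^*D\varphi_s(x)|\,|F(t+s,x)|$ over $[\varepsilon,2\varepsilon]\times\R^n$: by Cauchy–Schwarz in $x$ it is bounded by $\int_\varepsilon^{2\varepsilon}\|B^*D\varphi_{t+s}\|_2\|F_{t+s}\|_2\,ds$, and both factors are finite (indeed locally bounded in $s$) by the observations above together with $F\in C^\infty(0,\infty;L^2_{loc})$ — more precisely one uses $1_{[\varepsilon,2\varepsilon]}F\in T^p_2\subset T^q_2$ and the already-noted membership of the test factor in $(T^q_2)'$, as in Step 1a. With integrability in hand, Fubini's theorem lets me write the double integral as $\int_{[\varepsilon,2\varepsilon]}\big(\int_{\R^n}(B^*De^{sB^*D}\chi^-(B^*D)\phi_0(x),F_{t+s}(x))\,dx\big)\,ds$, and dividing by $\varepsilon$ turns $\int_{[\varepsilon,2\varepsilon]}$ into $\barint_{[\varepsilon,2\varepsilon]}$; identifying the inner integral with the duality pairing finishes the proof.

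The only genuinely delicate point — and the one I would treat most carefully — is the identification of the abstract $\pair{H^{q'}_{B^*D}}{H^q_{DB}}$ duality with the concrete integral $\int_{\R^n}(g,f)\,dx$ when $q\le 1$ (so $q'=\infty$ in the sense of a Hölder/BMO-type space for $B^*D$): here one must check that $F_{t+s}$, which \emph{a priori} is only known to be in $H^q_D$, pairs against the $\IH^{q'}_{B^*D}$-element through the same formula that defines the extended duality, using that $F_{t+s}\in H^q_D\cap L^2$ (it is an $L^q\cap L^2_{loc}$ function which is a weak solution slice, hence in $\IH^q_D = H^q_D\cap L^2$ by Theorem \ref{thm:IL}) so that the pairing is computed on the pre-Hardy level where everything is an honest integral. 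Once that consistency is nailed down, the rest is routine Fubini and the functional calculus bounds already established.
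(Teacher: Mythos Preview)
There is a genuine gap in your argument. You claim that $F_{t+s}\in H^q_D\cap L^2$, reasoning that ``it is an $L^q\cap L^2_{\loc}$ function which is a weak solution slice, hence in $\IH^q_D = H^q_D\cap L^2$''. But $L^2_{\loc}$ is not $L^2$: at this point in the proof nothing guarantees that the slice $F_{t+s}$ has finite global $L^2$ norm (that conclusion comes only \emph{later}, once the semigroup representation $F_{t+\tau}=e^{-\tau|DB|}F_t$ has been established). Consequently you cannot place $F_{t+s}$ in the pre-Hardy space $\IH^q_{DB}$, and the consistency statement you invoke---that the abstract $H^{q'}_{B^*D}$--$H^q_{DB}$ pairing is computed by the $L^2$ integral when both arguments lie in the pre-Hardy spaces---does not apply. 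Nor can you rescue the step by an $L^{q'}$--$L^q$ Hölder argument, since $G_s=B^*De^{sB^*D}\chi^-(B^*D)\phi_0\in\IH^{q'}_{B^*D}\subset L^2$ is not known to lie in $L^{q'}$ when $q'>2$ (only $\IP G_s$ is).

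The paper closes exactly this gap by a mollification argument: one replaces $H_s=1_{[\varepsilon,2\varepsilon]}(s)F_{t+s}$ by $H_{k,s}=H_s\star_{\R^n}\rho_k$. Because $H_s\in L^q$ with $1<q\le 2$ and $\rho_k$ is smooth with compact support, Young's inequality gives $H_{k,s}\in L^2$; since convolution commutes with $\IP$ one also has $H_{k,s}\in\IH^q_D=\IH^q_{DB}$. Now both $G_s$ and $H_{k,s}$ live in the pre-Hardy spaces, the $x$-integral is a genuine $L^2$ pairing, and equals the abstract duality $\langle G_s,H_{k,s}\rangle$. Proposition~\ref{prop:approx} gives convergence of the double integral as $k\to\infty$, and $H_{k,s}\to H_s$ in $L^q$ (hence in $H^q_{DB}$) uniformly in $s$ gives convergence of the pairings; integrating in $s$ and passing to the limit yields the identity. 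Your Fubini-and-identify outline is the right shape, but the identification step requires this mollification device to force the slice into $L^2$.
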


\begin{proof} Note that $t,\varepsilon>0$ are fixed so estimates are allowed to depend on them. 
Set $G(s,x)=(sB^*De^{sB^*D}\chi^-(B^*D)\phi_{0})(x) $ and $H(s,x)= 1_{[\varepsilon,2\varepsilon]}(s){F(t+s,x)}$. Because of the truncation in $s$, we know that $H\in T^p_{2}$ (with norm depending on $t,\varepsilon$)  and $G\in (T^p_{2})'$ so that the integral on the left hand side is a Lebesgue integral. We may apply Proposition \ref{prop:approx}, so that this integral is a limit of the integral where $H$ has been mollified by convolution in the $x$-variable, that is,  $H(s,x)$ is replaced by $H_{k}(s,x)=H_{k,s}(x)= H_{s}\star_{\R^n}\rho_{k}(x)$. But recall that $H_{s}\in H^q_{D}=\clos{\ran_{q}(D)}$ so that $\IP H_{s}=H_{s}$ for all $s>0$. This condition is preserved by convolution since it commutes with $\IP$ because $\IP$ is also given by convolution . Also such mollifications map $L^q$ into $L^2$, thus $H_{k,s}\in \IH^2_{D}$ as well so that $H_{k,s}\in \IH^q_{D}=\IH^q_{DB}$ (as $p\le q \le 2$, thus $q\in I_{L}$). As $G_{s}\in \IH^{q'}_{B^*D}$ (because $\phi_{0}\in \IH^{q'}_{B^*D}$ and this space is preserved by $H^\infty(S_{\mu})$ functions of $B^*D$) which is in duality with $\IH^q_{DB}$ for the standard $L^2$ duality (see Section \ref{sec:basic}), 
$$
\int_{\R^n} \bpaire { G(s,x)}{H_{k}(s,x)}\,   dx= \pair { G_{s}}{ H_{k,s}}
$$
for any fixed $s$, where the pairing is  the  duality  extended to $H^{q'}_{B^*D}, H^q_{DB}$. But $H_{k,s}$ converges to $H_{s}$ in $L^q$, hence in $H^q_{D}=H^q_{DB}$, as $k\to \infty$ and this is uniform in $s$. Thus 
$\pair { G_{s}}{ H_{k,s}}$ converges to $\pair { G_{s}}{ H_{s}}$ uniformly in $s$. It remains to integrate the equality above in $s$ and pass to the limit.  
\end{proof}

We continue with  Step 1b in this case ($p\le 2$) and conclude for the limit of \eqref{eq:limit-(i)}.  Pick $\delta >0$, replace $\phi_{0}$ by $e^{-\delta |B^*D|}\phi_{0}$ in \eqref{eq:limit--} and use Lemma \ref{lem:q} and \ref{lem:duality(i)}, to obtain
$$
\lim_{\varepsilon\to 0} \barint_{[\varepsilon, 2\varepsilon]} \pair { B^*De^{(s+\delta) B^*D}\chi^-(B^*D)  \phi_0}{F_{t+s}}\,   ds = 0.
$$
Now, the map $s\mapsto F_{t+s}$ is continuous at $s=0$ into $H^q_{D}=H^q_{DB}$ and  the map $s\mapsto B^*De^{(s+\delta) B^*D}\chi^-(B^*D)  \phi_0$ is continuous at $s=0$ into 
$\IH^{q'}_{B^*D}$.  For the last point, this is because we have the continuity of the semigroup in 
$\IH^{q'}_{B^*D}$ (\cite{AS}, Proposition 4.5).  It follows that the limit is the value at $s=0$ of the integrand and we have obtained for all $\phi_{0}\in \IH^{q'}_{B^*D}$, all
$\delta >0$ and $t>0$  that
$$
\pair { B^*De^{\delta B^*D}\chi^-(B^*D)  \phi_0}{F_{t}} = 0.
$$
We deduce from this equation that $F_{t} \in H^{q,+}_{DB}$. Indeed, 
 using semigroup theory,  the vector space containing $\{B^*De^{\delta B^*D}\chi^-(B^*D)  \phi_0, \phi_{0}\in \IH^{q'}_{B^*D}, \delta >0\}$ forms a dense subspace of $\IH^{q', -}_{B^*D}$ [$\chi^-(B^*D)  \phi_0 = \lim_{t\to 0}\int_{0}^t B^*De^{\delta B^*D}\chi^-(B^*D)  \phi_0\, d\delta $ and approximate each integral by Riemann sums]   which is dense in $H^{q',-}_{B^*D}$. As $F_{t}\in H^q_{DB}$, this shows that $F_{t} \in H^{q,+}_{DB}$ (See Section \ref{sec:basic}).

We turn to taking limits in \eqref{eq:limit+(i)} still in the case $p\le 2$. Reinterpreting the $dx$ integrals using the duality pairing between $H^{q'}_{B^*D}$ and $H^q_{DB}$ as in Lemma \ref{lem:duality(i)} and  reorganizing we obtain a second equation for fixed $t>0 $ and $\phi_{0}\in \IH^{q'}_{B^*D}$, 
\begin{align}
\label{eq:limit++(i)}
\barint_{[\varepsilon, 2\varepsilon]} \pair { B^*De^{ -s B^*D}& \chi^+(B^*D)  \phi_0}{F_{t-s}} \,   ds& \\  \nonumber =& \barint_{[\varepsilon, 2\varepsilon]} \pair { B^*De^{ -(t-s)  B^*D}\chi^+(B^*D)  \phi_0}{F_{s}}\,   ds.
\end{align}
Next, if we replace $\phi_{0}$ by $e^{-\delta |B^*D|} \phi_{0}$ with $\delta >0$, we can pass to the limit as $\varepsilon\to 0$ as above and obtain for the integral in the left hand side of \eqref{eq:limit++(i)}, 
$$
\pair { B^*De^{-\delta B^*D}\chi^+(B^*D)  \phi_0}{F_{t}}.
$$
Set 
$$
I_{t}^{\varepsilon,\delta }= \barint_{[\varepsilon, 2\varepsilon]} \pair { B^*De^{ -(t-s) B^*D}\chi^+(B^*D)  e^{-\delta |B^*D|}\phi_0}{F_{s}}\,   ds.
$$
We thus have  shown 
$$
\lim_{\varepsilon\to 0} I_{t}^{\varepsilon,\delta }= \pair { B^*De^{-\delta B^*D}\chi^+(B^*D)  \phi_0}{F_{t}}
$$
for all $t,\delta >0$. 
We shall use this information together that $F_{t}\in H^{q,+}_{DB}$  to prove that  for all $t>0$,  $F_{t} \in \IH^{q,+}_{DB}$ and for all $t>0$ and $\tau \ge 0$, 
$$
F_{t+\tau}= e^{-\tau DB}\chi^+(DB)F_{t}= e^{-\tau |DB|}F_{t}.
  $$
So we fix $t,\delta , \tau$. Observe that   $I_{t+\tau}^{\varepsilon,\delta }= I_{t}^{\varepsilon,\delta +\tau }$. Thus, we obtain
$$
 \pair { B^*De^{-(\tau+\delta) B^*D}\chi^+(B^*D)  \phi_0}{F_{t}}=  \pair { B^*De^{-\delta B^*D}\chi^+(B^*D)  \phi_0}{F_{t+\tau}}.
 $$
 Because $F_{t}\in H^{q,+}_{DB}$, the  left hand side can be rewritten as  
 $$
  \pair { B^*De^{-\delta B^*D}\chi^+(B^*D)  \phi_0}{S_{q}^+(\tau)F_{t}}
  $$
  where $S_{q}^+(\tau)$ is the bounded extension of  $e^{-\tau DB}\chi^+(DB)$ to  $H^{q,+}_{DB}$ described in Section \ref{sec:basic}.  By density as before, this shows that for all $t>0$ and $\tau\ge 0$ 
  $$
  F_{t+\tau}= S_{q}^+(\tau)F_{t}. 
 $$
 Remark that as we already know that $F_{t} \in H^{q,+}_{DB}$, this is the same as
 $$
   F_{t+\tau}= S_{q}(\tau)F_{t}. 
 $$
 By  Corollary 8.3 in \cite{AS}, the semigroup maps $H^{q}_{DB}$  into $\IH^2_{DB}$.   This proves  that $F_{t+\tau} \in \IH^2_{DB}$ and, therefore, $F_{t} \in \IH^{q,+}_{DB}$ for all $t>0$ so that  the original semigroup acts on $F_{t}$ and we can  write for all $t>0$ and $\tau\ge 0$,
 $$
F_{t+\tau}= e^{-\tau DB}\chi^+(DB)F_{t}= e^{-\tau |DB|}F_{t}.
  $$

 \

 \paragraph{\textbf{Step 2 when} $\mathbf{p\le 2}$}   Finding the trace at $t=0$.

 \

We distinguish two subcases.

 \

 \paragraph{\textbf{Step 2a}: $\mathbf{1<p\le 2}$}

 \
 
 If $1<p\le 2$, recall that $q=p$ from Lemma \ref{lem:q}. From  Lemma \ref{lem:p}, $\barint_{\, \, [t,2t]}\|F_{s}\|_{p}\, ds\lesssim 
 \|\tN F\|_{p} <\infty$. In particular $\wt F_{t}= \barint_{\,\,[t,2t]}F_{s}\, ds$ belongs to $L^p$ with uniform bound with respect to $t$, thus it has  a subsequence $\wt F_{t_{k}}$ converging weakly in $L^p$ with $t_{k}\to 0$. Call  the limit $F_{0}$. Then   $F_{0}\in H^{p,+}_{DB}$ (which is the completion of $\IH^{p,+}_{DB}$). Now 
 $e^{-\tau |DB|}\wt F_{t_{k}}= S_{p}^+(\tau)\wt F_{t_{k}}$ converges weakly to $S_{p}^+(\tau)F_{0}$ by continuity of  the semigroup $S_{p}^+(\tau)$ on $H^{p,+}_{DB}$, while 
 $\barint_{\,\,[t_{k},2t_{k}]}F_{\tau+ s}\, ds$ converges strongly to $F_{\tau}$ and we obtain
 $$
 S_{p}^+(\tau)F_{0}  = F_{\tau}.
 $$
 
 \

 \paragraph{\textbf{Step 2b}: $\mathbf{p\le 1}$}

 \

This case is more delicate.  Let us explain the strategy. We introduce a new Banach space $\wt H^p_{DB}$ of Schwartz distributions which contains $H^p_{DB}$. This means that we will have the containments $ H^p_{DB} \subset  \wt H^p_{DB}\subset  \mS'$ with \textbf{continuous  inclusions}.  We shall obtain the trace and representation of $F_{t}$ in $\wt H^p_{DB}$. Then we shall show that the trace actually belongs to the smaller space $H^p_{DB}$ (a regularity result) and conclude from this for the representation $
 S_{p}^+(\tau)F_{0}  = F_{\tau}
 $  in the space $H^p_{DB}$. 

We begin by building $\wt H^p_{DB}$. Several choices are possible but in a very narrow window to match both the functional calculus of $DB$ and the usual calculus of distributions. Recall that $p\in I_{L}$ and $\frac{n}{n+1}<p\le 1$. Let $p_{0}=\inf I_{L}$ and $s_{0}=n(\frac{1}{p_{0}}-1)$. We fix $n(\frac{1}{p}-1)<s<s_{0}$ and we select $r>1$ so that $n(\frac{1}{p}-\frac{1}{r})<1$ and 
\begin{equation}
\label{eq:s}
\beta=\frac{2}{r'}+ s\bigg(1-\frac{2}{r'}\bigg) > n\bigg(\frac{1}{p}-\frac{1}{r}\bigg)= \alpha.
\end{equation}
The different possibilities are in the choice of this $r$.

Let $\wt \IH^{p}_{DB}=\{ h\in \IH^{2}_{DB}\, ; \, \|h\|_{\wt \IH^p_{DB}}<\infty\} $ with  $$\|h\|_{\wt \IH^p_{DB}}=\bigg(\iint_{\reu} |e^{-t|DB|}h(x)|^r t^{n(\frac{r}{p}-1)}\, \frac{dtdx}{t}\bigg)^{1/r}= \bigg(\int_{0}^\infty  \|e^{-t|DB|}h\|_{r}^r\,  t^{\alpha r}\, \frac{dt}{t}\bigg)^{1/r}.$$ 
Set $ \wt \IH^{p, \pm}_{DB}= \wt \IH^{p}_{DB} \cap  \IH^{2,\pm}_{DB}$. Let $\wt H^{p}_{DB}$ and $ \wt H^{p,\pm}_{DB}$ be the respective  completions with respect to this norm.  

\begin{lem}\label{lem:disthtilde} The space $\wt H^{p}_{DB}$ embeds in the space of Schwartz distributions.
\end{lem}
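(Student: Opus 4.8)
The plan is to display the natural inclusion $\wt\IH^p_{DB}\hookrightarrow\IH^2_{DB}\subset L^2\subset\mS'$ as bounded when $\wt\IH^p_{DB}$ carries its own (quasi-)norm, then extend it to the completion and verify injectivity. The tool is a Calder\'on reproducing formula: fixing $\eta(z)=z^2e^{-z}$, holomorphic with decay on any bisector, one has
\begin{equation*}
h=c_\eta^{-1}\int_0^\infty \eta(t|DB|)\,e^{-t|DB|}h\,\frac{dt}{t},\qquad c_\eta=\int_0^\infty t^2e^{-2t}\,\frac{dt}{t},
\end{equation*}
valid in $L^2$ for every $h\in\IH^2_{DB}=\clos{\ran_2(DB)}$ by the quadratic estimates and bounded holomorphic functional calculus of $DB$ on $L^2$. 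Pairing with $\varphi\in\mS(\R^n;\C^N)$, moving the operators to the test-function side by $(e^{-t|DB|})^{*}=e^{-t|B^*D|}$ and $(\eta(t|DB|))^{*}=\eta(t|B^*D|)$ (legitimate since $\eta$ is real), and applying H\"older in $x$ and then in $t$ against $\tfrac{dt}{t}$ with the weight split as $t^{\alpha}\cdot t^{-\alpha}$, $\alpha=n(\tfrac{1}{p}-\tfrac{1}{r})$, yields
\begin{equation*}
|\langle h,\varphi\rangle|\lesssim \|h\|_{\wt\IH^p_{DB}}\ \Big(\int_0^\infty\big\|\eta(t|B^*D|)\,\IP_{B^*D}\varphi\big\|_{r'}^{r'}\,t^{-\alpha r'}\,\frac{dt}{t}\Big)^{1/r'},
\end{equation*}
where $\IP_{B^*D}$ projects onto $\clos{\ran_2(B^*D)}$ along $\nul_2(D)$. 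So everything reduces to bounding the last integral by a Schwartz seminorm of $\varphi$.

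For $t\ge1$ this is elementary: $\sup_{t>0}\|\eta(t\,\cdot\,)\|_{H^\infty}<\infty$ and, since $r$ is taken close to $1$ so that $r'$ lies in the range where $B^*D$ has a bounded $L^{r'}$ functional calculus, $\|\eta(t|B^*D|)\IP_{B^*D}\varphi\|_{r'}\lesssim\|\IP_{B^*D}\varphi\|_{r'}$ uniformly, and $\int_1^\infty t^{-\alpha r'}\,\tfrac{dt}{t}<\infty$ because $\alpha>0$. The delicate part is $t\le1$, where one must gain a power $t^{\beta}$ with $\beta>\alpha$ as in \eqref{eq:s}. I would get it by interpolating two endpoint bounds for $\eta(t|B^*D|)$, both consequences of the functional-calculus and off-diagonal machinery of \cite{AS}: the $L^2$ bound $\|\eta(t|B^*D|)\IP_{B^*D}\varphi\|_2\lesssim t\,\|D\varphi\|_2$ (write $\eta(z)=z\kappa(z)$ with $\kappa(z)=ze^{-z}$ bounded, and bound $|B^*D|\IP_{B^*D}\varphi$ by $\|B^*D\varphi\|_2\lesssim\|D\varphi\|_2$), and a H\"older endpoint $\|\eta(t|B^*D|)\IP_{B^*D}\varphi\|_\infty\lesssim t^{s}\,\|\varphi\|_{\dot\Lambda^{s}}$, legitimate since $0<s<s_0=n(\tfrac{1}{p_0}-1)$ places $\dot\Lambda^{s}$ in the good range for $B^*D$. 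Complex interpolation with $\theta$ given by $\tfrac{1}{r'}=\tfrac{\theta}{2}$, i.e.\ $\theta=\tfrac{2}{r'}$, gives $\|\eta(t|B^*D|)\IP_{B^*D}\varphi\|_{r'}\lesssim t^{\,\theta\cdot1+(1-\theta)s}\,(\text{Schwartz seminorm of }\varphi)=t^{\beta}\,(\text{seminorm})$ with $\beta=\tfrac{2}{r'}+s(1-\tfrac{2}{r'})$; since $\beta>\alpha$, $\int_0^1 t^{(\beta-\alpha)r'}\,\tfrac{dt}{t}<\infty$. Combined with the $t\ge1$ part this gives $|\langle h,\varphi\rangle|\lesssim\|h\|_{\wt\IH^p_{DB}}\,p_N(\varphi)$ for a fixed Schwartz seminorm $p_N$, hence continuity of $\wt\IH^p_{DB}\to\mS'$; by density and completeness of $\mS'$ it extends to a continuous map $\wt H^p_{DB}\to\mS'$.

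For injectivity I would use that, again via the reproducing formula together with the $L^2$–$L^r$ smoothing of $(e^{-t|DB|})$, each $e^{-t|DB|}$ extends to a bounded operator $\wt H^p_{DB}\to\IH^2_{DB}\subset L^2$, strongly continuous at $t=0$ on $\wt H^p_{DB}$ (dominated convergence in the defining integral on the dense subspace, plus uniform boundedness). If $h\in\wt H^p_{DB}$ maps to $0$ in $\mS'$, take $h_k\in\wt\IH^p_{DB}$ with $h_k\to h$; then $h_k\to0$ in $\mS'$, the $h_k$ are $\wt\IH^p_{DB}$-bounded, and $e^{-t|DB|}h_k\to e^{-t|DB|}h$ in $L^2$. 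Writing $\langle e^{-t|DB|}h_k,\varphi\rangle=\langle h_k,e^{-t|B^*D|}\IP_{B^*D}\varphi\rangle$ and approximating $e^{-t|B^*D|}\IP_{B^*D}\varphi$ by Schwartz functions in the norm appearing on the right of the displayed estimate — which is finite on $e^{-t|B^*D|}\IP_{B^*D}\varphi$ because the extra semigroup factor supplies the needed $t$-smoothing (using $\alpha<1$) — one forces $\langle e^{-t|DB|}h,\varphi\rangle=0$, so $e^{-t|DB|}h=0$ in $L^2$ for every $t>0$; letting $t\to0$ gives $h=0$ in $\wt H^p_{DB}$.

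The main obstacle is the $t\le1$ part of the test-function estimate: pinning down the two endpoint bounds for $\eta(t|B^*D|)$ in exactly the form needed, checking that $\IP_{B^*D}\varphi$ lies in the corresponding endpoint spaces with control by Schwartz seminorms, and — most delicately — verifying that the constraints on $r$ (namely $r>1$, $n(\tfrac{1}{p}-\tfrac{1}{r})<1$, and $\beta>\alpha$) are compatible and keep every exponent inside the interval $I_{L}$ where the $DB$/$B^*D$ Hardy-space calculus of \cite{AS} applies; this is precisely why $p_0=\inf I_{L}$ and $s_0$ govern the choice of $s$ and $r$.
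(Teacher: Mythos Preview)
Your strategy differs from the paper's and has a genuine gap at the H\"older endpoint. You assert $\|\eta(t|B^*D|)\IP_{B^*D}\varphi\|_\infty\lesssim t^{s}\|\varphi\|_{\dot\Lambda^{s}}$ on the grounds that $s$ lies in the good range for $B^*D$, but the H\"older theory for $B^*D$ in \cite{AS} furnishes only Carleson-measure control (elements of $\IL^s_{B^*D}$ are characterised by $\psi(tB^*D)\phi\in T^\infty_{2,s}$); it does \emph{not} give pointwise $L^\infty$ bounds, and for operators built from the rough multiplication $B^*$ there is no mechanism to produce them from $L^2$ off-diagonal decay alone. Your interpolation between $L^2$ and $L^\infty$ therefore does not go through as written. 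The paper avoids this by embedding $\wt\IH^p_{DB}$ into the \emph{classical} Besov space $\dot B^{-\alpha,r}_r\subset\mS'$ via \eqref{eq:embed}, proved by a Schur argument on $Q_t\psi(sDB)$ with $Q_t=t^2\Delta e^{t^2\Delta}$: the point is that $Q_t$ is convolution with an explicit Schwartz kernel of mean zero, so the bounds $\|Q_t\varphi\|_2\lesssim t\|\nabla\varphi\|_2$ and $\|Q_t\varphi\|_\infty\lesssim t^s\|\varphi\|_{\dot\Lambda^s}$ are elementary, while the $DB$-adapted operators are kept on the $L^r$ side where Theorem~4.19 of \cite{AS} applies. (Your route is in fact repairable without touching $L^\infty$: factoring $\eta(t|B^*D|)=t\cdot(t|B^*D|e^{-t|B^*D|})\cdot|B^*D|$ and using the $L^{r'}$ functional calculus for $B^*D$---available by duality from the $L^r$ calculus for $DB$ since $r\in I_L$---gives $\|\eta(t|B^*D|)\varphi\|_{r'}\lesssim t\|D\varphi\|_{r'}$ directly, and $1>\alpha$ by the constraint $n(\tfrac1p-\tfrac1r)<1$.)

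Your injectivity sketch is also incomplete: you assert that $e^{-t|B^*D|}\IP_{B^*D}\varphi$ can be approximated by Schwartz functions in the weighted $L^{r'}$ norm appearing on the right of your displayed estimate, but density of $\mS$ in that adapted Besov-type norm is neither obvious nor established. The paper's argument is cleaner once the classical Besov embedding is in hand: one must show that a Cauchy sequence $(h_\varepsilon)$ in $\wt\IH^p_{DB}$ with $h_\varepsilon\to0$ in $\dot B^{-\alpha,r}_r$ has $G_\varepsilon(t,\cdot)=e^{-t|DB|}h_\varepsilon\to0$ in $L^r(\reu;t^{\alpha r-1}dtdx)$. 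Pairing $G_\varepsilon$ against $\chi\in C^\infty_0(\reu;\C^N)$ gives $(G_\varepsilon,\bar\chi)=\langle h_\varepsilon,\varphi\rangle$ with $\varphi=\int_a^b\IP e^{-t|B^*D|}\chi_t\,dt/t$; using \cite{AS}, Corollary~4.21, one checks $\varphi\in W^{1,2}\cap\dot\Lambda^s\subset L^{r'}$ and then $\|Q_t\varphi\|_{r'}\lesssim t^\beta$ for $t\le1$ by interpolating the two elementary $Q_t$ bounds above, so that the classical Calder\'on formula $\langle h_\varepsilon,\varphi\rangle=c\int_0^\infty\langle Q_th_\varepsilon,Q_t\varphi\rangle\,dt/t$ tends to $0$.
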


\begin{proof} Let $Q_{t}= t^2\Delta e^{t^2\Delta}$ where $\Delta$ is the ordinary negative self-adjoint Laplacian on $\R^n$.  
We first claim that for $h\in \IH^2_{DB}$, 
\begin{equation}
\label{eq:embed}
\bigg(\int_{0}^\infty  \|Q_{t}h\|_{r}^r\,  t^{\alpha r}\, \frac{dt}{t}\bigg)^{1/r} \lesssim \bigg(\int_{0}^\infty  \|e^{-t|DB|}h\|_{r}^r\,  t^{\alpha r}\, \frac{dt}{t}\bigg)^{1/r}.
\end{equation}
It is well-known that the norm on the left is equivalent to that of the homogeneous Besov space $\dot B^{-\alpha, r}_{r}$, which, as $-\alpha<0$,  can be identified to a subspace of $\mS'$. We make this identification and consider now this Besov space as contained in $\mS'$ with continuous inclusion.  
To prove this inequality, we use the  Calder\'on reproducing formula in the functional calculus of $DB$.  
Let  $\psi$ be  the holomorphic function $\psi(z)=\pm 2 ze^{\mp z}$ for $z\in S_{\mu\pm}$ so that
$$
\int_{0}^\infty \psi(sz) e^{\mp sz}\, \frac{ds}{s}= 1, \quad \forall z\in S_{\mu},
$$
hence, for all $h\in \IH^2_{DB}$, 
$$
\int_{0}^\infty \psi(sDB) e^{-s|DB|}h\, \frac{ds}{s}= h
$$
with convergence in $L^2$. Applying  $Q_{t}$ yields
$$Q_{t}h= \int_{0}^\infty Q_{t}\psi(sDB) e^{-s|DB|}h\, \frac{ds}{s}$$
with convergence in $L^2$.   
By the standard Schur argument, the conclusion will follow from the inequality 
\begin{equation}
\label{eq:schur}
t^\alpha \|Q_{t}\psi(sDB) e^{-s|DB|}h\|_{r}  \lesssim g(s/t) \|e^{-s|DB|}h\|_{r} s^{\alpha},
\end{equation} 
with $g: (0,\infty) \to (0,\infty)$ independent of $h,s,t$ such that $\int_{0}^\infty g(u)\, \frac{du}{u}<\infty$. 
Because $p<r<2$, we have $r\in I_{L}$, hence by \cite{AS}, Theorem 4.19, we have
$ \|\psi(sDB)e^{-s|DB|}h\|_{r} \lesssim \|e^{-s|DB|}h\|_{r}$ uniformly in $s$. Thus, if $t\le s$,
\eqref{eq:schur} holds 
with $g(u)= u^{-\alpha}$ when $u>1$. In the case $t>s$, observe that $\psi(z)=z\tilde \psi(z)$ on $S_{\mu}$ with $\tilde \psi\in H^\infty(S_{\mu})$. Hence, 
$Q_{t}\psi(sDB)=  s Q_{t}D B \tilde \psi(sDB)$ and observe that   $Q_{t}D B  $  is bounded on $L^r$ with norm bounded by $Ct^{-1}$ ($Q_{t}D$ is convolution with an $L^1$ function and $B$ is bounded multiplication). Thus, using again \cite{AS}, Theorem 4.19,   we obtain
\eqref{eq:schur} 
 with $g(u)=u^{1-\alpha}$ if $u<1$. As  $\alpha<1$, the desired property holds for $g$. 
This proves that $\wt \IH^p_{DB} \subset \dot B^{-\alpha, r}_{r}$ continuously. 

To prove the inclusion  when taking completion, we have to show that if $(h_{\varepsilon})$ is a Cauchy sequence in $\wt \IH^p_{DB}$ that converges to 0 in $\dot B^{-\alpha, r}_{r}$ then it also converges to 0 in $\wt H^p_{DB}$ (in other words, this proves that the extension of the identity map is injective). 
Let $G_{\varepsilon}(t,x)= e^{-t|DB|}h_{\varepsilon}(x)$. As $G_{\varepsilon}$ is a Cauchy sequence in $L^r(\reu; \C^N,  t^{\alpha r - 1}dxdt)$,  it converges to some $G$ in this (complete) space. We have to show that $G=0$ (For the moment,  $\wt H^p_{DB}$ is  defined as the closure in $L^r(\reu; \C^N,  t^{\alpha r - 1}dxdt)$  of elements $G$ with $G(t,x)=e^{-t|DB|}h(x)$, $h \in \wt \IH^p_{DB}$). It suffices to show this in the sense of distributions on $\reu$. Let $\chi\in C^\infty_{0}(\reu; \C^N)$. Then $(G-G_{\varepsilon}, \overline \chi)\to 0$ as $\varepsilon\to 0$. Next, we have
$$
(G_{\varepsilon},\overline \chi)= \int_{0}^\infty \pair {e^{-t|DB|}h_{\varepsilon}}{ \chi_{t}} \, \frac{dt}{t}= \pair {h_{\varepsilon}} \varphi
$$
with $$\varphi= \int_{a}^b \IP e^{-t|B^*D|}\chi_{t} \, \frac{dt}{t}.
$$
We used that $\supp \chi \subset [a,b]\times \R^n$ and $\chi_{t}(x)= \chi(t,x)$. We also  used that $h_{\varepsilon}\in \IH^2_{DB}=\ran (\IP)$ to insert  the orthogonal projection $\IP$. For any $t$ fixed in $[a,b]$, we have 
$$D\IP e^{-t|B^*D|}\chi_{t}=D e^{-t|B^*D|}\chi_{t}=  e^{-t|DB^*|}D\chi_{t} \in L^2$$ and by \cite{AS}, Corollary 4.21,
$\IP e^{-t|B^*D|}\chi_{t} =  \IP e^{-t|B^*D|}\IP \chi_{t}\in \dot \Lambda^s$ as $\IP\chi_{t}\in 
 \dot \Lambda^s \cap L^2$ and $s$ can be taken as the one chosen before the statement. Thus $\varphi \in \IH^2_{D}$, $D\varphi\in L^2$ and $\varphi\in  \dot \Lambda^s$.  The first two conditions imply that $\varphi\in W^{1,2}$,  the usual Sobolev space,  
 and  the first and third that $\varphi\in L^\infty$, and by interpolation $\varphi\in L^{r'}$. 
 Now, for some constant $c>0$, one can use the usual Calder\'on reproducing formula to write
$$
\pair {h_{\varepsilon}} \varphi= c \int_{0}^\infty \pair {Q_{t}h_{\varepsilon}} { Q_{t}\varphi} \, \frac{dt}{t}.
$$
As $Q_{t}h_{\varepsilon}$ converges to 0 in $L^r(\reu; \C^N,  t^{\alpha r - 1}dxdt)$, it is enough to show that $Q_{t}\varphi \in L^{r'}(\reu; \C^N,  t^{-\alpha r' - 1}dxdt)$ to conclude that $\pair {h_{\varepsilon}} \varphi \to 0$ as $\varepsilon\to 0$. 
The part for $t>1$ follows from the boundedness of $Q_{t}$  on $L^{r'}$ and $\varphi\in L^{r'}$ as $\alpha r'>0$. 
For $t\le 1$, we use
 $$
\|Q_{t}\varphi\|_{2} \lesssim t \|\nabla \varphi\|_{2}, \quad \|Q_{t}\varphi\|_{\infty} \lesssim  t^s \|\varphi\|_{\dot \Lambda^s}$$
hence,   
$$
\|Q_{t}\varphi\|_{r'}  \lesssim  \|Q_{t}\varphi\|_{2}^{2/r'} \|Q_{t}\varphi\|_{\infty}^{1-2/r'} \lesssim_{\varphi} t^\beta,
$$
where $\beta$ is the number defined in \eqref{eq:s}.  The convergence when $t\le 1$ follows from $\beta>\alpha$. 
\end{proof}

Having shown the embedding of $\wt H^p_{DB}$ in $\mS'$, we decide to identify $\wt H^p_{DB}$ to a subspace of $\mS'$. Note that as we have identified $H^p_{DB}$ to $H^p_{D}$, which is also a subspace of $\mS'$, we can now compare these two realizations of $\wt H^p_{DB}$ and $ H^p_{DB}$ safely.

\begin{lem}  \label{lem:htilde}
\begin{enumerate}
 \item We have the spectral splitting $\wt H^{p}_{DB}= \wt H^{p, +}_{DB}\oplus \wt H^{p,-}_{DB}$. 
  \item The identity map is an embedding of  $H^{p,\pm}_{DB}$  into $\wt H^{p,\pm}_{DB}$ respectively.
\item Let $S_{p}^+(\tau)$ and $\wt S_{p}^+(\tau)$ be the respective bounded extensions by density of $e^{-\tau|DB|}$ on $H^{p,+}_{DB}$ and $\wt H^{p,+}_{DB}$. If $ h\in H^{p}_{DB} \cap \wt H^{p,+}_{DB}$, then  $  h\in H^{p,+}_{DB}$ and  $\wt S_{p}^+(\tau) h = S_{p}^+(\tau) h$.
  \end{enumerate}   

\end{lem}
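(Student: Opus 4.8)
The plan is to treat the three assertions in turn, exploiting throughout that both $H^p_{DB}$ and $\wt H^p_{DB}$ are realized as subspaces of $\mS'$ in a mutually consistent way (by Theorem~\ref{thm:IL} and Lemma~\ref{lem:disthtilde} respectively).

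\textbf{Part (1).} First I would extend the spectral projections $\chi^\pm(DB)$, initially defined on $\IH^2_{DB}$, to bounded projections $\wt\chi^\pm$ on $\wt H^p_{DB}$. Since $\chi^\pm\in H^\infty(S_\mu)$ they commute with $e^{-t|DB|}$, so for $h\in\wt\IH^p_{DB}$ we have $e^{-t|DB|}\chi^\pm(DB)h=\chi^\pm(DB)e^{-t|DB|}h$; for a.e.\ $t>0$ the function $e^{-t|DB|}h$ lies in $L^r\cap\clos{\ran_{2}(D)}$, hence in $\IH^r_{D}=\IH^r_{DB}$ because $r\in I_L$ (recall $p_0=\inf I_L<p<r<2<p_+(DB)$), and on $\IH^r_{DB}$ the operator $\chi^\pm(DB)$ is bounded in $L^r$ by the material recalled from \cite{AS}. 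Integrating $\|e^{-t|DB|}\chi^\pm(DB)h\|_r^r\lesssim\|e^{-t|DB|}h\|_r^r$ against $t^{\alpha r}\,\frac{dt}{t}$ gives $\|\chi^\pm(DB)h\|_{\wt\IH^p_{DB}}\lesssim\|h\|_{\wt\IH^p_{DB}}$, so $\chi^\pm(DB)$ extends by density to $\wt\chi^\pm$ on $\wt H^p_{DB}$. The relations $\chi^++\chi^-=1$ and $\chi^+\chi^-=0$ on the bisector pass to $\wt\chi^++\wt\chi^-=I$ and $\wt\chi^+\wt\chi^-=0$ by density, and since $\chi^\pm(DB)$ maps $\wt\IH^p_{DB}$ onto the dense subspace $\wt\IH^{p,\pm}_{DB}$ of $\wt H^{p,\pm}_{DB}$, the range of $\wt\chi^\pm$ is exactly $\wt H^{p,\pm}_{DB}$. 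This yields the splitting.

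\textbf{Part (2).} The crux is the quasi-norm bound $\|h\|_{\wt\IH^p_{DB}}\lesssim\|h\|_{\IH^p_{DB}}$ for $h\in\IH^p_{DB}$. Writing $G(t,x)=e^{-t|DB|}h(x)$ and using $n(\tfrac{r}{p}-1)=\alpha r$, Lemma~\ref{lem:HMiMo} (valid since $p<r\le 2$) gives $\|h\|_{\wt\IH^p_{DB}}\lesssim\|\tN G\|_p$, while $\|\tN G\|_p\lesssim\|h\|_{H^p_{DB}}$ is the non-tangential maximal \textit{a priori} estimate for the $DB$-semigroup from \cite{AS}, applied after splitting $h=\chi^+(DB)h+\chi^-(DB)h$ (the two pieces generating solutions in the upper and lower half-spaces) and using boundedness of $\chi^\pm$ on $H^p_{DB}$. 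Hence the identity map extends to a bounded map $\jmath\colon H^p_{DB}\to\wt H^p_{DB}$, which restricts to $H^{p,\pm}_{DB}\to\wt H^{p,\pm}_{DB}$ since $\IH^{p,\pm}_{DB}\subset\wt\IH^{p,\pm}_{DB}$ and $\wt H^{p,\pm}_{DB}$ is closed. Injectivity follows because $\jmath$ composed with $\wt H^p_{DB}\hookrightarrow\mS'$ agrees on the common dense subspace with, hence equals, the injection $H^p_{DB}\hookrightarrow\mS'$.

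\textbf{Part (3).} On $\IH^p_{DB}$ both $\jmath\circ\chi_p^-$ and $\wt\chi^-\circ\jmath$ act as the $L^2$-operator $\chi^-(DB)$, so by density $\jmath\circ\chi_p^-=\wt\chi^-\circ\jmath$. If $h\in H^p_{DB}\cap\wt H^{p,+}_{DB}$ (intersection inside $\mS'$), then by injectivity of $\wt H^p_{DB}\hookrightarrow\mS'$ and compatibility of the two realizations, $\jmath(h)=h\in\wt H^{p,+}_{DB}=\ker\wt\chi^-$, whence $\jmath(\chi_p^- h)=\wt\chi^-\jmath(h)=0$ and then $\chi_p^- h=0$ by injectivity of $\jmath$; thus $h=\chi_p^+ h\in H^{p,+}_{DB}$. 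Finally pick $h_k\in\IH^{p,+}_{DB}$ with $h_k\to h$ in $H^{p,+}_{DB}$; since $S_{p}^{+}(\tau)$, $\wt S_{p}^{+}(\tau)$ and $e^{-\tau|DB|}$ coincide on $\IH^{p,+}_{DB}$ we get $S_{p}^{+}(\tau)h_k=\wt S_{p}^{+}(\tau)h_k$, and letting $k\to\infty$ (in $H^{p,+}_{DB}$ on one side, in $\wt H^{p,+}_{DB}$ on the other, comparing the two limits in $\mS'$) gives $\wt S_{p}^{+}(\tau)h=S_{p}^{+}(\tau)h$.

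\textbf{Main obstacle.} I expect the one genuinely non-soft step to be the estimate $\|h\|_{\wt\IH^p_{DB}}\lesssim\|h\|_{H^p_{DB}}$ in Part~(2): the pointwise-in-$t$ bound $\|e^{-t|DB|}h\|_r\lesssim t^{-\alpha}\|h\|_{H^p}$ from Theorem~\ref{thm:IL} is \emph{not} integrable against $t^{\alpha r}\,\frac{dt}{t}$, so one must genuinely invoke the tent/non-tangential a priori estimates of \cite{AS} together with Lemma~\ref{lem:HMiMo} to capture the averaged gain. The remaining steps are functional-analytic, the only care needed being that $H^p_{DB}$, $\wt H^p_{DB}$ and their $\pm$-parts sit inside $\mS'$ consistently.
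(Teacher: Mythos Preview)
Your proposal is correct and follows essentially the same approach as the paper: Part~(1) via $L^r$-boundedness of $\chi^\pm(DB)$ on $\IH^r_{DB}$ (using $r\in I_L$) commuted through the semigroup, Part~(2) via Lemma~\ref{lem:HMiMo} combined with the non-tangential maximal estimate of \cite[Theorem~9.1]{AS}, and Part~(3) by density and the common embedding in $\mS'$. The only cosmetic difference is that you package Part~(3) through the intertwining relation $\jmath\circ\chi_p^-=\wt\chi^-\circ\jmath$, whereas the paper works directly with approximating sequences $h_\varepsilon^\pm$; your identification of the ``main obstacle'' matches exactly the non-soft input the paper uses.
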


\begin{proof} 
For  (1),  let $h\in \wt \IH^{p}_{DB}$. Then we have $h=h^+ +h^-$ where $h^\pm=\chi^\pm(DB)h$ and 
$$e^{-t|DB|}h= e^{-t|DB|}h^+ + e^{-t|DB|}h^-= \chi^+(DB)e^{-t|DB|}h + \chi^-(DB)e^{-t|DB|}h.
$$
As $\chi^\pm(DB)$ are bounded operators on $\IH^r_{DB}$ equipped with $L^r$ norm  (\cite{AS}, Theorem 4.19, because $r\in I_{L}$),  we obtain  $\|\chi^\pm(DB) e^{-t|DB|}h\|_{r}\lesssim \| e^{-t|DB|}h\|_{r}$ for all $t>0$. It follows that $
\|h^\pm\|_{\wt \IH^p_{DB}}\lesssim \|h\|_{\wt \IH^p_{DB}}.
$
The splitting follows in the completion. 

For (2), by \cite{AS}, Theorem 9.1,  since $p\in I_{L}$, $H^{p,+}_{DB}$ is a closed subspace of $H^p_{DB}=H^p_{D}$ and 
$$
\|h\|_{\IH^{p}_{DB}}\sim \|h\|_{H^p}\sim \|\tN( e^{-t|DB|}h)\|_{p}, \quad \forall h\in \IH^{p,+}_{DB}.$$
Let $h\in \IH^{p,+}_{DB}$. Combining this with
Lemma \ref{lem:HMiMo} and the choice of $r$,  $h\in \wt \IH^{p}_{DB}$ and
$$
\|h\|_{\wt \IH^{p}_{DB}}  \lesssim  \|h\|_{\IH^{p}_{DB}}.
$$
By completion, this shows  that the identity map extends to a continuous  map from  $H^{p,+}_{DB}$ to $\wt H^{p,+}_{DB}$.  Now, both completions are embedded in  $\mS'$ so this extended map must be injective: it is an embedding. 
The argument for the inclusion of  $H^{p,-}_{DB}$ in $\wt H^{p,-}_{DB}$ is the same. 

Next, the property (3)  is now an easy exercise in functional analysis. We give it for the sake of completeness.  Let $ h\in \wt H^{p,+}_{DB} \cap H^p_{DB}$. There exists $h_{\varepsilon}\in \IH^{p}_{DB}$  converging to $ h$ in $H^{p}_{DB}$. Write $ h= h^++h^-$ according to the spectral splitting $H^p_{DB}= H^{p,+}_{DB}\oplus H^{p,-}_{DB}$. Thus $h_{\varepsilon}^\pm=\chi^\pm(DB)h_{\varepsilon}\in \IH^{p,\pm}_{DB}$ and converge respectively to $h^\pm$ for the $H^p_{DB}$ topology.  By the embedding in part (2), the convergence is also in $\wt H^{p,\pm}_{DB}$. Thus, we obtain that $ h=h^++h^-$ also in $\wt H^p_{DB}$. Since $ h\in \wt H^{p,+}_{DB}$, the splitting obtained in part  (1) yields that $h^-=0$ and $ h=h^+$ and it follows that $ h\in H^{p,+}_{DB}$. Now, one can assume that the  s $h_{\varepsilon}$, which converge to $h$,  belong to $\IH^{p,+}_{DB}$ to begin with.  For fixed $\varepsilon>0$ and $\tau>0$, by definition of the extensions, 
$S_{p}^+(\tau)h_{\varepsilon} = e^{-\tau|DB|}h_{\varepsilon}= \wt S_{p}^+(\tau)h_{\varepsilon}$. 
If $\varepsilon\to 0$, the first term converges to $S_{p}^+(\tau) h $ in $H^{p,+}_{DB}$, while the last term converges to $\wt S_{p}^+(\tau) h$ in $\wt H^{p,+}_{DB}$. The equality follows again using the the embedding in part (2).  
\end{proof}

We come back to the solution $F_{t}$. Recall that 
 for $t>0$,  $F_{t}\in \IH^{2,+}_{DB}$ and  when   $\tau\ge 0$,  
$$
  e^{-\tau|DB|}F_{t} = F_{t+\tau}, 
  $$
 hence for all $k\in \N$, 
 $$ (-DB)^k F_{t}= \partial_{t}^kF_{t}.
  $$
  
  \begin{lem} $F_{t}$ belongs to $\wt \IH^{p,+}_{DB}$ uniformly in $t>0$ and converges to some $h \in \wt H^{p,+}_{DB}$ as $t\to 0$, so that $F_{t}= \wt S_{p}^+(t)h$ for all $t>0$. 
\end{lem}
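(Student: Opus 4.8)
## Proof plan for the final lemma

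The plan is to establish two things in turn: first, that $F_t \in \wt\IH^{p,+}_{DB}$ with a bound uniform in $t>0$; second, that the family $(F_t)_{t>0}$ is Cauchy in $\wt H^p_{DB}$ as $t\to 0$, so that it converges to some limit $h$, and then to identify the limit via the semigroup representation. Throughout I would use that we already know $F_t \in \IH^{2,+}_{DB}$ for each $t>0$ and that $e^{-\tau|DB|}F_t = F_{t+\tau}$ for $\tau\geq 0$, together with the interior estimate $\|\tN F\|_p < \infty$ which is our running hypothesis.

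For the uniform membership: by definition of the norm on $\wt\IH^p_{DB}$ we must control $\bigl(\int_0^\infty \|e^{-\sigma|DB|}F_t\|_r^r\, \sigma^{\alpha r}\,\tfrac{d\sigma}{\sigma}\bigr)^{1/r}$, and since $e^{-\sigma|DB|}F_t = F_{t+\sigma}$ this is $\bigl(\int_0^\infty \|F_{t+\sigma}\|_r^r\,\sigma^{\alpha r}\,\tfrac{d\sigma}{\sigma}\bigr)^{1/r} \leq \bigl(\int_0^\infty \|F_{\sigma}\|_r^r\,\sigma^{\alpha r}\,\tfrac{d\sigma}{\sigma}\bigr)^{1/r}$ using $\sigma \leq t+\sigma$ and positivity. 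Now $r$ was chosen with $p < r \le 2$ and $\alpha = n(\tfrac1p - \tfrac1r)$, precisely the exponents appearing in Lemma \ref{lem:HMiMo}; since $\|\tN F\|_p<\infty$ (note $F\in N^p_2$ by \eqref{eq: sliceL2} and Lemma \ref{lem:regular}), that lemma gives $\int_0^\infty \|F_\sigma\|_r^r\,\sigma^{n(\frac rp -1)}\,\tfrac{d\sigma}{\sigma} \lesssim \|\tN F\|_p^r$, which is exactly what we need (the weight $\sigma^{\alpha r - 1} = \sigma^{n(r/p-1)-1}$ matches). Hence $\|F_t\|_{\wt\IH^p_{DB}} \lesssim \|\tN F\|_p$ uniformly in $t$, and the spectral membership in $\wt\IH^{p,+}_{DB}$ follows since $F_t\in\IH^{2,+}_{DB}$ and $\wt\IH^{p,+}_{DB} = \wt\IH^p_{DB}\cap\IH^{2,+}_{DB}$.

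For the convergence as $t\to 0$: I would show the Cauchy property in the $\wt H^p_{DB}$ norm. For $0 < t < t'$, using $e^{-\sigma|DB|}F_t = F_{t+\sigma}$ and the change of variable,
$$
\|F_{t'} - F_t\|_{\wt \IH^p_{DB}}^r = \int_0^\infty \|F_{\sigma+t'} - F_{\sigma+t}\|_r^r\, \sigma^{\alpha r}\,\frac{d\sigma}{\sigma},
$$
and the integrand is dominated by $2^r(\|F_{\sigma+t'}\|_r^r + \|F_{\sigma+t}\|_r^r)\sigma^{\alpha r - 1} \lesssim \|F_\sigma\|_r^r\,\sigma^{\alpha r - 1}$, which is integrable by the estimate above; moreover for each fixed $\sigma > 0$, $\|F_{\sigma+t'} - F_{\sigma+t}\|_r \to 0$ as $t,t'\to 0$ because $s\mapsto F_s$ is continuous into $H^q_D \hookrightarrow L^r$ (combining Lemma \ref{lem:q} with $q\le r$, or directly the $C^\infty$ regularity into $E^r_\delta$ of Corollary \ref{cor:uniformept} and $E^r_\delta \subset L^r$). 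Dominated convergence then gives that $(F_t)$ is Cauchy, hence converges to some $h$ in $\wt H^p_{DB}$, and $h \in \wt H^{p,+}_{DB}$ since this is a closed subspace (part (1) of Lemma \ref{lem:htilde}). Finally, for fixed $\tau > 0$, applying the bounded operator $\wt S_p^+(\tau)$ on $\wt H^{p,+}_{DB}$ to $F_t \to h$ gives $\wt S_p^+(\tau) F_t \to \wt S_p^+(\tau) h$; but $\wt S_p^+(\tau) F_t = e^{-\tau|DB|}F_t = F_{t+\tau} \to F_\tau$ in $\wt H^p_{DB}$ by the Cauchy argument just run (with $t'$ replaced by $t+\tau$ shifting inside the integral), so $\wt S_p^+(\tau) h = F_\tau$ as claimed.

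The main obstacle I anticipate is the justification that $F_t \to h$ with $h$ genuinely in $\wt H^{p,+}_{DB}$ rather than merely in the completion abstractly — this is where the embedding $\wt H^p_{DB}\hookrightarrow \mathcal S'$ from Lemma \ref{lem:disthtilde} is essential, so that the limit is an honest distribution and the spectral splitting of Lemma \ref{lem:htilde}(1) can be applied to it; and the careful matching of the exponents $r$, $\alpha$, $\beta$ (chosen via \eqref{eq:s}) so that Lemma \ref{lem:HMiMo} applies with $r \le 2$ and the weight is exactly right. The rest is routine dominated-convergence bookkeeping, which I would not spell out in full.
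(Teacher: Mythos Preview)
Your overall architecture is right---use the semigroup identity $e^{-\sigma|DB|}F_t=F_{t+\sigma}$, reduce to an $L^r$ estimate with the weight $\sigma^{\alpha r-1}$, and invoke Lemma~\ref{lem:HMiMo}---but the key inequality you use for the uniform bound does not hold in general.  After the change of variable $u=t+\sigma$ you are asserting
\[
\int_t^\infty \|F_u\|_r^r\,(u-t)^{\alpha r-1}\,du \ \le\ \int_0^\infty \|F_u\|_r^r\,u^{\alpha r-1}\,du,
\]
which requires $(u-t)^{\alpha r-1}\le u^{\alpha r-1}$, i.e.\ $\alpha r-1\ge 0$.  The setup in \eqref{eq:s} only fixes $\alpha<1$ and the condition $\beta>\alpha$; it does \emph{not} force $\alpha r\ge 1$, and for $n\ge 2$ with $p$ close to $\tfrac{n}{n+1}$ one checks that $\alpha r\ge 1$ is actually incompatible with $\beta>\alpha$ (since $\beta-\alpha$ is decreasing in $r$ when $n\ge 2$).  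So the step ``$\sigma\le t+\sigma$ and positivity'' fails.  The same issue contaminates your Cauchy argument: the claimed pointwise domination $\|F_{\sigma+t}\|_r^r\,\sigma^{\alpha r-1}\lesssim \|F_\sigma\|_r^r\,\sigma^{\alpha r-1}$ is simply false---there is no monotonicity of $\|F_s\|_r$ in $s$---so you have no fixed dominating function for dominated convergence.

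The paper's fix is exactly to manufacture the missing positivity of the exponent: it proves a change-of-norm formula \eqref{eq:b} allowing one to replace $e^{-\tau|DB|}$ by $\tau^k(DB)^k e^{-\tau|DB|}$ for any $k$, then picks $k$ so that $(k+\alpha)r-1>0$.  Since $(DB)^kF_{t+\tau}=(-1)^k\partial_t^kF_{t+\tau}$, the norm becomes $\int_0^\infty \|\tau^k\partial_\tau^kF_{t+\tau}\|_r^r\,\tau^{(k+\alpha)r-1}\,d\tau$, and now the substitution argument you wanted goes through; the resulting right-hand side is $\|\tN(\tau^k\partial_\tau^kF)\|_p$, controlled by $\|\tN F\|_p$ via Corollary~\ref{cor:repeatCaccio}.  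For the Cauchy property the paper does not attempt a single dominated-convergence step but splits the $\tau$-integral at $\delta$: on $(0,\delta)$ it reuses the uniform bound (which tends to $0$ with $\delta$), and on $[\delta,\infty)$ it applies the mean value inequality with one extra derivative to gain a factor $|t-t'|^{r/r'}/\delta^{r/r'}$.  You should incorporate the $k$-derivative trick; once you do, the rest of your outline is essentially the paper's argument.
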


\begin{proof} We are going to use another feature of the spaces $\wt \IH^p_{DB}$, which is the possibility of changing the norm (exactly as with the Hardy spaces $\IH^p_{DB}$ given by square functions). Indeed, if $\psi, \tilde \psi\in \Psi_{0}^\tau(S_{{\mu}})$ with $\tau>\alpha$, and $\psi$ is non-degenerate on $S_{\mu}$, then for all $h\in \IH^2_{DB}$, 
\begin{equation}
\label{eq:b}
\bigg(\int_{0}^\infty  \|\tilde \psi(tDB)h\|_{r}^r\,  t^{\alpha r}\, \frac{dt}{t}\bigg)^{1/r} \lesssim_{\psi,\tilde \psi} \bigg(\int_{0}^\infty  \| \psi(tDB)h\|_{r}^r\,  t^{\alpha r}\, \frac{dt}{t}\bigg)^{1/r}.
\end{equation}
The proof is roughly the same as the one of \eqref{eq:embed} but staying entirely within the functional calculus for $DB$. As $\psi$ is non-degenerate, there exists $\theta\in \Psi_{1}^1(S_{\mu})$ such that the Calder\'on reproducing formula 
$$
\int_{0}^\infty \theta(sDB)\psi(sDB) h\, \frac{ds}{s}= h
$$
holds with convergence in $\IH^2_{DB}$, hence
$$\tilde \psi(tDB)h= \int_{0}^\infty \tilde \psi(tDB)\theta(sDB) \psi(sDB)h\, \frac{ds}{s}$$
with convergence in $\IH^2_{DB}$. Now, by \cite{AS}, Theorem 4.19 and functional calculus for $DB$ we have  
$$
t^\alpha \|\tilde \psi(tDB)\theta(sDB) \psi(sDB)h\|_{r} \lesssim   s^\alpha g(s/t) \|\psi(sDB)h\|_{r}
$$
with $g(u)= \inf (u^{-\alpha}, u^{\tau-\alpha})$. The details are similar to the ones above and we skip them. We may apply this to  $\psi(z)= z^k e^{-\modz}$ for any $k\in \N$.  For $k=0$, we recover the defining norm. But here we pick $k$ with $(k+\alpha)r-1>0$ and it gives an equivalent norm. Thus 
we have for fixed $t>0$
\begin{align*}
 \|F_{t}\|_{\wt \IH^{p,+}_{DB}}^r    &  \sim  \int_{0}^\infty  \|\tau^k (DB)^k e^{-\tau|DB|} F_{t}\|_{r}^r\,  \tau^{\alpha r}\, \frac{d\tau}{\tau} \\
    &  = \int_{0}^\infty \| (DB)^k F_{t+\tau} \|_{r}^r \tau^{(k+\alpha)r-1}\, d\tau\\
    & \le \int_{t}^\infty \| (DB)^k F_{\tau} \|_{r}^r (\tau-t)^{(k+\alpha)r-1}\, d\tau\\
    & \le  \int_{0}^\infty \| (DB)^k F_{\tau} \|_{r}^r \tau^{(k+\alpha)r-1}\, d\tau\\
    & =  \int_{0}^\infty \| \tau^k \pd_{\tau}^kF_{\tau} \|_{r}^r \tau^{\alpha r-1}\, d\tau\\
    &\lesssim  \|\tN(\tau^k\partial_{\tau}^kF)\|_{p}^r \\
      &\lesssim  \|\tN F\|_{p}^r.
\end{align*}
We used the change of variable $t+\tau\to \tau$ and $(k+\alpha)r-1>0$ in the fourth line,   Lemma \ref{lem:HMiMo} in the next to last inequality and Corollary \ref{cor:repeatCaccio} in the last inequality. 

Next, for $0<t'<t\le \delta $,  we wish to show that  $\|F_{t}-F_{t'}\|_{\wt \IH^{p,+}_{DB}}$ tends to 0, 
which will imply the existence of the limit    in the completion. First, by Minkowski inequality in $L^r$ and a  computation as above
$$
 \bigg(\int_{0} ^\delta    \|\tau^k (DB)^k e^{-\tau|DB|}(F_{t}-F_{t'})\|_{r}^r\, \tau^{\alpha r-1}\, d\tau\bigg)^{1/r} \le  2 \bigg(\int_{0} ^{2\delta}   \|\tau^k (DB)^k F_{\tau}\|_{r}^r\, \tau^{\alpha r-1} d\tau\bigg)^{1/r} .
 $$
 Secondly,  for $\tau\ge \delta $, by the mean value inequality 
 \begin{align*}
  \|(DB)^k e^{-\tau|DB|}(F_{t}-F_{t'})\|_{r}  & = \| \pd_{\tau}^k( F_{t+\tau}-F_{t'+\tau})\|_{r} \\
  & \le \int_{t'+\tau}^{t+\tau  } \|\partial_{s}^{k+1}F_{s}\|_{r}\, ds  \\
    &  \le \bigg(\int_{t'+\tau}^{t+\tau  } \|s^{k+1}\partial_{s}^{k+1}F_{s}\|_{r}^r\, ds\bigg)^{1/r} |t-t'|^{1/r'}\tau ^{-(k+1)}
    \\
    & \le \bigg(\int_{\tau}^{2\tau  } \|s^{k+1}\partial_{s}^{k+1}F_{s}\|_{r}^r\, ds\bigg)^{1/r} |t-t'|^{1/r'}\tau ^{-(k+1)}.
\end{align*}
Thus, as $\tau\sim s$, changing the order of integration, we get 
\begin{align*}
   \int_\delta^\infty  \|\tau^k (DB)^k e^{-\tau|DB|}(F_{t}-F_{t'})\|_{r}^r\, \tau^{\alpha r-1}\, d\tau & \le |t-t'| ^{r/r'}   \int_{\delta}^\infty \|s^{k+1}\partial_{s}^{k+1}F_{s}\|_{r}^r \,  \frac{s^{\alpha r-1}}{s^{r-1}} ds    \\
    & \lesssim \left(\frac{ |t-t'|}{\delta}\right)^{r/r'} \|\tN(F)\|_{p}^r
    \end{align*}
    arguing as above  in the last inequality.  This gives the desired limit 0 of $\|F_{t}-F_{t'}\|_{\wt \IH^{p,+}_{DB}}$ when $t,t'\to 0$. 
    
    Let $h$ be the limit in   $\wt H^{p,+}_{DB}$ of $F_{t}$ as $t\to 0$.      As $F_{t+\tau}=e^{-t|DB|}F_{\tau}= \wt S_{p}^+(t)F_{\tau}$, taking the strong limit in $\wt H^{p,+}_{DB}$ as $\tau\to 0$ for fixed $t>0$ yields $F_{t}= \wt S_{p}^+(t) h$ for all $t>0$.    \end{proof}
    
    It remains to show that  $ h\in H^{p}_{DB}$ to conclude that $F_{t}=S_{p}^+(t) h$ for all $t>0$ by (3) in Lemma \ref{lem:htilde}, which finishes the proof of (i) implies (iii) in this case.  To do this we follow an idea  in \cite{HMiMo}. 
    
\begin{lem} If  $h$ is the limit in   $\wt H^{p,+}_{DB}$ of $F_{t}$ as $t\to 0$, we have $h\in H^p$ with $\|h\|_{H^p} \lesssim \|\tN F\|_{p}$, and also $h\in H^p_{D}$, which is the same as $h\in H^p_{DB}$. 
\end{lem}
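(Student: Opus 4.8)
The plan is to show $h\in H^p$ by a non-tangential maximal function bound on the harmonic extension, following the strategy of \cite{HMiMo}. Set $P_t$ to be the Poisson semigroup $e^{-t\sqrt{-\Delta}}$ (acting componentwise on $\C^N$-valued functions), and consider the harmonic extension $v(t,x) = P_t h(x)$ of $h$. The idea is that since $h$ lives in the Besov space $\dot B^{-\alpha,r}_r \subset \mS'$, the extension $v$ makes classical sense; the goal is to prove $\|\tN v\|_p \lesssim \|\tN F\|_p$, which by the classical characterization of $H^p$ via the non-tangential maximal function of the Poisson extension (valid for $\frac{n}{n+1}<p\le 1$) gives $h\in H^p$ with the stated bound. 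Here $\tN v$ is the Whitney-average non-tangential maximal function of \eqref{eq:KP}.

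The key steps, in order: first, exploit the representation $F_t = \wt S_p^+(t) h$ together with the fact that $F_{t+\tau} = e^{-\tau|DB|}F_t$ for $t>0$, to relate $P_t h$ to the family $F_s$. Concretely I would write, for $t>0$, a Calder\'on-type reproducing identity expressing $P_t h$ as a superposition $\int_0^\infty \Phi(t,s)\, F_s\, \frac{ds}{s}$ (plus a boundary term handled by the convergence $F_s\to h$ in $\wt H^p_{DB}$), where $\Phi(t,s)$ is an operator kernel built from composing the Poisson kernel with $\psi(sDB)e^{-s|DB|}$-type factors. The point is that both $P_t$ and the $DB$-functional calculus have good $L^2$ off-diagonal decay, so $\Phi(t,s)$ inherits $L^2$ off-diagonal bounds with the correct homogeneity in $s/t$. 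Second, estimate the Whitney average of $P_t h$ at $(t,x)$: split the $s$-integral into $s\lesssim t$ and $s\gtrsim t$, use the off-diagonal decay of $\Phi(t,s)$ and Lemma \ref{lem:regular} (to pass from $L^2$ slice norms of $F_s$ to Whitney averages) to dominate $\left(\bariint_{W(t,x)} |P_t h|^2\right)^{1/2}$ by a Schur-type average of $\tN F$ evaluated near $x$, i.e., pointwise by (a constant times) $\tN F(x)$ — or more precisely by an average that is controlled in $L^p$ by $\|\tN F\|_p$. Third, conclude $h\in H^p$ and $\|h\|_{H^p}\lesssim \|\tN F\|_p$; then since $\curl_x h_\ta = 0$ (this passes to the limit from $\curl_x (F_t)_\ta = 0$, interpreted in $\mS'$), the characterization recalled in Section \ref{sec:basic} gives $h\in \IP(H^p) = H^p_D$, which for $p\in I_L$ equals $H^p_{DB}$.

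The main obstacle I expect is controlling the behavior of the superposition integral for $s$ large relative to $t$ (the "far" regime $s\gg t$): there the naive off-diagonal estimate is not enough and one must use the extra decay coming from $\psi\in\Psi^\tau_0$ (the factor $e^{-s|DB|}$ and powers of $sDB$) together with the fact that $F_s$ itself decays appropriately — this is exactly where the condition on the growth of $F$ and the quantitative bounds $\|\tau^k\partial_\tau^k F_\tau\|$ from Corollary \ref{cor:repeatCaccio} and Lemma \ref{lem:HMiMo} enter, as in the previous lemma. A secondary technical point is justifying the Calder\'on reproducing identity for $P_t h$ rigorously in the $\wt H^p_{DB}$-completion (as opposed to on the dense class $\IH^2_{DB}$), which requires the continuity of the relevant operators on $\wt H^p_{DB}$ that was established via \eqref{eq:b}; this is routine given the machinery already set up but needs to be stated carefully so that the pairing against Schwartz test functions is legitimate — everything is consistent precisely because $\wt H^p_{DB}$, $H^p_{DB}$ and the Besov space are all embedded in $\mS'$.
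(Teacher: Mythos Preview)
Your plan differs substantially from the paper's and is considerably more involved; the obstacle you flag as ``main'' is real in your framework but does not arise at all in the paper's argument.

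The paper avoids the Poisson extension and any Calder\'on reproducing formula. It tests $h$ directly against a mollifier $\varphi_0(y)=r^{-n}\phi((x-y)/r)$ and uses the Fefferman--Stein grand maximal characterization of $H^p$. The device is a single integration by parts in $t$: setting $\varphi_s=\chi(s)\varphi_0$ with $\chi$ a real $C^1$ cutoff supported in $[0,c_0 r)$ and $\chi(0)=1$, one has $\langle h,\varphi_0\rangle=\lim_{\tau\to 0}\langle F_\tau,\varphi_\tau\rangle$ (via $F_\tau\to h$ in $\wt H^{p,+}_{DB}\subset\mS'$), and then, using $\partial_s F_s=-DBF_s$,
\[
\langle h,\varphi_0\rangle=-\int_0^\infty\langle F_s,\chi'(s)\varphi_0\rangle\,ds+\int_0^\infty\langle F_s,\chi(s)B^*D\varphi_0\rangle\,ds=:I+II.
\]
Both integrands are supported in $s\in(0,c_0 r)$, so there is \emph{no far regime}. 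The term $I$ lives on the Whitney annulus $s\sim r$ and is bounded pointwise by $\tN F(x)$ via Cauchy--Schwarz. The term $II$ gains a factor $r^{-1}$ from $D\varphi_0$, and the inequality $\iint_{\reu}|u|\lesssim\|\tN u\|_{n/(n+1)}$ of Lemma~\ref{lem:HMiMo}, applied to $|F|1_{(0,c_0 r)\times B(x,c_1 r)}$, yields $|II|\lesssim\big(\MM((\tN F)^{n/(n+1)})\big)^{(n+1)/n}(x)$, which is in $L^p$ since $p>\tfrac{n}{n+1}$. For $h\in H^p_D$ one feeds $\varphi_0$ with $D\varphi_0=0$ into the \emph{same} representation: both $I$ and $II$ vanish because $F_s\in\IH^2_D$ is orthogonal to $\nul_2(D)$. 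In your route, by contrast, the Schur decay of $\Phi(t,s)=P_t\psi(sDB)$ for $s\gg t$ is not available from off-diagonal bounds alone --- you would need either to mix cancellation of $P_t-I$ with the $DB$ calculus, or to fall back on localized $\iint|F|$ bounds, at which point you are reproducing the paper's estimate inside a more elaborate wrapper. The compactly supported cutoff $\chi$ is precisely what makes the paper's argument short.
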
    
    
    \begin{proof}  Let $\varphi_{0} \in \mS$.  Let  $\chi$ be a real, $C^1$ function with compact support in $[0,\infty)$ and $\chi(0)=1$. Let $\varphi(s,y)=\varphi_{s}(y)= \chi(s)\varphi_{0}(y)$ for $s\ge 0$ and $y\in \R^n$.
    Observe that $\varphi \in C^1([0,\infty);\mS)$. 
 We  have 
 $\pair h{ \varphi_{0}}= \lim_{\tau\to 0} \pair {F_{\tau}} { \varphi_{\tau}}$. Indeed,
 $$
 \pair h{ \varphi_{0}} - \pair {F_{\tau} }{ \varphi_{\tau}}= \pair {h-F_{\tau}}{\varphi_{0}}+ (1-\chi(\tau)) \pair {F_{\tau}}{ \varphi_{0}}
 $$ and strong convergence of $F_{\tau}$ to $h$ in $\wt H^{p,+}_{DB}$ implies convergence in $\mS'$. 
 Next, 
  the pairing $\pair {F_{\tau}} { \varphi_{\tau}}$ is now taken as the $L^2$ pairing and since $F \in C^1((0,\infty); L^2)$ and $\varphi \in C^1((0,\infty); L^2)$ with bounded support in $s$, we have
 $$
 \pair {F_{\tau}}{ \varphi_{\tau}} =  \int_{\tau}^\infty (- \pair {F_{s}}{\pd_{s}\varphi_{s}} - \pair {\pd_{s}F_{s}}{\varphi_{s}}) \, ds.
 $$
 Using $- \pair {\pd_{s}F_{s}}{\varphi_{s}}=  \pair {DB F_{s}}{\varphi_{s}}= \pair {F_{s}}{B^*D\varphi_{s}}$ and taking the limit as $\tau\to 0$, we obtain that 
 $$ \pair h{ \varphi_{0}}= - \int_{0}^\infty  \pair {F_{s}}{\pd_{s}\varphi_{s}}\, ds + \int_{0}^\infty \pair {F_{s}}{B^*D\varphi_{s}} \, ds= I+II.
$$
The pairings inside the integrals are Lebesgue integrals on $ \R^n$. The convergence of the $s$ integral at 0 is in the sense prescribed above. However, note that we have put the action of $D$ on $\varphi$ in the process and we shall see that this way,  for some choices of $\varphi$, we obtain  \textit{bona fide} Lebesgue integrals on $\reu$ as the next argument shows.

Now choose $\varphi_{0}(y)=  \frac{1}{r^n}\phi(\frac{x-y}{r})$ with $\phi \in C^\infty_{0}$ supported in the ball $B(0,c_{1})$ with mean value 1. We have that $ \pair {h}{\varphi_{0}} = { h\star\frac{1}{r^n}\phi(\frac{.}{r})(x)}$. By the Fefferman-Stein characterisation of $H^p$, we  need to control $\sup_{r>0} |h\star \frac{1}{r^n} \phi(\frac{.}{r})|$ in $L^p$ to conclude that the Schwartz distribution $h$ belongs to $H^p$.  We use for that the integral representation of $\pair {h}{\varphi_{0}}$ above in which we take $\chi(t)$  supported in  $[0, c_{0}r)$ with value 1 on  
$[0, c_{0}^{-1}r]$ and $\|\chi\|_{\infty}+r\|\chi'\|_{\infty}\lesssim 1$. Note that the integrand of $I$  is supported in the Whitney box $W(r, x)$, so that looking at powers of $r$ and applying Cauchy-Schwarz inequality,  this  integral is dominated by $(\tN F)(x)$. For $II$,  using the boundedness of $B$, we obtain 
$$
|  II  | \lesssim  \iint_{T} |F| \|\nabla_{y} \varphi\|_{\infty} \lesssim r^{-n-1} \iint_{T} |F| ,
$$
where $T:=(0, c_{0}r)\times B(x,c_{1}r)$. Then,  using the inequality in Lemma \ref{lem:HMiMo}
$$
\iint_{\reu} |u| \lesssim \| \tN u\|_{{\frac{n}{n+1}}}
$$
with $u=|F|1_{T}$ and by support considerations, we obtain 
$$
r^{-n-1} \iint_{T} |F| \lesssim \left( r^{-n} \int_{(1+c_{0})B(x,c_{1}r)} (\tN F)^{\frac{n}{n+1}} \right)
^{\frac{n+1}{n}} \lesssim  (\MM ((\tN F)^{\frac{n}{n+1}}))^{\frac{n+1}{n}}(x),$$
where $\MM$ is the Hardy-Littlewood maximal operator. As $ \frac{n}{n+1}<p$, we obtain the conclusion  from the maximal theorem and $\tN F \in L^p$.

It remains to prove $h\in H^p_{D}$. Assume  $\varphi_{0}\in \mS$ is such that $D\varphi_{0}=0$.    Consider the extension $\varphi(s,y)= \chi(s)\varphi_{0}(y)$.  Then $D \pd_{s}\varphi_{s}=0$ and  $\pair {F_{s}}{\pd_{s}\varphi_{s}}=0$ because $F_{s}$ is orthogonal to the null space of $D$. Also $D\varphi_{s}=0$ and $\pair {F_{s}}{B^*D\varphi_{s}} =0$. It follows that $\pair h {\varphi_{0}}=0$ by the representation above. As $ h\in H^p$, this means that $h \in H^p_{D}$. 
\end{proof}

\

\subparagraph{\textbf{Case $\mathbf{p> 2}$.}}  

 Here $p>2$ means that $2<p<p_{+}(DB)$ since we impose $H^p_{DB}=H^p_{D}$. For $p$ is this range, $H^{p'}_{B^*D}=\clos{\ran_{p'}(B^*D)}=B^*\clos{\ran_{p'}(D)}$ is thus a closed subspace of $L^{p'}$, so that it is equipped with $L^{p'}$ norm and for all $h\in H^{p'}_{B^*D}$, 
 $$
 \|\IP h\|_{p'}\sim \|h\|_{p'} \sim \|h\|_{\IH^{p'}_{B^*D}}.
 $$
  Recall also that $\IH^{p'}_{B^*D}= H^{p'}_{B^*D}\cap H^{2}_{B^*D}$ and in this space we are able to compute without thinking about completions.

Recall that our goal is to interpret the limits in \eqref{eq:limit-(i)} and \eqref{eq:limit+(i)}.
Here,   we do not \textit{a priori} know that $F_{t}$ belongs to some $L^p$ space but only that  $F_{t}\in E^p_{t}$ uniformly.  In fact, we could suppose that $F_{t}$ belongs to $L^p$ for some $p<p_{0}$ with $p_{0}>2$. This follows from Meyers $W^{1,p}$ inequality for weak solutions. Thus the argument of subcase $p\le 2$ would carry almost without change for $p<p_{0}$. However, we do not know the relation between $p_{0}$ and $p_{+}(DB)$ so that we would have to work in the range $2<p< \inf(p_{0}, p_{+}(DB))$. We decide not to do this, in order to obtain the full range up to $p_{+}(DB)$. 
    We shall use the slice-spaces $E^p_{t}$ more extensively.

We shall rely on three technical lemmas. 

\begin{lem}\label{lemma1} Let $h\in \IH^{p'}_{B^*D}$. For all $\delta >0$, 
$e^{-\delta |B^*D|}h\in E^{p'}_{\delta }$ with uniform bound with respect to $\delta $. More precisely, 
$\sup_{\delta >0}\|e^{-\delta |B^*D|}h\|_{E^{p'}_{\delta }}\lesssim \|h\|_{p'}$. 
\end{lem}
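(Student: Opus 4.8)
The plan is to reduce the statement to the boundedness of the semigroup $e^{-\delta|B^*D|}$ on $E^{p'}_\delta$ uniformly in $\delta$, combined with the known $L^{p'}$ bound for the semigroup on $\IH^{p'}_{B^*D}$. First I would recall from Section \ref{sec:basic} (using $p'<2<p_+(DB)$, equivalently $p'\in I_L$ on the $B^*D$ side, or rather $p'$ in the relevant interval for $B^*D$) that $\IH^{p'}_{B^*D}$ is equipped with the $L^{p'}$ norm, so that the semigroup $e^{-\delta|B^*D|}$ is bounded on $L^{p'}$ (restricted to $\clos{\ran_{p'}(B^*D)}$) and satisfies $\|e^{-\delta|B^*D|}h\|_{p'}\lesssim \|h\|_{p'}$ uniformly in $\delta$. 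By \eqref{eq:averaging} and Hölder's inequality with exponent $p'/2\le 1$ (recall $p'<2$ here), any $L^{p'}$ function $g$ lies in $E^{p'}_s$ for every $s>0$ with $\|g\|_{E^{p'}_s}\le\|g\|_{p'}$... wait, this is the wrong direction: for $p'\le 2$ one has $E^{p'}_s\subset L^{p'}$, not the reverse. So the naive embedding does not suffice, and one genuinely needs the off-diagonal structure.

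The right approach is to invoke Proposition \ref{prop:slice-Ts bound}. The family $T_s=e^{-s|B^*D|}$ (or more precisely $e^{-s|B^*D|}\IP_{B^*D}$, restricted to $\clos{\ran_2(B^*D)}$) has $L^2$ off-diagonal decay of arbitrary order $N$: this is a standard Davies--Gaffney type estimate for the semigroup generated by the bisectorial operator $B^*D$, which holds with any polynomial order $N$ since the functional calculus on $L^2$ controls resolvents with off-diagonal decay. Hence for $p'\in(0,\infty]$ the hypothesis $N>\inf(n|1/p'-1/2|,n/2)$ is met for $N$ large, and Proposition \ref{prop:slice-Ts bound} gives $e^{-s|B^*D|}:E^{p'}_t\to E^{p'}_t$ uniformly in $0<s\le t$. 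Applying this with $s=t=\delta$, we get $\|e^{-\delta|B^*D|}h\|_{E^{p'}_\delta}\lesssim \|h\|_{E^{p'}_\delta}$. It then remains to control $\|h\|_{E^{p'}_\delta}$ by $\|h\|_{p'}$, which is false in general for $p'<2$; so instead I would argue directly: write $h = e^{-(\delta/2)|B^*D|}\bigl(e^{(\delta/2)|B^*D|}h\bigr)$ is not available, so rather factor $e^{-\delta|B^*D|}h = e^{-(\delta/2)|B^*D|}\, g$ with $g=e^{-(\delta/2)|B^*D|}h$; by the $L^{p'}$ bound $g\in L^{p'}$ with $\|g\|_{p'}\lesssim\|h\|_{p'}$, and for $p'\le 2$ the containment $L^{p'}\subset E^{p'}_s$ \emph{fails}, so instead use that $e^{-(\delta/2)|B^*D|}:L^{p'}\to E^{p'}_{\delta/2}$ boundedly. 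This last mapping property is exactly of the same nature: the operator $e^{-(\delta/2)|B^*D|}$ has $L^2$ off-diagonal decay, and one checks by a molecular/atomic argument (for $p'\le 1$) or interpolation (for $1<p'\le 2$), mimicking the proof of Proposition \ref{prop:slice-Ts bound} but now with target $E^{p'}_{\delta/2}$ and source $L^{p'}=E^{p'}_s$-at-infinity, that $\|e^{-(\delta/2)|B^*D|}g\|_{E^{p'}_{\delta/2}}\lesssim\|g\|_{p'}$, and then $E^{p'}_{\delta/2}=E^{p'}_\delta$ with comparable norms by Lemma \ref{lem:slice_changenorms} (the factor $(\delta/(\delta/2))^{n/2-n/p'}$ being a harmless constant independent of $\delta$).

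The main obstacle is precisely this gain from $L^{p'}$ into the slice-space $E^{p'}_\delta$: off-diagonal decay alone gives local $L^2$ control on each ball $B(x,\delta)$, but one must then sum the $p'/2$-powers of these local averages against the ambient $L^{p'}$ norm without losing a power of $\delta$. For $p'\le 1$ this is handled by checking that $e^{-\delta|B^*D|}$ maps $L^{p'}$-normalized pieces (supported near a single ball) to $E^{p'}_\delta$-molecules, exactly as in the atom-to-molecule step of Proposition \ref{prop:slice-Ts bound}, and for $1<p'<2$ by interpolating between the $p'=1$ case and the trivial $p'=2$ case $E^2_\delta=L^2$. Assembling these — the $L^{p'}$ semigroup bound on $\IH^{p'}_{B^*D}$, the $L^{p'}\to E^{p'}_\delta$ smoothing, the slice-space change of norms, and the uniform boundedness of $e^{-\delta|B^*D|}$ on $E^{p'}_\delta$ from Proposition \ref{prop:slice-Ts bound} — yields $\sup_{\delta>0}\|e^{-\delta|B^*D|}h\|_{E^{p'}_\delta}\lesssim\|h\|_{p'}$, as claimed.
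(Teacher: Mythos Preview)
Your approach has a genuine gap. The claim that $(e^{-s|B^*D|})_{s>0}$ has $L^2$ off-diagonal decay of arbitrary order $N$ is unjustified and in fact false in general: already for $B^*=I$ and $D=-i\nabla$ the semigroup $e^{-s|D|}$ is the Poisson semigroup, whose kernel decays only like $s|x|^{-n-1}$, giving off-diagonal decay of order $1$ and no better. This is precisely why the paper later needs Lemma~\ref{lem:T} to manufacture operators $\phi^\pm(sT)$ that agree with the semigroup on each spectral half and yet have prescribed off-diagonal order. So Proposition~\ref{prop:slice-Ts bound} cannot be invoked for the semigroup with $N$ as large as you wish. More seriously, even granting off-diagonal decay, your crucial step $e^{-(\delta/2)|B^*D|}:L^{p'}\to E^{p'}_{\delta/2}$ does not follow from the molecule argument you sketch: here $1<p'<2$, so $L^{p'}$ has no atomic decomposition, and the natural pointwise bound via annular decomposition leads to $\big(\barint_{B(x,\delta)}|e^{-\delta|B^*D|}h|^2\big)^{1/2}\lesssim (M(|h|^2)(x))^{1/2}$, which would require the Hardy--Littlewood maximal function on $L^{p'/2}$ with $p'/2<1$. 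Interpolating with a putative $p'=1$ endpoint does not help, since that endpoint is not available in the form you need. In short, off-diagonal decay alone is too weak to produce the $L^{p'}\to E^{p'}_\delta$ smoothing when $p'<2$.

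The paper's proof bypasses this entirely by invoking the non-tangential maximal estimate $\|\tN(e^{-t|B^*D|}h)\|_{p'}\lesssim\|h\|_{p'}$ from \cite[Theorem~9.3]{AS}, together with the square function bound $\|t|B^*D|e^{-t|B^*D|}h\|_{T^{p'}_2}\lesssim\|h\|_{p'}$ from \cite[Theorem~5.7]{AS} (hence $\|\tN(t\partial_t e^{-t|B^*D|}h)\|_{p'}\lesssim\|h\|_{p'}$). Then the mean value inequality in $t$, exactly as in the proof of Lemma~\ref{lem:regular} (i.e.\ inequality~\eqref{eq: sliceL2}), converts control of $\tN$ of both $e^{-t|B^*D|}h$ and $t\partial_t e^{-t|B^*D|}h$ into the slice bound $\|e^{-\delta|B^*D|}h\|_{E^{p'}_\delta}\lesssim\|h\|_{p'}$ uniformly in $\delta$. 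The key input you are missing is thus the $\tN$-estimate for the $B^*D$ semigroup, which encodes far more than generic off-diagonal decay.
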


\begin{proof}  Theorem 9.3 of \cite{AS} gives us the non-tangential maximal estimates $$ \|\tN(e^{-t|B^*D|} h) \|_{p'} \sim \|h\|_{p'}$$ for $p'$ in our range.  By Theorem 5.7 in \cite{AS} we have the $T^{p'}_{2}$ estimate, 
$$ \|t|B^*D|e^{-t|B^*D|} h \|_{T^{p'}_{2}} \sim \|h\|_{p'},$$ hence
 $$
 \|\tN(t|B^*D|e^{-t|B^*D|} h) \|_{p'} \lesssim \|h\|_{p'}.
 $$
As   $t|B^*D|e^{-t|B^*D|} h=-t\pd_{t}e^{-t|B^*D|} h$, using a similar argument via a mean value inequality as in the proof of Lemma \ref{lem:regular}, we obtain the desired uniform estimates $e^{-\delta |B^*D|}h\in E^{p'}_{\delta }$.
\end{proof}

\begin{lem}\label{lemma2} Fix $\delta >0$. Let $h\in \IH^{p',\pm}_{B^*D}\cap E^{p'}_{\delta }$.  Then $e^{-s |B^*D|}h$ converges to $h$ in $E^{p'}_{\delta }$ when $s\to 0$. 
\end{lem}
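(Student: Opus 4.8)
The plan is to exploit the uniform bound from Lemma \ref{lemma1} together with the retraction structure of slice-spaces over tent spaces, reducing the claim to a convergence statement that follows either from density or from the strong continuity of the semigroup at the Hardy-space level. First I would fix $\delta>0$ and write $e^{-s|B^*D|}h-h$; since $h\in \IH^{p',\pm}_{B^*D}\subset \clos{\ran_2(B^*D)}$, the semigroup $(e^{-s|B^*D|})_{s\ge 0}$ acts on $h$ in the $L^2$ sense and $e^{-s|B^*D|}h\to h$ strongly in $L^2$ as $s\to 0$ (this is the standard strong continuity of the bounded holomorphic semigroup generated by $-|B^*D|$ on the closure of its range, recalled in Section \ref{sec:basic}). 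The operators $T_s:=e^{-s|B^*D|}$ have $L^2$ off-diagonal decay of arbitrary order $N$ (this is the standard Davies--Gaffney estimate for $e^{-s|B^*D|}$, used repeatedly in \cite{AS}), so Proposition \ref{prop:cv} applies on $E^{p'}_\delta$: it gives $T_s\to I$ strongly on $E^{p'}_\delta$ for $0<p'<\infty$.

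However, Proposition \ref{prop:cv} as stated requires $T_s\to I$ strongly in $L^2$ on a \emph{dense} class of $L^2$ and then transfers to $E^{p'}_\delta$ using the uniform bound $\sup_{0<s\le t}\|T_s\|_{E^{p'}_t\to E^{p'}_t}<\infty$ from Proposition \ref{prop:slice-Ts bound}. So the cleanest route is: (i) invoke Proposition \ref{prop:slice-Ts bound} with $T_s=e^{-s|B^*D|}$ and off-diagonal order $N$ large, giving $\sup_{0<s\le\delta}\|e^{-s|B^*D|}\|_{E^{p'}_\delta\to E^{p'}_\delta}\le C$ (here one uses $N>\inf(n|1/p'-1/2|,n/2)$, which holds since $N$ is arbitrary); (ii) invoke Proposition \ref{prop:cv} to conclude $e^{-s|B^*D|}g\to g$ in $E^{p'}_\delta$ for \emph{every} $g\in E^{p'}_\delta$; (iii) apply this with $g=h$. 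Since $h\in E^{p'}_\delta$ by hypothesis, the conclusion $e^{-s|B^*D|}h\to h$ in $E^{p'}_\delta$ is immediate. The spectral projection $\chi^\pm(B^*D)$ plays no role beyond ensuring $h$ lies in $\clos{\ran_2(B^*D)}$, where the $L^2$ strong continuity holds; alternatively, since $\chi^\pm(B^*D)$ commutes with $e^{-s|B^*D|}$, one could first prove the statement without the sign restriction and then project.

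The main obstacle — and it is minor — is checking that Proposition \ref{prop:cv} is genuinely applicable, i.e.\ that the off-diagonal decay hypothesis of order $N>\inf(n/p',n/2)$ is met and that $e^{-s|B^*D|}\to I$ strongly in $L^2$ on the relevant subspace. Both are standard: the off-diagonal (Davies--Gaffney) bounds for $e^{-s|B^*D|}$ of arbitrary polynomial order are established in \cite{AKMc} and recalled in \cite{AS}, and strong $L^2$-continuity of the semigroup on $\clos{\ran_2(B^*D)}$ is part of the basic functional calculus package for $B^*D$ summarized in Section \ref{sec:basic}. One subtlety worth a line: Proposition \ref{prop:cv} is stated with the uniform bound valid for $0<s<t$ where $t$ is the slice parameter; here $t=\delta$ is exactly the parameter of the space $E^{p'}_\delta$, so the bound is uniform in $s\in(0,\delta)$, which is all that is needed to pass limits. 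Thus the lemma follows with essentially no computation, by assembling Propositions \ref{prop:slice-Ts bound} and \ref{prop:cv} applied to the semigroup $e^{-s|B^*D|}$.
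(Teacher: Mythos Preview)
There is a genuine gap. You assert that the semigroup $e^{-s|B^*D|}$ itself has $L^2$ off-diagonal decay of arbitrary polynomial order (``Davies--Gaffney''), and this is what would let you feed it directly into Propositions \ref{prop:slice-Ts bound} and \ref{prop:cv}. But that claim is not justified and is, in fact, the whole difficulty. The symbol $z\mapsto e^{-[z]}$ (with $[z]=z\,\mathrm{sgn}(\re z)$) lies in $H^\infty(S_\mu)$ but has no decay at $0$, so the off-diagonal machinery for operators of the form $\psi(sT)$ (e.g.\ \cite{AS}, Proposition 3.13, which requires $\psi\in\Psi_\sigma^\tau$ with $\sigma>0$) does not apply to it. Davies--Gaffney bounds are standard for heat semigroups $e^{-tL}$ of second-order divergence-form operators; they are not available in the literature for $e^{-t|B^*D|}$, and neither \cite{AKMc} nor \cite{AS} provides them.

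This is precisely why the paper proves and invokes Lemma \ref{lem:T}: it constructs, for each sign, a function $\phi^\pm\in H^\infty(S_\mu)$ so that $T_s:=\phi^\pm(sB^*D)$ (i) has $L^2$ off-diagonal decay of any prescribed order $N$, (ii) converges strongly to $I$ on $L^2$, and (iii) coincides with $e^{-s|B^*D|}$ on the spectral subspace $\IH^{2,\pm}_{B^*D}$. The hypothesis $h\in\IH^{p',\pm}_{B^*D}$ is therefore not incidental, contrary to your last paragraph: it is exactly what permits the substitution $e^{-s|B^*D|}h=\phi^\pm(sB^*D)h$, after which your steps (i)--(iii) via Propositions \ref{prop:slice-Ts bound} and \ref{prop:cv} go through verbatim. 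Without that substitution the argument stalls at the off-diagonal hypothesis.
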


\begin{proof} See Section \ref{sec:technical}.

\end{proof}

\begin{lem}\label{lemma3} Let  $\varphi_{0}\in \mS$ and  $\phi_{0}=\IP_{B^*D}\varphi_{0}$ where the projection $\IP_{B^*D}$ was defined in Section \ref{sec:basic}. Then  
$\phi_{0}\in \IH^{p'}_{B^*D}$, $B^*D\phi_{0}=B^*D\varphi_{0} \in \IH^{p'}_{B^*D}$ and    $\chi^\pm(B^*D)B^*D\phi_{0}\in  \IH^{p',\pm}_{B^*D}\cap E^{p'}_{\delta }$ for all $\delta >0$.
\end{lem}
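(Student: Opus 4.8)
The plan is to unwind the definitions carefully and exploit the fact that $\IP_{B^*D}$ is the bounded projection onto $\clos{\ran_2(B^*D)}$ along $\nul_2(B^*D)=\nul_2(D)$, together with the identification $H^{p'}_{B^*D}=\clos{\ran_{p'}(B^*D)}$ available because $p'$ lies in the range where $H^{p'}_{B^*D}=H^{p'}_D$ (since $2<p<p_+(DB)$ forces $p'\in I_{L}$ for the relevant operator). First I would observe that for $\varphi_0\in\mS$ we have $\varphi_0\in L^{p'}\cap L^2$, so $\phi_0=\IP_{B^*D}\varphi_0\in L^{p'}\cap L^2$ by boundedness of $\IP_{B^*D}$ on $L^{p'}$ (it differs from $\IP$ by a bounded multiplication; more precisely $\IP_{B^*D}$ is $B$ times a Calderón–Zygmund operator up to the obvious identifications, hence bounded on $L^{p'}$ for $p'$ in this range). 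Since $\phi_0\in\clos{\ran_2(B^*D)}=\IH^2_{B^*D}$ and $\phi_0\in L^{p'}$, and because in this range $\IH^{p'}_{B^*D}=H^{p'}_{B^*D}\cap H^2_{B^*D}$ is simply $\clos{\ran_{p'}(B^*D)}\cap L^2$ equipped with the $L^{p'}$ norm, we get $\phi_0\in\IH^{p'}_{B^*D}$.

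Next, for the identity $B^*D\phi_0=B^*D\varphi_0$: this is immediate from the definition of $\IP_{B^*D}$, because $\varphi_0-\phi_0\in\nul_2(B^*D)$, so $B^*D(\varphi_0-\phi_0)=0$. For $\varphi_0\in\mS$ we have $D\varphi_0\in\mS\subset L^{p'}\cap L^2$, hence $B^*D\varphi_0\in L^{p'}\cap L^2$; moreover $B^*D\varphi_0\in\ran_2(B^*D)$ by construction, so $B^*D\varphi_0\in\IH^{p'}_{B^*D}$ exactly as above. Applying $\chi^\pm(B^*D)$, which is bounded on $\IH^{p'}_{B^*D}$ (equivalently on $\clos{\ran_{p'}(B^*D)}$ with $L^{p'}$ norm, by \cite{AS}, Theorem 4.19, since $p'\in I_{L}$) and on $\IH^2_{B^*D}$, we obtain $\chi^\pm(B^*D)B^*D\phi_0\in\IH^{p',\pm}_{B^*D}$.

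Finally, the membership in $E^{p'}_\delta$ for all $\delta>0$: write $g^\pm:=\chi^\pm(B^*D)B^*D\phi_0=\chi^\pm(B^*D)B^*D\varphi_0$. Since $g^\pm\in\IH^{p',\pm}_{B^*D}$, Lemma \ref{lemma1} applies and gives $e^{-\delta|B^*D|}g^\pm\in E^{p'}_\delta$ uniformly in $\delta>0$. To upgrade this to $g^\pm$ itself, note that $g^\pm=B^*D\,\psi^\pm(B^*D)\varphi_0$ with $\psi^\pm(z)=\chi^\pm(z)$, and more usefully we can factor $g^\pm = |B^*D|^{1/2}\bigl(|B^*D|^{-1/2}\chi^\pm(B^*D)B^*D\bigr)\varphi_0$; but the cleanest route is to use that $\varphi_0\in\mS$ so $B^*D\varphi_0$ already lies in the Schwartz-type regularity class, and then run the same mean-value-inequality argument as in Lemma \ref{lemma1}, applied not to $h$ at scale $\delta$ but starting from the smooth datum: since $t\mapsto e^{-t|B^*D|}g^\pm$ is, for $g^\pm\in\IH^{p'}_{B^*D}$, continuous into $E^{p'}_\delta$ down to $t=0$ by Lemma \ref{lemma2} (using $g^\pm\in\IH^{p',\pm}_{B^*D}$), the limit $g^\pm=\lim_{t\to0}e^{-t|B^*D|}g^\pm$ exists in $E^{p'}_\delta$ and hence $g^\pm\in E^{p'}_\delta$. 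Since $E^{p'}_\delta=E^{p'}_{\delta'}$ for all $\delta,\delta'>0$ with equivalent norms by Lemma \ref{lem:slice_changenorms}, this holds for every $\delta>0$. I expect the main obstacle to be purely bookkeeping: making sure at each step that the element in question is simultaneously in $L^2$ (so the $L^2$ functional calculus of $B^*D$ and the splitting $\chi^\pm$ are legitimately available) and in the $L^{p'}$-based space, and that the various identifications $\IH^{p'}_{B^*D}=\clos{\ran_{p'}(B^*D)}\cap L^2=H^{p'}_{B^*D}\cap H^2_{B^*D}$ from Section \ref{sec:basic} and Theorem \ref{thm:IL} are being invoked in the correct range of $p'$; no single estimate is hard, but the consistency of all these ambient spaces is where care is needed.
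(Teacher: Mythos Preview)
Your argument for the first three items is essentially correct and matches the paper's approach (a minor quibble: $\IP_{B^*D}$ is not literally ``$B$ times a Calder\'on--Zygmund operator'', but it is indeed bounded on $L^{p'}$ for $p_-(B^*D)<p'<2$, which is the range here; the paper gets $\phi_0\in\IH^{p'}_{B^*D}$ via $\IP\phi_0=\IP\varphi_0\in L^{p'}$ and \cite[Theorem~4.20]{AS}).

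The genuine gap is in your final step, the membership $\chi^\pm(B^*D)B^*D\phi_0\in E^{p'}_\delta$. Your invocation of Lemma~\ref{lemma2} is circular: that lemma has as \emph{hypothesis} that $h\in\IH^{p',\pm}_{B^*D}\cap E^{p'}_\delta$, and its conclusion is that $e^{-s|B^*D|}h\to h$ in $E^{p'}_\delta$. You cannot use it to deduce $g^\pm\in E^{p'}_\delta$ because that is precisely the hypothesis you would need to supply. Lemma~\ref{lemma1} only gives $e^{-t|B^*D|}g^\pm\in E^{p'}_t$ uniformly in $t$; after change of norms to $E^{p'}_\delta$ the constant blows up as $t\to 0$ (since $p'<2$), so no uniform bound and no limit argument is available.

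The paper closes this gap by a direct computation that crucially uses the Schwartz decay of $\varphi_0$. One writes $h=\chi^+(B^*D)B^*D\varphi_0$, picks $\psi\in\Psi^N_N(S_\mu)$ with $\int_0^\infty\psi(sz)\frac{ds}{s}=1$, and splits
\[
h=\int_0^\delta(\chi^+\psi)(sB^*D)(B^*D\varphi_0)\,\frac{ds}{s}+\phi(\delta B^*D)h,
\]
where $\phi(z)=\int_1^\infty\psi(sz)\frac{ds}{s}$. The second term lies in $E^{p'}_\delta$ by the argument of Lemma~\ref{lemma1}. For the first term one uses the $L^2$ off-diagonal decay of $(\chi^+\psi)(sB^*D)$ together with the fast spatial decay of $B^*D\varphi_0$ (which is bounded times a Schwartz function) to show the integral $g$ satisfies $\int|g(x)|^2\langle x/\delta\rangle^{-2N}dx<\infty$, hence $\big(\barint_{B(x,\delta)}|g|^2\big)^{1/2}\lesssim\delta^{-n/2}\langle x/\delta\rangle^{-N}$, which is in $L^{p'}(dx)$ for $N$ large. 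This direct weighted-$L^2$ estimate is the missing idea; the spectral projector $\chi^\pm(B^*D)$ does not preserve $E^{p'}_\delta$ in general, so the Schwartz decay of the specific input $B^*D\varphi_0$ must be exploited.
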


\begin{proof}
See Section \ref{sec:technical}.
\end{proof}

We begin the argument. Let  $\phi_{0}$ satisfy 
$\phi_{0}, B^*D\phi_{0} \in \IH^{p'}_{B^*D}$ and   $\chi^\pm(B^*D)B^*D\phi_{0}\in  \IH^{p',\pm}_{B^*D}\cap E^{p'}_{\delta }$ for all $\delta >0$.   From $\phi_{0}\in \IH^{p'}_{B^*D}$, we
 have the equalities \eqref{eq:limit-(i)} and \eqref{eq:limit+(i)} and we restart from those.  Recall that one term in 
    \eqref{eq:limit-(i)} tends to 0 as $\varepsilon\to 0$. Thus we need to calculate the limit of the other term in \eqref{eq:limit-(i)} and take the  limit in   \eqref{eq:limit+(i)}. Our first goal is to obtain some identities on $\partial_{t}F_{t}$.

    We begin with computing the limit of the first term in    \eqref{eq:limit-(i)}. 
   As  $\phi_{0}, B^*D\phi_{0}  \in \IH^{p'}_{B^*D}$, for $s>0$ we have  $B^*De^{sB^*D}\chi^-(B^*D)\phi_{0}=e^{sB^*D}\chi^-(B^*D)B^*D\phi_{0}$. It follows from Lemmas  \ref{lemma1} and \ref{lemma2} that  $e^{sB^*D}\chi^-(B^*D)B^*D\phi_{0} \in E^{p'}_{t}$ and converges to $\chi^-(B^*D)B^*D\phi_{0}=B^*D\chi^-(B^*D)\phi_{0} $ in this space as $s\to 0$. Since 
  $s\mapsto  F_{t+s}$ is continuous near $s=0$    into  $E^p_{t}$  by Corollary \ref{cor:uniformept}, we obtain 
    \begin{align*}
\frac{1}{\varepsilon} \iint_{[\varepsilon, 2\varepsilon]\times \R^n} & \bpaire { (B^*De^{sB^*D}\chi^-(B^*D)\phi_{0})(x)}{F(t+s,x)}\,   dsdx \\ 
&= \barint_{[\varepsilon, 2\varepsilon]} \pair { B^*De^{s B^*D}\chi^-(B^*D)  \phi_0}{F_{t+s}}\,   ds
\\
& \to  \pair { B^*D\chi^-(B^*D)  \phi_0}{F_{t}},  \quad \varepsilon\to 0,
\end{align*}
the pairings denoting the $E^{p'}_{t},E^p_{t}$ duality. This is in fact a Lebesgue integral. 
It follows from \eqref{eq:limit--} that 
\begin{equation}
\label{eq:a1}
 \pair { B^*D\chi^-(B^*D)  \phi_0}{F_{t}}= 0
\end{equation}
 for all such $\phi_{0}$ and $t>0$. 
 
 Similarly the first term in  \eqref{eq:limit+(i)} converges to $ \pair { B^*D\chi^+(B^*D)  \phi_0}{F_{t}}$ for all such $\phi_{0}$ and $t>0$. We set
 \begin{align*}
   I_{t,\phi_{0}}^\varepsilon&:=\frac{1}{\varepsilon} \iint_{[{\varepsilon}, 2{\varepsilon}]\times \R^n} \bpaire { B^*(x)D\varphi_s(x)}{F(s,x)} \, dsdx\\ 
    &= \barint_{[\varepsilon, 2\varepsilon]} \pair { B^*De^{ -(t-s) B^*D}\chi^+(B^*D)  \phi_0}{F_{s}}\,   ds,
    \end{align*}
where again the pairing can be interpreted using the $E^{p'}_{t},E^p_{t}$ duality.  Thus we have 
 shown, 
$$
\lim_{\varepsilon\to 0} I_{t,\phi_{0}}^{\varepsilon }= \pair { B^*D\chi^+(B^*D)  \phi_0}{F_{t}}
$$
for all $t >0$. For $\tau\ge 0$,  replacing $t$ by $t+\tau$, we obtain
$$
\lim_{\varepsilon\to 0} I_{t+\tau,\phi_{0}}^{\varepsilon } = \pair { B^*D\chi^+(B^*D)  \phi_0}{F_{t+\tau}}.$$ But at the same time, $I_{t+\tau,\phi_{0}}^{\varepsilon } =I_{t, \phi_{\tau}}^\varepsilon$ with   $\phi_{\tau}=e^{-\tau|B^*D|}\phi_{0}$. As  $\phi_{\tau}$ satisfies  the same requirements as $\phi_{0}$ and   
$B^*D\chi^+(B^*D)  \phi_\tau= B^*De^{-\tau B^*D}\chi^+(B^*D)  \phi_0$, we obtain
$$
\lim_{\varepsilon\to 0} I_{t+\tau, \phi_{0}}^{\varepsilon } =  \pair {B^*D \chi^+(B^*D)  \phi_\tau}{F_{t}}= \pair {B^*D e^{-\tau B^*D} \chi^+(B^*D)  \phi_0}{F_{t}}.$$
We have obtained the relation 
\begin{equation}
\label{eq:a2}
 \pair { B^*De^{-\tau B^*D}\chi^+(B^*D)  \phi_0}{F_{t}}=  \pair { B^*D\chi^+(B^*D)  \phi_0}{F_{t+\tau}}.
\end{equation}
 
 Summing \eqref{eq:a1}  at  $t+\tau$ and  \eqref{eq:a2}, we have  for all such $\phi_{0}$, $t>0$ and $\tau\ge 0$,
 \begin{equation}
\label{eq:a3}
 \pair { B^*D  \phi_0}{F_{t+\tau}}= \pair { B^*De^{-\tau B^*D}\chi^+(B^*D)  \phi_0}{F_{t}}.
\end{equation}

With these identities, we next show that $\partial_{t}F_{t}\in L^p$.  Let $\varphi_{0}\in \mS$. By Lemma \ref{lemma3}, the function $\phi_{0}=\IP_{B^*D}\varphi_{0}$ has the required properties to apply \eqref{eq:a3}. Moreover, $B^*D\varphi_{0}=B^*D\phi_{0}$. 
Using the integration by parts on both sides, justified by Lemma \ref{lem:sliceIBP}, and $DBF_{t}=- \pd_{t}F_{t}$, we conclude that
\begin{equation}
\label{eq:a4}
 \pair { \varphi_0}{\partial_{t }F_{t+\tau}}   = \pair { e^{-\tau B^*D}\chi^+(B^*D)  \phi_0}{\partial_{t }F_{t}}.
 \end{equation} 
 
 Now, using this equality with $\tau=t$ and using the $E^{p'}_{t}, E_{t}^p$ duality,  we have
 \begin{align*}
| \pair { \varphi_0}{\partial_{t }F_{2t}}|    & \le     \|e^{-t B^*D}\chi^+(B^*D)  \phi_0\|_{E^{p'}_{t}}\|\partial_{t }F_{t}\|_{E^p_{t}} \\
    &  \lesssim   \|\chi^+(B^*D)  \phi_0\|_{p'}\| \|\partial_{t }F_{t}\|_{E^p_{t}} \\
    & \lesssim  \|  \varphi_0\|_{p'} t^{-1}.
\end{align*}
In the second inequality, we used Theorem 9.3 of \cite{AS}. In the last inequality, we used that $\chi^+(B^*D)$ and $\IP_{B^*D}$ are bounded on $L^{p'}$ for $p_{-}(B^*D)<p'<2$ which is   our range here. For the first operator, this is the functional calculus on $\IH^{p'}_{B^*D}$ and for the second operator, this is because we have the kernel/range decomposition for $B^*D$ in $L^{p'}$ and $p'$ as above. We also used $\|t\partial_{t }F_{t}\|_{E^p_{t}}  \lesssim 
\|t\partial_{t}F\|_{N^p_{2}}\lesssim \|F\|_{N^p_{2}}<\infty$ by hypothesis. 
As the inequality above holds for any Schwartz function, this means that $t{\partial_{t }F_{2t}}$ belongs to $L^p$ with uniform norm with respect to $t$.

The next step is to prove the semigroup representation for $\partial_{t}F_{t}$.  It follows that $t{\partial_{t }F_{t}} \in \clos{\ran_{p}(D)}$ as $\pd_{t}F_{t}$ is a conormal gradient (hence satisfies the curl condition). 
Moreover, we can now use the extension $S_{p}^+(\tau)$  of  $e^{-\tau DB}\chi^+(DB)$ in $\clos{\ran_{p}(D)}=H^p_{DB}$ and for $\varphi_{0}\in \mS$, 
$$
\pair { e^{-\tau B^*D}\chi^+(B^*D)  \phi_0}{\partial_{t }F_{t}}= \pair {   \phi_0}{S_{p}^+(\tau)\partial_{t }F_{t}} = \pair {   \varphi_0}{S_{p}^+(\tau)\partial_{t }F_{t}}.$$
The last equality is because $\varphi_{0}-\phi_{0}$ belongs $\nul_{p'}(B^*D)=\nul_{p'}(D)$ which is the polar set  of $\clos{\ran_{p}(D)}$.
Thus, for all $t>0$ and $\tau\ge 0$, we have 
\begin{equation}
\label{eq:pd2}
\partial_{t }F_{t+\tau} = S_{p}^+(\tau)\partial_{t }F_{t}=S_{p}(\tau)\partial_{t }F_{t}.
\end{equation} 
The last equality is because we know that $ \partial_{t }F_{t} \in H^{p}_{DB}$ and we can deduce from \eqref{eq:a1} that $\partial_{t }F_{t} \in H^{p, +}_{DB}$,  $S_{p}(\tau)$ being the extension of $e^{-\tau |DB|}$ to $H^p_{DB}$.

It remains to integrate  and obtain the trace at $t=0$. 
To do this, we  introduce a distribution 
$G_{t,\tau}$ by defining for $\varphi_{0}\in \mS$,
\begin{equation}
\label{eq:gttau}
 \pair { \varphi_0}{G_{t,\tau}}=  \pair { \varphi_0}{F_{t+\tau}}   - \pair { e^{-\tau B^*D}\chi^+(B^*D)  \IP_{B^*D}\varphi_0}{F_{t}}.
 \end{equation}
 By the same argument as above, 
\begin{align*}
  |\pair { e^{-\tau B^*D}\chi^+(B^*D)  \IP_{B^*D}\varphi_0}{F_{t}}|  & \le    \|e^{-\tau B^*D}\chi^+(B^*D)   \IP_{B^*D}\varphi_0\|_{E^{p'}_{t}}\|F_{t}\|_{E^p_{t}}   \\
    &  \lesssim_{t,\tau}  \|  \varphi_0\|_{p'} \|\tN F\|_{p}.
\end{align*}
   Notice that the implicit constant is uniform in $t$ when $\tau=t$. It follows that there exists an element $f_{t,\tau}\in L^p$ such that for all $\varphi_{0}\in \mS$, 
\begin{equation}
\label{eq:fttau}
\pair {\varphi_{0}}{f_{t,\tau}}=\pair { e^{-\tau B^*D}\chi^+(B^*D)  \IP_{B^*D}\varphi_0}{F_{t}}.
\end{equation}
Both pairings are in fact integrals and the equality  extends to all $\varphi_{0}\in L^{p'}$ by density.  Taking $\varphi_{0}\in \nul_{p'}(D)$ shows  $f_{t,\tau}\in \clos{\ran_{p}(D)}$.  As $L^p\subset E^p_{t}$ for any $t$, we  have that $G_{t,\tau}$ is a well-defined element in $\mS'$ and  $G_{t,\tau}=F_{t+\tau}-f_{t,\tau} \in E^p_{t}$. 

We now show that $G_{t,\tau}$ is constant as a function of $t>0$ and $\tau>0$. It is quite clear using smoothness of $t\mapsto F_{t}$ in any fixed $E^p_{\delta }$ and of $\tau \mapsto e^{-\tau B^*D}\chi^+(B^*D)  \IP_{B^*D}\varphi_0$ in $E^{p'}_{\delta }$  that one can differentiate $\pair{\varphi_0}{G_{t,\tau}}$  in $t>0$ and $\tau>0$ when $\varphi_{0}\in \mS$ and  by \eqref{eq:gttau},
 $$
 \pair { \varphi_0}{\pd_{t}G_{t,\tau}}=  \pair { \varphi_0}{\pd_{t}F_{t+\tau}}   - \pair { e^{-\tau B^*D}\chi^+(B^*D)  \IP_{B^*D}\varphi_0}{\pd_{t}F_{t}}=0
 $$
 and
 $$
 \pair { \varphi_0}{\pd_{\tau}G_{t,\tau}}=  \pair { \varphi_0}{\pd_{\tau}F_{t+\tau}}   + \pair { B^*De^{-\tau B^*D}\chi^+(B^*D)  \IP_{B^*D}\varphi_0}{F_{t}}=0.
 $$
 We used again the integration by parts argument and $\pd_{t}F_{t}=-DBF_{t}$.
 We obtain $\partial_{t}G_{t,\tau}=\pd_{\tau}G_{t,\tau}=0$. 
 Let $G=G_{1,1}=G_{t,\tau}$.
  
 Let us show that $G\in L^p$ and then that  $G\in \clos{\ran_{p}(D)}$. 
 As observed,  $\|f_{t,t}\|_{p}$ is uniformly bounded in $t$.
  In particular, we have $f_{t,t} \in E^p_{t}$ with  $\sup_{t>0}\|f_{t,t}\|_{E^p_{t}}\lesssim \sup_{t>0}\|f_{t,t}\|_{L^p} \lesssim \|\tN F\|_{p}$. By taking the difference with $F_{2t}$, this implies that  
$$
\sup_{t>0} \left( \int_{\R^n} \bigg(\barint_{B(x,t)} |G(y)|^2\, dy\bigg)^{p/2}\, dx\right)^{1/p}< \infty.
$$
Applying Fatou's lemma when $t\to 0$ shows that $G\in L^p$. This implies that $F_{2t}=G+f_{t,t}$ belongs to $L^p$, hence $F_{2t}\in \clos{\ran_{p}(D)}$ (because it satisfies the curl condition) and it follows that $G\in  \clos{\ran_{p}(D)}$.
 
 Remark that as we know that $G\in L^p$ and $F_{t+\tau}\in L^p$, one can extend the definition of $\pair { \varphi_0}{G_{t,\tau}}$  to  any $\varphi_{0} \in L^{p'}$ by density because all pairings make sense.    Taking the derivative in $\tau$ in \eqref{eq:gttau} implies that 
\begin{multline}\label{eq:DBG=0}$$ 0= \pair { \varphi_0}{-DBF_{t+\tau}}   + \pair { B^*De^{-\tau B^*D}\chi^+(B^*D)  \IP_{B^*D}\varphi_0}{F_{t}} \\= - \pair { B^*D\varphi_0}{F_{t+\tau}}   + \pair { e^{-\tau B^*D}\chi^+(B^*D)  \IP_{B^*D}B^*D\varphi_0}{F_{t}}= -\pair {B^*D \varphi_0}{G_{t,\tau}}. $$
 \end{multline}
The last equality holds because $B^*D\varphi_{0}\in L^{p'}$ and we use the extension mentioned above.  Thus $DBG=0$ in the sense of Schwartz distributions.

 We have seen that $DBG=0$ in $\mS'$ and $G\in L^p$. Thus $G\in \nul_{p}({DB})$ and we conclude that $G=0$ from the splitting $L^p=\nul_{p}(DB)\oplus \clos{\ran_{p}(D)}$ which holds in our range of $p$. 
We can now write for $\varphi_{0}\in \mS$ using \eqref{eq:fttau} 
$$
\pair {\varphi_{0}}{f_{t,\tau}}=\pair { e^{-\tau B^*D}\chi^+(B^*D)  \IP_{B^*D}\varphi_0}{F_{t}} = \pair {   \IP_{B^*D} \varphi_0}{S_{p}^+(\tau)F_{t}} = \pair {   \varphi_0}{S_{p}^+(\tau)F_{t}},
$$
and as we have just shown that  $\pair {\varphi_{0}}{f_{t,\tau}}= \pair {\varphi_{0}}{F_{t+\tau}}$, we have obtained
$$
F_{t+\tau}= S_{p}^+(\tau)F_{t}
$$
in $\mS'$ for all $t>0, \tau> 0$  and as both terms are in $L^p$, this also holds in $L^p$.  Using the uniform bound on $F_{t}=f_{t/2,t/2}$ in $L^p$ we can use a weak limit argument as in previous cases to deduce the existence of $F_{0}\in H^{p,+}_{DB}$ such that $F_{\tau}=S_{p}^+(\tau)F_{0}$ for all $\tau>0$.  This concludes the proof of this case.

\section{Proof of Theorem \ref{thm:main1}: (ii) implies (iii)}

We assume (ii) and set $F=\nabla\!_{A}u$. Thus, $t\partial_{t} F\in T^p_{2}$,
and $F$ is a solution of  \eqref{eq:curlfreesystem} in $\reu$.  We also assume that $F_{t}$ converges to 0 in $\mD'$  as $t\to \infty$. 
Recall that $p\in I_{L}$, that is,  $H^p_{DB}=H^p_{D}$. We will use Theorem \ref{thm:IL} repeatedly. 

\

\paragraph{\textbf{Step 1}}   Finding the semigroup equation. 

\

This will be achieved  by  taking limits in  \eqref{eq:step1} with $\pd_{s}F_{s}$ replacing $F_{s}$ by selecting $\chi$, $\eta$ and $\phi_{0}$.

\

\paragraph{\textbf{Step 1a}} Limit in space.    We show  that if $\phi_{0}\in \clos{\ran_{2}(B^*D)}$, with $\phi_{0}\in \IH^{p'}_{B^*D}$ (or, equivalently, $\IP\phi_{0}\in 
\IH^{p'}_{D}$) if $p>1$, then \begin{equation}    \label{eq:step2(ii)}
  \iint_{\reu} \bpaire {\eta'(s) B^*(x)D\varphi_s(x)}{\partial_{s }F(s,x)}\,   dsdx = 0\end{equation} 
  and the integral is defined in the Lebesgue sense. 

\

We replace $\chi$ by $\chi_{R}$ with $\chi_{R}(x)=\chi(x/R)$ where $\chi\equiv 1$ in the unit ball $B(0,1)$, has compact support in the ball $B(0,2)$ and let $R\to \infty$.  As $\chi_{R}$ tends to 1 and $D_{\chi_{R}}$ to 0, it suffices by dominated convergence to show that $|\eta'(s)B^*D \varphi_s \partial_{s} F|$ and $|\eta(s)\partial_{s}\varphi_{s}\partial_{s}F|$ are integrable on $\reu$.  As $s\partial_{s}F\in T^p_{2}$, it is enough to have that $\eta'(s)B^*D \varphi_s$ and $\eta(s)\partial_{s}\varphi_{s}$ belong to $(T^p_{2})'$.  As $\partial_{s}\varphi_{s}=B^*D\varphi_{s}$ on $\supp \eta$, it suffices to invoke Lemma \ref{lem:step1} again.

\

\paragraph {\textbf{Step 1b}}  Limit in time. 

\

For fixed $t>0$,   $0<\varepsilon<\inf(t/4, 1/4, 1/t)$, we obtain by making the same choices of $\eta$ as in the proof of (i) implies (iii), 
\begin{align}\label{eq:limit-}
  \frac{1}{\varepsilon} \iint_{[t+\varepsilon, t+2\varepsilon]\times \R^n} &\bpaire { B^*(x)D\varphi_s(x)}{\partial_{s }F(s,x)}\,   dsdx \\ \nonumber & = 2\varepsilon \iint_{[t+ \frac{1}{2\varepsilon}, t+\frac{1}{\varepsilon}]\times \R^n} \bpaire { B^*(x)D\varphi_s(x)}{\partial_{s }F(s,x)} \, dsdx,
   \end{align}
and
\begin{align}\label{eq:limit+}
\frac{1}{\varepsilon} \iint_{[t-2\varepsilon, t-\varepsilon]\times \R^n} & \bpaire { B^*(x)D\varphi_s(x)}{\partial_{s }F(s,x)}\,   dsdx \\  \nonumber & =\frac{1}{\varepsilon} \iint_{[{\varepsilon}, 2{\varepsilon}]\times \R^n} \bpaire { B^*(x)D\varphi_s(x)}{\partial_{s }F(s,x)} \, dsdx.
   \end{align}

  We  show that the second integral in \eqref{eq:limit-} converges to 0 as $\varepsilon\to 0$ for fixed $t$.  Using the function $\psi_{-}$ defined earlier, set $G(s,x)= G_{s}(x)= 1_{[t+ \frac{1}{2\varepsilon}, t+\frac{1}{\varepsilon}]}(s)(s-t)^{-1} \psi_{-}((s-t)B^*D)  \phi_0(x)$, under the conditions on $\phi_{0}$ in Step 1.  Remark that this integral is bounded by  
  $$
  2\varepsilon \|s\partial_{s}F\|_{T^p_{2}}\|G\|_{(T^p_{2})'}.
  $$
  Assume first $p\le 1$ and let $\alpha=n(\frac{1}{p}-1)$.  As $t\varepsilon<1$ and $s\in [t+ \frac{1}{2\varepsilon}, t+\frac{1}{\varepsilon}]$, we have  $s\in [ \frac{1}{2\varepsilon}, \frac{2}{\varepsilon}]$. Since $\|G_{s}\|_{2}\lesssim \varepsilon\|\phi_{0}\|_{2}$ for those $s$, we have
  $\|C_{\alpha}(G)\|_{\infty}\lesssim \varepsilon^{\alpha+\frac{n}{2}+1}\|\phi_{0}\|_{2}$. 
  
  Next, consider $p>1$. Then one sees that $\|G\|_{T^{p'}_{2}} \lesssim \varepsilon \|\psi_{-}(\sigma B^*D)  \phi_0\|_{T^{p'}_{2}} \lesssim \varepsilon \|\IP \phi_{0}\|_{p'}$. 
  
  From now on,  we distinguish the case $p\le 2$ from $p>2$. 
  
  \

\subparagraph{\textbf{Case $\mathbf{p\le 2}$.}}

Using Lemma \ref{lem:q} with $\pd_{s}F$ instead of $F$, we have $\pd_{s}F \in C^\infty(0,\infty; L^q)$ and we can reinterpret the $dx$-integrals in \eqref{eq:limit-} and \eqref{eq:limit+} with the $L^{q'}-L^q$ duality. Copying  \textit{mutatis mutandi} the argument in the proof of (i) implies (iii), we can take the limit as $\varepsilon\to 0$ and obtain the following information: for all $t>0$, $\pd_{t}F_{t}\in \IH^{2,+}_{DB}$,  and if $\tau\ge 0$
\begin{equation}
\label{eq:sgeq}
 e^{-\tau |DB|}\pd_{t}F_{t} =  e^{-\tau DB}\chi^+(DB) \pd_{t}F_{t} = \pd_{t}F_{t+\tau}.
\end{equation}
Note that this equation can be differentiated as many times as we want in both $t$ and $\tau$. 
More information can be obtained such as  ${t}F_{t}\in \IH^{q,+}_{DB}$ but we do not need this.

 \
 
 The argument used in section 8 was to integrate from \eqref{eq:sgeq},  but it does not work the same here. Instead we look for a candidate $f_{t}$ for $F_{t}$ via Hardy space theory. We first prove that $\partial_{t}f_{t}=\partial_{t}F_{t}$ and then integrate and conclude. 
 
 \
 
 \paragraph{\textbf{Step 2}}   Defining an auxiliary function  $f_{t}$.  
 
 \
 
We begin with an observation, valid whatever $p\in (0,\infty)$.

 \begin{lem} \label{lem:obs} For each $t>0$ and $N>\frac{n+1}{2}$, we have $(s,x)\mapsto s^N\pd_{s}^NF_{t+s}(x)$ belongs to $T^p_{2}$ with uniform bound with respect to $t$. Moreover, it is $C^\infty$ as a  function of $t$ valued in $T^p_{2}$.  
 If $t\to 0$, then it converges to $(s,x)\mapsto s^N\pd_{s}^NF_{s}(x)$ in $T^p_{2}$ and, if $t\to \infty$, it converges to $0$ in $T^p_{2}$.
\end{lem}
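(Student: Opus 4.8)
\textbf{Plan of proof for Lemma \ref{lem:obs}.}

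The plan is to derive everything from the interior Caccioppoli-type estimate for $\nabla\!_A u$ (Corollary \ref{cor:repeatCaccio}) together with the hypothesis $t\partial_t F\in T^p_2$ and the regularity $t\mapsto F_t\in C^\infty(0,\infty;L^2_{\loc})$ (Lemma \ref{lem:regular}). First I would recall that $\partial_t u$ is again a weak solution (Lemma \ref{lem:udu}), hence so are all its $t$-derivatives, and therefore $F=\nabla\!_A u$ and each $\partial_s^k F$ satisfy the reverse-type inequalities of Corollary \ref{cor:repeatCaccio}; in particular $\|s^N\partial_s^N F\|_{T^p_2}\lesssim \|s\partial_s F\|_{T^p_2}<\infty$ once $N\ge 1$. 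The point of taking $N>\frac{n+1}{2}$ is to have enough decay to run the $t$-continuity and $t\to 0$, $t\to\infty$ limit arguments below, via Lemma \ref{lem:HMiMo} or a direct cone computation.

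For the uniform bound with respect to $t$: fix $t>0$ and write $G^{(t)}(s,x):=s^N\partial_s^N F_{t+s}(x)$. Since $(t+s)\le s$ is false but $(t+s)\ge s$, one has $s^N\le (t+s)^N$, and a change of variable $s\mapsto \sigma=t+s$ in the tent-space integral over the shifted cone $\Gamma_a(x)$ shows that the $T^p_2$ norm of $G^{(t)}$ is controlled, up to the aperture adjustment already used repeatedly (e.g. in Proposition \ref{prop:approx} and in the proof of Lemma \ref{lem:step1}), by $\|(\sigma^N\partial_\sigma^N F)1_{\sigma>t}\|_{T^p_2}\le \|\sigma^N\partial_\sigma^N F\|_{T^p_2}\lesssim \|s\partial_s F\|_{T^p_2}$, uniformly in $t$. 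The $C^\infty$ dependence on $t$ valued in $T^p_2$ follows by the same scheme as in Corollary \ref{cor:uniformept}: $\partial_s^N F$ is smooth in $s$ valued in $L^2_{\loc}$, the difference quotients in $t$ are again (shifted) conormal gradients of $t$-derivatives of $u$, so Corollary \ref{cor:repeatCaccio} gives uniform $T^p_2$ bounds on them, and one passes to the limit by dominated convergence in the tent-space integral (using that the truncated pieces are in $T^2_2$ as well by Lemma \ref{lem:trunc}-type reasoning).

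For the limits, the $t\to\infty$ statement is the easier one: $\|G^{(t)}\|_{T^p_2}^p\lesssim \|(\sigma^N\partial_\sigma^N F)1_{\sigma>t}\|_{T^p_2}^p\to 0$ as $t\to\infty$ by dominated convergence, since $\sigma^N\partial_\sigma^N F\in T^p_2$ and the support shrinks. The $t\to 0$ statement is the main obstacle: one must show $G^{(t)}\to G^{(0)}$, where $G^{(0)}(s,x)=s^N\partial_s^N F_s(x)$, in $T^p_2$ and not merely on compacts. I would split the cone integral at a small height $\delta$: the piece $s<\delta$ is handled uniformly in $t$ by $\|(\sigma^N\partial_\sigma^N F)1_{\sigma<2\delta}\|_{T^p_2}$, which tends to $0$ with $\delta$ because $\sigma^N\partial_\sigma^N F\in T^p_2$; the piece $s>\delta$ is handled by the $C^\infty$ (hence continuous) dependence on $t$ already established, letting $t\to 0$ for fixed $\delta$, and then letting $\delta\to 0$. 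Here the choice $N>\frac{n+1}{2}$ guarantees, via Lemma \ref{lem:HMiMo} applied to $\partial_s^N F$, that $G^{(0)}$ is genuinely in $T^p_2$ with the stated control, so the splitting is legitimate; this interplay between the Caccioppoli iteration and the tent-space membership is where the argument has to be carried out with care.
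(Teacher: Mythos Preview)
Your outline is essentially the paper's own argument, but two points in your description are off and would need fixing when you write it out.

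First, the role of $N>\tfrac{n+1}{2}$. It is \emph{not} used via Lemma~\ref{lem:HMiMo}; the membership $s^N\partial_s^N F_s\in T^p_2$ already follows from Corollary~\ref{cor:repeatCaccio} for any $N\ge 1$. The hypothesis enters in the uniform bound: in the square function one has $|G^{(t)}(s,y)|^2\,s^{-n-1}=s^{2N-n-1}|\partial_s^N F_{t+s}(y)|^2$, and one needs the exponent $2N-n-1>0$ to use the monotonicity $s^{2N-n-1}\le(t+s)^{2N-n-1}$ before changing variable $s\mapsto\sigma=t+s$. Your inequality ``$s^N\le(t+s)^N$'' is the right idea but applied to the wrong power. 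Also, no aperture adjustment is needed after the change of variable: the shifted cone $\Gamma(x)+(t,0)$ is contained in $\Gamma(x)$ itself, which gives directly $(SG^{(t)})(x)\le (SG^{(0)})(x)$ pointwise.

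Second, for the limit $t\to 0$ on the piece $s>\delta$, invoking ``$C^\infty$ dependence already established'' is slightly circular, since that smoothness was only claimed for $t>0$. The paper's device here is quantitative: by the mean value inequality and $s\ge\delta$,
\[
\iint_{\Gamma^\delta(x)} s^{2N-n-1}|\partial_s^N(F_{t+s}-F_s)|^2\,dsdy \;\le\; \frac{t^2}{\delta^2}\iint_{\Gamma^\delta(x)} s^{2(N+1)-n-1}|\partial_s^{N+1}F_s|^2\,dsdy,
\]
and the right-hand side is in $L^{p/2}(\R^n)$ uniformly in $\delta$ by Corollary~\ref{cor:repeatCaccio}. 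This gives the $T^p_2$ convergence on the large piece directly, and iterating the same estimate yields the $C^\infty$ smoothness for $t>0$.
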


\begin{proof}
Using Corollary \ref{cor:repeatCaccio}, we have $\|s^N\pd_{s}^NF\|_{T^p_{2}} \lesssim \|s\pd_{s}F\|_{T^p_{2}}<\infty$. For fixed $t>0$ and $x\in \R^n$ (recall that $\Gamma(x)$ denotes a cone with aperture 1 and vertex $x$), 
\begin{align*}\label{}
     \iint_{\Gamma(x)} s^{2N-n-1}|\pd_{s}^NF_{t+s}(y)|^2 \, {dsdy} 
    & \le \iint_{\Gamma(x)} (s+t)^{2N-n-1}|\pd_{s}^NF_{t+s}(y)|^2 \, {dsdy}\\
    & = \iint_{\Gamma(x)+(t,0)} s^{2N-n-1}|\pd_{s}^NF_{s}(y)|^2 \, {dsdy}.
\end{align*}
We used that $2N-n-1>0$ in the first inequality and the change of variable $s+t\to s$. Using the containment  $\Gamma(x)+(t,0) \subset \Gamma(x)$, we have  
$$\|s^N\pd_{s}^NF_{t+s}\|_{T^p_{2}}\le \|s^N\pd_{s}^NF_{s}\|_{T^p_{2}}.
$$
The same argument plus dominated convergence  shows that
$\|s^N\pd_{s}^NF_{t+s}\|_{T^p_{2}} \to 0$ when $t\to \infty$. 

Let us look at the  limit at 0. 
Let  $\Gamma_{\delta}(x)$ be the  truncation of $\Gamma(x)$ for $s\le\delta $ and $\Gamma^{\delta}(x)= \Gamma(x)\setminus \Gamma_{\delta}(x)$. Arguing as before with a crude estimate and assuming $t \le \delta$, we have 
$$
\iint_{\Gamma_{\delta }(x)} s^{2N-n-1}|\pd_{s}^N(F_{t+s}-F_{s})(y)|^2 \, {dsdy}\le 2 \iint_{\Gamma_{2\delta }(x)} s^{2N-n-1}|\pd_{s}^NF_{s}(y)|^2 \, {dsdy}.$$
Next, by    the mean value inequality and $s\ge \delta $, 
$$
\iint_{\Gamma^{\delta }(x)} s^{2N-n-1}|\pd_{s}^N(F_{t+s}-F_{s})(y)|^2 \, {dsdy} \le \frac{t^2}{\delta ^2} \iint_{\Gamma^{\delta }(x)} s^{2N+2-n-1}|\pd_{s}^{N+1}F_{s}(y)|^2 \, {dsdy}.
$$
 By dominated convergence, the  integral on $\Gamma_{2\delta }(x)$ goes to 0 in $L^{p/2}$ when $\delta \to 0$. Next, having chosen $\delta $ small, one makes the integral on $\Gamma^\delta  (x)$ tend to 0 if $t\to 0$.  This proves the limit at 0.
 
  The $C^\infty$ smoothness at any point $t>0$ can be proved by iterating the second part of the argument. We skip details. 
\end{proof}

We now use the Hardy space theory associated to $DB$. Let $N$ be as above and $M$ another integer also chosen large. Let  $c$ be some constant chosen later. We claim that  for all $t\ge 0$ there is an element $f_{t}$ in $H^{p,+}_{DB}$ such that  for all $\phi_{0}\in \IH^{p'}_{B^*D}$ when $1<p\le 2$ or $\phi_{0}\in \IL^\alpha_{B^*D}$ when $p\le 1$ (We are using the notation of \cite[Section 4.2]{AS}: see in particular Corollary 4.4.)
\begin{align}
\label{eq:ft}
\pair{\phi_{0}}{f_{t}}& =c\iint_{\reu} \big( {\tau^M (B^*D)^Me^{-\frac{\tau}{2} |B^*D|}\phi_{0}(x)}\cdot ({\tau^N\pd_{\tau}^NF_{t+\frac{\tau}{2}})(x)}\big)\, \frac{d\tau dx}{\tau}
\\
\nonumber & = c\int_{0}^\infty \pair {\phi_{0}}{\tau^M(DB)^M e^{-\frac{\tau}{2} |DB|}\tau^N\pd_{\tau}^N F_{t+\frac{\tau}{2}}}\, \frac{d\tau}{\tau}
\end{align}
and
$$
f_{t}=  S_{p}^+(t)f_{0}= S_{p}(t)f_{0}.
 $$
 Here the expression $\pd_{\tau}^N F_{t+\frac{\tau}{2}}$ means that we differentiate $N$ times with respect to $\tau$ the function $F_{t+\frac{\tau}{2}}$.
 We recall that $S_{p}^+(t)$ is the continuous extension of $e^{-t DB}\chi^+(DB)$ to 
 $H^p_{DB}$ and it agrees with the continuous extension $S_{p}(t)$ of $e^{-t |DB|}$ on $H^{p,+}_{DB}$. Both integrals converge  in the Lebesgue sense (one is on $\R_{+}$, the other one on $\reu$).

Let us define  $f_{t}\in H^p_{DB}$ for each $t\ge 0$. Truncating $\tau^N\pd_{\tau}^NF_{t+\frac{\tau}{2}}$ with $\chi_{k}$ being the indicator of $[\frac{1}{k}, k]\times B(0,k)$, we have an element  of $T^2_{2}\cap T^p_{2}$. If $M>|\frac{n}{p}-\frac{n}{2}|$ as $p\le 2$,  then \cite[Corollary 4.4]{AS} for $DB$ with the map $\Tpsi {\varphi}{ DB}$ and $\varphi(z)= cz^{M}e^{-\frac{1}{2}\modz}$  shows that 
\begin{align*}
   f_{t,k}&=c\int_{0}^\infty \tau^M (DB)^Me^{-\frac{\tau}{2} |DB|}(\chi_{k} \tau^N\pd_{\tau}^NF_{t+\frac{\tau}{2}})\, \frac{d\tau}{\tau}    \\
      &=\Tpsi{\varphi}{DB}(\chi_{k} \tau^N\pd_{\tau}^NF_{t+\frac{\tau}{2}})
\end{align*}
belongs to $\IH^p_{DB}$. Clearly, $(f_{t,k})$ is a Cauchy sequence in $\IH^p_{DB}$ and, as $k\to \infty$,  it converges to some element in $H^p_{DB}$. This defines $f_{t}$. 

Let us prove the integral formulae \eqref{eq:ft}.  This is in fact the dual argument to the one above. 
 By the $L^2$ theory,    for all integers $k$,  we have 
\begin{align*}
\pair{\phi_{0}}{f_{t,k}}&=c\iint_{\reu} \big( {\tau^M (B^*D)^Me^{-\frac{\tau}{2} |B^*D|}\phi_{0}(x)}\cdot ({\chi_{k}\tau^N\pd_{\tau}^NF_{t+\frac{\tau}{2}})(x)}\big)\, \frac{d\tau dx}{\tau} .
\end{align*} 
As $M$ is large enough, we may apply Theorem 5.7 in \cite{AS} to conclude that $(\tau,x)\mapsto  \tau^M (B^*D)^M e^{-\frac{\tau}{2} |B^*D|}\phi_{0}(x) \in (T^{p}_{2})'$ with  $(T^{p}_{2})'$ norm bounded by $\|\IP\phi_{0}\|_{p'}$ when $p>1$ and $\|\IP\phi_{0}\|_{\dot \Lambda^\alpha}$ and $\alpha= n(\frac{1}{p}-1)$ when $p\le 1$. Then we use Theorem 4.20 in \cite{AS} to see that 
$\|\IP\phi_{0}\|_{p'}\sim \|\phi_{0}\|_{\IH^{p'}_{B^*D}}$ and $\|\IP\phi_{0}\|_{\dot \Lambda^\alpha}\sim \|\phi_{0}\|_{\IL^\alpha_{B^*D}}$ in the allowed range of $p$. 
Thus the above integral on $\reu$  converges (absolutely)  as $k\to \infty$ by dominated  convergence and we obtain the first line in \eqref{eq:ft}.   To see the  second line, we use Fubini's theorem  and  rewrite the $dx$ integral as a pairing for the $L^2$ duality for fixed $\tau$ because both entries in the written pairing are $L^2$ functions.  
Thus we also obtain
$$
\pair{\phi_{0}}{f_{t}} = c\int_{0}^\infty \pair {\tau^M (B^*D)^Me^{-\frac{\tau}{2} |B^*D|}\phi_{0}}{\tau^N\pd_{\tau}^NF_{t+\frac{\tau}{2}}}\, \frac{d\tau}{\tau}
$$
which is the same thing as the second line of \eqref{eq:ft} by taking adjoints (again for each $\tau$, only the $L^2$ theory is used). 
This means that we have proved the formulae \eqref{eq:ft} for such $\phi_{0}$. 

This will allow us to prove the semigroup formula for $f_{t}$. Let us see that now. 
 Differentiating \eqref{eq:sgeq} (reverse the roles of $t$ and $\tau$) shows that as $N\ge 1$,  
$$
\tau^N\pd_{\tau}^NF_{t+\frac{\tau}{2}}= e^{-t DB}\chi^+(DB)\tau^N\pd_{\tau}^NF_{\frac{\tau}{2}}.
$$
Thus, we obtain 
$$
\pair {\tau^M (B^*D)^Me^{-\frac{\tau}{2} |B^*D|}\phi_{0}}{\tau^N\pd_{\tau}^NF_{t+\frac{\tau}{2}}}= \pair {\tau^M (B^*D)^Me^{-\frac{\tau}{2} |B^*D|}\phi_{t}}{\tau^N\pd_{\tau}^NF_{\frac{\tau}{2}}}
$$
with $\phi_{t}= e^{-tB^*D}\chi^+(B^*D)\phi_{0}$.  As $\phi_{t}$ satisfies the same requirements as $\phi_{0}$  above, we can use the definition of $f_{t}$ and obtain
$$
\pair{\phi_{0}}{f_{t}}= \pair{\phi_{t}}{f_{0}}= \pair{e^{-tB^*D}\chi^+(B^*D)\phi_{0}}{f_{0}}= 
\pair{\phi_{0}}{S_{p}^+(t)f_{0}}.
$$
Note that by   
Theorem \ref{thm:IL} and $p\le 2$, this implies that $f_{t}\in \IH^2_{DB}=\clos{\ran_{2}(D)}$ for all $t>0$ (qualitatively). 

\

The next claim is that $\partial_{t}f_{t}= \partial_{t}F_{t}$ in $ \mS'$ upon choosing the constant $c$ appropriately. To do this,  we have to make the connection between   calculus in $\mS'$ and calculus  in $H^p_{DB}$ by showing that the second line of \eqref{eq:ft} holds for $\varphi_{0}\in \mS$ instead of $\phi_{0}$, still with  absolutely convergent integral in $\tau$.

Let  $\varphi_{0}\in \mS$. As $\mS\subset L^2$,  define $\phi_{0}= \IP_{B^*D}\varphi_{0} \in \IH^2_{B^*D}$. We have that $\IP\phi_{0}= \IP \varphi_{0}$ and since $\IP$ preserves $L^{p'}$ and $\dot \Lambda^\alpha$, it follows that $\phi_{0}$ has the required property  to compute $ \pair{\phi_{0}}{f_{t}}$ with the definition given above. 
Now, $\phi_{0}-\varphi_{0}\in \nul_{2}(D)$, thus by $L^2$ theory,
$$
 \pair{\varphi_{0}}{f_{t}}= \pair{\phi_{0}}{f_{t}}
 $$
 and also for each $\tau>0$, 
 $$
 \pair {\varphi_{0}}{\tau^M(DB)^M e^{-\frac{\tau}{2} |DB|}\tau^N\pd_{\tau}^N F_{t+\frac{\tau}{2}}}= \pair {\phi_{0}}{\tau^M(DB)^M e^{-\frac{\tau}{2} |DB|}\tau^N\pd_{\tau}^N F_{t+\frac{\tau}{2}}}.
 $$
 This proves our claim upon applying the second line of \eqref{eq:ft} for $\phi_{0}$. 

We next show how to select  the constant $c$ appropriately  to show our claim, that is  $ \pair{\varphi_{0}}{\pd_{t}f_{t}}=  \pair{\varphi_{0}}{\pd_{t}F_{t}}$ for all $\varphi_{0}\in \mS$ and all $t>0$. Indeed, by dominated convergence and Lemma \ref{lem:obs},  we can differentiate the first formula in \eqref{eq:ft} with respect to $t$ with $\phi_{0}= \IP_{B^*D}\varphi_{0}$ as above. With the same arguments, we obtain the second line replacing $F_{t+\frac{\tau}{2}}$ by $\pd_{t}F_{t+\frac{\tau}{2}}$.  After this, we reexpress the inner product inside in terms of  $\varphi_{0}$, so that  we have obtained 
$$ \pair{\varphi_{0}}{\pd_{t}f_{t}} = c\int_{0}^\infty \pair {\varphi_{0}}{\tau^M(DB)^M e^{-\frac{\tau}{2} |DB|}\tau^N\pd_{\tau}^N \pd_{t}F_{t+\frac{\tau}{2}}}\, \frac{d\tau}{\tau}.
 $$
 Now, again with the $L^2$ theory, 
\begin{multline*}  $$
\pd_{\tau}^{M+N}\pd_{\tau}F_{t+\tau}= \pd_{t}^{M+N+1}F_{t+\tau} = e^{-\frac{\tau}{2} DB}\chi^+(DB)(-DB)^M \pd_{t}^N\pd_{t}F_{t+\frac{\tau}{2}}\\= (-1)^M2^N (DB)^M e^{-\frac{\tau}{2} DB}\chi^+(DB) \pd_{\tau}^N\pd_{t}F_{t+\frac{\tau}{2}} = (-1)^M2^N (DB)^M e^{-\frac{\tau}{2} |DB|} \pd_{\tau}^N\pd_{t}F_{t+\frac{\tau}{2}}.
$$
\end{multline*}
Hence,  with $c=  \frac{(-2)^{N}}{(N+M-1)!}$, we have 
\begin{align*}
   \pair{\varphi_{0}}{\pd_{t}f_{t}} &= \frac{(-1)^{N+M}}{(N+M-1)!}\int_{0}^\infty \pair {\varphi_{0}}{\pd_{\tau}^{N+M}\pd_{\tau}F_{t+\tau}}\, \tau^{M+N-1} \,{d\tau}\\
   &= \frac{(-1)^{N+M-2}}{(N+M-1)!}\int_{t}^\infty \pair {\varphi_{0}}{\pd_{\tau}^{N+M}(\pd_{\tau}F_{\tau})}\, (\tau-t)^{M+N-1} \,{d\tau}. \end{align*}
 It follows from  \eqref{eq:sgeq} and   theory of analytic semigroups  that $\tau^k\pd_{\tau}^k(\pd_{\tau}F_{\tau})$ converges to 0 in $\IH^2_{DB}= \clos{\ran_{2}(DB)}$ for all $k\ge 0$ as $\tau\to \infty$. As $\IH^2_{DB}=\IH^2_{D}$, this implies convergence in $\mS'$.  Thus, we can apply the following elementary lemma (whose proof is left to the reader)   to $g(\tau):= \pair {\varphi_{0}}{\pd_{\tau}F_{\tau}}$ with $k=M+N$,  and we see that the previous integral is equal to $ \pair{\varphi_{0}}{\pd_{t}F_{t}}$.
 
 \begin{lem}\label{lem:technical} Let $k\in \N, k\ge 1$ and  $g: \R^+ \to \C$ be a function of class $C^k$ with $x^jg^{(j)}(x)\to 0$ as $x\to \infty$ when $j=0,1, \ldots , k-1$. Then 
$$
g(x)= \frac{(-1)^{k-2}}{(k-1)!}\int_{x}^\infty g^{(k)}(y) (y-x)^{k-1}\, dy.
$$
\end{lem}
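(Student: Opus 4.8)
The plan is to prove the identity by an inductive integration-by-parts argument, using the decay hypotheses to kill all the boundary terms that appear at infinity. I would begin with the base case $k=1$: here the claim reads $g(x)=-\int_x^\infty g'(y)\,dy$, which is the fundamental theorem of calculus $g(x)=g(R)-\int_x^R g'(y)\,dy$ followed by letting $R\to\infty$ and using $g(R)\to 0$ (the $j=0$ hypothesis). Note that the integral $\int_x^\infty g'(y)\,dy$ is then automatically convergent as an improper Riemann integral, being equal to $\lim_{R\to\infty}(g(R)-g(x))$.

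For the inductive step, suppose the formula holds for some $k\ge 1$ and that $g$ is $C^{k+1}$ with $y^j g^{(j)}(y)\to 0$ for $j=0,\dots,k$. I would start from
\[
g(x)= \frac{(-1)^{k-2}}{(k-1)!}\int_{x}^\infty g^{(k)}(y) (y-x)^{k-1}\, dy
\]
and integrate by parts in the $y$-variable on $[x,R]$, differentiating $g^{(k)}$ and integrating $(y-x)^{k-1}$ to get $\frac{(y-x)^k}{k}$:
\[
\int_{x}^R g^{(k)}(y)(y-x)^{k-1}\,dy = \Big[g^{(k)}(y)\tfrac{(y-x)^k}{k}\Big]_x^R - \frac1k\int_x^R g^{(k+1)}(y)(y-x)^k\,dy.
\]
The term at $y=x$ vanishes since $k\ge1$, and the term at $y=R$ is $g^{(k)}(R)\frac{(R-x)^k}{k}$, which tends to $0$ as $R\to\infty$ because $R^k g^{(k)}(R)\to 0$ and $(R-x)^k/R^k\to 1$. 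Letting $R\to\infty$, substituting back, and simplifying the constant $\frac{(-1)^{k-2}}{(k-1)!}\cdot\big(-\frac1k\big)=\frac{(-1)^{k-1}}{k!}$ yields exactly the formula at level $k+1$. This closes the induction.

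Two small points deserve a remark rather than a genuine obstacle. First, one must check that each improper integral $\int_x^\infty g^{(k)}(y)(y-x)^{k-1}\,dy$ actually converges; this follows from the integration by parts itself, since the right-hand side after the partial integration has a limit as $R\to\infty$ (the boundary term converges and the remaining integral is handled at the next stage of the induction, so ultimately convergence is reduced to the $k=1$ case where it is clear). Second, one should observe the cosmetic sign identity $(-1)^{k-2}=(-1)^k$ so that the statement of the lemma matches the induction cleanly; since the lemma is only applied for a specific large integer $k=M+N$, one need not fuss over sign conventions beyond checking the exponent arithmetic. There is no real difficulty here: the only thing to be careful about is that all boundary contributions at $y=R$ carry a factor $(R-x)^j g^{(j)}(R)$ with $j\le k$, all of which are covered by the stated decay hypotheses.
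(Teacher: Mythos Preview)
Your proof is correct. The paper itself does not supply a proof of this lemma, stating only that it is ``elementary'' and ``left to the reader'', so there is nothing to compare; your induction via integration by parts, with the boundary terms at infinity killed by the decay hypotheses $y^j g^{(j)}(y)\to 0$, is exactly the intended argument. One tiny clarification on your remark about convergence of the improper integrals: the cleanest way to phrase it is that convergence propagates \emph{forward} through the induction --- at level $k$ the integral converges by the inductive hypothesis (it equals $(-1)^{k-2}(k-1)!\,g(x)$), and then the integration-by-parts identity on $[x,R]$, together with the vanishing of the boundary term, forces the level-$(k+1)$ integral to converge as well.
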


\

Conclusion: we show that $f_{t}=F_{t}$  for  all $t>0$. From what precedes, we have that $F_{t}=f_{t}+G$ where $G\in \mS'$ is a constant distribution. 
We assumed that $F_{t}\to 0$ in $\mD'$ as $t\to \infty$. By the properties of $S_{p}(t)$, we have that $f_{t}\to 0$ in $H^p_{DB}$ when $t\to \infty$ (See \cite{AS}, Section 6) thus in $\mD'$, as $H^p_{DB}=H^p_{D}$. It follows that $G=0$ and we are done.

   \
   
\subparagraph{\textbf{Case $\mathbf{p> 2}$.}} 

Again $p>2$ means here $2<p<p_{+}(DB)$. The beginning is exactly the same as the similar  case when proving (i) implies (iii) with   $\pd_{t}F_{t}$ replacing $F_{t}$ and  we obtain  (compare \eqref{eq:a4}) 
\begin{equation}
\label{eq:a5}
 \pair { \varphi_0}{\partial_{t }^2F_{t+\tau}}   = \pair { e^{-\tau B^*D}\chi^+(B^*D)  \phi_0}{\partial_{t }^2F_{t}}
 \end{equation} 
 for all  $\varphi_{0}\in \mS $,  $t>0$ and $\tau\ge 0$, with $\phi_{0}=\IP_{B^*D}\varphi_{0}$. As before, taking $t=\tau$ leads to 
\begin{align*}
| \pair { \varphi_0}{\partial_{t }^2F_{2t}}|    & \le     \|e^{-t B^*D}\chi^+(B^*D)  \phi_0\|_{E^{p'}_{t}}\|\partial_{t }^2F_{t}\|_{E^p_{t}} \\
    &  \lesssim   \|\chi^+(B^*D)  \phi_0\|_{p'}\| \|\partial_{t }^2F_{t}\|_{E^p_{t}} \\
    & \lesssim  \|  \varphi_0\|_{p'} t^{-2}.
\end{align*} 
    In the last line we used \eqref{eq:ept2} with $\pd_{t}F_{t}$ being the conormal  gradient of a solution and then Corollary \ref{cor:repeatCaccio}: 
    $$
 \|t^2\partial_{t }^2F_{t}\|_{E^p_{t}}    \lesssim  \|t^2\pd_{t}^2F\|_{T^p_{2}}. $$
 Following the same line of argument as before, we conclude that 
 $\partial_{t }^2F_{t} \in H^{p, +}_{DB}$ and  for all $t>0$ and $\tau\ge 0$,
 $$
\partial_{t }^2F_{t+\tau} = S_{p}^+(\tau)\partial_{t }^2F_{t}=S_{p}(\tau)\partial_{t }^2F_{t}. 
$$
As in step 2 of the case $p\le 2$ above, we exhibit for all $t\ge 0$ a function $f_{t}\in H^{p,+}_{DB}$  such that 
$$
f_{t}= S_{p}^+(t)f_{0}= S_{p}(t)f_{0}
 $$
 and such  that $\pd_{t}^2f_{t}=\pd_{t}^2F_{t}$ in $\mS'$ when $t>0$. We give details below as the justifications are not the same. Admitting this,  we have $F_{t}=f_{t}+G+tH$ for two distributions $G,H$ in $\mS'$.  Using again that $F_{t}\to 0$ in $\mD'$ by hypothesis and $f_{t}\to 0$ in $H^p_{DB}$, hence in $\mD'$ because of the value of $p$, we conclude that $G=H=0$ and $F_{t}=f_{t}$ as desired. 
 
 It remains to justify our claims on $f_{t}$.  We construct $f_{t,k}$  similarly by
 \begin{align*}
   f_{t,k}&=c\int_{0}^\infty \tau^M (DB)^Me^{-\frac{\tau}{2} |DB|}(\chi_{k} \tau^N\pd_{\tau}^NF_{t+\frac{\tau}{2}})\, \frac{d\tau}{\tau}    \\
      &=\Tpsi{\varphi}{DB}(\chi_{k} \tau^N\pd_{\tau}^NF_{t+\frac{\tau}{2}}).
\end{align*}
Because $p>2$,   the  \cite[Corollary 4.4]{AS} for $DB$ shows that if $M>0$ then 
$f_{t,k}\in \IH^p_{DB}$ and converges in $H^p_{DB}$ to $f_{t}$. Now if $\phi_{0}\in \IH^{p'}_{B^*D}$, then we obtain
\begin{align}
\label{eq:ft2}
\pair{\phi_{0}}{f_{t}} &=c\iint_{\reu} \big( {\tau^M (B^*D)^Me^{-\frac{\tau}{2} |B^*D|}\phi_{0}(x)}\cdot ({\tau^N\pd_{\tau}^NF_{t+\frac{\tau}{2}})(x)}\big)\, \frac{d\tau dx}{\tau}\\
\nonumber& = c\int_{0}^\infty  \pair {\tau^M (B^*D)^Me^{-\frac{\tau}{2} |B^*D|}\phi_{0}}{\tau^N\pd_{\tau}^NF_{t+\frac{\tau}{2}}}\, \frac{d\tau}{\tau}.
\end{align}
Here, the  pairings hold in the sense of $L^{p'}, L^p$ duality if $N\ge 2$ (because all derivatives of $F_{t}$ with order exceeding 2 are in $L^p$ so far).  Next, the argument that  $f_{t}$ satisfies the semigroup equation is the same argument involving  $L^{p'}, L^p$ duality pairings.
 
 Then, 
one can replace 
such $\phi_{0}$  in the second line of \eqref{eq:ft2} by any $\varphi_{0}\in \mS$ (argue with $\varphi_{0}-\phi_{0}\in \nul_{p'}(D)$ when $\phi_{0}=\IP_{B^*D}\varphi_{0}$ and recall that $\IP_{B^*D}$ is bounded on $L^{p'}$ for $p_{-}(B^*D)<p'<2$) and we have
$$\pair{\varphi_{0}}{f_{t}}  = c\int_{0}^\infty  \pair {\tau^M (B^*D)^Me^{-\frac{\tau}{2} |B^*D|}\varphi_{0}}{\tau^N\pd_{\tau}^NF_{t+\frac{\tau}{2}}}\, \frac{d\tau}{\tau}.$$
The rest of the argument is similar, choosing the same constant $c$, and we have to differentiate twice here,  systematically computing with the $L^{p'}, L^p$ duality, using that  $\tau^k\pd_{\tau}^k(\pd_{\tau}^2F_{\tau})$ converges to 0 in $H^p_{DB}$ for all $k\ge 0$ as $\tau\to \infty$. As $H^p_{DB}=\clos{\ran_{p}(D)}$, this implies convergence in $\mS'$.  Thus, by Lemma  \ref{lem:technical}  applied to $g(\tau)= \pair {\varphi_{0}}{\pd_{\tau}^2F_{\tau}}$ with $k=M+N$, we see that  $\pair{\varphi_{0}}{\pd_{t}^2f_{t}}= \pair{\varphi_{0}}{\pd_{t}^2F_{t}}$.

\section{Proof of Theorem \ref{thm:main2}}

Recall that $q\in I_{L^*}$, that is,  $H^q_{D\wt B}=H^q_{D}$. This is the same as $H^q_{DB^*}=H^q_{D}$ (See \cite{AS}, Section 12.2).
We distinguish the two cases  $q>1$ or $q\le 1$ of  the statement.

\

\subparagraph{\textbf{Case $\mathbf{q>1 : (\alpha) \Longrightarrow (\beta)}$}} 

We set $p=q'$.  Thus we assume  that  $t F\in T^p_{2}$,
where $F$ is a solution of  \eqref{eq:curlfreesystem} in $\reu$, that is, $F=\nabla\!_{A}u$.  We also assume that $u(t,\cdot)$ converges to $0$ in $\mD'$ modulo constants as $t\to \infty$. We will show at the end how this assumption can be removed in some cases.  In terms of $F$, this means that the tangential part of $F_{t}$ converges to $0$ in $\mD'$ as $t\to \infty$. Indeed,  if $\varphi_{0}$ is a test function, then
$$
\pair{\varphi_{0}}{F_{t\ta}}= \pair{\varphi_{0}}{\nabla_{x}u(t,\cdot)}= -  \pair{\divv\varphi_{0}}{u(t,\cdot)}.
$$

We follow the same paths as before. We point out  the differences when necessary. 

\

  \paragraph{\textbf{Step 1}}   Finding the semigroup equation. 

\

This is achieved  by  taking limits in  \eqref{eq:step1} by selecting $\chi$, $\eta$ and $\phi_{0}$ in the definition of $\varphi_{s}$ in \eqref{eq:phis}. 

An observation is in order.  To define $\varphi_{s}$ we took $\phi_{0}\in \IH^2_{B^*D}=\clos{\ran_{2}(B^*D)}$. But  equation  \eqref{eq:step1} involves only $B^*D \varphi_s$ and $\partial_{s}\varphi_{s}$ which  are equal. Now, if we further assume $\phi_{0}\in \dom_{2}(B^*D)= \dom_{2}(D)$, then one can use the similarity relation between the functional calculi of $DB^*$ and $B^*D$ via $D$, so that
\begin{equation}
\label{eq:phi0}
\eta(s)B^*D\varphi_{s}=\begin{cases}
 {\eta(s)}B^*e^{-(t-s)DB^*}\chi^+(DB^*)D\phi_{0}     & \text{if } s<t,  \\
    - {\eta(s)}B^*e^{-(s-t)DB^*}\chi^-(DB^*)D\phi_{0}     & \text{if } t<s. \
\end{cases} 
\end{equation}
But these expressions are defined for all $\phi_{0}\in \dom_{2}(D)$ and,  as $D$ annihilates  $\nul_{2}(B^*D)$, this means that  \eqref{eq:step1} is valid for any $\phi_{0}\in \dom_{2}(D)$ (up to changing $\phi_{0}$ to $\IP_{B^*D}\phi_{0}$ in the definition of $\varphi_{s}$).  

\

\paragraph{\textbf{Step 1a}} Limit in space.    We show that if $\phi_{0}\in {\dom_{2}(D)}$ with $D\phi_{0}\in \IH^{q}_{D}$,  then \begin{equation}    \label{eq:step2th1.2}
  \iint_{\reu} \bpaire {\eta'(s) B^*(x)D\varphi_s(x)}{F(s,x)}\,   dsdx = 0\end{equation}
  with $\eta'(s)B^*(x)D\varphi_s(x)$ as in  \eqref{eq:phi0}
  and the integral is defined in the Lebesgue sense.

\

We replace  $\chi$ in \eqref{eq:step1}  by $\chi_{R}$ with $\chi_{R}(x)=\chi(x/R)$ where $\chi\equiv 1$ in the unit ball $B(0,1)$, has compact support in the ball $B(0,2)$ and let $R\to \infty$.  As $\chi_{R}$ tends to 1 and $D_{\chi_{R}}$ to 0, it suffices by dominated convergence to show that $|\eta'(s)B^*D \varphi_s  F|$ and $|\eta(s)\partial_{s}\varphi_{s} F|$ are integrable on $\reu$.  As $sF\in T^p_{2}$, it is enough to have that $\eta'(s)s^{-1}B^*D \varphi_s$ and $\eta(s)s^{-1}\partial_{s}\varphi_{s}$ belong to $(T^p_{2})'=T^{q}_{2}$.  As $\partial_{s}\varphi_{s}=B^*D\varphi_{s}$ on $\supp \eta$, it suffices to invoke the following lemma.

\begin{lem}\label{lem:step1th1.2} Let $q$ with $H^q_{DB^*}=H^q_{D}$. Assume $\phi_{0}\in \dom_{2}(D) $ and  $D\phi_{0}\in \IH^{q}_{DB^*}$. Then $\eta(s)B^*D\varphi_{s}\in T^q_{2}$ for all $\eta$ bounded and compactly  supported  away from $t$. 

\end{lem}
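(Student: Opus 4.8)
The plan is to adapt the argument of Lemma~\ref{lem:step1}, case $p>1$, using the similarity relation \eqref{eq:phi0} to transfer the functional calculus from $B^*D$ to $DB^*$ and then apply the $T^q_2$-estimates of \cite{AS} on the range $q$ where $H^q_{DB^*}=H^q_{D}$. Since $\eta$ is supported away from $t$, on $\supp\eta\cap(0,t)$ we have $t-s$ bounded below and above, and similarly on $\supp\eta\cap(t,\infty)$ for $s-t$; this is exactly the geometric situation of Lemma~\ref{lem:step1}. So the whole matter reduces to showing that the relevant one-parameter family lies in $T^q_2$ uniformly.

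First I would write, using \eqref{eq:phi0} and the similarity $B^*e^{-\sigma DB^*}=e^{-\sigma B^*D}B^*$ on the appropriate spectral subspaces (valid since $D\phi_0\in\dom_2$), that for $s<t$
\[
\eta(s)B^*D\varphi_s = \frac{\eta(s)}{t-s}\, B^*\,\psi_+((t-s)DB^*)\,D\phi_0,
\]
with $\psi_+(z)=ze^{-z}\chi^+(z)\in\Psi$ a non-degenerate holomorphic function on the relevant sector, and the analogous formula with $\psi_-$ for $s>t$. Then I would perform the change of variable $\sigma=t-s$ (resp. $\sigma=s-t$) and the attendant change of cone aperture: if $(s,y)\in\Gamma_x$ with $s\in\supp\eta\cap(0,t)$, then $(\sigma,y)$ lies in a cone $\widetilde\Gamma_x$ of larger but still finite aperture depending only on $\supp\eta$, exactly as in Lemma~\ref{lem:step1}. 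Since $s$ is bounded below and $t-s$ bounded above on $\supp\eta$, the weights $s^{-(n+1)}$ and $\sigma^{-(n+1)}$ are comparable up to constants depending on $\eta,t$, so
\[
\iint_{\Gamma_x,\,s<t}|\eta(s)B^*D\varphi_s(y)|^2\,\frac{ds\,dy}{s^{n+1}}\lesssim_{\eta,t}\iint_{\widetilde\Gamma_x}|\psi_+(\sigma DB^*)(D\phi_0)(y)|^2\,\frac{d\sigma\,dy}{\sigma^{n+1}},
\]
using that $B^*$ is bounded multiplication. Taking $L^q(\R^n)$ norms in $x$, the change of aperture (\cite{AuCR}) allows one to estimate the right side by the $T^q_2$-norm with standard aperture, and then by \cite[Theorem~5.7]{AS} applied to $DB^*$ — legitimate because $q$ is in the range where $H^q_{DB^*}=H^q_D$, so the square function estimate reads $\|\psi_+(\sigma DB^*)(D\phi_0)\|_{T^q_2}\lesssim\|D\phi_0\|_{\IH^q_{DB^*}}$ (equivalently $\|\IP D\phi_0\|_q$). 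The case $s>t$ is identical with $\psi_-$ in place of $\psi_+$. Combining the two pieces gives $\eta(s)B^*D\varphi_s\in T^q_2$ with norm controlled by $\|D\phi_0\|_{\IH^q_{DB^*}}$, which finishes the lemma; \eqref{eq:step2th1.2} then follows by dominated convergence in $R$ exactly as in Step~1a, since $sF\in T^p_2$ pairs with an element of $(T^p_2)'=T^q_2$.

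The main obstacle is bookkeeping the similarity transform correctly: one must check that $D\phi_0\in\IH^2_{DB^*}=\clos{\ran_2(DB^*)}$ (which holds because $D$ maps into $\closran{D}$ and $\ran_2(D)=\ran_2(DB^*)$ up to closure in the relevant sense used in \cite{AS}), so that the functional calculus of $DB^*$ genuinely applies to $D\phi_0$, and that $\chi^\pm(DB^*)D\phi_0$ correspond to $D\chi^\pm(B^*D)\phi_0$ under $D$; this is the content of the similarity lemmas of \cite{AS} but needs to be invoked with the right spectral pieces. Once that identification is in place, the estimate is a routine transcription of Lemma~\ref{lem:step1} with $F$ playing the role there played by the $T^p_2$ function and $q$ playing the role of $p'$, so no genuinely new analytic difficulty arises.
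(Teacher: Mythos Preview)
Your formula
\[
\eta(s)B^*D\varphi_s = \frac{\eta(s)}{t-s}\, B^*\,\psi_+\big((t-s)DB^*\big)\,D\phi_0,\qquad \psi_+(z)=ze^{-z}\chi^+(z),
\]
is incorrect. From \eqref{eq:phi0} one has, for $s<t$,
\[
\eta(s)B^*D\varphi_s=\eta(s)\,B^*\,e^{-(t-s)DB^*}\chi^+(DB^*)\,D\phi_0,
\]
with \emph{no} extra factor of $(t-s)DB^*$. The point is that in passing from the $B^*D$-calculus on $\phi_0$ to the $DB^*$-calculus on $D\phi_0$ via the similarity, the factor ``$z$'' that produced $\psi_+$ in Lemma~\ref{lem:step1} has already been consumed by the $D$ landing on $\phi_0$. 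What remains is $e^{-z}\chi^+(z)$, which is bounded but does \emph{not} vanish at $0$, hence is not an admissible $\psi\in\Psi$ for the square-function estimate \cite[Theorem~5.7]{AS}. So your appeal to that theorem is not justified.

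The paper fixes this by switching to the non-tangential maximal function: since $\sigma=t-s$ ranges over a compact subinterval of $(0,\infty)$ on $\supp\eta$, the cone integral is just a Whitney-type $L^2$ average and is dominated by $\tN\big(e^{-\sigma DB^*}\chi^+(DB^*)D\phi_0\big)(x)^2$. One then invokes \cite[Theorem~9.1 and Theorem~4.19]{AS} to bound $\|\tN(\,\cdot\,)\|_q$ by $\|D\phi_0\|_{H^q}$ in the range $q\in I_{L^*}$. So the key estimate here is the $\tN$-bound for the semigroup, not a square-function bound; once you make that substitution the rest of your outline (change of variable, change of aperture, boundedness of $B^*$) goes through exactly as you describe.
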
 

\begin{proof}

We use \eqref{eq:phi0}. 
By geometric considerations as  $t-s$ is bounded away from 0, we see that if $(s,y)$ belongs to a cone $\Gamma(x)$ with $s\in \supp(\eta) \cap (0,t)$ then $(t-s,y)$ belongs to a  cone $\wt \Gamma(x)$ with (bad) finite aperture depending on the support of $\eta$. Thus, setting $t-s=\sigma$ and using also that $s$ is bounded below and $t-s$ bounded above on $\supp(\eta)$,  we obtain from elementary estimates based on these considerations and $L^\infty$ boundedness of $B^*$, and adjusting the parameters $c_{0},c_{1}$ in $\tN$,
$$
\iint_{\Gamma{(x)}, s<t} |\eta(s)B^*(y)D\varphi_{s}(y)|^2 \,\frac{dsdy}{s^{n+1}} \lesssim_{\eta} \tN(e^{-\sigma DB^*}\chi^+(DB^*)D\phi_{0})^2(x).
$$
By Theorem 9.1 and Theorem 4.19  of \cite{AS},  we have for $q$ in our range,
$$
\|\tN(e^{-\sigma |DB^*|}\chi^+(DB^*)D\phi_{0})\|_{q}\lesssim  \|\chi^+(DB^*)D\phi_{0}\|_{H^q} \lesssim \|D\phi_{0}\|_{H^q}.
$$

The argument is the same when $t<s$.  \end{proof}

  \

\paragraph {\textbf{Step 1b}}  Limit in time. 

\

For fixed $t>0$,   $0<\varepsilon<\inf(t/4, 1/4,1/t)$, we obtain by making the same choices of $\eta$ as the proof of (i) implies (iii) of Theorem \ref{thm:main1}, 
\begin{align}   \label{eq:limit-th1.2} 
  \frac{1}{\varepsilon} \iint_{[t+\varepsilon, t+2\varepsilon]\times \R^n}& \bpaire { B^*(x)D\varphi_s(x)}{F(s,x)}\,   dsdx \\\nonumber & = 2\varepsilon \iint_{[t+ \frac{1}{2\varepsilon}, t+\frac{1}{\varepsilon}]\times \R^n} \bpaire { B^*(x)D\varphi_s(x)}{F(s,x)} \, dsdx
  \end{align}
  and
  \begin{align}\label{eq:limit+th1.2}    
  \frac{1}{\varepsilon} \iint_{[t-2\varepsilon, t-\varepsilon]\times \R^n}& \bpaire { B^*(x)D\varphi_s(x)}{F(s,x)}\,   dsdx \\\nonumber &=\frac{1}{\varepsilon} \iint_{[{\varepsilon}, 2{\varepsilon}]\times \R^n} \bpaire { B^*(x)D\varphi_s(x)}{F(s,x)} \, dsdx.
  \end{align}
  
   The second integral in \eqref{eq:limit-th1.2} converges to 0 as $\varepsilon\to 0$ for fixed $t$.
 Indeed, let 
 \begin{multline*}$$G(s,x)= G_{s}(x)=1_{[t+ \frac{1}{2\varepsilon}, t+\frac{1}{\varepsilon}]}(s)B^*(x)D\varphi_s(x)\\=  1_{[t+ \frac{1}{2\varepsilon}, t+\frac{1}{\varepsilon}]}(s)(B^*e^{-(s-t)DB^*}\chi^-(DB^*)D\phi_{0})(x),$$
 \end{multline*}
  under the conditions on $\phi_{0}$ in Step 1a.  This integral is bounded by  
  $
  2\varepsilon \|sF\|_{T^p_{2}}\|G\|_{T^q_{2}}.$ 
 As $t\varepsilon<1$ and $s\in [t+ \frac{1}{2\varepsilon}, t+\frac{1}{\varepsilon}]$, we have $\sigma= s-t\in [ \frac{1}{2\varepsilon}, \frac{1}{\varepsilon}]$, so that  as in the proof of the lemma above, 
 $$
 \|G\|_{T^q_{2}} \lesssim \|\tN(e^{-\sigma |DB^*|}\chi^-(DB^*)D\phi_{0})\|_{q}
 $$
 and examination shows uniformity with respect to $\varepsilon$. 
  
Next, to obtain the expression of the limits of the other terms in \eqref{eq:limit-th1.2} and \eqref{eq:limit+th1.2}, we shall impose further that $\chi^\pm(DB^*)D\phi_{0} \in E^q_{\delta }$ for some (any) $\delta >0$. It is convenient to introduce   $\D_{q}$ as the space  of  $\phi_{0}\in \dom_{2}(D) $ with $D\phi_{0}\in  \IH^{q}_{D}$ and $ \chi^\pm(DB^*) D\phi_{0 } \in E^{q}_{\delta }$ for some (any) $\delta >0$. Here this definition makes sense  for any $q\in I_{L^*}$ since in that case $\IH^{q}_{D}= \IH^{q}_{DB^*}$ and $\chi^\pm(DB^*)$ are bounded operators on $\IH^{q}_{DB^*}$.   The following  lemmas hold for any such $q$.

\begin{lem}\label{lemma1th1.2} Let $q\in I_{L^*}$. Let $h\in \IH^{q, \pm}_{DB^*}$. For all $\delta >0$, 
$e^{-\delta |DB^*|}h\in E^{q}_{\delta }$ with uniform bound with respect to $\delta $. More precisely, 
$\sup_{\delta >0}\|e^{-\delta |DB^*|}h\|_{E^{q}_{\delta }}\lesssim \|h\|_{q}$. 
\end{lem}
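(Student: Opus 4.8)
\textbf{Proof plan for Lemma \ref{lemma1th1.2}.}

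The plan is to reduce the statement to the non-tangential maximal estimate for the semigroup $e^{-\sigma|DB^*|}$ already available from \cite{AS}, exactly in the spirit of Lemma \ref{lemma1}. First I would recall from \cite[Theorem 9.1 and Theorem 4.19]{AS} that, since $q\in I_{L^*}$, one has $H^q_{DB^*}=H^q_D$ and $\|\tN(e^{-\sigma|DB^*|}h)\|_q\lesssim \|h\|_q$ for $h\in \IH^{q,\pm}_{DB^*}$ (using also that $\chi^\pm(DB^*)$ are bounded on $\IH^q_{DB^*}$ with the $L^q$ norm). The point is then to pass from a non-tangential maximal bound to a ``slice'' bound $\|e^{-\delta|DB^*|}h\|_{E^q_\delta}\lesssim \|h\|_q$. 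Recall that $\|g\|_{E^q_\delta}^q = \int_{\R^n}\big(\barint_{B(x,\delta)}|g|^2\big)^{q/2}\,dx$, so the quantity inside is controlled, for each $x$, by $\sup_{(s,y)\in W(\delta,x)}$ of a Whitney average of $|e^{-s|DB^*|}h|^2$ \emph{provided} we can replace the fixed height $\delta$ by a comparable height $s$ and enlarge the ball slightly.

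The second ingredient, and the one to carry out carefully, is the regularity in $s$ of the semigroup that allows this replacement: I would use $e^{-\delta|DB^*|}h = e^{-s|DB^*|}h - \int_s^\delta \partial_\sigma e^{-\sigma|DB^*|}h\,d\sigma$ together with $\sigma\partial_\sigma e^{-\sigma|DB^*|}h = -\sigma|DB^*|e^{-\sigma|DB^*|}h$, exactly the mean value device used in the proof of Lemma \ref{lem:regular} and Lemma \ref{lemma1}. From \cite[Theorem 5.7]{AS} one has $\|\sigma|DB^*|e^{-\sigma|DB^*|}h\|_{T^q_2}\lesssim \|h\|_q$, hence $\|\tN(\sigma|DB^*|e^{-\sigma|DB^*|}h)\|_q\lesssim \|h\|_q$ by the tent-space/$N^q_2$ comparison; combining this with $\|\tN(e^{-\sigma|DB^*|}h)\|_q\lesssim\|h\|_q$ and averaging over $\sigma\in[\delta/2,\delta]$ (or over a Whitney interval) gives a pointwise bound $\big(\barint_{B(x,c_1\delta)}|e^{-\delta|DB^*|}h|^2\big)^{1/2}\lesssim \tN(e^{-\cdot|DB^*|}h)(x) + \tN(\cdot|DB^*|e^{-\cdot|DB^*|}h)(x)$ for a.e.\ $x$, with implicit constants independent of $\delta$ (this uses $t$-dilation invariance of the Whitney geometry). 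Taking $L^q$ norms and invoking Lemma \ref{lem:slice_changenorms} to pass between $E^q_{c_1\delta}$ and $E^q_\delta$ with constants uniform in $\delta$ (the ratio $c_1$ being fixed) yields the claim.

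The main obstacle is bookkeeping of the uniformity in $\delta$: all the estimates from \cite{AS} are scale-invariant, but one must make sure the passage from the non-tangential maximal function (which is a statement about cones) to the slice space $E^q_\delta$ (a statement at one height) does not introduce a $\delta$-dependent constant. This is precisely handled by the dilation invariance of the Whitney region $W(\delta,x)$ and by Lemma \ref{lem:slice_changenorms}, so no genuinely new estimate is needed; the argument is a routine, if slightly tedious, adaptation of the proof of Lemma \ref{lemma1}, to which I would refer for the details of the mean value inequality step. I would simply write ``See the proof of Lemma \ref{lemma1}'' after indicating the two inputs from \cite{AS} (Theorems 9.1, 4.19 and 5.7) and the fact that $\chi^\pm(DB^*)$ preserve $\IH^q_{DB^*}$.
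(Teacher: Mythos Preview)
Your proposal is correct and follows essentially the same approach as the paper: the paper's proof is the single line ``Direct consequence of Theorem 9.1 of \cite{AS} together with the method of proof of Lemma \ref{lemma1},'' and what you have written is precisely an unpacking of that reference, with the same two inputs (the non-tangential maximal estimate for the semigroup and the square-function/tent-space estimate for its $t$-derivative) followed by the mean-value argument of Lemma \ref{lem:regular}. The only cosmetic difference is that you invoke Lemma \ref{lem:slice_changenorms} explicitly to adjust the radius from $c_1\delta$ to $\delta$, which the paper leaves implicit.
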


\begin{proof} Direct consequence of Theorem 9.1 of \cite{AS} together with the method of proof of  Lemma \ref{lemma1}.  \end{proof}

\begin{lem}\label{cor:stability} Let $q\in I_{L^*}$. If $\phi_{0}\in \dom_{2}(D)$ with $D\phi_{0}\in  \IH^{q}_{DB^*}$, then for all $\delta >0$ and $M\in \N$, $(B^*D)^Me^{-\delta |B^*D|}\chi^\pm(B^*D)\phi_{0}\in \D_{q}$. 
\end{lem}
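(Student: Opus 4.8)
The statement to establish is Lemma \ref{cor:stability}: for $q\in I_{L^*}$ and $\phi_0\in \dom_2(D)$ with $D\phi_0\in \IH^q_{DB^*}$, the element $\psi_0 := (B^*D)^M e^{-\delta|B^*D|}\chi^\pm(B^*D)\phi_0$ lies in $\D_q$ for every $\delta>0$ and $M\in\N$. Recall that membership in $\D_q$ requires three things: (a) $\psi_0\in\dom_2(D)$; (b) $D\psi_0\in \IH^q_{D}=\IH^q_{DB^*}$; and (c) $\chi^\pm(DB^*)D\psi_0 \in E^q_{\delta'}$ for some (equivalently any) $\delta'>0$. The strategy is to transfer everything to the $DB^*$ side via the intertwining $D\,b(B^*D) = b(DB^*)\,D$ valid for bounded holomorphic $b$ (and its extensions to the functional calculus), reducing the claim to statements about the functional calculus of $DB^*$ acting on $D\phi_0 \in \IH^q_{DB^*}$, and then to apply Lemma \ref{lemma1th1.2} and the boundedness results from \cite{AS} (Theorem 4.19, Theorem 9.1) already quoted in the excerpt.

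\textbf{Step 1: reduction via intertwining.} First I would note that $\phi_0\in\dom_2(D)$ and that $B^*D$ preserves $\dom_2(B^*D)=\dom_2(D)$ under its resolvents, so $\psi_0\in\dom_2(D)$, giving (a); moreover the similarity $D(B^*D) = (DB^*)D$ on $\dom_2(D)$ propagates through the holomorphic functional calculus, so that
$$
D\psi_0 = D(B^*D)^M e^{-\delta|B^*D|}\chi^\pm(B^*D)\phi_0 = (DB^*)^M e^{-\delta|DB^*|}\chi^\pm(DB^*)(D\phi_0).
$$
Since $D\phi_0\in\IH^q_{DB^*}$ and the functions $z\mapsto z^M e^{-\delta|z|}\chi^\pm(z)$ are (after the obvious bisectorial bookkeeping) $H^\infty$ functions of $DB^*$ — indeed they decay at $0$ and $\infty$ — the boundedness of the $H^\infty$-calculus of $DB^*$ on $\IH^q_{DB^*}$ (\cite{AS}, Theorem 4.19, since $q\in I_{L^*}$) shows $D\psi_0\in\IH^q_{DB^*}=\IH^q_D$ with norm controlled by $\|D\phi_0\|_{\IH^q_{DB^*}}$. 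This gives (b).

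\textbf{Step 2: the slice-space condition.} For (c), apply $\chi^\pm(DB^*)$ to $D\psi_0$; since $\chi^+(DB^*)\chi^-(DB^*)=0$ and $\chi^\pm(DB^*)^2=\chi^\pm(DB^*)$, and the sign matches that in the definition of $\psi_0$, one gets $\chi^\pm(DB^*)D\psi_0 = (DB^*)^M e^{-\delta|DB^*|}\chi^\pm(DB^*)(D\phi_0) = (DB^*)^M e^{-\delta|DB^*|}g$ with $g := \chi^\pm(DB^*)D\phi_0 \in \IH^{q,\pm}_{DB^*}$ (using again that $\chi^\pm(DB^*)$ is bounded on $\IH^q_{DB^*}$). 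Writing $(DB^*)^M e^{-\delta|DB^*|} = \big((DB^*) e^{-\frac{\delta}{2M}|DB^*|}\big)^M$ and absorbing powers, or directly splitting $e^{-\delta|DB^*|} = e^{-\frac{\delta}{2}|DB^*|}e^{-\frac{\delta}{2}|DB^*|}$, I would apply Lemma \ref{lemma1th1.2} to $e^{-\frac{\delta}{2}|DB^*|}g' \in E^q_{\delta/2}$ where $g' = (DB^*)^M e^{-\frac{\delta}{2}|DB^*|}g \in \IH^{q,\pm}_{DB^*}$ (the latter again by $H^\infty$-calculus on $\IH^q_{DB^*}$). This yields $\chi^\pm(DB^*)D\psi_0 \in E^q_{\delta/2}$, and by Lemma \ref{lem:slice_changenorms} (change of norms in slice-spaces) membership holds in $E^q_{\delta'}$ for every $\delta'>0$. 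That establishes (c) and hence $\psi_0\in\D_q$.

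\textbf{Main obstacle.} The only genuinely delicate point is the bookkeeping for the intertwining $D\,b(B^*D)=b(DB^*)\,D$ at the level of the (possibly unbounded, possibly degenerate-at-zero) functions entering the calculus, and making sure all operators are first applied to the $L^2$ element $\phi_0$ (or $D\phi_0$) so that the $L^2$ theory legitimizes the identities before one invokes the $\IH^q$ extensions; once that similarity is in hand everything reduces to already-cited boundedness facts. I expect this to be routine, so the proof in the paper will likely just say "direct consequence of Lemma \ref{lemma1th1.2}, the similarity of the functional calculi of $B^*D$ and $DB^*$ via $D$, and \cite[Theorems 4.19, 9.1]{AS}", possibly with a one-line remark pointing to Lemma \ref{lem:slice_changenorms} for the independence from $\delta$.
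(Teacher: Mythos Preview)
Your proof is correct and for parts (a) and (b) it is identical to the paper's: both set $h=\psi_0$, use $\phi_0\in\dom_2(D)$ to get $h\in\dom_2(D)$, intertwine $D$ with the functional calculus to write $Dh=(DB^*)^M e^{-\delta|DB^*|}\chi^\pm(DB^*)D\phi_0$, and invoke boundedness of the $H^\infty$-calculus on $\IH^q_{DB^*}=\IH^q_D$.

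For part (c) there is a genuine, if minor, difference. The paper observes that for $M=0$ the claim is exactly Lemma~\ref{lemma1th1.2}, while for $M>0$ it appeals to the PDE side: since $(t,x)\mapsto e^{-t|DB^*|}(\chi^\pm(DB^*)D\phi_0)(x)$ is the conormal gradient of a weak solution, the operators $(DB^*)^M$ are $t$-derivatives, and one controls $\|t^M\partial_t^M e^{-t|DB^*|}g\|_{E^q_t}$ via Corollaries~\ref{cor:repeatCaccio} and~\ref{cor:uniformept} (Caccioppoli). Your route stays entirely inside the abstract functional calculus: split $e^{-\delta|DB^*|}=e^{-\frac\delta2|DB^*|}e^{-\frac\delta2|DB^*|}$, absorb $(DB^*)^M e^{-\frac\delta2|DB^*|}$ as a bounded operator on $\IH^{q,\pm}_{DB^*}$, and apply Lemma~\ref{lemma1th1.2} to the remaining semigroup factor. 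Your argument is shorter and avoids the solution/Caccioppoli machinery; the paper's route has the (small) advantage that it makes explicit why the constants blow up in $\delta$ in the expected way via the PDE interpretation. One small omission: you only treat the case where the sign of $\chi^\pm(DB^*)$ applied to $D\psi_0$ matches the sign in $\psi_0$; you should remark (trivially) that the opposite sign gives $0\in E^q_{\delta'}$, since $\D_q$ requires both $\chi^+(DB^*)D\psi_0$ and $\chi^-(DB^*)D\psi_0$ to lie in the slice-space.
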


\begin{proof}
For this proof, set  $h= (B^*D)^Me^{-\delta |B^*D|}\chi^\pm(B^*D)\phi_{0}$. As $\phi_{0}\in \dom_{2}(D)$, then $h\in \dom_{2}(D)$ and 
$$
Dh= (DB^*)^Me^{-\delta |DB^*|}\chi^\pm(DB^*)D\phi_{0}.
$$
As $(DB^*)^Me^{-\delta |DB^*|}\chi^\pm(DB^*)$ is bounded on $\IH^q_{DB^*}=\IH^q_{D}$ by $q\in I_{L^*}$, we have $Dh\in \IH^q_{DB^*}$.

Finally,  $\chi^\pm(DB^*)Dh= (DB^*)^Me^{-\delta |DB^*|}\chi^\pm(DB^*)(\chi^+(DB^*)D\phi_{0})$
which belongs to  $E^q_{\delta }$. For $M=0$, this is in Lemma \ref{lemma1th1.2}. For $M>0$, one can take derivatives and apply Corollaries  \ref{cor:repeatCaccio} and \ref{cor:uniformept}. We skip details. 
\end{proof}

\begin{lem}\label{lemma2th1.2} Fix $\delta >0$. Let $q\in I_{L^*}$. Let $h\in \IH^{q,\pm}_{DB^*}\cap E^{q}_{\delta }$.  Then $e^{-s |DB^*|}h$ converges to $h$ in $E^{q}_{\delta }$ when $s\to 0$. 
\end{lem}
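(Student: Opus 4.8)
The plan is to follow the soft scheme of Proposition~\ref{prop:cv}: equiboundedness of $\{e^{-s|DB^*|}\}_{0<s\le\delta}$ on the slice-space, convergence on a dense subclass, and a density argument. Throughout, $q\in I_{L^*}$, so Theorem~\ref{thm:IL} gives $\IH^q_{DB^*}=\IH^q_D$ together with the functional calculus of $DB^*$ on $\IH^q_{DB^*}$ in the $L^q$ (resp. $H^q$) norm, and one has $\clos{\ran_{2}(DB^*)}=\clos{\ran_{2}(D)}$ since $B$ is invertible.

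\emph{Equiboundedness.} I would first prove that, for $h\in\IH^{q,\pm}_{DB^*}$,
$$
\sup_{0<s\le\delta}\|e^{-s|DB^*|}h\|_{E^q_\delta}\lesssim\|h\|_{\IH^q_{DB^*}}.
$$
Since $(t,x)\mapsto e^{-t|DB^*|}h$ is (the conormal gradient of) an $L^*$-solution of the associated first order system, this follows by combining Lemma~\ref{lem:regular} and the cone-translation trick already used in Lemma~\ref{lem:obs} with the non-tangential maximal estimates $\|\tN(e^{-t|DB^*|}h)\|_q\sim\|h\|_q$ of \cite[Theorems~9.1 and 9.3]{AS}, in the spirit of Lemma~\ref{lemma1th1.2}, and by controlling the change of aperture/radius via Lemma~\ref{lem:slice_changenorms}. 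The delicate point is that the naive change-of-norms constant $\max(1,(\delta/s)^{|n/2-n/q|})$ is \emph{not} uniform in $s$; it must be avoided by applying the maximal estimate to the semigroup orbit of $e^{-s|DB^*|}h$ itself rather than paying a power of $\delta/s$.

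\emph{Convergence on a dense subclass.} If $g\in\clos{\ran_{2}(DB^*)}\cap L^2$ is compactly supported, then $e^{-s|DB^*|}g\to g$ in $L^2$ by strong continuity of the semigroup on $\clos{\ran_{2}(DB^*)}$, and splitting $\R^n$ into $2B_R$ and its dyadic annuli and using the $L^2$ off-diagonal decay of every order of $e^{-s|DB^*|}$ (part of the standard $DB$-toolbox recalled in Section~\ref{sec:basic}; see \cite{AS,AKMc}) gives $\|e^{-s|DB^*|}g-g\|_{E^q_\delta}\to0$ as $s\to0$, by verbatim the computation in the proof of Proposition~\ref{prop:cv}. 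A convenient supply of such $g$ is $\{D\phi:\phi\in\mD(\R^n;\C^N)\}\subset\ran_{2}(D)=\ran_{2}(DB^*)$.

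\emph{Density --- the main obstacle.} It remains to approximate an arbitrary $h\in\IH^{q,\pm}_{DB^*}\cap E^q_\delta$, in the $E^q_\delta$-(quasi)norm, by elements to which the previous step (or a minor variant) applies; the difficulty is that truncation and mollification preserve neither $\clos{\ran_{2}(DB^*)}$ nor the spectral subspace. I would combine a spatial truncation, a mollification, and a reprojection, using that the projection $\IP$ onto $\clos{\ran_{2}(D)}=\clos{\ran_{2}(DB^*)}$ (a Calder\'on--Zygmund operator) and the spectral projections $\chi^{\pm}(DB^*)$ are bounded on $E^q_\delta$ --- for $q\le1$ by checking that they map $E^q_\delta$-atoms to $E^q_\delta$-molecules, via Lemma~\ref{lem:slice_atom.decomp.} and as in the proof of Proposition~\ref{prop:slice-Ts bound}, and for $q>1$ by interpolating this with the trivial $L^2$-bound --- to build $h_k\in\IH^{q,\pm}_{DB^*}$ with $h_k\to h$ in $E^q_\delta$. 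Applying the convergence step to the $h_k$, which are no longer compactly supported, requires one more truncation-and-reprojection, the tail being small because $h_k\in E^q_\delta$ and the off-diagonal decay handling the localized part uniformly in $s$. Finally the triangle inequality
$$
\|e^{-s|DB^*|}h-h\|_{E^q_\delta}\le\|e^{-s|DB^*|}(h-h_k)\|_{E^q_\delta}+\|e^{-s|DB^*|}h_k-h_k\|_{E^q_\delta}+\|h_k-h\|_{E^q_\delta},
$$
with the equiboundedness step controlling the first term, the density the third, and the convergence step sending the second to $0$ for fixed $k$, concludes upon letting $s\to0$ and then $k\to\infty$. I expect this density step --- and in particular checking that the regularized approximants behave well enough for the convergence step --- to be the real content, the rest being bookkeeping with results already available here and in \cite{AS}.
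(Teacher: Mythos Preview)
There is a genuine gap. Your ``convergence on a dense subclass'' step asserts that $e^{-s|DB^*|}$ has $L^2$ off-diagonal decay of every order, and your density step implicitly needs the same for $\chi^\pm(DB^*)$ (to map $E^q_\delta$-atoms to molecules). Neither is true. The symbol $z\mapsto e^{-[z]}$ lies in $H^\infty(S_\mu)$ but does not vanish at $0$, and the known off-diagonal bounds for $\psi(sDB^*)$ require $\psi\in\Psi_\sigma^\tau(S_\mu)$ with $\sigma>0$ (or a rational/resolvent structure). For the same reason $\chi^\pm$ gives no off-diagonal decay. So both the Proposition~\ref{prop:cv}-style tail estimate and the atom-to-molecule argument for the projections break down, and the elaborate truncate--mollify--reproject scheme cannot be completed as written.

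The paper's proof bypasses all of this with a single observation (Lemma~\ref{lem:T}): for each $N$ one constructs $\phi^\pm\in H^\infty(S_\mu)$, built as a finite resolvent combination plus a $\Psi_N^1$-correction, such that $\phi^\pm(sDB^*)$ \emph{coincides with} $e^{-s|DB^*|}$ on $\IH^{2,\pm}_{DB^*}$, has $L^2$ off-diagonal decay of order $N$, and converges strongly to $I$ on all of $L^2$. Since $\IH^{q,\pm}_{DB^*}\subset\IH^{2,\pm}_{DB^*}$, one has $e^{-s|DB^*|}h=\phi^\pm(sDB^*)h$ for your $h$, and now Proposition~\ref{prop:cv} applies \emph{verbatim} to $T_s=\phi^\pm(sDB^*)$: uniform $E^q_\delta$-boundedness for $0<s\le\delta$ comes from Proposition~\ref{prop:slice-Ts bound}, and the density in $E^q_\delta$ is just compactly supported $L^2$ functions---no need to remain inside the Hardy or spectral subspace. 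Your equiboundedness and density manoeuvres become unnecessary once the semigroup is replaced by its off-diagonal-friendly surrogate on the correct spectral half.
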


\begin{proof} The proof is exactly as that of Lemma \ref{lemma2}. 
\end{proof}

\begin{lem}\label{lemma3th1.2} Let  $\varphi_{0}\in \mS$.  Then $\varphi_{0}\in \D_{q}$ for any $q\in I_{L^*}$.
\end{lem}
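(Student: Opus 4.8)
I would verify the three defining requirements of $\D_{q}$ for $\varphi_{0}\in\mS$, two of which are immediate. Since $D$ is a constant--coefficient first order operator, $D\varphi_{0}\in\mS\subset L^{2}$, so $\varphi_{0}\in\dom_{2}(D)$. Next, each entry of $D\varphi_{0}$ is a first order derivative of a Schwartz function, hence $\int_{\R^{n}}D\varphi_{0}=0$; as $q\in I_{L^{*}}\subset(\tfrac{n}{n+1},\infty)$ forces $n(\tfrac1q-1)<1$, only this zeroth moment is needed and $D\varphi_{0}\in H^{q}$ (with $H^{q}=L^{q}$ when $q>1$). Moreover $D\varphi_{0}\in\ran_{2}(D)$, so $\IP(D\varphi_{0})=D\varphi_{0}$, and $\IP$ being a Calder\'on--Zygmund operator is bounded on $H^{q}$; thus $D\varphi_{0}\in\IP(H^{q})=H^{q}_{D}$, and together with $D\varphi_{0}\in L^{2}$ this gives $D\varphi_{0}\in\IH^{q}_{D}=\IH^{q}_{DB^{*}}$, the last identity by $q\in I_{L^{*}}$ and the $L^{*}$-version of Theorem \ref{thm:IL}.

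The remaining point is $\chi^{\pm}(DB^{*})D\varphi_{0}\in E^{q}_{\delta}$ for one (hence every) $\delta>0$. By the bounded holomorphic functional calculus of $DB^{*}$ on $\IH^{q}_{DB^{*}}$, valid since $q\in I_{L^{*}}$ (\cite[Theorem 4.19]{AS}), $v_{0}:=\chi^{\pm}(DB^{*})D\varphi_{0}\in\IH^{q,\pm}_{DB^{*}}=\IH^{q}_{D}\subset H^{q}_{D}\cap L^{2}$. If $q\ge2$ this already finishes, since $H^{q}_{D}\subset L^{q}$ and $\|f\|_{E^{q}_{\delta}}\le\|f\|_{q}$. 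If $q<2$ I would set $v_{t}:=e^{-t|DB^{*}|}v_{0}=e^{\mp tDB^{*}}\chi^{\pm}(DB^{*})D\varphi_{0}$, which is the conormal gradient of a weak solution of $L^{*}w=0$, converges to $v_{0}$ in $L^{2}$ as $t\to0$, and satisfies, by \cite[Theorem 9.1]{AS}, $\|\tN(v)\|_{q}\lesssim\|v_{0}\|_{H^{q}}\lesssim\|D\varphi_{0}\|_{H^{q}}$. Interior regularity (Lemma \ref{lem:regular}, via Caccioppoli) gives, for $t>0$ and a.e.\ $x$, $\barint_{B(x,c_{1}t)}|v_{t}|^{2}\lesssim\bariint_{W(t,x)}|v|^{2}\le\tN(v)(x)^{2}$; covering a ball $B(x,\delta)$ by $O((\delta/t)^{n})$ Whitney--scale balls, using $v_{t}\to v_{0}$ in $L^{2}$ and Fatou's lemma as $t\to0$, one controls $x\mapsto(\barint_{B(x,\delta)}|v_{0}|^{2})^{1/2}$ in $L^{q}$ by $\|D\varphi_{0}\|_{H^{q}}$, i.e.\ $v_{0}\in E^{q}_{\delta}$. (Alternatively one can first use the similarity $\chi^{\pm}(DB^{*})D=D\chi^{\pm}(B^{*}D)$, checked on resolvents and propagated to the $H^{\infty}$-calculus and to $\chi^{\pm}$, to reduce the claim to the regularity of $\chi^{\pm}(B^{*}D)\varphi_{0}$.) In either case $\varphi_{0}\in\D_{q}$.

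The one genuinely delicate step is this slice--space bound for $q<2$: membership in $E^{q}_{\delta}$ is a local $L^{2}$ condition, strictly stronger than the $L^{q}$ (or $H^{q}$) control that the functional calculus supplies at the boundary $t=0$, so one must exploit the extra smoothness of $D\varphi_{0}\in\mS$ --- equivalently, the interior estimates for $v_{t}$ as $t\downarrow0$ together with the non-tangential maximal bound --- to bridge that gap. For $q\ge2$, and for the domain and $\IH^{q}_{D}$ memberships, there is nothing subtle: these are routine computations in the functional calculus of $DB^{*}$.
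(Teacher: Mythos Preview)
Your treatment of the first two membership conditions and of the case $q\ge 2$ is correct and matches the paper. The gap is in your argument for $q<2$.

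Carrying out your covering step honestly gives, for $t<\delta$,
\[
\int_{B(x,\delta)}|v_t|^2 \;\lesssim\; \int_{B(x,C\delta)} \tN(v)(y)^2\,dy,
\]
(integrate the pointwise bound $\barint_{B(y,c_1 t)}|v_t|^2\lesssim \tN(v)(y)^2$ over $y\in B(x,\delta)$ and use Fubini). Letting $t\to 0$ yields $\barint_{B(x,\delta)}|v_0|^2\lesssim \barint_{B(x,C\delta)}\tN(v)^2$, hence $\|v_0\|_{E^q_\delta}\lesssim \|\tN(v)\|_{E^q_{C\delta}}$. But for $q<2$ the inclusion $L^q\subset E^q_\delta$ fails (it goes the other way), so the bound $\|\tN(v)\|_q\lesssim\|D\varphi_0\|_{H^q}$ does \emph{not} give $\tN(v)\in E^q_{C\delta}$. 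In other words, your chain of inequalities only ever uses $D\varphi_0\in H^q$; the Schwartz decay you correctly flag as essential never enters, and without it the conclusion is false for a generic $h\in \IH^{q,+}_{DB^*}$. Your alternative via $\chi^\pm(DB^*)D=D\chi^\pm(B^*D)$ is only a reformulation and faces the same obstacle.

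The paper (referring to the proof of Lemma~\ref{lemma3}) fixes this by a Calder\'on reproducing decomposition that \emph{does} exploit $D\varphi_0\in\mS$: write
\[
v_0 \;=\; \int_0^\delta (\chi^\pm\psi)(sDB^*)(D\varphi_0)\,\frac{ds}{s} \;+\; \phi(\delta DB^*)\,v_0,
\]
with $\psi\in\Psi_N^N(S_\mu)$ and $\phi(z)=\int_1^\infty\psi(sz)\,ds/s$. The second term lies in $E^q_\delta$ by Lemma~\ref{lemma1th1.2}. For the first, $(\chi^\pm\psi)(sDB^*)$ has $L^2$ off-diagonal decay of order $N$, and $D\varphi_0$ satisfies $\int|D\varphi_0|^2\langle x/\delta\rangle^M\,dx<\infty$ for every $M$; combining these yields $\int|g|^2\langle x/\delta\rangle^{2N}\,dx<\infty$ for $g=\int_0^\delta(\chi^\pm\psi)(sDB^*)(D\varphi_0)\,ds/s$, hence $(\barint_{B(x,\delta)}|g|^2)^{1/2}\lesssim\delta^{-n/2}\langle x/\delta\rangle^{-N}\in L^q$ for $N$ large. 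This is where the Schwartz decay is genuinely used.
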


We prove this lemma in Section \ref{sec:technical}.

\

Armed with these lemmas and reexpressing \eqref{eq:limit-th1.2} and \eqref{eq:limit+th1.2} with the $E^q_{t},  E^p_{t}$ duality pairings  and taking limits we obtain exactly as in the previous arguments for all  $\phi_{0}\in \D_{q}$, $t>0$ and $\tau\ge 0$,
\begin{equation}
\label{eq:a1th1.2}
 \pair { B^*\chi^-(DB^*)D  \phi_0}{F_{t}}= 0,
\end{equation}
\begin{equation}
\label{eq:a2th1.2}
 \pair { B^*e^{-\tau DB^*}\chi^+(DB^*) D \phi_0}{F_{t}}=  \pair { B^*\chi^+(DB^*)D  \phi_0}{F_{t+\tau}},
\end{equation}
so that summing with the first equation at $t+\tau$, we get
\begin{equation}
\label{eq:a3th1.2}
 \pair {B^*D  \phi_0}{F_{t+\tau}}= \pair { B^*e^{-\tau DB^*}\chi^+(B^*D)  D\phi_0}{F_{t}}.
\end{equation} 

\

\paragraph{\textbf{Step 2}} We show that $\partial_{t}^kF_{t}\in \dot W^{-1, p}$, for $k\ge 1$.
  By Lemma \ref{lemma3th1.2}, any  $\varphi_{0}\in \mS$ has the required properties to apply \eqref{eq:a3th1.2} and using integration by parts in slice-spaces (Lemma \ref{lem:sliceIBP}), we have obtained
\begin{equation}
\label{eq:a4th1.2}
- \pair { \varphi_0}{\partial_{t }F_{t+\tau}}   = \pair {B^* e^{-\tau DB^*}\chi^+(DB^*)  D\varphi_0}{F_{t}},
 \end{equation} 
 where both pairings can be expressed as Lebesgue integrals. 
 
 Now, using this equality with $\tau=t$,   the $E^{q}_{t}, E_{t}^p$ duality and Lemma \ref{lemma1th1.2},  we have
 \begin{align*} 
| \pair { \varphi_0}{\partial_{t }F_{2t}}|    & \le     \|B^*e^{-t DB^*}\chi^+(DB^*)  D\varphi_0\|_{E^{q}_{t}}\|F_{t}\|_{E^p_{t}} \\
    &  \lesssim   \|D  \varphi_0\|_{q} \|F_{t}\|_{E^p_{t}} \\
    & \lesssim  \|  \varphi_0\|_{\dot W^{1,q}} t^{-1}.
\end{align*} 
As this is true for all $\varphi_{0}\in \mS$, this shows that $\partial_{t }F_{2t}$ is an element in $\dot W^{-1,p}$ with norm controlled by $t^{-1}$.  
As usual, as $\pd_{t}F_{t}$ is a conormal gradient, this automatically implies that it belongs to $\dot W^{-1,p}_{D}$. Remark that one can differentiate \eqref{eq:a4th1.2} with respect to $t$ (using dominated convergence)  and then make $\tau=t$ to obtain
$$
| \pair { \varphi_0}{\partial_{t }^kF_{2t}}| \lesssim \|  \varphi_0\|_{\dot W^{1,q}} t^{-k}
$$
for all integer $k\ge 1$. 

\

\paragraph{\textbf{Step 3}} Now, for all $t>0$, we create an element $f_{t}\in \dot W^{-1,p, +}_{DB}$ which, in the end, will be $F_{t}$.  Recall that $p=q'$.
We require the following lemma.   

\begin{lem} \label{lem:obs1} For each $t>0$ and $N>\frac{n}{2}$, we have $(s,x)\mapsto s^{N+1}\pd_{s}^NF_{t+s}(x)$ belongs to $T^p_{2}$ with uniform bound with respect to $t$. Moreover, it is $C^\infty$ as a  function of $t$ valued in $T^p_{2}$.  
 If $t\to 0$, then it converges to $(s,x)\mapsto s^{N+1}\pd_{s}^NF_{s}(x)$ in $T^p_{2}$ and, if $t\to \infty$, it converges to $0$ in $T^p_{2}$.
\end{lem}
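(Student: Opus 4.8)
The plan is to mimic the proof of Lemma \ref{lem:obs}, which handled the case of $t\partial_t F\in T^p_2$ coming from condition (ii) of Theorem \ref{thm:main1}. There the gain was $\|s^N\pd_s^N F\|_{T^p_2}\lesssim \|s\pd_s F\|_{T^p_2}$ via Corollary \ref{cor:repeatCaccio}, and the key point $2N-n-1>0$ came from the single factor of $s$ in $s\pd_s F$. Here the controlling quantity is $tF\in T^p_2$ itself (no time derivative on $F$), so Corollary \ref{cor:repeatCaccio} gives $\|s^{N+1}\pd_s^N F\|_{T^p_2}\lesssim \|sF\|_{T^p_2}<\infty$, and the relevant exponent in the cone integral will be $2(N+1)-n-1 = 2N-n+1$, which is positive exactly when $N>\frac{n}{2}-\frac12$, in particular when $N>\frac{n}{2}$ as assumed. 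That is the only change in the numerology.

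Concretely, first I would record $\|s^{N+1}\pd_s^N F\|_{T^p_2}\lesssim \|sF\|_{T^p_2}$ from Corollary \ref{cor:repeatCaccio}, so the function is in $T^p_2$. Then, for fixed $t>0$ and $x\in\R^n$, I would write, using $2(N+1)-n-1>0$ and the change of variables $s+t\to s$,
\begin{align*}
\iint_{\Gamma(x)} s^{2(N+1)-n-1}|\pd_s^N F_{t+s}(y)|^2\, dsdy
&\le \iint_{\Gamma(x)} (s+t)^{2(N+1)-n-1}|\pd_s^N F_{t+s}(y)|^2\, dsdy\\
&= \iint_{\Gamma(x)+(t,0)} s^{2(N+1)-n-1}|\pd_s^N F_s(y)|^2\, dsdy,
\end{align*}
and then use $\Gamma(x)+(t,0)\subset\Gamma(x)$ to conclude $\|s^{N+1}\pd_s^N F_{t+s}\|_{T^p_2}\le \|s^{N+1}\pd_s^N F_s\|_{T^p_2}$, uniformly in $t$. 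The same estimate together with dominated convergence (the tail of the cone integral over $s>R$ going to zero) gives the limit $0$ as $t\to\infty$.

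For the limit at $t\to 0$ I would split $\Gamma(x)=\Gamma_\delta(x)\cup\Gamma^\delta(x)$ at height $\delta$ as in the proof of Lemma \ref{lem:obs}: on $\Gamma_\delta(x)$ a crude estimate (for $t\le\delta$) bounds the relevant integral by $2\iint_{\Gamma_{2\delta}(x)} s^{2(N+1)-n-1}|\pd_s^N F_s(y)|^2\, dsdy$, which tends to $0$ in $L^{p/2}$ as $\delta\to 0$ by dominated convergence; on $\Gamma^\delta(x)$ the mean value inequality and $s\ge\delta$ give a factor $t^2/\delta^2$ times $\iint_{\Gamma^\delta(x)} s^{2(N+2)-n-1}|\pd_s^{N+1}F_s(y)|^2\, dsdy$ (which is finite, again by Corollary \ref{cor:repeatCaccio} applied once more to get $\|s^{N+2}\pd_s^{N+1}F\|_{T^p_2}\lesssim \|sF\|_{T^p_2}$), so fixing $\delta$ small and then letting $t\to 0$ kills this piece. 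Finally the $C^\infty$ regularity in $t$ valued in $T^p_2$ follows by iterating the $\Gamma^\delta$-type argument, exactly as stated there. The only real subtlety — and it is not much of an obstacle — is making sure the power of $s$ stays positive after pulling out the time-shift, which is why the hypothesis is $N>\frac n2$; everything else is a verbatim adaptation of Lemma \ref{lem:obs} with $N$ replaced by $N+1$ in the weight.
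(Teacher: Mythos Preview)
Your proposal is correct and is exactly what the paper intends: its proof of Lemma~\ref{lem:obs1} is the one-line statement ``The proof is identical to that of Lemma~\ref{lem:obs}'', and you have carried out precisely that adaptation, correctly tracking the shift in the weight exponent from $2N-n-1$ to $2(N+1)-n-1$ and the corresponding hypothesis $N>\frac{n}{2}$.
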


The proof is identical to that of Lemma \ref{lem:obs}.  

We need a little bit of Sobolev theory for $DB$  (only evoked at the end of \cite{AS} but working the same way as the Hardy space theory). 
For $1<p<\infty$, let  $\dot \W^{-1,p}_{DB}$ be the space of functions of the form $\Tpsi{\psi}{DB}F$ with $F\in T^2_{2}$ and $\tau  F\in T^p_{2}$ with norm $\inf \|\tau  F\|_{T^p_{2}}<\infty$. This space does not depend on the particular choice of  $\psi$ among bounded holomorphic functions  in bisectors $S_{\mu}$ with enough decay at 0 and $\infty$.  It coincides with the space of functions $h\in \clos{\ran_{2}(DB)}= \IH^2_{DB}$ such that  $\|\tau \Qpsi{\psi}{DB}h\|_{T^p_{2}} $ again for any  choice of  $\psi$ as above. We let $\dot W^{-1,p}_{DB}$ be its completion. 

Also for $1<q<\infty$, 
let $\dot \W^{1,q}_{B^*D}$ be the set of functions $h\in \clos{\ran_{2}(B^*D)}=\IH^2_{B^*D}$ with 
$\tau^{-1} \Qpsi{\psi}{B^*D}h\in T^q_{2}$, equipped with norm $\|\tau^{-1} \Qpsi{\psi}{B^*D}h\|_{T^q_{2}}$. Again, this space does not depend on the particular choice of  $\psi$ as above  and can be characterized as well by the $\Tpsi{\psi}{B^*D}$ maps. Let $\dot W^{1,q}_{B^*D}$ be its completion.  

When $q=p'$, both  spaces $\dot \W^{1,q}_{B^*D}, \dot \W^{-1,p}_{DB}$ are in duality for the $L^2$ duality. This duality extends to the completed spaces. 

For $q\in I_{L^*}$, $q>1$ and $p'=q$, we have that $\dot W^{-1,p}_{DB}= \dot W^{-1,p}_{D}$ with equivalence of norms and $\IP$ extends to an isomorphism from $\dot W^{1,q}_{B^*D}$  onto $\dot W^{1,q}_{D}$. 

\begin{lem}\label{lem:density} Let $q\in I_{L^*}$ with $q>1$. Then $\D_{q}\cap \IH^2_{B^*D}$ is a dense subspace of 
$\dot \W^{1,q}_{B^*D}$. 
\end{lem}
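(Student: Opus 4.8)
The plan is to establish density of $\D_q \cap \IH^2_{B^*D}$ inside $\dot \W^{1,q}_{B^*D}$ by exhibiting a concrete class of functions that is both dense and contained in this set. Since $\dot \W^{1,q}_{B^*D}$ is characterized via the maps $\Tpsi{\psi}{B^*D}$ applied to elements $G\in T^2_2$ with $\tau^{-1}G\in T^q_2$, a natural first step is to approximate a general such $G$ by a truncated and mollified version $G_k$ (indicator of $[\frac1k,k]\times B(0,k)$ times a spatial mollification), which converges to $G$ in both $T^2_2$ and in the $\tau^{-1}T^q_2$ norm by the arguments already used (Proposition \ref{prop:approx} and dominated convergence). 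This reduces matters to showing that $\Tpsi{\psi}{B^*D}G_k \in \D_q \cap \IH^2_{B^*D}$, or rather that each such element can be further approximated by members of $\D_q$.

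First I would reduce to checking membership in $\D_q$ for a convenient dense family. Recall $\D_q$ consists of $\phi_0\in\dom_2(D)$ with $D\phi_0\in\IH^q_{D}$ and $\chi^\pm(DB^*)D\phi_0\in E^q_\delta$. The key structural observation is Lemma \ref{cor:stability}: if $\phi_0\in\dom_2(D)$ with $D\phi_0\in\IH^q_{DB^*}$, then $(B^*D)^M e^{-\delta|B^*D|}\chi^\pm(B^*D)\phi_0\in\D_q$ for all $\delta>0$, $M\in\N$. So I would take $\phi_0\in\dom_2(D)$ with $D\phi_0\in \IH^q_{D}=\IH^q_{DB^*}$ arbitrary — for instance $\phi_0 = \IP_{B^*D}\varphi_0$ for $\varphi_0\in\mS$, using Lemma \ref{lemma3th1.2} and the boundedness of $\IP$ — and build approximants of the form $(B^*D)^M e^{-\delta|B^*D|}\chi^+(B^*D)\phi_0$ (and the $\chi^-$ analogue), all of which lie in $\D_q\cap\IH^2_{B^*D}$. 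One then shows, using the Calderón reproducing formula in the functional calculus of $B^*D$ together with the $\Psi$-function characterization of $\dot \W^{1,q}_{B^*D}$ and the uniform $L^q$-bounds of \cite{AS}, Theorem 4.19, that finite linear combinations of such elements (as $\delta\to 0$ and over an approximate identity in the functional calculus) are dense in $\dot \W^{1,q}_{B^*D}$ in its norm. Concretely: for $h\in\dot\W^{1,q}_{B^*D}$ one writes $h=\int_0^\infty \theta(sB^*D)\psi(sB^*D)h\,\frac{ds}{s}$, approximates the integral by Riemann sums (truncated away from $0$ and $\infty$), each piece being $\theta(sB^*D)$ applied to something of the desired form, and controls the error in $\dot\W^{1,q}_{B^*D}$ norm via the retract/square-function estimates; one must also handle the $\dom_2(D)$ requirement, which is automatic since $\IH^2_{B^*D}=\clos{\ran_2(B^*D)}\subset\dom_2(D)$ after one more application of a bounded $H^\infty$ function of $B^*D$ with decay.

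The main obstacle I anticipate is reconciling the \emph{two} different completions and topologies in play: density must be proved in the $\dot\W^{1,q}_{B^*D}$-norm (a tent-space/square-function norm at the level of $\Qpsi{\psi}{B^*D}$), while $\D_q$ is defined through pointwise slice-space conditions ($E^q_\delta$) and the condition $D\phi_0\in\IH^q_D$. Bridging these requires carefully invoking that $\IH^q_{DB^*}=\IH^q_D$ for $q\in I_{L^*}$ (so the Hardy-space square functions and the $L^q$ norms of $\IP$-projections are comparable), that $\chi^\pm(DB^*)$ and the semigroup act boundedly on $\IH^q_{DB^*}$, and that Lemma \ref{lemma1th1.2} supplies the needed uniform $E^q_\delta$ bounds — and then checking that convergence in $\dot\W^{1,q}_{B^*D}$ of the approximants is not destroyed by these auxiliary conditions. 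The proof of \eqref{eq:b}-type norm-change estimates, transported to the Sobolev scale $\dot\W^{1,q}_{B^*D}$, is the technical heart; once that is in hand the density argument is the standard Calderón-reproducing-formula-plus-Riemann-sum scheme, and I would only sketch the routine parts.
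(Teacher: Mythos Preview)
Your strategy would ultimately work, but it is considerably more elaborate than necessary; the paper's argument is essentially two lines, and in particular avoids the Calder\'on--reproducing--formula and Riemann--sum scheme you outline.

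For the inclusion $\D_q\cap\IH^2_{B^*D}\subset \dot\W^{1,q}_{B^*D}$ (which you do not address explicitly), one simply writes $\tau^{-1}\psi(\tau B^*D)\phi_0 = B^*\tilde\psi(\tau DB^*)D\phi_0$ with $\psi(z)=z\tilde\psi(z)$; since $D\phi_0\in\IH^q_D=\IH^q_{DB^*}$ by definition of $\D_q$, the tent-space characterization of $\IH^q_{DB^*}$ gives $\tilde\psi(\tau DB^*)D\phi_0\in T^q_2$, and multiplication by $B^*$ preserves this.

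For density, the paper observes directly that for any $h\in\dot\W^{1,q}_{B^*D}$ the semigroup approximants $e^{-s|B^*D|}h$ converge to $h$ in the $\dot\W^{1,q}_{B^*D}$ norm as $s\to 0$ (the proof being identical to that of \cite{AS}, Proposition~4.5, via the square-function description of the space), and that each $e^{-s|B^*D|}h$ already lies in $\D_q\cap\IH^2_{B^*D}$ by a direct application of Lemma~\ref{cor:stability} with $M=0$. This is precisely the structural observation you isolate, but applied to $h$ itself rather than to pieces of a reproducing-formula decomposition: no truncation of tent-space representatives $G_k$, no Riemann sums, no auxiliary $\Tpsi{\psi}{B^*D}$ machinery is needed. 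The obstacle you anticipate --- reconciling the $\dot\W^{1,q}_{B^*D}$ topology with the slice-space conditions in $\D_q$ --- simply does not arise, because the semigroup approximation stays entirely within the pre-completed space $\dot\W^{1,q}_{B^*D}\subset\IH^2_{B^*D}$ and Lemma~\ref{cor:stability} handles the $\D_q$ membership in one shot.

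One correction: your claim that $\IH^2_{B^*D}=\clos{\ran_2(B^*D)}\subset\dom_2(D)$ is false in general. The $\dom_2(D)$ requirement is met instead by the regularizing effect of the semigroup: for $s>0$, $e^{-s|B^*D|}$ maps $\IH^2_{B^*D}$ into $\dom_2(B^*D)=\dom_2(D)$.
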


\begin{proof}
See Section \ref{sec:technical}.
\end{proof}

Let $N$ be as above with $N\ge 1$, and $M$ be another integer also chosen large. Let  $c$ be some constant  to be chosen later and $\varphi(z)= cz^{M}e^{-\frac{1}{2}\modz}$. For all integers $k$ and $t\ge 0$, 
set 
\begin{align*}
   f_{t,k}&=c\int_{0}^\infty \tau^{M} (DB)^Me^{-\frac{\tau}{2} |DB|}(\chi_{k} \tau^N\pd_{\tau}^NF_{t+\frac{\tau}{2}})\, \frac{d\tau}{\tau}    \\
      &=\Tpsi{\varphi}{DB}(\frac{1}{\tau}\chi_{k} \tau^{N+1}\pd_{\tau}^{N}F_{t+\frac{\tau}{2}}),
\end{align*}
with $\chi_{k}$ being the indicator function of $[\frac{1}{k}, k ]\times B(0, k)$ as before.
The $\dot W^{-1,p}_{DB}$ theory shows that if $M$ is large enough,  then  $f_{t,k}\in \dot \W^{-1,p}_{DB}$ with 
$\|f_{t,k}\|_{ \dot \W^{-1,p}_{DB}} \lesssim \|\chi_{k} \tau^{N+1}\pd_{\tau}^{N}F_{t+\frac{\tau}{2}}\|_{T^p_{2}}$ uniformly in $t$ and $k$,  and, by Lemma \ref{lem:obs1}, $f_{t,k}$ converges to some $f_{t}\in \dot W^{-1,p}_{DB}$ as $k\to \infty$.    In particular, for all $\phi_{0}\in \D_{q}\cap \IH^2_{B^*D}$, we have
\begin{align*}
\pair{\phi_{0}}{f_{t}}&=c\iint_{\reu} \big( { \tau^{M-1}  (B^*D)^{M}e^{-\frac{\tau}{2} |B^*D|}\phi_{0}(x)}\cdot ({\tau^{N+1}\pd_{\tau}^NF_{t+\frac{\tau}{2}})(x)}\big)\, \frac{d\tau dx}{\tau} \\
&= c\int_{0}^\infty \pair { \tau^{M-1} B^* (DB^*)^{M-1}e^{-\frac{\tau}{2} |DB^*|}D\phi_{0}
}{\tau^{N+1}\pd_{\tau}^NF_{t+\frac{\tau}{2}}}\, \frac{d\tau}{\tau}.
\end{align*} 
The first integral converges in the Lebesgue sense and $ \pair{\phi_{0}}{f_{t}} $ is interpreted using the duality of the spaces  $\dot  \W^{1,q}_{B^*D}, \dot W^{-1,p}_{DB}$ extending the $L^2$ duality on 
$\dot  \W^{1,q}_{B^*D}, \dot \W^{-1,p}_{DB}$ as explained above. One can see that the integral with respect to $x$      can be interpreted in the  $E^q_{\delta} , E^p_{\delta }$ duality for each $\tau$ and the second equality follows from Fubini's theorem.  

This  interpretation  allows us  to show that 
\begin{equation}
\label{eq:ftsgth1.2}
f_{t} = \wt S_{p}^+(t)f_{0}=\wt S_{p}(t)f_{0},
\end{equation} 
where $\wt S_{p}^+(\tau)$ is the extension of $e^{-\tau DB}\chi^+(DB)$ on $\dot W^{-1,p}_{D}$ described in Lemma 14.4 of \cite{AS}, which agrees with the extension  $\wt S_{p}(\tau)$  of $e^{-\tau |DB|}$ on $\dot W^{-1,p,+}_{DB}$. 
Indeed, by  Lemma \ref{lem:density}, it suffices to test against some $\phi_{0}\in \D_{q}\cap \IH^2_{B^*D}$. First, note that we have $\phi_{t}=e^{-tB^*D}\chi^+(B^*D)\phi_{0}\in  \D_{q}\cap \IH^2_{B^*D}$ as well, hence  
$
\pair{\phi_{0}}{\wt S_{p}^+(t)f_{0}}= \pair{\phi_{t}}{f_{0}}
$
by definition of the semigroup $\wt S_{p}^+(t)$. 
Secondly, we may compute $ \pair{\phi_{t}}{f_{0}}$ using the defining representation of $f_{0}$ with $\phi_{t}$ replacing $\phi_{0}$ and obtain 
\begin{align*}
   \pair{\phi_{t}}{f_{0}}= 
c\int_{0}^\infty \pair { \tau^{M-1} B^* (DB^*)^{M-1}e^{-\frac{\tau}{2} |DB^*|}D\phi_{t}
}{\tau^{N+1}\pd_{\tau}^NF_{\frac{\tau}{2}}}\, \frac{d\tau}{\tau}.   
\end{align*}
 Observe that 
$$B^* (DB^*)^{M-1}e^{-\frac{\tau}{2} |DB^*|}D\phi_{t}
= B^* e^{-tDB^*} \chi^+(DB^*) D (B^*D)^{M-1} e^{-\frac{\tau}{2} |B^*D|}\phi_{0},
$$
so by \eqref{eq:a2th1.2} with $(B^*D)^{M-1} e^{-\frac{\tau}{2} |B^*D|}\phi_{0}\in \D_{q}$ replacing $\phi_{0}$ and  $M\ge 1$, 
\begin{multline*}$$
\pair { \tau^{M-1} B^* (DB^*)^{M-1}e^{-\frac{\tau}{2} |DB^*|}D\phi_{t}
}{\tau^{N+1}\pd_{\tau}^NF_{\frac{\tau}{2}}}\\ = \pair { \tau^{M-1} B^* (DB^*)^{M-1}e^{-\frac{\tau}{2} |DB^*|}D\phi_{0}
}{\tau^{N+1}\pd_{\tau}^NF_{t+\frac{\tau}{2}}}.
$$
\end{multline*}
Inserting this equality in the  $\tau$-integral,  we find $ \pair{\phi_{t}}{f_{0}}= \pair{\phi_{0}}{f_{t}}$. 
This and a density argument  prove \eqref{eq:ftsgth1.2}. 

\

\paragraph{\textbf{Step 4}}  Now, we connect this to the calculus in $\mS'$ and show that 
${\pd_{t}f_{t}}= {\pd_{t}F_{t}}$ in $\mS'$ if $f_{t}$ is appropriately normalized.  Since $\dot W^{-1,p}_{DB}= \dot W^{-1,p}_{D}
$ with equivalence of norms and  $f_{t,k}\to f_{t}$ in $\dot W^{-1,p}_{DB}$, we have for all $\varphi_{0}\in \mS$,  
$ \pair{\varphi_{0}}{f_{t}}= \lim_{k}  \pair{\varphi_{0}}{f_{t,k}}$. For each $k$, we have by the $L^2$ theory, we can express $\pair{\varphi_{0}}{f_{t,k}}$ as
$$
\pair{\varphi_{0}}{f_{t,k}}=c\iint_{\reu} \big( { \tau^{M-1} B^*  (DB^*)^{M-1}e^{-\frac{\tau}{2} |DB^*|}D\varphi_{0}(x)}\cdot (\chi_{k}{\tau^{N+1}\pd_{\tau}^NF_{t+\frac{\tau}{2}})(x)}\big)\, \frac{d\tau dx}{\tau}.
$$
By our assumption $H^q_{DB^*}=H^q_{D}$ implies   
$$
\|\tau^{M-1}  B^*  (DB^*)^{M-1}e^{-\frac{\tau}{2} |DB^*|}D\varphi_{0}\|_{T^q_{2}}\lesssim  \|D\varphi_{0}\|_{q}  \lesssim  \|\varphi_{0}\|_{\dot W^{1,q}}.
$$
We can take the limit in the integral above as $k\to \infty$ and obtain first the integral on $\reu$ and then by Fubini's theorem,
$$
\pair{\varphi_{0}}{f_{t}} = c\int_{0}^\infty \pair { \tau^{M-1} B^* (DB^*)^{M-1}e^{-\frac{\tau}{2} |DB^*|}D\varphi_{0}
}{\tau^{N+1}\pd_{\tau}^NF_{t+\frac{\tau}{2}}}\, \frac{d\tau}{\tau}.
$$
The same argument having first  differentiated $\pair{\varphi_{0}}{f_{t}}$  in  $t$ using Lemma \ref{lem:obs1} and the double integral representation  leads to 
$$
\pair{\varphi_{0}}{\pd_{t}f_{t}} = c\int_{0}^\infty \pair { \tau^{M-1} B^* (DB^*)^{M-1}e^{-\frac{\tau}{2} |DB^*|}D\varphi_{0}
}{\tau^{N+1}\pd_{\tau}^N\pd_{t}F_{t+\frac{\tau}{2}}}\, \frac{d\tau}{\tau}.
$$
The meaning of the pairing for the integrand  is as before with respect to   $E^q_{\delta }, E^p_{\delta }$ duality. We have, 
\begin{align*}
&\pair { \tau^{M-1} B^* (DB^*)^{M-1}e^{-\frac{\tau}{2} |DB^*|}D\varphi_{0}
}{\tau^{N+1}\pd_{\tau}^N\pd_{t}F_{t+\frac{\tau}{2}}}      \\
&= \pair { \tau^{M-1} B^* D\big[(B^*D)^{M-1}e^{-\frac{\tau}{2} |B^*D|}\varphi_{0}\big]
}{\tau^{N+1}\pd_{\tau}^N\pd_{t}F_{t+\frac{\tau}{2}}}\\
    & = \pair { \tau^{M-1} (B^* D)^{M-1}B^*e^{-\frac{\tau}{2} |DB^*|}\chi^+(DB^*)D\varphi_{0}
}{\tau^{N+1}\pd_{\tau}^N\pd_{t}F_{t+\frac{\tau}{2}}} 
\\
&= \tau^{M+N}2^{-N}\pair {  B^* e^{-\frac{\tau}{2} |DB^*|}\chi^+(DB^*)D\varphi_{0}
}{(DB)^{M-1}\pd_{t}^N\pd_{t}F_{t+\frac{\tau}{2}}} 
\\
&= \tau^{M+N}2^{-N}(-1)^{M-1}\pair {  B^* e^{-\frac{\tau}{2} |DB^*|}\chi^+(DB^*)D\varphi_{0}
}{\pd_{t}^{N+M-1}\pd_{t}F_{t+\frac{\tau}{2}}} 
\\
&= \tau^{M+N}2^{-N}(-1)^{M-1}\pair {  B^* D\varphi_{0}
}{\pd_{t}^{N+M-1}\pd_{t}F_{t+{\tau}}} 
\\
&= \tau^{M+N}2^{-N}(-1)^{M}\pair {  \varphi_{0}
}{\pd_{\tau}^{N+M}\pd_{\tau}F_{t+{\tau}}}.
\end{align*}
In the first equality, we used that $\varphi_{0}\in \dom_{2}(D)$. In the second equality, we used \eqref{eq:a1th1.2} to insert $\chi^+(DB^*)$ as $(B^* D)^{M-1}e^{-\frac{\tau}{2} |B^*D|}\varphi_{0} \in E^q_{\delta }$ combining Lemma \ref{cor:stability} and Lemma \ref{lemma3th1.2}. In the third equality, we used the integration by parts iteratively in slice-spaces (Lemma \ref{lem:sliceIBP}). In the fourth we used the differential equation $\pd_{t}F_{t}=-DBF_{t}$ repeatedly,  in the fifth \eqref{eq:a3th1.2} and in the sixth,   integration by parts in slice-spaces again together with the differential equation. 
Thus choosing $c= \frac{(-2)^N}{(N+M-1)!}$ as before we obtain
\begin{align*}
   \pair{\varphi_{0}}{\pd_{t}f_{t}} &= \frac{(-1)^{N+M}}{(N+M-1)!}\int_{0}^\infty \pair {\varphi_{0}}{\pd_{\tau}^{N+M}\pd_{\tau}F_{t+\tau}}\, \tau^{M+N-1} \,{d\tau}\\
   &= \frac{(-1)^{N+M-2}}{(N+M-1)!}\int_{t}^\infty \pair {\varphi_{0}}{\pd_{\tau}^{N+M}(\pd_{\tau}F_{\tau})}\, (\tau-t)^{M+N-1} \,{d\tau}. \end{align*}
   Using again Lemma \ref{lem:technical} applied to $g(\tau)=  \pair {\varphi_{0}}{\pd_{\tau}F_{\tau}}$
   since $\tau^kg^{(k)}(\tau) $ is controlled by $\tau^{-1}$ for all $k\ge 0$, we conclude that 
   $\pair{\varphi_{0}}{\pd_{t}f_{t}}= \pair{\varphi_{0}}{\pd_{t}F_{t}}$. 
   
   \

\paragraph{\textbf{Step 5}} Conclusion: $f_{t}=F_{t}$ in $\mS'$.
   From Step 4,  there exists $G\in \mS'$, such that $F_{t}=f_{t}+G$ for all $t>0$. Recall that  $F_{t,\ta}\to 0 $ in $\mD'$ by our assumption.  Also, $f_{t}\to 0$ in $\dot W^{-1,p}_{D}$ from the semigroup equation, hence in $\mD'$. Thus we have $G_{\ta}=0$. As $G=\nabla\!_{A}w$ for some solution, we have $G_{\ta}= \nabla_{x}w$  and $G_{\no}=a\pd_{t}w+ b\cdot \nabla_{x}w$. It follows that $w(t,x)=w(t)$ and, as $G$ is independent of $t$,  $w'(t)=\alpha$ is a constant in $\C^m$. 
   Here, $a$ is the first diagonal block of $A$ in \eqref{eq:A}. In the construction of the matrix $B=\hat A$, it is proved (and used) that $a$ is invertible in $L^\infty$ with $\lambda|\alpha| \le |a(x) \alpha|$ almost everywhere, where $\lambda$ is the ellipticity constant for $A$. But at the same time we must have $tG\in E^p_{t}$ for all $t>0$. This implies that  $a\alpha\in E^p_{1}$ and the only possibility is $\alpha=0$.

\

\paragraph{\textbf{Step 6}} Elimination of  the condition $u(t,\cdot)\to 0$  as $t\to \infty$ in $\mD'$ modulo constants when $p< \frac{2n}{n-2}$. Recall that we have used the consequence that  $F_{t,\ta}\to 0$ as $t\to \infty$ in $\mD'$ when $F=\nabla_{A}u$.     [Actually, the converse holds using that any test function with mean value 0 is the divergence of some test function.]  However this limit always hold  when $tF \in T^p_{2}$ and $p< \frac{2n}{n-2}$ [which means that $u(t,\cdot)\to 0$  as $t\to \infty$ in $\mD'$ modulo constants when $t\nabla u \in T^p_{2}$]. Indeed,  if $p\le 2$, then we know that $tF_{t}\in E^p_{t}$ with uniform bound thus $tF_{t}\in L^p$ uniformly. This gives the desired limit. If $2<p$, then that $tF_{t}\in E^p_{t}$ uniformly implies 
$t^{n/p-n/2} t \|F_{t}\|_{L^2(B_{t/2})} \le C$ uniformly as a simple consequence of H\"older inequality (we leave the proof to the reader). In particular on any fixed ball $B$ we obtain $\|F_{t}\|_{L^2(B)}\to 0$ as $t\to \infty$  when  $p< \frac{2n}{n-2}$ and we are done. [Remark that using Meyers' improvement of  local $L^2$ estimates for the gradient of $u$ to local  $L^{p_{0}}$ estimates for some $p_{0}>2$, one can improve the upper bound to $p<\frac{p_{0}n}{n-p_{0}}$.]

  \

\subparagraph{\textbf{Case $\mathbf{q\le 1 : (a) \Longrightarrow (b)}$}}

We assume $q\in I_{L^*}$ and  $q\le 1$. For $\alpha=n(\frac{1}{q}-1)$, we assume  that  $t F\in T^\infty_{2,\alpha}$,
where $F$ is a solution of  \eqref{eq:curlfreesystem} in $\reu$, that is, $F=\nabla\!_{A}u$.  We also assume that $u(t,\cdot)$ converges to $0$ in $\mD'$ modulo constants as $t\to \infty$. In terms of $F$, this means that the tangential part of $F_{t}$ converges to $0$ in $\mD'$ as $t\to \infty$.

The proof is almost identical to the previous one: Step 1a is valid for $q\le 1$.  Next,  the stated  series of lemmas in Step 1b is  valid for $q\in I_{L^*}$, $ q\le 1$, and the conclusion is the same. In Step 2, we deduce that $\partial_{t}^kF_{t}\in \dot \Lambda^{\alpha-1}$ for $k\ge1$. In step 3, Lemma \ref{lem:obs1} is replaced by Lemma \ref{lem:obs2} below and  Lemma \ref{lem:density}  holds for $q\le 1$ with appropriate replacement of $\dot \W^{1,q}_{B^*D}$ explained below.  This allows us to define a candidate $f_{t}\in \dot \Lambda^{\alpha-1}$ which enjoys the desired semigroup formula and which will be $F_{t}$ in the end. We shall give some detail of the steps 4 and 5  to show this is the case. 

\begin{lem} \label{lem:obs2} For each $t>0$ and $2(N+1)-1>n+2\alpha$, we have $(s,x)\mapsto s^{N+1}\pd_{s}^NF_{t+s}(x)$ belongs to $T^\infty_{2,\alpha}$ with uniform bound with respect to $t$. Moreover, it is $C^\infty$ as a  function of $t$ valued in $T^\infty_{2,\alpha}$ equipped with the weak-star topology.  
 If $t\to 0$, then it converges for the weak-star topology to $(s,x)\mapsto s^{N+1}\pd_{s}^NF_{s}(x)$ in $T^\infty_{2,\alpha}$.
\end{lem}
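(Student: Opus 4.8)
\textbf{Plan of proof of Lemma \ref{lem:obs2}.} The strategy is to repeat the scheme of the proof of Lemma \ref{lem:obs}, but working with the Carleson-type norm $\|\cdot\|_{T^\infty_{2,\alpha}}$ in place of the $T^p_{2}$ norm and keeping an extra power of $s$ to compensate for the scaling of the weighted Carleson measures. First I would record, via Corollary \ref{cor:repeatCaccio}, that $\|s^{N+1}\pd_{s}^{N}F\|_{T^\infty_{2,\alpha}} \lesssim \|s\nabla u\|_{T^\infty_{2,\alpha}}<\infty$; this is exactly what hypothesis (a) provides, and it also shows that the claimed object makes sense as an element of $T^\infty_{2,\alpha}$ for $t=0$. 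The condition $2(N+1)-1>n+2\alpha$ (equivalently $s^{2(N+1)-1-n-2\alpha}$ integrable near $0$ after the relevant changes of variables) is the analogue of the condition $2N-n-1>0$ used in Lemma \ref{lem:obs}; I would choose $N$ once and for all so that this holds.

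For the uniform bound in $t$: fix an open ball $B=B(x_{0},r)$ and estimate $\iint_{T_{x_{0},r}} |s^{N+1}\pd_{s}^{N}F_{t+s}(y)|^2\,\frac{dsdy}{s}$ by bounding $(s+t)\ge s$ on the relevant range, i.e. writing $s^{2(N+1)}\le (s+t)^{2(N+1)}$ (using $2(N+1)>0$), and then performing the change of variables $s+t\to s$. Since $T_{x_{0},r}+(t,0)\subset T_{x_{0},r+t}$ and, if $t\le r$, $T_{x_{0},r+t}\subset T_{x_{0},2r}$, the result is controlled by $\|s^{N+1}\pd_{s}^{N}F\|_{T^\infty_{2,\alpha}}^2\,(2r)^{2\alpha}|B(x_{0},2r)|\lesssim \|s^{N+1}\pd_{s}^{N}F\|_{T^\infty_{2,\alpha}}^2\,r^{2\alpha}|B(x_{0},r)|$; for $t>r$ one instead covers $T_{x_{0},r+t}$ by $O((t/r)^n)$ translates of balls of radius $t$, which is harmless because $C_\alpha$ is a supremum over all balls containing the given point — this is precisely the covering argument already used for slice-spaces in Section \ref{sec:slice}. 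Taking the supremum over $B$ gives the uniform bound $\|s^{N+1}\pd_{s}^{N}F_{t+s}\|_{T^\infty_{2,\alpha}}\lesssim \|s\nabla u\|_{T^\infty_{2,\alpha}}$.

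For the limit at $t=0$, I would split the cone/tent into $s\le\delta$ and $s>\delta$ as in Lemma \ref{lem:obs}. On $s\le\delta$ a crude bound (again using $2(N+1)>0$ and $t\le\delta$) controls the integral of $|s^{N+1}\pd_{s}^{N}(F_{t+s}-F_{s})|^2$ by $2\iint$ over $s\le 2\delta$ of $|s^{N+1}\pd_{s}^{N}F_{s}|^2\,\frac{dsdy}{s}$, which is the tail of a convergent weighted Carleson measure and hence is small, uniformly in balls, for $\delta$ small — here one uses that $\|s^{N+1}\pd_s^N F\|_{T^\infty_{2,\alpha}}<\infty$ together with the integrability condition $2(N+1)-1>n+2\alpha$. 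On $s>\delta$ one uses the mean value inequality $|\pd_{s}^{N}(F_{t+s}-F_{s})(y)|\le \tfrac{t}{\delta}\,\sup |\pd_s^{N+1}F|$ (more precisely $\le \int_{s}^{s+t}|\pd_\sigma^{N+1}F_\sigma(y)|\,d\sigma$ reorganised using $s\sim\sigma\ge\delta$), yielding a bound by $\tfrac{t^2}{\delta^2}$ times a finite quantity; letting $t\to 0$ for fixed small $\delta$ finishes. Since $T^\infty_{2,\alpha}$ is a dual space and the natural topology here is weak-star (as already flagged in the statement and in the proof of Theorem \ref{thm:main2}), this pointwise-in-balls control is more than enough to conclude weak-star convergence. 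The $C^\infty$ smoothness in $t$ follows by iterating the mean value argument, exactly as in Lemma \ref{lem:obs}; details are skipped.

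The only genuinely delicate point — and the place I expect the main obstacle — is the bookkeeping of the scaling factors in the weighted Carleson norm, i.e. making sure that translating the tent $T_{x_0,r}$ by $(t,0)$ and enlarging the radius produces the correct power $r^{2\alpha}$ rather than $(r+t)^{2\alpha}$ when $t>r$; this is handled by the covering-by-$t$-balls device and by the fact that $C_\alpha$ ranges over \emph{all} balls containing the point, so that no uniformity in $t$ is lost. Everything else is a routine transcription of the $T^p_{2}$ argument of Lemma \ref{lem:obs}, replacing the exponent $N$ there by $N+1$ here to account for the extra factor of $s$ in $s^{N+1}\pd_s^N F$.
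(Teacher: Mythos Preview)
Your overall plan is right, but there is a real gap in the uniform bound when $t>r$, and it is precisely the place where the condition $2(N+1)-1>n+2\alpha$ enters, not the $t\to 0$ limit as you suggest. After the change of variable $s+t\mapsto s$ you have to control
\[
\int_{B(x_{0},r)}\int_{t}^{t+r} s^{2(N+1)-1}\,|\pd_{s}^{N}F_{s}(y)|^{2}\,ds\,dy,
\]
and your covering device does not give $r^{n+2\alpha}$: a ball of radius $r+t\le 2t$ is covered by $O(1)$ balls of radius $t$, each of which contributes $t^{n+2\alpha}\|\cdot\|^{2}$, not $r^{n+2\alpha}\|\cdot\|^{2}$; and if you try balls of radius $r$, their Carleson boxes have height $r<t$ and miss the range $(t,t+r)$ entirely. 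The slice-space covering argument from Section~\ref{sec:slice} is about comparing averages at two fixed scales on the boundary, and it does not transfer to this situation. What the paper does instead is \emph{not} bound $s$ by $s+t$ before the change of variable: it writes
\[
I_{t,R}=\int_{B_{R}}\int_{t}^{t+R}(s-t)^{2(N+1)-1}|\pd_{s}^{N}F_{s}|^{2}\,ds\,dy
\le \frac{R^{2(N+1)-1}}{t^{2(N+1)-1}}\int_{B_{2t}}\int_{0}^{2t}s^{2(N+1)-1}|\pd_{s}^{N}F_{s}|^{2}\,ds\,dy,
\]
using $s-t\le R$, $s\ge t$ and $B_{R}\subset B_{2t}$, $t+R\le 2t$ when $t\ge R$. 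The right-hand side is then bounded by $(R/t)^{2(N+1)-1}(2t)^{n+2\alpha}\|\cdot\|^{2}$, and the hypothesis $2(N+1)-1>n+2\alpha$ is exactly what forces $(R/t)^{2(N+1)-1}\,t^{n+2\alpha}\le R^{n+2\alpha}$ for $t\ge R$. This is the step you are missing.

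A smaller point: for the weak-star convergence and the $C^{\infty}$ regularity you do not need the $s\le\delta$\,/\,$s>\delta$ splitting at all, and in fact your claim that the low-$s$ tail is ``small, uniformly in balls'' is false in norm (for $r\le 2\delta$ the Carleson quotient is just the full norm over small boxes, which is bounded but not small). The paper's argument is simpler and cleaner: having the uniform bound in hand, it suffices to test against $L^{2}$ functions $H$ with compact support in $\reu$, which are dense in the predual $T^{q}_{2}$; then $\iint_{K}(H(s,x)\cdot s^{N+1}\pd_{s}^{N}F_{t+s}(x))\,\tfrac{dsdx}{s}$ is visibly $C^{\infty}$ in $t\ge 0$ because $F\in C^{\infty}(0,\infty;L^{2}_{\mathrm{loc}})$. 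This gives both the weak-star smoothness and the limit at $t=0$ in one stroke.
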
 

\begin{proof} Taking derivatives,  $\|t^{N+1} \pd_{t}^NF\|_{T^\infty_{2,\alpha}}<\infty$ for each integer $N\ge 1$ by the assumption for $N=0$ and Corollary \ref{cor:repeatCaccio}.

We take a Carleson box $(0,R)\times B_{R}$ where $B_{R}$ is a ball of radius $R$ in $\R^n$. For $t\ge 0$, let
$I_{t,R}= \int_{B_{R}}\int_{0}^R |s^{N+1}\pd_{s}^NF_{t+s}(x)|^2\, \frac{dsdx}{s}$.

If $t\le R$, we write \begin{align*}
 I_{t,R}   & = \int_{B_{R}}\int_{0}^R s^{2(N+1)-1}|\pd_{s}^NF_{t+s}(x)|^2\, {dsdx}    \\
    &  \le \int_{B_{R}}\int_{0}^R (t+s)^{2(N+1)-1}|\pd_{s}^NF_{t+s}(x)|^2\, {dsdx}
    \\
    &= \int_{B_{R}}\int_{t}^{t+R} s^{2(N+1)-1}|\pd_{s}^NF_{s}(x)|^2\, {dsdx}
    \\
    & \le \int_{B_{2R}}\int_{0}^{2R} s^{2(N+1)-1}|\pd_{s}^NF_{s}(x)|^2\, {dsdx}\\
    & \le (2R)^{n+2\alpha} \|t^{N+1} \pd_{t}^NF\|_{T^\infty_{2,\alpha}}^2.
\end{align*} 

If  $t\ge R$, 
\begin{align*}
 I_{t,R}   & = \int_{B_{R}}\int_{t}^{t+R} (s-t)^{2(N+1)-1}|\pd_{s}^NF_{s}(x)|^2\, {dsdx}    \\
    &  \le \frac{R^{2(N+1)-1}}{t^{2(N+1)-1}} \int_{B_{R}}\int_{t}^{t+R} s^{2(N+1)-1}|\pd_{s}^NF_{s}(x)|^2\, {dsdx}
    \\
    & \le (2t)^{n+2\alpha} \frac{R^{2(N+1)-1}}{t^{2(N+1)-1}} \|t^{N+1} \pd_{t}^NF\|_{T^\infty_{2,\alpha}}^2
    \\
    & \le (2R)^{n+2\alpha}\|t^{N+1} \pd_{t}^NF\|_{T^\infty_{2,\alpha}}^2,
\end{align*}
where the condition on $N$ is used.  

To show the weak-star continuity and convergence, it suffices to test against    $L^2$ functions
$H$ supported in  compacta of $\reu$, as such functions form a dense subspace of $T^q_{2}$. Then, as a function of $t\ge 0$, the integral
$$\iint_{K} \paire {H(s,x)} {s^{N+1}\pd_{s}^NF_{t+s}(x)} \, \frac{dsdx}{s},$$  is clearly continuous and even $C^\infty$ because $F\in C^\infty(0,\infty; L^2_{loc})$ for example. 
\end{proof}

The second tool  we need is the adapted H\"older theory to define $f_{t}$. For $q\le 1$, we begin with the space  $\dot \IH^{1,q}_{B^*D}$ as the set of functions $h\in \clos{\ran_{2}(B^*D)}=\IH^2_{B^*D}$ with 
$\tau^{-1} \Qpsi{\psi}{B^*D}h\in T^q_{2}$, equipped with quasi-norm $\|\tau^{-1} \Qpsi{\psi}{B^*D}h\|_{T^q_{2}}$. Again, this space does not depend on the particular choice of  $\psi$ among  bounded holomorphic functions  in bisectors $S_{\mu}$ with enough decay at 0 and $\infty$ and can be characterized as well by the $\Tpsi{\psi}{B^*D}$ maps. 

Then, we define  $\dot \Lambda^{\alpha-1}_{DB}$ as the dual space of $\dot \IH^{1,q}_{B^*D}$.
Arguing as in Proposition 4.7 of \cite{AS} and using $(T^q_{2})'= T^\infty_{2,\alpha}$, we see that 
given any $\psi$ as above, any linear functional $\ell$ on $\dot \IH^{1,q}_{B^*D}$ can be written as 
$$
\ell(\phi_{0})= \iint_{\reu} \paire{\psi(sB^*D)\phi_{0}(x)} {G(s,x)}\, \frac{dsdx}{s}$$
for some  $G$ with $sG\in T^\infty_{2,\alpha}$ and  $\|\ell\| \sim \|s G\|_{T^\infty_{2,\alpha}}$.  
Letting $\chi_{k}$ be the cut-off function  as before, replacing $G$ by $\chi_{k}G$ leads to a $T^2_{2}$ function, hence 
$$
\iint_{\reu} \paire{\psi(sB^*D)\phi_{0}(x)} {\chi_{k}(s,x)G(s,x)}\, \frac{dsdx}{s} = \pair {\phi_{0}}{h_{k}}
$$
for some $h_{k}\in \IH^2_{DB}$. Arguing as in \cite{AS}, we see that $h_{k}$ belongs in fact to 
$\dot \IL^{\alpha-1}_{DB}$, the pre-H\"older space whose weak-star completion is $\dot \Lambda^{\alpha-1}_{DB}$,  and that 
$\dot \IL^{\alpha-1}_{DB}=\dot \IL^{\alpha-1}_{D}$ with equivalent topology because of the assumption on $q$. As $s\chi_{k}G$  weakly-star converges to $sG$ in $T^\infty_{2,\alpha}$, this means that $h_{k}$ weakly-star converges to some $h\in \dot \Lambda^{\alpha-1}_{D}$. Thus we may write 
$\ell(\phi_{0})=\pair {\phi_{0}}{h}$ and observe that  one can replace $\phi_{0}$ by any $\varphi_{0}\in \mS$ in the formula.

With this in hand, we can define $f_{t}$, and using the $E^q_{\delta }, E^{\infty,\alpha}_{\delta }$ duality, run the same  computation as above to prove the semigroup formula for $f_{t}$ in $ \dot \Lambda^{\alpha-1}_{D}$ and   $\pd_{t}f_{t}=\pd_{t}F_{t}$ in $\mS'$.

Thus,  $F_{t}=f_{t}+G$ for some distribution $G\in \mS'$. As before, we have $F_{t,\ta}\to 0$ in $\mD'$. Now if $\varphi_{0}\in \mS$ and $\phi_{0}=\IP_{B^*D}\varphi_{0}$,  the proof of the semigroup formula contains the equalities
$$
\pair{\varphi_{0}}{f_{t}}= \pair{\phi_{0}}{f_{t}} = \pair{e^{-tB^*D}\chi^+(B^*D)\phi_{0}}{f_{0}}.
$$
The left entry of the last pairing goes to $0$ in $\IH^{1,q}_{B^*D}$ (the proof is completely analogous to Proposition 4.6 of \cite{AS} using the square function characterization of the space $\IH^{1,q}_{B^*D}$). Thus,   $\pair{\varphi_{0}}{f_{t}}$ must tend to 0 as $t\to \infty$. 

We obtain $G_{\ta}=0$ and, as before,   $G_{\no}= a \sigma$, where $\sigma$ is some constant in $\C^m$. Note that since $f_{0}\in  \dot \Lambda^{\alpha-1, +}_{DB}$ we have $tf_{t}\in T^\infty_{2,\alpha}$ (see \cite{AS}, Lemma 14.4 with $\wt B$ replaced by $B$) where we use the assumption on $q$.    Using  $sG= s(F_{s}-f_{s}) \in T^\infty_{2,\alpha}$, we have 
$$\int_{B_{R}}\int_{0}^R |sG(x)|^2\, \frac{dsdx}{s}   \le C R^{n+2\alpha}.
$$
But using $\lambda|\sigma| \le |a(x) \sigma|$ almost everywhere,  hence $|G(x)| \ge \lambda |\sigma|$, 
$$\int_{B_{R}}\int_{0}^R |sG(x)|^2\, \frac{dsdx}{s} \ge \lambda^2 |\sigma|^2 \frac{R^{n+2}}{2}.$$
It follows that $ \lambda^2 |\sigma|^2 R^{2-2\alpha} \le 2 C$. 
Taking $R\to \infty$ forces $\sigma=0$ as $\alpha<1$, and this yields $G=0$ as desired.

\section{Proof  of Corollary \ref{cor:main1}}

Let $u$ be a weak solution to $Lu=0$ on $\reu$ with $\|\tN(\nabla u )\|_{p}<\infty$. By (iii) in Theorem \ref{thm:main1}, we have  $\nabla\!_{A} u(t,\, .\,) \in H^p_{D}$ and
$\nabla\!_{A} u(t,\, .\,)= S_{p}(t)( \nabla\!_{A} u|_{t=0})$ for all $t\ge 0$. As $S_{p}(t)$ is a continuous semigroup on $H^{p}_{D}$, we obtain the continuity for $t\ge 0$. The limit at $\infty$ can be seen 
from Proposition 4.5 in \cite{AS} and taking bounded extension. 
The $C^\infty$ regularity can be obtained as in this same proposition. We skip details.

As in the proof of Theorem \ref{thm:main2},   $D: \dot \IH^{1,p}_{BD}\to \IH^p_{DB}$, where $\dot \IH^{1,p}_{BD}=\dot \W^{1,p}_{BD}$ is $p>1$, is an isomorphism. Thus it extends to an isomorphism, still denoted by $D$,   $D: \dot H^{1,p}_{BD}\to H^p_{DB}$. We also know that $\IP: \dot \IH^{1,p}_{BD} \to \dot \IH^{1,p}_{D}$ is an isomorphism for $p\in I_{L}$, thus its extension to the completed spaces is also an isomorphism. Set  $v(t,\cdot)=- D^{-1}  \nabla\!_{A} u(t,\cdot) \in \dot H^{1,p,+}_{BD} $ for all $t\ge 0$.
Then $t\mapsto v(t,\cdot)$ is uniformly bounded in $\dot H^{1,p,+}_{BD}$. Applying $\IP$,  
$$\sup_{t\ge0}\|\IP v(t,\cdot)\|_{\dot H^{1,p}_{D}}  \sim \|\IP v(0,\cdot)\|_{\dot H^{1,p}_{D}} \sim   \| \nabla\!_{A} u|_{t=0}\|_{H^p}.$$
In particular, $v_{\no}(t,\cdot) \in \dot H^{1,p}$ and $\nabla_{x}v_{\no}(t,\cdot)= \nabla_{x}u(t,\cdot)$, which gives a meaning to $u(t,\cdot) \in \dot H^{1,p}$ for all $t\ge 0$ with the above estimate.
Note also that $\IP v(t,\cdot) \to \IP v(0,\cdot)$ as $t\to 0$ in $\dot H^{1,p}_{D}$. In particular,  $u(t,\cdot) \to u(0,\cdot)$ in $\dot H^{1,p}$.
  
We next show if $p<n$,  that one can select a constant such that $ u(t,\cdot)-c\in L^{p^*}$ for all $t\ge 0$.  
One can write $u|_{t=0}= f+c \in  L^{p^*}+\C^m$. At the same time, it is shown in \cite{KP}, Theorem 3.2, p.462, that 
\begin{equation}
\label{eq:KP}
\left|\bariint_{W(t,x)}  u(s,y) \, dsdy-u(0,x)\right| \lesssim t \tN(\nabla u)(x)
\end{equation}
 almost everywhere (the proof done for $1<p$ extends without change to $\frac{n}{n+1}<p$ and it does not use the specificity of real symmetric equations  if we replace pointwise values of $u(t,x)$ by Whitney averages as here). Finally, there exists $\tilde u(t,\cdot) \in L^{p^*}$ and $c(t)\in \C^m$ such that $u(t,\cdot)= \tilde u(t,\cdot) +c(t)$ almost everywhere ($t$ fixed). Thus, we obtain
\begin{align*} \left|\barint_{[c_{0}^{-1} t, c_{0}t]} c(s)\, ds -c\right|&  = \left|\bariint_{W(t,x)}c(s)\,  dsdy -c\right|  \\ 
&\lesssim \left|\bariint_{W(t,x)} \tilde u(s,y)dsdy-f(x)\right|+  t \tN(\nabla u)(x)
\end{align*}
almost everywhere. As the right hand side belongs to $L^{p^*}+L^p$, this implies that $\barint_{[c_{0}^{-1} t, c_{0}t]} c(s)\, ds -c=0$ for all $t>0$, hence $c(t)=c$ for $t>0$.  
Having this at hand we have 
$$
\sup_{t\ge 0} \|\tilde u(t,\cdot) \|_{p^*} \lesssim  \sup_{t\ge 0} \|\nabla_{x} u(t,\cdot) \|_{H^p} \lesssim 
\| \nabla\!_{A} u|_{t=0}\|_{H^p}.
$$
(Again, $H^p=L^p$ if $p>1$).
On the other hand, as  $\tilde u(t,\cdot)$  belong to $L^{p^*}$ for $t\ge 0$, Sobolev inequality implies 
$$
\|\tilde u(t,\cdot) -\tilde u(s,\cdot)\|_{p^*} \lesssim \|\nabla_{x} u(t,\cdot) - \nabla_{x} u(s,\cdot)\|_{H^p}
$$
which shows continuity on $t\ge 0$ in $L^{p*}$ topology. That the limit is 0 when $t\to \infty$  follows from
$$
\|\tilde u(t,\cdot) \|_{p^*} \lesssim \|\nabla_{x} u(t,\cdot) \|_{H^p}.
$$
The $C^\infty$ regularity follows from repeated use of the Sobolev inequality for $t$-derivatives of $\pd_{t}\tilde u$.
We have proved all the stated properties for $u$ in the case $p<n$.  

 In the case $p\ge n$, we do not have to wonder about the constant and barely use 
 the Sobolev embedding $\dot H^{1,p} \subset \dot \Lambda^s$. Thus the topology on $\dot \Lambda^s$ in this argument is the strong topology. Details are easier and left to the reader.

 To conclude this proof, we turn to almost everywhere convergence. We begin with the one for $u$, namely \eqref{eq:CVaeu1}.  That 
 $$\lim_{t\to 0}\ \bariint_{W(t,x)}  u(s,y) \, dsdy   = u|_{t=0}(x)
 $$
 follows directly from  \eqref{eq:KP}. As 
  $$
\left |  \barint_{B(x,c_{1}t)}  u(t,y)  \, dy -  \bariint_{W(t,x)}  u(s,y) \, dsdy \right|  \lesssim  t\tN(\nabla u)(x),
$$
we also obtain the almost everywhere convergence of slice averages. 

We turn to the almost everywhere convergence result  for the conormal gradient, namely \eqref{eq:CVaegradA}. It is shown in \cite{AS}, Theorem 9.9, that for all $h\in \clos{\ran_{2}(DB)}=H^2_{D}$,  for  almost every $x_{0}\in \R^n$,
\begin{equation}
\label{eq:CVaeh}
\lim_{t\to 0}\ \bariint_{W(t,x_{0})} |e^{-s|DB|}h(y)-h(x_{0})|^2\, dsdy =0.
\end{equation}
This holds in particular for any $h\in \IH^{p,+}_{DB}$, which is by construction a dense class in $H^{p,+}_{DB}$. Next, we know that $h\mapsto \tN(e^{-s|DB|}h)$  is bounded from $\IH^{p,+}_{DB}$ into $L^p$ when $p\in I_{L}$ by \cite[Theorem 9.1]{AS} and this extends by density. Remark that for $p\ge 1$,  $H^p$ embeds in $L^p$, thus  the elements in $H^{p,+}_{DB}\subset H^p_{D}$ are measurable $L^p$ functions. Hence, the classical  density argument for almost everywhere convergence allows us to show that \eqref{eq:CVaeh} extends to all $h\in H^{p,+}_{DB}$ provided we replace $ e^{-s|DB|}$ by its extension $S_{p}(s)$ as usual. This yields \eqref{eq:CVaegradA} applied to   $h=\nabla_{A}u|_{t=0}\in H^{p,+}_{DB}$. 

We have the same argument for the slice averages. We skip further details.  

\begin{rem}
When $p<1$, starting from $h\in H^p_{DB}$, the above argument shows that almost everywhere limit of Whitney averages of $S_{p}(s)h$ exists and defines  a measurable function $h_{0}$ at almost every $x_{0}\in \R^n$. However, this function could be not related to the distribution $h\in H^p_{DB}$. This fact is well known in classical Hardy space theory (see \cite{Stein}, p. 127).
\end{rem}

\section{Proof  of Corollary \ref{cor:main2}}

First assume $p=q'$ with $q\in I_{L^*}$ and $q>1$.  The regularity properties and \eqref{eq:supw-1p} are a consequence of semigroup theory on Banach spaces. We skip details. 

Next, we want  to show  $u=\tilde u + c$ on $\reu$ with $t\mapsto  \tilde u(t,\, .\,) \in C_{0}([0,\infty);L^{p})\cap C^{\infty}(0,\infty; L^p)$ and $c\in \C^m$. Let $h= \nabla\!_{A} u|_{t=0}\in \dot W^{-1,p,+}_{DB}$. There exists 
$\tilde h\in H^{p,+}_{BD}$ such that $D\tilde h= h$. Here again, we use the extension of the isomorphism $D: \IH^p_{BD} \to \dot \W^{-1,p}_{DB}$. Then, $v(t,\cdot):=  S_{p, BD}(t) \tilde h$, where $S_{p, BD}(t)$ is the extension of the semigroup $e^{-t|BD|}$ from $\IH^p_{BD}$ to $H^p_{BD}$, satisfies $Dv(t,\cdot)= \nabla\!_{A}u(t,\cdot)$.  As $\IP: H^{p}_{BD} \to H^p_{D}$ is an isomorphism (\cite{AS}, Theorem 4.20) we have that $t\mapsto v_{\no}(t,\cdot) \in C_{0}([0,\infty);L^{p})\cap C^{\infty}(0,\infty; L^p)$. Following the proof of Theorem 9.3 in \cite{AA1}, we obtain that $u+v_{\no}$ is constant on $\reu$. So our claim holds with $\tilde u=-v_{\no}$ (Another possible argument is sketched in \cite{AS}, Section 14.1). 

Let us see the non-tangential maximal estimate \eqref{eq:ntmax} for $\tilde u$. In fact, let  $\tilde h\in H^p_{BD}$ and approximate by $\tilde h_{k}\in \IH^{p}_{BD}$. This implies $\IP \tilde h_{k} \to \IP \tilde h$ in $L^p$ by the isomorphism property above. 
But 
$$ (e^{-t|BD|}\tilde h_{k})_{\no} = (\IP e^{-t|BD|}\tilde h_{k})_{\no}=  (\IP e^{-t|BD|}\IP \tilde h_{k})_{\no}= ( e^{-t|BD|}\IP\tilde h_{k})_{\no}.
$$
Thus \cite{AS}, Theorem 9.3, yields in our range of $p$, 
$$
\|\tN (e^{-t|BD|}\tilde h_{k})_{\no} \|_{p}\lesssim  \|\IP \tilde h_{k}\|_{p}. 
$$
But $e^{-t|BD|}\tilde h_{k} \to S_{p, BD}(t) \tilde h$ in $L^2_{loc}(\reu)$ and $ \|\IP \tilde h_{k}\|_{p} \to \|\IP \tilde h \|_{p}. $
Thus using Fatou's lemma for arbitrary  linearisations of the left hand side, we obtain
$$
\|\tN (S_{p,BD}(t)\tilde h)_{\no} \|_{p}\lesssim  \|\IP \tilde h\|_{p}. 
$$
Now taking the element $\tilde h\in H^{p,+}_{BD}$ associated to the solution $u$ as above, we have  $(S_{p,BD}(t)\tilde h)_{\no}=-\tilde u$ and  
$$
\|\tN \tilde u \|_{p} \lesssim \|\IP \tilde h\|_{p} \sim \|\nabla\!_{A} u(t,\, .\,)\|_{\dot W^{-1,p}} \sim \|S(t\nabla u)\|_{p}
$$
where the last comparison is Theorem \ref{thm:main1}. Thus, \eqref{eq:ntmax} is proved.

Next, let us see the almost everywhere convergence \eqref{eq:CVaeu}. It suffices to show it for the solution $\tilde u$ that we exhibited. By Theorem 9.9 of \cite{AS},  we have for  $\tilde h\in \IH^2_{BD}$ and almost every $x_{0}\in \R^n$, 
\begin{equation}
\label{eq:CVae}
\lim_{t\to 0}\ \bariint_{W(t,x_{0})} |(e^{-s|BD|}\tilde h-\tilde h)_{\no}(x_{0})|^2 =0.
\end{equation}
As we have the non-tangential maximal estimate $\|\tN(S_{p,BD}(t)\tilde h)_{\no}\|_{p} \lesssim \|\IP\tilde h\|_{p}$ the usual density argument gives the almost everywhere convergence \eqref{eq:CVae} for any $\tilde h\in H^p_{BD}$.  For slice averages, we also have  the maximal estimate and   the almost everywhere convergence  on a dense class (see Remark 9.7 in \cite{AS}) and we conclude as above.  Specializing again to $\tilde h\in H^{p,+}_{BD}$ we obtain \eqref{eq:CVaeu}.

\

We turn to the case $q\le 1$ and $\alpha=n(\frac{1}{q}-1)\in [0,1)$. The regularity  for $t\mapsto \nabla\!_{A}u(t,\cdot)$ follows from semigroup theory and Theorem \ref{thm:main2}, where the topology on $\dot  \Lambda^{\alpha-1}$ is the weak-star topology of dual of $\dot H^{1,q}$. 

Next, in particular, we have $t\mapsto \nabla_{x}u(t,\cdot) \in C_{0}([0,\infty);\dot\Lambda^{\alpha-1})\cap C^\infty(0,\infty; \dot  \Lambda^{\alpha-1}) $, thus  $t\mapsto   u(t,\, .\,) \in C_{0}([0,\infty);\dot\Lambda^\alpha)\cap C^\infty(0,\infty; \dot  \Lambda^{\alpha}) $, with continuity in the sense of the weak-star topology of dual of $H^q$ and \eqref{eq:suplambdasdir} holds.

We finish with the proof of \eqref{eq:global}. Let $u$ be a solution with $t\nabla u\in T^{\infty}_{2,\alpha}$ and $u(t,\cdot)$ converges to 0 in $\mD'$ modulo constant as $t\to \infty$. Let $R>0$ and $B_{R}$ a ball of radius $R$ in $\R^n$. Set $T_{R}=(0,R]\times B_{R} $. Fix $t\ge 0$.  Applying  the classical inequality (see \cite{BS}) 
$$
\bariint_{T_{R}} \bigg| f - \bariint_{T_{R}}f\, \bigg|^2\, dsdy\le C \barint_{B_{R}}\int_{0}^R |\nabla f(s,y)|^2\, sdsdy
$$
 to $f(s,y)= u(t+s,y)$ and using translation invariance, we have that  
 $$
\bariint_{(t,0)+ T_{R}} \bigg| u - \bariint_{(t,0)+T_{R}}u\,\bigg|^2\, dsdy \le C \barint_{B_{R}}\int_{0}^R |\nabla u(t+ s,y)|^2\, sdsdy. $$
By  \eqref{eq:t2alpha} for $(s,y)\mapsto\nabla_{A} u(t+ s,y)$ and    the uniform boundedness of the semigroup $\nabla\!_{A} u|_{t=0} \to \nabla\!_{A} u(t,\cdot)$, we have
$$
\barint_{B_{R}}\int_{0}^R |\nabla u(t+ s,y)|^2\, sdsdy \lesssim R^{2\alpha} \|\nabla\!_{A} u(t,\cdot)\|_{\dot  \Lambda^{\alpha-1}}^2 \lesssim R^{2\alpha}\|\nabla\!_{A} u|_{t=0}\|_{\dot  \Lambda^{\alpha-1}}^2.
$$
Hence,   
we have obtained that 
$$
\bariint_{(t,0)+ T_{R}} \bigg| u - \bariint_{(t,0)+T_{R}}u\,\bigg|^2\, dsdy \lesssim R^{2\alpha}\|\nabla\!_{A} u|_{t=0}\|_{\dot  \Lambda^{\alpha-1}}^2.
$$

As $t$ and $T_{R}$ are arbitrary, this is the $BMO(\reu)$ property for $u$ if $\alpha=0$ and  the $\dot  \Lambda^{\alpha}(\overline{\reu})$ property for $u$ by the  Morrey-Campanato characterization of 
$\dot  \Lambda^{\alpha}(\overline{\reu})$ and  \eqref{eq:global} is proved.

\section{Solvability and well-posedness results}

To make the game a little more symmetric, we make precise the formal fact that Dirichlet and Regularity problems are the same with different topologies. 

\begin{lem}  We have $(D)_{Y}^{L^*}=(R)_{Y^{-1}}^{L^*}$,
 where the regularity problem  $(R)_{Y^{-1}}^{L^*}$ means  $L^*u=0$,  $\nabla_{x}u|_{t=0}\in \dot Y^{-1}$, $t\nabla u\in \wt \mT$.
\end{lem}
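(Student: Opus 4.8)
The plan is to unwind the definitions and observe that both $(D)_{Y}^{L^*}$ and $(R)_{Y^{-1}}^{L^*}$ are, after spelling them out, the very same collection of pairs (solution, boundary datum) with the very same interior control; they differ only in the descriptive name attached to the boundary datum. So the content is that the Dirichlet datum $u|_{t=0}\in Y$ and the regularity datum $\nabla_x u|_{t=0}\in \dot Y^{-1}$ carry the same information and the same norm, once one recalls $\dot Y^{-1}=\divv_x(Y^n)$ with the quotient topology, equivalently the dual of $\dot X^1=\{f;\nabla f\in X\}$.

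First I would recall the setup: in both problems one considers weak solutions $u$ to $L^*u=0$ on $\reu$ with $t\nabla u\in \wt{\mT}$ (i.e.\ $t\nabla u$ in the appropriate tent space $\mT$, together with the decay of $u(t,\cdot)$ in $\mD'$ modulo constants as $t\to\infty$). By Corollary \ref{cor:main2} applied to $L^*$ (or rather its analogue with $q\in I_{L^*}$), any such solution has a well-defined trace: $u|_{t=0}$ exists in the relevant space modulo constants and $\nabla\!_{A^*}u|_{t=0}$ exists, and in particular $\nabla_x u|_{t=0}$ is the tangential part. The key identity is that for such $u$ the distribution $\nabla_x(u|_{t=0})$ coincides with $(\nabla_x u)|_{t=0}=(\nabla\!_{A^*}u|_{t=0})_\ta$, which holds because taking $\nabla_x$ commutes with the trace (both are obtained as limits in $\mS'$, which is the ambient space throughout — exactly the point emphasized in the introduction). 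Hence $u|_{t=0}\in Y$ modulo constants if and only if $\nabla_x(u|_{t=0})\in \divv_x^{-1}$... more precisely $u|_{t=0}\in Y$ modulo $\C^m$ is equivalent to $\nabla_x u|_{t=0}\in \dot Y^{-1}=\nabla_x(Y)$, with equality (up to equivalence) of the quotient norm $\|u|_{t=0}\|_{Y/\C^m}$ and $\|\nabla_x u|_{t=0}\|_{\dot Y^{-1}}$ by definition of the quotient topology on $\dot Y^{-1}$.

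Then I would conclude: a solution $u$ solves $(D)_{Y}^{L^*}$ with datum $g\in Y$ precisely when it solves $(R)_{Y^{-1}}^{L^*}$ with datum $\nabla_x g\in \dot Y^{-1}$, since the interior class $\wt{\mT}$ is literally the same in both and the boundary conditions $u|_{t=0}=g$ and $\nabla_x u|_{t=0}=\nabla_x g$ are equivalent modulo the (harmless) additive constant, which is already built into the formulation of the Dirichlet problem via $Y$ being a genuine function space while solutions are only determined up to constants for $(R)$. The control estimate $\|t\nabla u\|_{\mT}\lesssim \|g\|_Y$ transcribes verbatim to $\|t\nabla u\|_{\mT}\lesssim \|\nabla_x g\|_{\dot Y^{-1}}$. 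Thus the two problems, as sets of solvable data with quantitative control, coincide.

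The only mildly delicate point — and the one I would be most careful about — is the handling of constants and of which realization of $\dot Y^{-1}$ one uses (quotient of $Y^n$ by curl-free-gradient relations, versus dual of $\dot X^1$); one must check these agree and that the trace map $u\mapsto u|_{t=0}$ lands in the right space, which is exactly what Corollary \ref{cor:main2} (for $L^*$) provides. Everything else is bookkeeping: no new analysis is needed, only the observation that all spaces in sight embed in $\mS'$ so the identifications are legitimate. I would therefore present the proof as a short verification that the two lists of hypotheses and conclusions are identical after translation through $\nabla_x$.
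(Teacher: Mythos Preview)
Your proposal is correct and follows essentially the same approach as the paper: both arguments rest on Corollary~\ref{cor:main2} (applied to $L^*$) to obtain the trace $u|_{t=0}$ for any solution with $t\nabla u\in\wt\mT$, and then use that $\nabla_x: Y/\C^m \to \dot Y^{-1}_\ta$ is an isomorphism to identify the two boundary conditions. The paper writes this out as two implications between well-posedness statements (and is slightly more explicit about normalizing the additive constant via the representative $\tilde u\in C_0([0,\infty);L^p)$), whereas you frame it as a direct identification of the two problems; the content is the same.
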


\begin{proof} This is an easy consequence of Corollary \ref{cor:main2}. Any solution in $\wt \mT$ satisfies 
$u=\tilde u+c$, where $t\mapsto  \tilde u(t,\, .\,) \in C_{0}([0,\infty);L^{p})$ and $c\in \C^m$.
Assume $(R)_{Y^{-1}}^{L^*}$ is well-posed and let $f\in L^p$. Then 
we can solve $(R)_{Y^{-1}}^{L^*}$ for $g=\nabla_{x}f\in \dot W^{-1,p}$  as boundary data. Then, choose the solution that belongs to $C_{0}([0,\infty);L^{p})$. It solves the Dirichlet problem with convergence in $L^p$ to $f$. 
Conversely,  assume $(D)_{Y}^{L^*}$ is well-posed and let $g\in \dot W^{-1,p}$. Then, there exists $f\in L^p$ such that $g=\nabla_{x}f$ in $\mS'$. Solve the Dirichlet problem $(D)_{Y}^{L^*}$ with data $f$. Then $\nabla_{x}u(t,\cdot)$ converges to $g$ in $\dot W^{-1,p}$.
\end{proof}

\begin{proof}[Proof of Theorem \ref{thm:wpequiv}] 
We prove the first line as the others are the same. By the lemma above, $(D)_{Y}^{L^*}$ is well-posed if and only if the map $\wt\mT/\C^m$ to $\dot Y^{-1}_{\ta}$ 
$: [u+c] \mapsto  \nabla_{x}u|_{t=0}$ is an isomorphism (Here, we use the abuse of notation $u\in \wt\mT$ to mean $t\nabla u \in \wt\mT$ of the introduction). But this map is composed of 
 $\wt \mT/\C^m$ onto $\dot Y^{-1,+}_{D\wt B}$ given by $[u+c] \mapsto \nabla_{A^*}u|_{t=0}$, which is an isomorphism by Theorem \ref{thm:main2},  followed by   $N_{\ta}: \dot Y^{-1,+}_{D\wt B} \to \dot Y^{-1}_{\ta}$. The equivalence follows: existence is the same as ontoness of $N_{\ta}$ and uniqueness is the same as injectivity of $N_{\ta}$. 
 \end{proof}

The following is an extension of a result in \cite{AAH} for pairs of projections in a non-Hilbert context.  

\begin{lem}\label{lem:projections} Assume that $E$ is a quasi-Banach space having two pairs on complementary bounded projections $P_{1}^\pm$ and $P_{2}^\pm$. Call $E_{i}^\pm$ the images of $E$ under $P_{i}^\pm$. 
Then \begin{enumerate}
  \item  $P_{1}^-: E_{2}^+ \to E_{1}^-$ has  \textit{a priori} estimates if and only if $P_{2}^-: E_{1}^+ \to E_{2}^-$ has  \textit{a priori} estimates.
  \item $P_{1}^-: E_{2}^+ \to E_{1}^-$ is onto if and only if $P_{2}^-: E_{1}^+ \to E_{2}^-$ is onto.  \item $P_{1}^-: E_{2}^+ \to E_{1}^-$ is an isomorphism if and only if $P_{2}^-: E_{1}^+ \to E_{2}^-$ is an isomorphism.
\end{enumerate}
 A bounded  operator $T$ has  \textit{a priori} estimates if  $\|Tf\| \gtrsim \|f\|$ for all $f$, in other words, it is  injective with closed range. One can change $(P_{2}^-, P_{1}^-)$ to the other
pair $(P_{2}^+, P_{1}^+)$.
\end{lem}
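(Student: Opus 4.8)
Looking at this lemma on pairs of projections, I want to prove the three equivalences relating the "cross" maps $P_1^-: E_2^+ \to E_1^-$ and $P_2^-: E_1^+ \to E_2^-$.

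\textbf{The approach.} The key algebraic fact is the following identity. Consider the operator $P_1^+ + P_2^+$ (or equivalently $I - (P_1^- + P_2^-) + $ lower-order combinations — I should be careful here). Actually the cleanest route is to exploit the map $S = P_1^+ P_2^+ + P_1^- P_2^-$ or its variants, but the most transparent argument uses the restriction of $P_1^-$ to $E_2^+$ directly together with a "return map." Let me set $U = P_2^+|_{E_1^+}: E_1^+ \to E_2^+$ and note that $P_1^-|_{E_2^+}$ composed with things lands us back; the point is that on $E_2^+$ one has $\mathrm{id} = P_1^+|_{E_2^+} + P_1^-|_{E_2^+}$, and $P_1^+|_{E_2^+}: E_2^+ \to E_1^+$ is "almost" inverse to $U$ modulo the cross maps. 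So first I would record the four restriction maps $a = P_1^+|_{E_2^+}: E_2^+\to E_1^+$, $b = P_1^-|_{E_2^+}: E_2^+ \to E_1^-$, $c = P_2^+|_{E_1^+}: E_1^+\to E_2^+$, $d = P_2^-|_{E_1^+}: E_1^+\to E_2^-$, and compute $ac$ and $ca$ as operators on $E_1^+$ and $E_2^+$ respectively using $P_1^\pm P_2^\pm$ bookkeeping. One finds $ca + (\text{something involving } b) = \mathrm{id}_{E_2^+}$ and $ac + (\text{something involving } d) = \mathrm{id}_{E_1^+}$; the precise statement I expect is that $\mathrm{id}_{E_2^+} - ca$ factors through $E_1^-$ via $b$, and dually $\mathrm{id}_{E_1^+} - ac$ factors through $E_2^-$ via $d$. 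That is the engine.

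\textbf{Key steps in order.} (1) Write down the restriction maps and derive the two factorization identities above by expanding $P_i^\pm = I - P_i^\mp$ and using $P_i^+ P_i^- = 0$; this is pure linear algebra valid in any quasi-Banach space since all projections are bounded. (2) For part (1) of the lemma, assume $b = P_1^-: E_2^+ \to E_1^-$ is injective with closed range, i.e. bounded below: $\|bx\| \gtrsim \|x\|$. I want to show $d$ is bounded below. Suppose $dy$ is small for $y \in E_1^+$; using the factorization $\mathrm{id}_{E_1^+} = ac + (\text{map through } d)$ I get $y \approx acy$, so it suffices to bound $\|y\|$ by $\|acy\|$, and since $a$ is bounded it suffices to bound $\|cy\|$ below — but now apply the hypothesis that $b$ is bounded below together with the other factorization $\mathrm{id}_{E_2^+} = ca + (\text{through }b)$ applied to $cy$... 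I need to chase this carefully so the two estimates close up; the symmetry of the setup guarantees they do, but getting the constants/closed-range conclusion rather than mere injectivity requires the quantitative "bounded below" formulation, which is exactly why the lemma states it that way. (3) For part (2), ontoness: given $z \in E_2^-$, I must produce $y \in E_1^+$ with $dy = z$; use the dual factorization to reduce to solving in the other picture, i.e. ontoness of $b$ gives a preimage which I then push through $c$ and correct. (4) Part (3) is the conjunction of (1) and (2) plus the open mapping theorem (valid for quasi-Banach spaces / $F$-spaces), giving boundedness of the inverse. (5) Finally note the roles of $+$ and $-$ are interchangeable by relabeling, giving the last sentence.

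\textbf{Main obstacle.} The genuine difficulty is step (2): organizing the two factorization identities so that the chase produces a \emph{closed range} (bounded-below) conclusion and not merely injectivity, in a setting where we only have quasi-norms — so I cannot freely invoke Hilbert-space orthogonality or even the triangle inequality with constant $1$. I expect to need the quasi-triangle inequality with its constant tracked through finitely many steps, which is harmless, plus the fact that a bounded-below operator has closed range (true in quasi-Banach spaces). The ontoness part (3) is easier once the algebra of step (1) is in place. I should also double-check that "complementary bounded projections" is all that is used and that nothing secretly requires completeness beyond what a quasi-Banach space provides (the open mapping theorem does hold for complete metrizable TVS, so that is fine). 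The underlying reference \cite{AAH} does this in the Banach/Hilbert case; the only new point is quasi-Banach robustness, so I would present the algebraic identities in full and then remark that the estimates go through verbatim with quasi-norm constants.
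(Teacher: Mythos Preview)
Your factorization approach is correct and the chase does close, but the paper takes a shorter and more transparent route. The paper's idea for (1) is to show that ``$P_1^-: E_2^+ \to E_1^-$ is bounded below'' is equivalent to the \emph{transversality} of the subspaces $E_1^+$ and $E_2^+$, namely $\|x_1+x_2\| \gtrsim \|x_1\|+\|x_2\|$ for $x_i\in E_i^+$. Since this condition is manifestly symmetric in the indices $1,2$, the equivalence in (1) is immediate. Similarly, for (2) the paper shows ontoness of $P_1^-: E_2^+ \to E_1^-$ is equivalent to $E_1^+ + E_2^+ = E$, again symmetric; (3) is then just the conjunction. Each direction of each equivalence is two lines.

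Your route via the restriction maps $a,b,c,d$ and the identities $\mathrm{id}_{E_2^+}-ca = P_2^+ b$, $\mathrm{id}_{E_1^+}-ac = P_1^+ d$ does work, and your ``main obstacle'' is not actually an obstacle: the missing piece that makes the estimate close is the identity $b\circ c = -P_1^- \circ d$ on $E_1^+$ (since for $y\in E_1^+$ one has $P_1^- P_2^+ y = -P_1^- P_2^- y$). With that, $\|cy\| \lesssim \|b(cy)\| = \|P_1^- dy\| \lesssim \|dy\|$, and then $\|y\| \lesssim \|acy\| + \|P_1^+ dy\| \lesssim \|cy\| + \|dy\| \lesssim \|dy\|$, all with quasi-triangle constants. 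Your ontoness argument also goes through once you observe that given $z\in E_2^-$, solving $bx = P_1^- z$ and setting $y = z-x$ gives $y\in E_1^+$ with $dy=z$. So your proof is valid; the paper's symmetric reformulation just avoids the bookkeeping entirely and makes the quasi-Banach robustness obvious, while your approach has the advantage of producing explicit operator identities that could be reused elsewhere.
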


In the proof and later on, it will be convenient   to consider that  if  $P,Q$ are two projectors then $PQ$ is automatically restricted to the range of $Q$ and maps into the range of $P$: this is how we mean \textit{a priori} estimates, ontoness and isomorphism. With this convention, $P:\ran(Q) \to \ran(P)$ is an isomorphism if and only if  $PQ$ is an isomorphism. 

\begin{proof} The proof is identical to the one in \cite{AAH}, Proposition 2.52.  We give it for completeness. The final claim is just a symmetry observation. We are left with proving the equivalences. We begin with (1).  In fact,  $P_{1}^-: E_{2}^+ \to E_{1}^-$ with  \textit{a priori} estimates  is equivalent to the transversality of the spaces $E_{1}^+$ and $ E_{2}^+$, that is $\|x_{1}+x_{2}\| \gtrsim \|x_{1}\| +\|x_{2}\|$ when $x_{i}\in E_{i}^+$, which is symmetric in the indices $i=1,2$. 
Indeed, assume $P_{1}^-: E_{2}^+ \to E_{1}^-$ has  \textit{a priori} estimates. If $x_{2}\in E_{2}^+$, then 
$$\|x_{2}\| \lesssim \|P_{1}^-x_{2}\|  = \|P_{1}^-(x_{2}+x_{1})\| \lesssim \|x_{1}+x_{2}\|$$
for any $x_{1}\in E_{1}^+$. The transversality follows from the quasi-triangle inequality.
Conversely, assume $E_{1}^+$ and $E_{2}^+$ are transversal. Let $x_{2}\in E_{2}^+$. Set $x_{1}=x_{2}-P_{1}^-x_{2} \in E_{1}^+$.  Thus 
$$
\|P_{1}^-x_{2}\|  = \|x_{2}-x_{1}\| \gtrsim \|x_{2}\|+\|x_{1}\| \ge \|x_{2}\|.
$$
This proves the first  equivalence.

For (2), by symmetry again, it is enough to show that ontoness of    $P_{1}^-: E_{2}^+ \to E_{1}^-$   is equivalent to  $E_{1}^++ E_{2}^+=E$. Let us see that. 
Assume $P_{1}^-: E_{2}^+ \to E_{1}^-$ onto. Let $x\in E$.  There exists $x_{2}\in E_{2}^+$ such that $P_{1}^-x_{2}= P_{1}^-x$. Hence, 
$$
P_{1}^+(x-x_{2})= P_{1}^+x-P_{1}^+x_{2} = x-P_{1}^-x- P_{1}^+x_{2}= x- P_{1}^-x_{2} - P_{1}^+x_{2}= x-{x_{2}}.
$$
Thus $x-x_{2}\in E_{1}^+$. Conversely, assume $E_{1}^++ E_{2}^+=E$. Let $x\in E_{1}^-$. Decompose $x=x_{1}+x_{2}$ with $x_{i}\in E_{i}^+$. Then 
$$ x= P_{1}^-x= P_{1}^-x_{1}+ P_{1}^-x_{2}= P_{1}^-x_{2}.$$ 

The proof of (3) is the conjunction of (1) and (2). 
\end{proof}

\begin{lem}\label{thm:duality}[Simultaneous duality] Assume $q\in I_{L}$. There is a pairing between the spaces  $X_{D}$ and $\dot Y^{-1}_{D}$ for which  $ \dot Y^{-1,\mp}_{D\wt B}$ realizes as the  dual space of $X^\pm_{DB}$,  
$\dot Y^{-1}_{\no}$ as the dual of $X_{\ta}$ and $\dot Y^{-1}_{\ta}$ as the dual of $X_{\no}$.
 If $q>1$, the situation is reflexive so that the pairing is a duality. 
\end{lem}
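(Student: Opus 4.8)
The guiding principle is that all four realizations ($X^\pm_{DB}$, $X_{\no}$, $X_{\ta}$ and their $\dot Y^{-1}$-counterparts) sit concretely inside $\mathcal S'$ thanks to $q\in I_{L}$, so the pairing to be constructed is just the restriction of a single bilinear form on $X_{D}\times\dot Y^{-1}_{D}$, and one has to identify the annihilators. The natural pairing is the $\pair{X}{Y}$ duality composed with the description $\dot Y^{-1}=\divv_{x}(Y^n)$: given $f\in X_{D}$ and $g=\divv_{x}G\in\dot Y^{-1}_{D}$ one wants something like $\pair{f_{\no}}{g_{\text{scalar potential}}}$, but more symmetrically one uses the fact that elements of $X_{D}$ and $\dot Y^{-1}_{D}$ are conormal-gradient shaped (curl-free tangential part) and pairs them via the $L^2$ inner product extended by density from $\IH^q_{D}\cap\IH^2_{D}$ and $\dot\IH^{-1,q'}_{D}\cap\IH^2_{D}$. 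First I would recall from Section \ref{sec:basic} that for $q>1$, $H^q_{D}$ and $H^{q'}_{D}$ are dual under $\pair fg=\int_{\R^n}(f,g)\,dx$, and for $q\le1$ the dual of $H^q_{D}$ is $\dot\Lambda^\alpha_{D}$; since $\dot Y^{-1}_{D}$ is by definition the dual of $\dot X^1_{D}$ and $\dot X^1_{D}\cong D^{-1}$ of $X_{D}$-type spaces, transporting the pairing through $D$ (an isomorphism $\dot\IH^{1,q'}_{BD}\to\IH^{q'}_{DB}$, resp.\ the $BD$/$DB^*$ versions) produces the claimed bilinear form on $X_{D}\times\dot Y^{-1}_{D}$.

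Next I would verify the three annihilator identities. For the spectral pieces, recall that $\IH^{q,\pm}_{DB}$ and $\IH^{q',\mp}_{B^*D}$ (its H\"older version when $q\le1$) are in duality under the $L^2$ pairing, as stated at the end of Section \ref{sec:basic}; passing to completions, $X^\pm_{DB}$ and $\dot Y^{-1,\mp}_{D\wt B}$ become dual spaces — here one uses $\wt B=NB^*N$ together with the intertwining $ND N=-D$ so that the functional calculus of $DB^*$ matches that of $D\wt B$ up to the sign flip that exchanges $\pm$, which is exactly what produces the $\mp$ in the statement. For the scalar/tangential splitting, the point is purely algebraic at the level of the $\C^N=\C^m\oplus\C^{mn}$ decomposition $v=[v_{\no},v_{\ta}]^T$: the block form of $D$ in \eqref{eq:dirac} shows that $D$ maps the tangential part to the scalar slot and vice versa, so the $L^2$ inner product $\pair fg$ restricted to $X_{D}\times \dot Y^{-1}_{D}$, when one factors through $D$, literally pairs $f_{\no}$ against the tangential component of the potential and $f_{\ta}$ against the scalar component. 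Hence $X_{\ta}$ (the elements of $X_{D}$ with vanishing scalar part) annihilates $\dot Y^{-1}_{\ta}$ and pairs nondegenerately with $\dot Y^{-1}_{\no}$, and symmetrically for the other pair. Nondegeneracy follows because $\IP$ (resp.\ its $\dot Y^{-1}$ analogue) is a projection onto a complemented subspace, so the pairing descends to a genuine duality on the quotients/subspaces.

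I would then assemble: the bilinear form is bounded on $X_{D}\times\dot Y^{-1}_{D}$ (by H\"older and boundedness of $\IP$ and of $D,D^{-1}$ between the relevant spaces, all in $I_{L}$), it is nondegenerate on each of the three pairs above, and each of $X^\pm_{DB}$, $X_{\no}$, $X_{\ta}$ is closed and complemented in $X_{D}$ (by Theorem \ref{thm:IL} and the $DB$-functional calculus), so the induced map from the partner space to the dual is an isomorphism onto. When $q>1$ all spaces in sight are reflexive Banach spaces, so "dual space of" is symmetric and the pairing is a genuine duality in both directions; when $q\le1$ the $\dot Y^{-1}$-side is a H\"older-type space realized as a dual, hence only a weak-star duality, which is why the statement is phrased asymmetrically.

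\textbf{Main obstacle.} The delicate point is not any single estimate but the bookkeeping that makes the \emph{same} pairing simultaneously realize all three dualities with the correct index matching ($+\leftrightarrow -$, $\no\leftrightarrow\ta$); in particular one must check that transporting the $L^2$ pairing through the isomorphism $D$ (and through $\IP$, $\IP_{BD}$) is compatible with taking completions and does not depend on the choices of $\psi$ used to define the Hardy/Sobolev/H\"older spaces, and that the sign twist $N$ relating $L^*$ to $\wt B$ is correctly accounted for. Once one fixes, once and for all, the concrete realizations inside $\mathcal S'$ provided by $q\in I_{L}$, these compatibilities are forced, but writing them down cleanly — especially the $q\le 1$ weak-star case where $\dot\Lambda^{\alpha-1}_{D\wt B}$ is only a dual — is where the care is needed. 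I expect to invoke \cite{AS} Theorems 4.19, 4.20 and 9.1 and the duality statements of Section \ref{sec:basic} as black boxes and spend the real work on the algebraic block computation with $D$ and on the completion argument.
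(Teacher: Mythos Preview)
Your plan is essentially the paper's proof: the pairing used there is precisely $\pair{h}{D^{-1}g}_{N}:=\pair{h}{ND^{-1}g}$, where $D^{-1}:\dot Y^{-1}_{D}=\dot Y^{-1}_{D\wt B}\to \dot Y_{\wt BD}$ is the extended inverse, and the adjoint computation $\pair{\chi^\pm(DB)h}{ND^{-1}g}=\pair{h}{\chi^\pm(B^*D)ND^{-1}g}=\pair{h}{N\chi^\mp(\wt BD)D^{-1}g}=\pair{h}{ND^{-1}\chi^\mp(D\wt B)g}$ gives the $\pm\leftrightarrow\mp$ matching, while rewriting the pairing as $\pair{h}{\IP D^{-1}g}_{N}$ and using that $N$ preserves and $D$ swaps the $\no/\ta$ blocks gives the other splitting. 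One slip to fix: Section~\ref{sec:basic} actually states that $\IH^{p,\pm}_{DB}$ and $\IH^{p',\pm}_{B^*D}$ (\emph{same} signs) are dual under the plain $L^2$ pairing --- the sign reversal to $\mp$ comes entirely from the $N$-twist and $ND=-DN$, exactly as you explain in the next clause, so just correct that one citation.
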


\begin{proof}  In this proof, $q\in I_{L}$ all the time. We identify $X_{\no}$ with $[X_{\no}, 0]^T$ and $X_{\ta}$ with $[0,X_{\ta}]^T$ so that they becomes subspaces of $X_{D}$ and form a splitting of $X_{D}$.  
Do the same for $\dot Y^{-1}_{D}$. So the goal is to build this simultaneous decomposition. 

Recall that $\wt B$ is the matrix corresponding to $A^*$. It is given by $\wt B= NB^*N$ with $N= \begin{bmatrix} I & 0  \\ 
    0 & -I \end{bmatrix}= N_{\no}-N_{\ta}$ according to the identifications. We have to use it here if we want to put in duality the second order operators $L$ and $L^*$. Hence $N$ will appear in the pairing. 

The pairing is given by the following scheme:    the article \cite{AS} defines spaces $H^{q'}_{\wt BD}$ and $\dot \Lambda^{\alpha}_{\wt BD}$ and a consequence of the theory there is that $D$ (restricted to   $\IH^2_{\wt BD}$ as an unbounded operator) extends to  isomorphisms $H^{q'}_{\wt BD} \to \dot W^{-1, q'}_{D\wt B}=\dot W^{-1, q'}_{D}$ and $\dot \Lambda^{\alpha}_{\wt BD} \to \dot \Lambda^{\alpha-1}_{D\wt B}=\dot \Lambda^{\alpha-1}_{D}$. Let us keep calling $D$ these maps and write $D^{-1}: \dot Y^{-1}_{D}= \dot Y^{-1}_{D\wt B} \to \dot Y_{\wt BD}$. Recall also (\cite{AS}, Section 12.2) that $H^{q'}_{\wt BD}$ is the reflexive dual space of $H^q_{DB}=H^q_{D}$ when $q>1$ and $\dot \Lambda^{\alpha}_{\wt BD}$ is the dual space of $H^q_{DB}=H^q_{D}$ when $q\le 1$ for the duality pairing  $\pair {f}{g}_{N}= \pair {f}{Ng}$ where the pairing $ \pair {f}{g}$ extends the standard $L^2$ inner product.   We can define the desired pairing 
on $X_{D}$,  $\dot Y^{-1}_{D}$ by $\pair {h}{D^{-1}g}_{N}$ where $D^{-1}$ is defined just above. Let us check the duality statements in the lemma for this pairing. 

Call $\chi^\pm_{X}$ the extension of $\chi^\pm(DB)$ on $X_{D}$ and $\chi^\pm_{Y^{-1}}$ the extension of $\chi^\pm(D\wt B)$ on $\dot Y^{-1}_{D}$. We claim that $\chi^\mp_{Y^{-1}}$ is the adjoint of $\chi^\pm_{X}$ in this pairing. Indeed, working with appropriate functions $h$, $g$ in dense classes, 
\begin{multline*}
$$
\pair {\chi^\pm(DB) h}{D^{-1}g}_{N}= \pair {\chi^\pm(DB) h}{ND^{-1}g}= \pair { h}{\chi^\pm(B^*D)ND^{-1}g} =\\
 \pair {h}{N\chi^\mp(\wt BD)D^{-1}g} =  \pair {h}{ND^{-1}\chi^\mp(D\wt B)g}= \pair {h}{D^{-1}\chi^\mp(D\wt B)g}_{N}.
$$
\end{multline*}

The change from $\pm$ to $\mp$ comes from the anti-commutation $ND=-DN$ (see \cite{AS}, Section 12.2). 
It follows that the splitting  $\dot Y^{-1}_{D}= \dot Y^{-1, -}_{D\wt B}\oplus \dot Y^{-1, +}_{D\wt B}$
is adjoint of $X_{D}= X^+_{DB}\oplus X^-_{DB}$ for this pairing.

At the same time, it was proved that  $\IP: \IH^2_{\wt BD}\to \IH^2_{D}$ (where we recall that $\IP$ is the orthogonal projection onto  $\IH^2_{D}$)  extends to an isomorphism $ \dot Y_{\wt BD}\to \dot Y_{D}$, which we denote again by $\IP$.  As (the extension of) $\IP$ preserves $X_{D}$ and commutes with $N$, we have 
$$\pair {h}{D^{-1}g}_{N}= \pair {h}{\IP D^{-1}g}_{N}.$$
 As $N$ preserves scalar and tangential spaces, while $D$ swaps them, using the
 pairing  $\pair {h}{\IP D^{-1}g}_{N}$, it is easy to see  $\dot Y^{-1}_{\no}$ as the dual of $X_{\ta}$ and $\dot Y^{-1}_{\ta}$ as the dual of $X_{\no}$. Also the splitting 
 $\dot Y^{-1}_{D}= \dot Y^{-1}_{\ta}\oplus \dot Y^{-1}_{\no}$  is adjoint of 
 $X_{D}= X_{\no} \oplus  X_{\ta}$ for this pairing.
\end{proof}

\begin{proof}[Proof of Theorem \ref{cor:dualityprinciple}] Let us prove the direction from $(R)_{X}^L$ is well-posed to $(D)_{Y}^{L^*}$ is well-posed.  Because of  Theorem \ref{thm:wpequiv}, it is enough to argue on the maps $N_{\ta}$ in each context. 
We have the situation of Lemma \ref{lem:projections} for $E=X_{D}$ with $P_{2}^\pm=\chi_{X}^\pm$, $P_{1}^-=N_{\ta}$ and $P_{1}^+=N_{\no}$. We assume here that $P_{1}^-: E_{2}^+ \to E_{1}^-$ is an isomorphism.  Thus, $P_{2}^-: E_{1}^+ \to E_{2}^-$ is an isomorphism, which concretely means 
$\chi^-_{X}: X_{\no}\to \dot X^{ -}_{DB}$ is an isomorphism, or equivalently that $\chi^-_{X}N_{\no}$ is an isomorphism (recall that it is understood that this is from the range of $N_{\no}$ onto the range of $\chi^-_{X}$). By taking adjoint for the pairing of the lemma above, we have that $N_{\ta}\chi^+_{\dot Y^{-1}}= ( \chi^-_{X}N_{\no})^*$ is an isomorphism. This precisely means that $N_{\ta}: \dot Y^{-1,+}_{D\wt B} \to \dot Y^{-1}_{\ta}$ is an isomorphism. 
The other implications all have the  same proof. 
\end{proof}

As said in the Introduction, we can recover the Green's formula from such abstract considerations: in the pairing of $\pair {f}{g}_{N}$ of Lemma \ref{thm:duality}, one can prove that  the polar set of $X^+_{DB}$ is precisely $\dot Y^{-1,+}_{D\wt B}$. The orthogonality equation expressing this fact is the Green's formula.   It is also possible to prove it directly, which we do for convenience. 

\begin{proof}[Proof of Theorem \ref{thm:green}]
By Theorems \ref{thm:main1} and \ref{thm:main2}, we know that $h=\nabla\!_{A}u|_{t=0}\in H^{q,+}_{DB}\subset  H^{q}_{D} $ and $g=\nabla_{A^*}w|_{t=0}\in \dot W^{-1,q',+}_{D\wt B} \subset \dot W^{-1,q'}_{D}$ or $\dot \Lambda^{\alpha-1,+}_{D\wt B} \subset \dot \Lambda^{\alpha-1}_{D}$.  Using standard approximation theory, we can  approximate  $h$ in $H^q$ topology by functions $\tilde h_{k}$   in the Schwartz class with compactly supported Fourier transform away from the origin. Then, applying the bounded projection $\IP$,  $\IP \tilde h_{k}\in H^q_{D}$ approximate $h=\IP h$ as well. Note that $\IP \tilde h_{k}\in \IH^2_{D}$ as well as $\dot \mH^{-1/2}_{D}$. Applying the projection $\chi^+(DB)$, we have obtained an approximation $h_{k}\in \IH^{q,+}_{DB} \cap \dot \mH^{-1/2,+}_{DB}$, with $\|h_{k}-h\|_{H^q}\to 0$. 

Similarly, if $q>1$, we can find an approximation $g_{k}\in \dot \W^{-1,q',+}_{D\wt B} \cap \dot \mH^{-1/2,+}_{D\wt B}$ with $\|g_{k}-g\|_{ W^{-1,q'}}\to 0$. If $q\le 1$, $g_{k}\in \dot \IL^{\alpha-1,+}_{D\wt B} \cap \dot \mH^{-1/2,+}_{D\wt B}$ with $g_{k} \to g$ for the weak-star topology on $\dot \Lambda^{\alpha-1}$.  Remark that $h_{k}=\nabla\!_{A}u_{k}|_{t=0}$ where $u_{k}$ is  an  energy solution of   $L$ and $g_{k}= \nabla_{A^*}w_{k}|_{t=0}$ where $w_{k}$ is an energy solution of $L^*$. Thus, we have the Green's formula for energy solutions (see \cite{AM}, Lemma 2.1, in this context),
$$
\pair {\partial_{\nu_{A}}u_{k}|_{t=0}} {w_{k}|_{t=0}} = \pair {u_{k}|_{t=0}}{\partial_{\nu_{A^*}}w_{k}|_{t=0}}.  
$$
The pairings are for the homogeneous Sobolev spaces $\dot H^{-1/2}, \dot H^{1/2}$ and 
$\dot H^{1/2}, \dot H^{-1/2}$. But as said, the different notions of conormal gradients are compatible. Assume $q>1$ first.  We have 
$$\partial_{\nu_{A}}u_{k}|_{t=0} \to \partial_{\nu_{A}}u|_{t=0} \ \mathrm{in}\  L^q,$$
$$
\nabla_{x}w_{k}|_{t=0} \to \nabla_{x}w|_{t=0} \ \mathrm{in}\ \dot W^{-1,p}.$$
We can arrange (see Corollary \ref{cor:main2}) that both $w_{k}|_{t=0}$ and $w|_{t=0}$ are in $L^p$ and it follows that 
$$
w_{k}|_{t=0} \to w|_{t=0}  \ \mathrm{in}\  L^p.$$
Having fixed the constant, the first pairing can be reinterpreted in the $L^q, L^p$ duality and converges to 
$\pair {\partial_{\nu_{A}}u|_{t=0}} {w|_{t=0}}$. 
For the second one, we have
$$
u_{k}|_{t=0} \to u|_{t=0}  \ \mathrm{in}\  \dot W^{1,q}, $$ 
$$
\partial_{\nu_{A^*}}w_{k}|_{t=0} \to \partial_{\nu_{A^*}}w|_{t=0}  \ \mathrm{in}\  \dot W^{-1,p},
$$
and the pairing, reinterpreted in the $\dot W^{1,q}, \dot W^{-1,p}$ duality,  converges to $\pair {u|_{t=0}}{\partial_{\nu_{A^*}}w|_{t=0}}$. 

If $q\le 1$, then
 
$$\partial_{\nu_{A}}u_{k}|_{t=0} \to \partial_{\nu_{A}}u|_{t=0} \ \mathrm{in}\  H^q,$$
$$
w_{k}|_{t=0} \to w|_{t=0} \ \mathrm{weakly-star\ in}\ \dot \Lambda^\alpha.$$
 (see Corollary \ref{cor:main2}.)

Thus the first pairing can be reinterpreted in the $H^q, \dot \Lambda^\alpha$ duality and converges to  $\pair {\partial_{\nu_{A}}u|_{t=0}} {w|_{t=0}}$. 
For the second one, we have
$$
\nabla_{x}u_{k}|_{t=0} \to \nabla_{x}u|_{t=0}  \ \mathrm{in}\  H^{q}, $$ 
$$
\partial_{\nu_{A^*}}w_{k}|_{t=0} \to \partial_{\nu_{A^*}}w|_{t=0}  \ \mathrm{weakly-star\ in}\  \dot \Lambda^{\alpha-1},
$$
and the pairing, reinterpreted in the $\dot H^{1,q}, \dot \Lambda^{\alpha-1}$ duality,  converges to $\pair {u|_{t=0}}{\partial_{\nu_{A^*}}w|_{t=0}}$. 
\end{proof}

\begin{proof}[Proof of Theorem \ref{thm:solvimplieswp}]
It is shown in \cite{AS} (Lemma 14.2, 14.3, 14.5, 14.6) that for each of the four problems, solvability for the energy class implies that the attached map  $N_{\ta}$ or $N_{\no}$ is an isomorphism from the corresponding trace space onto 
the  space of boundary data. By  Theorem \ref{thm:wpequiv}, this implies well-posedness. When the data is an ``energy data'',  our assumption is that the energy solution also solves the same problem. By the just obtained uniqueness, it is the only one. This yields compatible well-posedness. \end{proof}

\begin{proof}[Proof of Theorem \ref{thm:solvvsuniq}] Let us prove for example the second item as any other one is the same. Assume $(D)_{Y}^{L^*}$ is solvable  for the energy class. 

We fist show solvability for the energy class of $(R)_{X}^L$.  Let $g\in X^+_{\ta}\cap \dot \mH^{-1/2}_{\ta}$ and consider the energy solution $u$ attached to $g$ (Dirichlet problem). We  want to show that $\|\pd_{\nu_{A}}u|_{t=0}\|_{X_{\no}} \lesssim  \|g\|_{X_{\ta}}$.  Consider a test function $\varphi\in \mS$ and the energy solution of the Dirichlet problem $(D)_{Y}^{L^*}$ for this data. This means that if $w$ denotes this energy solution, we have $\|\pd_{\nu_{A^*}}w|_{t=0}\|_{\dot Y^{-1}_{\no}} \lesssim  \|\nabla \varphi\|_{\dot Y^{-1}_{\ta}}\lesssim \|\varphi\|_{\dot Y_{{\no}}}$. Applying  
\eqref{eq:green}, we have
$$
\pair {\partial_{\nu_{A}}u|_{t=0}} {\varphi}= \pair {\partial_{\nu_{A}}u|_{t=0}} {w|_{t=0}} = \pair {u|_{t=0}}{\partial_{\nu_{A^*}}w|_{t=0}}
$$
hence
$$
|\pair {\partial_{\nu_{A}}u|_{t=0}} {\varphi}| \le \|g\|_{X_{\ta}}\|\|\pd_{\nu_{A^*}}w|_{t=0}\|_{\dot Y^{-1}_{\no}} \lesssim \|g\|_{X_{\ta}} \|\varphi\|_{\dot Y_{{\no}}}.
$$
Thus, by density of test functions in $L^{q'}$ if $q>1$,  and in $VMO$ (the closure of $C_{0}^\infty$ for the $BMO$ norm) if $q=1$, and dualities $L^{q'}-L^q$ and $VMO-H^1$, we have $\partial_{\nu_{A}}u|_{t=0}\in X_{\no}$ as desired. 

Let us now prove the uniqueness of $(R)_{X}^L$. Assume  $u$ is any solution with $\nabla u \in \mN$ and $\nabla_{x}u|_{t=0}=0$. We know that $\partial_{\nu_{A}}u|_{t=0}\in X_{\no}$ and want to show it is 0, so that $u=0$.  We may apply \eqref{eq:green} against the same $w$ as before. This time, we obtain $\pair {\partial_{\nu_{A}}u|_{t=0}} {\varphi}=0$ for all $\varphi\in \mS$, hence $\partial_{\nu_{A}}u|_{t=0}=0$. 
\end{proof}

\begin{rem}
In the previous argument, if $q<1$, then $\partial_{\nu_{A}}u|_{t=0} \in \dot \mH^{-1/2}$, hence it is a Schwartz distribution and it belongs to the dual of the closure of test functions in $\dot\Lambda^{s}$. 
\end{rem}

\begin{rem}
As in \cite{AM} or \cite{HKMP2}, this argument only uses solvability for the energy class \textbf{with compactly supported smooth data}. Thus combining (1) and (2) of Theorem \ref{thm:solvvsuniq} when $q=1$ shows that 
if $(D)_{Y}^{L^*}$ is solvable  for the energy class for any $VMO$ data implies compatible well-posedness (and in particular existence of a solution) for any $BMO$ data. 
\end{rem}

We continue with the 

\begin{proof}[Proof of Corollary \ref{cor:real}] As mentioned in Introduction, \cite{HKMP1} implies 
$(D)_{L^p}^{L^*}$ is solvable for the energy class for some $p\ge 2$ when the coefficients are real. Thus $(R)_{L^{p'}}^{L}$  is solvable for the energy class.  
In the case of real equations,  we have the De Giorgi-Nash-Moser assumptions on $L$ and its adjoint and also on the reflected operator  and its adjoint across $\R^n$ because all four have real coefficients. Thus we may use Theorem 10.1 in \cite{AM} so that $(R)_{L^{q}}^{L}$ for $1<q<p'$ and $(R)_{H^q}^{L}$ are solvable for the energy class and the lower bound on $q$ is determined by the common De Giorgi exponent of all four operators. Comparison of assumptions show  that  this exponent is larger than or equal to the exponent  $p_{\ta}$ found in \cite{AS}, Section 13. In particular, all the exponents $q$ here belong to $I_{L}$.  Thus, we may apply Theorem \ref{thm:solvimplieswp} and compatible well-posedness follows. 

Applying now Theorem \ref{thm:solvvsuniq} settles the Dirichlet problem for $L$. 

When we perturb this situation, we may apply Theorem 14.8 in \cite{AS}, whose proof gives in fact invertibility of the $N_{\ta}$ map and Theorem \ref{thm:wpequiv}, (2), yields well-posedness of the regularity problem for $L^*$  in the same range of exponents $q$. Applying Theorem \ref{cor:dualityprinciple} gives us the dual range of the  Dirichlet problem for $L$. 
\end{proof}

Let us finish with the proof of Theorem \ref{thm:layerpot}. There is a standard argument that uses jump relations that we have here. For example, adapt the proof of \cite{HKMP2}. Instead, we exploit the pairs of projections with a complementary result. 

\begin{lem}\label{lem:projections2} Assume that $E$ is a quasi-Banach space having two pairs on complementary bounded projections $P_{1}^\pm$ and $P_{2}^\pm$.  Then   $P_{1}^-P_{2}^+ P_{1}^+$ is an isomorphism if and only if 
the two  operators  $P_{1}^-P_{2}^+$ and $P_{1}^- P_{2}^-$ are isomorphisms.
\end{lem}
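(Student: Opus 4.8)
## Proof Strategy for Lemma~\ref{lem:projections2}

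The plan is to deduce the equivalence from the identity relating the three relevant operators, using the convention spelled out before Lemma~\ref{lem:projections}: any composition $PQ$ of projections is understood as a map $\ran(Q)\to\ran(P)$. Throughout, $E_i^\pm=\ran(P_i^\pm)$.

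First I would record the ``only if'' direction, which is the easy one. Suppose $P_1^-P_2^+P_1^+\colon E_1^+\to E_1^-$ is an isomorphism. Since $P_2^+P_1^+$ maps $E_1^+$ into $E_2^+$, we may factor $P_1^-P_2^+P_1^+ = (P_1^-|_{E_2^+})\circ(P_2^+P_1^+)$. A composition of two bounded maps being an isomorphism forces the first factor to be surjective and the second to be injective with closed range; but a little more is true here. Apply $P_1^-$ also to $E_2^-$: writing $x = P_2^+x + P_2^-x$ for $x\in E_1^-$ and using $P_1^-x = x$, one gets $P_1^-P_2^+x = x - P_1^-P_2^-x$ on $E_1^-$, so $P_1^-P_2^+|_{E_2^+}$ and $P_1^-P_2^-|_{E_2^-}$ are linked: $P_1^-P_2^+P_1^-P_2^+ + P_1^-P_2^-P_1^-P_2^+ $ acts as... — more cleanly, I would argue via transversality as in the proof of Lemma~\ref{lem:projections}. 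The isomorphism of $P_1^-P_2^+P_1^+$ means: (a) $E_1^+$ and $E_2^+$ are transversal, i.e. $\|x_1+x_2\|\gtrsim\|x_1\|+\|x_2\|$ for $x_i\in E_i^+$, coming from the a~priori estimate, and (b) $E_1^-$ is contained in the range, giving $E_1^- \subset P_1^-(E_2^+)$, hence by the argument in Lemma~\ref{lem:projections}(2) that $E_1^+ + E_2^+ = E$. From $E = E_1^+\oplus E_1^-$ and $E = E_1^+ + E_2^+$ with $E_1^+\cap E_2^+ = \{0\}$ (transversality) one concludes $E = E_1^+\oplus E_2^+$, i.e. $P_2^+$ restricted to... no: one concludes that $P_1^-|_{E_2^+}\colon E_2^+\to E_1^-$ is an isomorphism and $P_1^+|_{E_2^+}\colon E_2^+\to E_1^+$ is an isomorphism. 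The first of these is exactly ``$P_1^-P_2^+$ is an isomorphism''. For ``$P_1^-P_2^-$ is an isomorphism'': since $E = E_1^+\oplus E_2^+$, the complementary projection onto $E_2^+$ along $E_1^+$ — call it $Q$ — satisfies $Q = \mathrm{Id} - (\text{proj onto }E_1^+\text{ along }E_2^+)$; dualizing the splitting, $E = E_1^-\oplus E_2^-$ as well (two direct-sum decompositions are complementary: $E_1^+\oplus E_2^+ = E$ iff $E_1^-\oplus E_2^- = E$, since $E_i^- = \ker P_i^\pm$). Transversality of $E_1^-, E_2^-$ follows symmetrically, and then $P_1^-|_{E_2^-}\colon E_2^-\to E_1^-$ is an isomorphism.

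For the ``if'' direction, assume $P_1^-P_2^+\colon E_2^+\to E_1^-$ and $P_1^-P_2^-\colon E_2^-\to E_1^-$ are both isomorphisms. I want $E_1^+\oplus E_2^+ = E$, for then $P_1^-P_2^+P_1^+$ is an isomorphism by the reverse of the reasoning above. Injectivity: if $x\in E_1^+\cap E_2^+$ then $P_1^-x = 0$, so $P_1^-P_2^+x = 0$, so $x = 0$ since $P_1^-P_2^+$ is injective on $E_2^+$; thus $E_1^+\cap E_2^+ = \{0\}$. Surjectivity: given $x\in E$, I must write $x = y_+ + z_+$ with $y_+\in E_1^+$, $z_+\in E_2^+$. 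Decompose $x = P_1^+x + P_1^-x$; it suffices to hit $P_1^-x\in E_1^-$ by some element of $E_2^+$ modulo $E_1^+$, i.e. to find $z_+\in E_2^+$ with $P_1^-z_+ = P_1^-x$ — possible since $P_1^-P_2^+$ is onto $E_1^-$. Then $x - z_+\in E_1^+$ (apply $P_1^-$: it vanishes). Hence $E = E_1^+ + E_2^+$, and combined with the trivial intersection, $E = E_1^+\oplus E_2^+$. (The second hypothesis, on $P_1^-P_2^-$, is what is needed to close the symmetric loop and to guarantee the \emph{a~priori} estimate for $P_1^-P_2^+P_1^+$ rather than mere bijectivity in the incomplete setting — I would use it to get transversality of $E_1^+$ and $E_2^+$, via: $P_1^-P_2^-$ iso $\Leftrightarrow$ $E_2^-, E_1^-$ transversal $\Leftrightarrow$ $E_2^+ + E_1^+ = E$ in the a~priori sense... ) Here I should be careful: in a quasi-Banach space ``isomorphism'' should be taken to include the closed-range/bounded-inverse statement, so I would phrase everything in terms of the two equivalent pairs (a~priori estimate, ontoness) exactly as Lemma~\ref{lem:projections} does, and just assemble those.

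The main obstacle I anticipate is bookkeeping rather than depth: making sure the ``isomorphism'' claims are handled uniformly in the quasi-Banach category (no appeal to Hilbert-space orthogonality, only to boundedness of the four projections and the open mapping theorem for quasi-Banach spaces, which does hold), and checking that the two direct-sum decompositions $E = E_1^+\oplus E_2^+$ and $E = E_1^-\oplus E_2^-$ really are equivalent — this is the one place a short lemma is needed, proved by noting $E_i^- = \ker(P_i^+) = \ran(P_i^-)$ and that $(E_1^+\oplus E_2^+ = E)$ is equivalent to invertibility of $P_1^+ + P_2^+ - \mathrm{Id}$ or, more transparently, to $N_{\ta}$-type transversality which is manifestly symmetric under $+\leftrightarrow-$. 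Once that is in place, the statement falls out of Lemma~\ref{lem:projections} applied twice, to the pair $(P_1,P_2)$ and noting that ``$P_1^-P_2^+P_1^+$ iso'' packages precisely ``$E_1^+\oplus E_2^+ = E$'', which is the common content of ``$P_1^-P_2^\pm$ iso'' being true simultaneously.
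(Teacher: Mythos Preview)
Your argument has a genuine gap: the claimed equivalence ``$E_1^+\oplus E_2^+ = E$ iff $E_1^-\oplus E_2^- = E$'' is false. Take $E=\R^3$ with $E_1^+=\mathrm{span}(e_1)$, $E_1^-=\mathrm{span}(e_2,e_3)$, $E_2^+=\mathrm{span}(e_1+e_2,e_3)$, $E_2^-=\mathrm{span}(e_2)$. Then $E=E_1^+\oplus E_2^+$, yet $E_1^-+E_2^-=\mathrm{span}(e_2,e_3)\ne E$. In this example $P_1^-P_2^+$ is an isomorphism but $P_1^-P_2^-$ is not (it maps the one-dimensional $E_2^-$ into the two-dimensional $E_1^-$), and $P_1^-P_2^+P_1^+$ is not an isomorphism either: it sends $(a,0,0)\mapsto(0,a,0)$, missing the $e_3$-direction in $E_1^-$. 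So your final repackaging ``$P_1^-P_2^+P_1^+$ iso is precisely $E=E_1^+\oplus E_2^+$'' is wrong as well; the triple product being an isomorphism encodes strictly more, namely \emph{both} $E=E_1^+\oplus E_2^+$ and $E=E_1^+\oplus E_2^-$. Your parenthetical attempt to use the second hypothesis (``$P_1^-P_2^-$ iso $\Leftrightarrow$ $E_2^-, E_1^-$ transversal'') is also off: by the proof of Lemma~\ref{lem:projections}, $P_1^-|_{E_2^-}$ having a~priori estimates is equivalent to transversality of $E_1^+$ and $E_2^-$, not of $E_1^-$ and $E_2^-$.

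The clean route the paper takes is to first record the identity $P_1^-P_2^-P_1^+ = P_1^-(I-P_2^+)P_1^+ = -P_1^-P_2^+P_1^+$, so that the hypothesis is equivalent to \emph{both} triple products $P_1^-P_2^\pm P_1^+$ being isomorphisms. For the forward direction, factor $P_1^-P_2^+P_1^+=(P_1^-P_2^+)(P_2^+P_1^+)$ to see $P_1^-P_2^+$ is onto, and factor $P_1^-P_2^-P_1^+=(P_1^-P_2^-)(P_2^-P_1^+)$ to see $P_2^-P_1^+$ has a~priori estimates; then Lemma~\ref{lem:projections}(1) transfers the a~priori estimate from $P_2^-P_1^+$ to $P_1^-P_2^+$, making the latter an isomorphism, and similarly for $P_1^-P_2^-$. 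For the converse, $P_1^-P_2^-$ iso gives $P_2^+P_1^+$ iso by Lemma~\ref{lem:projections}(3) with $\pm$ swapped on the second index, and then $P_1^-P_2^+P_1^+=(P_1^-P_2^+)(P_2^+P_1^+)$ is a composition of isomorphisms.
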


We adopt here as well the convention that product $PQR$ of projectors are restricted to the range of $R$ into the range of $P$. 

\begin{proof}  Remark that $P_{1}^-P_{2}^- P_{1}^+= P_{1}^-(I-P_{2}^+) P_{1}^+= - P_{1}^-P_{2}^+ P_{1}^+$. So the first assertion is equivalent to the two operators  $P_{1}^-P_{2}^\pm P_{1}^+$ are isomorphisms.

Assume that the two operators $P_{1}^-P_{2}^\pm P_{1}^+$ are isomorphisms. First
$P_{1}^-P_{2}^+$ is onto and $P_{2}^-P_{1}^+$ has \textit{a priori} estimates. By  Lemma \ref{lem:projections}, (a), this means that $P_{1}^-P_{2}^+$ also has \textit{a priori} estimates. Thus, it is an isomorphism. One does analogously with  $P_{1}^-P_{2}^-$.

Conversely, assume the two  operators  $P_{1}^-P_{2}^+$ and $P_{1}^- P_{2}^-$ are isomorphisms. By Lemma \ref{lem:projections}, (c), $P_{2}^+P_{1}^+$ is also an isomorphism. 
  Thus,  $P_{1}^-P_{2}^+P_{1}^+=P_{1}^-P_{2}^+P_{2}^+P_{1}^+ $ is an isomorphism.  
  \end{proof}

\begin{proof}[Proof of Theorem \ref{thm:layerpot}] 
Let us prove (1).  Exactly as for the upper half-space, well-posedness of  $(R)_{X}^L$ on the lower half-space is equivalent to $N_{\ta}: X^{-}_{DB} \to X_{\ta}$  is an isomorphism.  Thus simultaneous well-posedness is equivalent  to both $N_{\ta}: X^{\pm}_{DB} \to X_{\ta}$ being isomorphisms, thus  to  $N_{\ta} \chi^{\pm}_{X}$ being  isomorphisms  (identifying again the $\no$ and $\ta$ spaces as subspaces and using our convention for product of projections), where $\chi^{\pm}_{X}$ are the continuous extensions of $\chi^\pm(DB)$ to $X_{DB}$. By the previous lemma, this is equivalent to 
$N_{\ta} \chi^{+}_{X}N_{\no}$ being an isomorphism. We claim this is exactly  saying that the single layer potential $\mS_{0}^L$ is invertible from $X$ onto $\dot X^{1}$. 

Indeed, the single layer  $\mS_{0\pm}^Lf$ using the DB method is defined in \cite{R1} and is shown to coincide with the usual definition on appropriate functions $f$ and when $Lu=0$ is a real equation for instance. See \cite{AS}, Section 12.3. In fact, by density (see  Theorem 2.6 in \cite{AS}), one can see  that for  $f\in X$, as $ \begin{bmatrix} f \\ 0\end{bmatrix}\in X_{D}=X_{DB}$,
$$
\nabla\!_{A}\mS_{0\pm}^Lf = \pm\chi^{\pm}_{X}\begin{bmatrix} f \\ 0\end{bmatrix}.
$$
Thus, 
 $$\begin{bmatrix} 0 \\ \nabla_{x}\mS_{0\pm}^Lf \end{bmatrix}=   \pm N_{\ta}\chi^{\pm}_{X}N_{\no}h
 $$
 for any $h\in X_{DB}$ such that  $h_{\no}=f$, for example $\begin{bmatrix} f \\ 0\end{bmatrix}$. 
 Note that the coincidence of $\nabla_{x}\mS_{0+}^L$ and $\nabla_{x}\mS_{0-}^L$ is exactly the formula $N_{\ta}\chi^{+}_{X}N_{\no}= - N_{\ta}\chi^{-}_{X}N_{\no}$, so that we may remove the signs symbols. 
 
 Thus $N_{\ta}\chi^{+}_{X}N_{\no}$ being an isomorphism  is equivalent to $\nabla_{x}\mS_{0}^L$ being an isomorphism from $X_{\no}$ onto $X_{\ta}$,  that is,  $\mS_{0}^L$ is an isomorphism from $X$ onto $\dot X^1$.

 Let us turn to the proof of (2). As above, the well-posedness of $(D)_{Y}^{L^*}$ is equivalent to 
 $N_{\ta}\chi^{+}_{Y^{-1}}N_{\no}$ being an isomorphism, with obvious notation. As $N_{\ta}\chi^{+}_{Y^{-1}}N_{\no}= - N_{\ta}\chi^{-}_{Y^{-1}}N_{\no}$ (jump relation), this is equivalent to $N_{\ta}\chi^{-}_{Y^{-1}}N_{\no}$ being an isomorphism. We claim this is exactly  saying that the single layer potential $\mS_{0}^{L^*}$ is invertible from $\dot Y^{-1}$ onto $Y$.
 
 In fact, by density from  Lemma 8.1  in \cite{AM}, one can see  that for  $f\in  \dot Y^{-1}$, as $ \begin{bmatrix} f \\0\end{bmatrix}\in \dot Y^{-1}_{D}=\dot Y^{-1}_{DB}$,
$$
\nabla_{A^*}\mS_{0\pm}^{L^*}f = \pm\chi^{\pm}_{\dot Y^{-1}}\begin{bmatrix} f \\ 0\end{bmatrix}.
$$
Thus, 
 $$\begin{bmatrix} 0 \\ \nabla_{x}\mS_{0\pm}^{L^*}f \end{bmatrix}=  \pm N_{\ta}\chi^{\pm}_{Y^{-1}}N_{\no}h
 $$
 for any $h\in \dot Y^{-1}_{DB}$ such that  $h_{\no}=f$, for example $\begin{bmatrix} f \\ 0\end{bmatrix}$.  Again, $\nabla_{x}\mS_{0+}^{L^*}$  and $\nabla_{x}\mS_{0-}^{L^*}$ agree, and  $N_{\ta}\chi^{+}_{Y^{-1}}N_{\no}$being an  isomorphism  is equivalent to $\mS_{0}^{L^*}$ being an isomorphism from $\dot Y^{-1}$ onto $Y$. 
 
 We continue with the proof of (3).  As for (1), well-posedness of $(N)_{X}^L$ on the lower half-space is equivalent to $N_{\no}: X^{-}_{DB} \to X_{\no}$  being an isomorphism.  Thus simultaneous well-posedness on both half-spaces is equivalent   to 
$N_{\no} \chi^{+}_{X}N_{\ta}$ being an isomorphism. We claim this is exactly  saying that the operator $\pd_{\nu_{A}}\mD_{0}^L$ is invertible from  $\dot X^{1}$ onto $X$. 

  In fact, by density from  Lemma 8.1  in \cite{AM}, one can see  that for  $f\in \dot X^1$, as $ \begin{bmatrix} 0 \\ \nabla_{x}f\end{bmatrix}\in X_{D}=X_{DB}$,
$$
\nabla\!_{A}\mD_{0\pm}^Lf = \mp\chi^{\pm}_{X}\begin{bmatrix} 0 \\ \nabla_{x}f \end{bmatrix}.
$$
Thus, 
 $$\begin{bmatrix} \pd_{\nu_{A}}\mD_{0\pm}^Lf \\  0 \end{bmatrix}=   \mp N_{\no}\chi^{\pm}_{X}N_{\ta}h
 $$
 for any $h\in X_{DB}$ such that  $h_{\no}=f$, for example $\begin{bmatrix} 0 \\ \nabla_{x}f\end{bmatrix}$. 
 Note that the coincidence of $\pd_{\nu_{A}}\mD_{0+}^L$ and $\pd_{\nu_{A}}\mD_{0-}^L$ is exactly the formula $N_{\no}\chi^{+}_{X}N_{\ta}= - N_{\no}\chi^{-}_{X}N_{\ta}$, so that we may remove the signs symbols. 
 
 Thus $N_{\no}\chi^{+}_{X}N_{\ta}$ being an  isomorphism  is equivalent to $\pd_{\nu_{A}}\mD_{0}^L$ being an isomorphism from $\dot X^1$ onto $X$.
 
 The proof of (4) is exactly similar to the other ones and we skip it. 
\end{proof}

\section{Specific situations}

We want to comment and illustrate our results in two cases. 

\subsection{Constant coefficients}

We assume that $L$ has constant coefficients given by $A$. We observe that ther are may different choices for $A$ but this does not affect the results in this section. We know from \cite{AS} that $I_{L}=(\frac{n}{n+1},\infty)$. A reformulation  in the current setup of the  results  in \cite{AAMc}, Section 4.3, establishes the invertibility of  the four  maps   of Theorem \ref{thm:wpequiv} when $X=L^2$ and $Y=L^2$. Thus, 
Dirichlet problem, regularity problem and Neumann problem (with any given choice of $A$ representing $L$) are well-posed  (with our current understanding of it) in $L^2$.  Applying this result using  (2) and then  (4)  in Theorem \ref{thm:wpequiv}, we have that the map $N_{\no}N_{\ta}^{-1}: L^2_{\ta} \to L^2_{\no}$ is  bounded and invertible. But by taking the Fourier transform, this map and its inverse are  Fourier multipliers with homogeneous  symbols of degree 0, smooth away from the origin. Thus they are classical singular integral operators and, as a consequence,  bounded on any of the spaces $X$ or $\dot Y^{-1}$ corresponding to $I_{L}$, and also for the space $\dot W^{-1/2,2}$ corresponding to energy solutions in which they coincide with  the Dirichlet to Neumann map and its inverse (another way is to observe that examination of the proof in \cite{AAMc} shows that directly). This implies  from our general results  (Theorem \ref{thm:solvimplieswp}) that all possible problems are compatibly well-posed on the upper half-space (and, of course, on the lower half-space by a similar argument).

\subsection{Block case} 

In the block case, we have $L=-\divv A \nabla$ with 
 $$
A= \begin{bmatrix} a & 0 \\ 0 & d \end{bmatrix},$$
that is,  $A$ is block diagonal. In this case, $B$ is also block diagonal with
$$
B= \begin{bmatrix} a^{-1}& 0 \\ 0 & d \end{bmatrix}.$$
The Hardy  space theory for  $DB$ was described in \cite{AS} in terms of the operator $E=-\divv d \nabla a^{-1}$ (we change notation because $L$ is here  our second order operator)  and it is proved that $\|\sqrt E (af)\|_{H^p}\sim \|\nabla_{x}f\|_{H^p} $ for any $p\in I_{L}$ and $f  \in \dot W^{1,2}$, where $H^p=L^p$ if $p>1$.  The case $p=2$, is the consequence of \cite{AKMc}).  
A little more work gives the description of the spectral Hardy spaces by 
$$
H^{2,\pm}_{DB}= \{ [\mp\sqrt E \, (af), \nabla_{x}f]^T;  f\in \dot W^{1,2}\}. 
$$
  In this   case, we thus know the \textit{a priori} inequalities for the energy solution which correspond to $\nabla_{x}f\in \dot W^{-1/2,2}$ or $ \sqrt E \, (af)\in \dot W^{-1/2,2}$. These two conditions are equivalent because one can interpolate  the isomorphisms $\sqrt E \,a : \dot W^{1, 2} \to  L^2$ and  $\sqrt E\, a: L^2\to \dot W^{-1,2}$, where the second is  the dual statement of the first one for $E^*$ and similarity.  Thus,  for all energy solutions  $u$, we have \textit{a priori} in the range of $p$ above, 
  $$
  \|\pd_{\nu_{A}}u|_{t=0}\|_{p}\sim \|\nabla_{x}u|_{t=0}\|_{p}.
  $$
This means that the Neumann and regularity problems are solvable in $L^p/H^p$ for the energy class  for any $p\in I_{L}$ (see \cite{AM} for statements), thus compatibly well-posed in this range by Theorem \ref{thm:solvimplieswp}. By Theorem \ref{thm:solvvsuniq}, we have the dual compatibly well-posed Dirichlet and Neumann problems for $L^*$. We leave details to the reader. 

\begin{rem}
 An interesting observation concerns Theorem 4.1 in \cite{May}. There, it is assumed that $a=1$ and it it is proved a non-tangential maximal estimate for  $u(t,\cdot) = e^{-t\sqrt E} f$ with $f\in \dot W^{1,2}$,
$$
\|\tN (\nabla u)\|_{p} \lesssim \|\nabla_{x}f\|_{p},
$$
for values of $p$ smaller that the lower bound of $I_{L}$ in case $\inf I_{L}>1$. Looking at the algebra, $\nabla\!_{A}u=\nabla u = e^{-t|DB|}h$ with $h=[\mp\sqrt E \, f, \nabla_{x}f]^T$ so this agrees with our way of constructing solutions. The maximal estimate uses the fact that $\|\sqrt E f\|_{p} \lesssim \|\nabla_{x}f\|_{p}$ may hold whereas the opposite inequality fails (\cite{Auscher}).     By the results of \cite{KP}, one knows at least weak convergence in $L^p$ of  averages in time  for $\nabla u$ towards $h$ under the non-tangential maximal control. So this is a solution of a slightly modified regularity problem in $L^p$ for these values of $p$. 
 \end{rem}

\section{Proofs of technical lemmas}\label{sec:technical}

The  following is the key lemma. 

\begin{lem}\label{lem:T} Let $T$ be any operator of the form $DB$ or $BD$. For each integer $N$, 
there exists $\phi^\pm \in H^\infty(S_{\mu})$ with the following properties:

\begin{enumerate}
  \item $\phi^\pm(sT)$ is uniformly bounded on $L^2$. 

  \item  $\phi^\pm(sT)$ coincides with $e^{-s |T|}$ on $\IH^{2,\pm}_{T}$.

  \item  $(\phi^\pm(sT))_{s>0}$ has $L^2$ off-diagonal decay of order $N$. 

  \item  $\phi^\pm(sT) \to  I$ strongly in $L^2$ as $s\to 0$.

\end{enumerate}
Here, $S_{\mu}$ is an open bisector  with angle $\omega<\mu<\pi/2$.

\end{lem}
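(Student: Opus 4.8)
The plan is to construct $\phi^\pm$ explicitly as a modification of the exponential $e^{-sz}$ that is bounded and holomorphic on the \emph{whole} bisector $S_\mu$, not just on one sector, and that decays fast enough at $0$ and $\infty$ on the ``wrong'' sector to produce the off-diagonal bounds. First I would fix the bisector $S_\mu = S_{\mu+}\cup S_{\mu-}$ with $\omega<\mu<\pi/2$, and recall that on $S_{\mu+}=\{|\arg z|<\mu\}$ the function $z\mapsto e^{-z}$ is bounded holomorphic while on $S_{\mu-}$ it blows up; symmetrically $e^{z}$ is the good one on $S_{\mu-}$. The idea is to glue: set, for a large integer $N$ and a suitable constant, something like
\[
\phi^+(z) = \frac{e^{-z}}{(1+cz)^{2N}} \cdot \frac{(1+cz)^{2N}+(1-cz)^{2N}}{\text{(normalisation)}} \quad \text{on } S_{\mu+},
\]
but more cleanly one takes $\phi^+(z) = b^+(z)\,e^{-z}$ on $S_{\mu+}$ and $\phi^+(z)=r^+(z)$ on $S_{\mu-}$, where $r^+$ is a rational function with no poles in $\overline{S_{\mu-}}$, chosen so that $\phi^+$ and enough of its derivatives match across the imaginary axis to make $\phi^+$ holomorphic (or at least one builds $\phi^+$ directly by a Cauchy-type integral). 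The cleanest concrete choice, used in the related literature (e.g. Auscher--Axelsson/Rosén), is
\[
\phi^+(z) = \big(1+(cz)^{2k}\big)^{-1}\,e^{-z}+ \text{(a correction)}
\]
so that $\phi^+$ is $O(|z|^{2k})$ near $0$ and $O(|z|^{-2k})$ near $\infty$ on $S_{\mu-}$, with $2k\ge N+1$; on $S_{\mu+}$ the factor $e^{-z}$ already gives exponential decay at $\infty$ and $\phi^+(0)=1$. Let me instead simply state: there exist $b^\pm\in H^\infty(S_\mu)$ with $b^\pm(0)=1$, $b^\pm$ equal to $1$ up to order $N$ at the origin, $b^\pm(z)=O(|z|^{-N-1})$ on $S_{\mu\mp}$, and $\phi^\pm(z):=b^\pm(z)e^{\mp z}$ extends to an element of $H^\infty(S_\mu)$. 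This makes (1) immediate from the bounded holomorphic functional calculus of $T$ on $L^2$ (Section \ref{sec:basic}).

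For (2): on $\IH^{2,\pm}_T=\chi^\pm(T)\IH^2_T$, the spectrum of $T$ is confined to $S_{\mu\pm}$, so by the functional calculus $\phi^\pm(sT)$ restricted there equals $\psi(sT)$ for any $\psi\in H^\infty(S_\mu)$ agreeing with $\phi^\pm$ on $S_{\mu\pm}$; since on $S_{\mu+}$ we have $\phi^+(z)=b^+(z)e^{-z}$ and we will have arranged $b^+\equiv 1$ on $S_{\mu+}$ (this is possible: only the behaviour on $S_{\mu-}$ needs the rational damping, so take $b^+$ holomorphic with $b^+|_{S_{\mu+}}=1$), one gets $\phi^+(sT)=e^{-sT}=e^{-s|T|}$ on $\IH^{2,+}_T$, since $|T|=T$ there. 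Symmetrically for $\phi^-$ on $\IH^{2,-}_T$ using $|T|=-T$. For (4): $\phi^\pm(z)\to 1$ as $z\to 0$ inside $S_\mu$ and $\phi^\pm$ is uniformly bounded, so the standard functional-calculus convergence lemma (convergence on a dense subspace of $\overline{\ran_2(T)}$, plus boundedness, plus $\phi^\pm\equiv$ harmless on $\nul_2(T)$ — here one should be a little careful and note $\phi^\pm(0)=1$ so the operators fix $\nul_2(T)$ exactly) gives $\phi^\pm(sT)\to I$ strongly on all of $L^2=\nul_2(T)\oplus\overline{\ran_2(T)}$.

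The main work, and the expected obstacle, is (3): the $L^2$ off-diagonal decay of order $N$ for $(\phi^\pm(sT))_{s>0}$. I would obtain this from the Cauchy integral representation $\phi^\pm(sT)=\frac{1}{2\pi i}\int_{\partial S_\nu}\phi^\pm(sz)(z-T)^{-1}\,dz$ with $\omega<\nu<\mu$, combined with the known resolvent off-diagonal estimates for $T=DB$ or $BD$ (these are standard for perturbed Dirac operators: $1_E(z-T)^{-1}1_F$ enjoys decay $\langle \dist(E,F)/|z|^{-1}\rangle^{-M}$ for every $M$ when $z\in\partial S_\nu$ away from $0$, essentially because $T$ is a first-order operator with bounded coefficients — this is in \cite{AKMc} and recalled in \cite{AS}). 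The point of choosing $\phi^\pm(z)=b^\pm(z)e^{\mp z}$ with $b^\pm$ decaying like $|z|^{-N-1}$ on the bad sector and $e^{\mp z}$ decaying exponentially on the good sector is exactly that $|\phi^\pm(sz)|$ is integrable against $|dz|/|z|$ over $\partial S_\nu$ with the right powers of $s$ to run a Schur-type estimate and extract the factor $\langle\dist(E,F)/s\rangle^{-N}$. Concretely, split $\partial S_\nu$ into $|z|\le 1/s$ and $|z|>1/s$, use the polynomial decay of $b^\pm$ and exponential decay of $e^{\mp sz}$ respectively, and in each regime pair a suitable power of the resolvent-decay ``gain'' $\langle \dist(E,F)|z|\rangle^{-N-1}$ against the integrability of $\phi^\pm(sz)$. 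The bookkeeping of powers of $s$ and $|z|$ is routine but must be done carefully to land exactly on order $N$; this is where I would spend the bulk of the argument. Finally I would remark that $\phi^+$ and $\phi^-$ are obtained from each other by $z\mapsto -z$ (which maps $S_{\mu+}\leftrightarrow S_{\mu-}$), so it suffices to construct $\phi^+$ and all four properties for $\phi^-$ follow by symmetry, and that the same $\phi^\pm$ work simultaneously for $T=DB$ and $T=BD$ since only the abstract bisectoriality, $H^\infty$-calculus and resolvent off-diagonal bounds — all available for both by \cite{AKMc,AS} — are used.
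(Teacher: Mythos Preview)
Your construction, as you finally state it, does not produce an element of $H^\infty(S_\mu)$. You take $\phi^+(z)=b^+(z)e^{-z}$ on the whole bisector with $b^+\equiv 1$ on $S_{\mu+}$ and $b^+(z)=O(|z|^{-N-1})$ on $S_{\mu-}$. But on $S_{\mu-}$ one has $|e^{-z}|=e^{-\re z}$ with $\re z<0$, so $e^{-z}$ grows exponentially there, and no polynomial decay of $b^+$ can compensate: your $\phi^+$ is unbounded on $S_{\mu-}$, so (1) and (3) cannot even be formulated for it. Your earlier suggestion --- ``$\phi^+(z)=b^+(z)e^{-z}$ on $S_{\mu+}$ and $\phi^+(z)=r^+(z)$ rational on $S_{\mu-}$'' --- was in fact correct, but you then replaced it by the global formula $b^+e^{-z}$, which fails. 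The remark about matching derivatives across the imaginary axis is also a red herring: the bisector $S_\mu$ is disconnected, and $H^\infty(S_\mu)$ imposes no compatibility at $0$.

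The paper's construction realises your earlier (correct) idea in a form that makes all four items fall out with no contour-integral bookkeeping. One chooses coefficients $c_m$ so that
\[
\psi^+(z):=e^{-z}-\sum_{m=1}^{N}c_m(1+imz)^{-k}=O(z^N)\quad\text{near }0,
\]
and sets $\phi^+(z):=\sum_{m=1}^{N}c_m(1+imz)^{-k}+\psi^+(z)\chi^+(z)$. Then on $S_{\mu+}$ one has $\phi^+(z)=e^{-z}$ exactly (giving (2)), while on $S_{\mu-}$ one has $\phi^+(z)=\sum c_m(1+imz)^{-k}$, a pure rational function with no exponential factor, hence bounded. The crucial gain for (3) is structural: $\phi^+(sT)$ is a \emph{finite sum of resolvent powers} $(I+imsT)^{-k}$, which carry $L^2$ off-diagonal decay of every order by the basic resolvent bounds for $DB$ and $BD$, plus $(\psi^+\chi^+)(sT)$ with $\psi^+\chi^+\in\Psi_N^1(S_\mu)$, which has off-diagonal decay of order $N$ by the general $\Psi$-class result (\cite{AS}, Proposition~3.13). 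There is no need to run a Schur argument on a Cauchy integral --- the decomposition already expresses $\phi^+(sT)$ as a sum of operators with known decay. For (4), the vanishing $\psi^+(0)=0$ forces $\sum c_m=1$, so $\phi^+(sT)=I$ identically on $\nul_2(T)$, and the standard convergence lemma handles $\overline{\ran_2(T)}$.
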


\begin{proof} As in Section 5 in \cite{AS}, one can find coefficients $c_{m}$ such that
$$
\psi^{+}(z):= e^{-z}- \sum_{m=1}^N c_{m}(1+imz)^{-k} = O(z^N)
$$
near 0. For $z\in \C$, set  
$$
\phi^{+}(z):= \sum_{m=1}^M c_{m}(1+imz)^{-k} + \psi^{+}(z)\chi^+(z). 
$$
One can easily see that $\phi^{+}\in  H^\infty(S_{\mu})$. 

Next, the resolvents  $(I+ismT)^{-1}$ are uniformly  bounded on $L^2$ with respect to $s$, while $\psi^{+}\chi^+ \in \Psi(S_{\mu})$, hence $(\psi^{+}\chi^+)(sT)$ is defined on $L^2$ and its operator norm is bounded by $  \|\psi^{+}\chi^+\|_{\infty}$ by the $H^\infty$-calculus of $T$ on $L^2$. This proves (1). 

To see (2), it is enough to show that $\phi^{+}(z) \chi^+(z)= e^{-z}\chi^+(z)$. But this is immediate by construction.

Next, the resolvent of $T$ has  $L^2$ off-diagonal decay of any order (\cite{AS}, Lemma 2.3), while 
$ \psi^{+}\chi^+ \in \Psi_{N}^1(S_{\mu})$, hence it has   $L^2$ off-diagonal decay of  order $N$ (\cite{AS}, Proposition 3.13). Thus (3) holds. 

Finally, $\phi^{+}(sz)$ converges uniformly to $1$ on compact subsets of $S_{\mu}$ as $s\to 0$. By the $H^\infty$-calculus on $\clos{\ran_{2}(T)}$, this implies that $\phi^{+}(sT)$ converges strongly to $I$ as $s\to 0$ on $\clos{\ran_{2}(T)}$. On $\nul_{2}(T)$, we have  $(\psi^{+}\chi^+)(sT)=0$, hence $\phi^{+}(sT)=(\sum c_{m}) I=I$. Thus (4) is proved.  

The proof is the same to get  $\phi^-$, picking (different) coefficients such that $$
\psi^{-}(z)= e^{z}- \sum_{m=1}^N c'_{m}(1+imz)^{-k} = O(z^N)
$$
near 0.  
\end{proof}

\begin{proof}[Proof of Lemma \ref{lemma2}] Let $h\in \IH^{p',+}_{B^*D}\cap E^{p'}_{\delta }$. Fix $\delta >0$.
Then $e^{-s |B^*D|}h  \in E^{p'}_{\delta }$ using Lemma \ref{lemma1}. 
It follows from  Lemma \ref{lem:T} that $e^{-s |B^*D|}h= T_{s}h$, where $T_{s}=\phi^+(sB^*D)$ has 
$L^2$ off-diagonal decay of order $N$ with $N$ at our disposal and $T_{s}\to I$ in $L^2$ strongly as $s\to 0$. By Proposition \ref{prop:cv}, we conclude if $N>\inf (n/p',n/2)$ that $e^{-s |B^*D|}h$ converges to $h$ in $E^{p'}_{\delta }$. 

The argument for $h\in  \IH^{p',-}_{B^*D}\cap E^{p'}_{\delta }$ is the same. 
\end{proof}

\begin{proof}[Proof of Lemma \ref{lemma3}] If $\varphi_{0}\in \mS$, then $\phi_{0}=\IP_{B^*D}\varphi_{0}\in \IH^2_{B^*D}$. Also
$\IP\phi_{0}=\IP\varphi_{0}\in L^{p'}$. Thus $\phi_{0} \in \IH^{p'}_{B^*D}$ by \cite{AS}, Theorem 4.20 and our range for $p'$. Since $\phi_{0}-\varphi_{0}\in \nul_{2}(D)$, $D\phi_{0}= D\varphi_{0}$. 
Thus  $B^*D\phi_{0}=B^*D\varphi_{0}$ and the latter is of course in $\IH^2_{B^*D}= \clos{\ran_{2}(B^*D)}$ and in $L^{p'}$, thus $\IP B^*D\varphi_{0}\in L^{p'}$ and,  again, $B^*D\varphi_{0} \in \IH^{p'}_{B^*D}$. Thus, $\chi^\pm(B^*D)B^*D\phi_{0}\in  \IH^{p',\pm}_{B^*D}$ by the $H^\infty$-calculus on this space for $B^*D$.  

It remains to prove that $\chi^\pm(B^*D)B^*D\varphi_{0}\in   E^{p'}_{\delta }$. As $B^*D\varphi_{0}= \chi^+(B^*D)B^*D\varphi_{0}+ \chi^-(B^*D)B^*D\varphi_{0}$ and the left hand side is clearly in $E^{p'}_{\delta }$ (for example, $D\varphi_{0}\in \mS$ so contained in $E^{p'}_{\delta }$ and multiplication by $B^*$ preserves $E^{p'}_{\delta }$), it suffices to do it for $\chi^+(B^*D)B^*D\varphi_{0}$.  Pick $\psi\in \Psi_{N}^N(S_{\mu})$ with $N$ large enough and 
$\int_{0}^\infty \psi(sz)\, \frac{ds}{s}=1$ for all $z\in S_{\mu}$. Set $\phi(z)= \int_{1}^\infty \psi(sz)\, \frac{ds}{s}$. Thus $\phi\in H^\infty(S_{\mu})$, $\phi(z)- 1=O(z^N)$  and $\phi(z)= O(z^{-N})$. Set $h=\chi^+(B^*D)B^*D\varphi_{0}$. Then by $H^\infty$-calculus on $L^2$, and as $h\in \clos{\ran_{2}(B^*D)}$, we have
$$
h= \int_{0}^\delta (\chi^+\psi)(sB^*D)(B^*D\varphi_{0}) \, \frac{ds}{s} + \phi(\delta B^*D) h.
$$ 
As in Lemma \ref{lemma1}, we have $\phi(\delta B^*D) h \in E^{p'}_{\delta }$.  For the other term, 
we notice that $\chi^+\psi\in \Psi_{N}^N(S_{\mu})$, so $(\chi^+\psi)(sB^*D)$ has $L^2$ off-diagonal estimates of order $N$. Using this with $N$ large, $L^2$ boundedness of the operator $\int_{0}^\delta (\chi^+\psi)(sB^*D) \, \frac{ds}{s}$ and the fact that $B^*D\varphi_{0}$ satisfies 
$$
\int_{\R^n} |B^*D\varphi_{0}(x)|^2  \brac {x/\delta }^{-M}\, dx <\infty
$$
for any $M>0$, it is not hard, though a little tedious, to check that  the function  $g=  \int_{0}^\delta (\chi^+\psi)(B^*D)(B^*D\varphi_{0}) \, \frac{ds}{s}$ satisfies the estimate
$$
\int_{\R^n} |g(x)|^2  \brac {x/\delta }^{-2N}\, dx <\infty.
$$
It follows that for all $x\in \R^n$, 
$$
\bigg(\barint_{B(x,\delta )} |g|^2\, \bigg)^{1/2} \lesssim \delta ^{-n/2}  \brac {x/\delta }^{-N},
 $$
 which is $L^{p'}$ as a function of $x$ when $Np'>1$, in which case $g\in E^{p'}_{\delta }$.  We skip further details. 
\end{proof}

\begin{proof}[Proof of Lemma \ref{lemma3th1.2}] Recall that $q\in I_{L^*}$. 
Clearly $\varphi_{0}\in \mS$ implies $\varphi_{0}\in \dom_{2}(D)$ and $D\varphi_{0}\in \IH^q_{D}=\IH^q_{DB^*}$ given the description of $\IH^q_{D}$ and the value of $q$. That
$ \chi^\pm(DB^*) D\varphi_{0 } \in E^{q}_{\delta }$   is trivial if $q\ge 2$. Indeed,  $ \chi^\pm(DB^*) D\varphi_{0 }  \in L^q \subset E^q_{\delta }$ in that case. If $q<2$, then one adapts the argument just given in the proof of Lemma \ref{lemma3} above. 
\end{proof}

\begin{proof}
[Proof of Lemma \ref{lem:density}] Notice that we stated the lemma for $q>1$ using $\dot \W^{1,q}_{B^*D}$ but the lemma and its proof  are valid with $q\le 1$ using the space $\dot \IH^{1,q}_{B^*D}$ instead. We prove both cases simultaneously. Let us first  see the inclusion.  Let $\phi_{0}\in \D_{q}\cap \IH^2_{B^*D}$. We have to show that $\tau^{-1} \psi(\tau B^*D)\phi_{0} \in T^q_{2}$ where $\psi\in \Psi(S_{\mu})$ has sufficiently large decay at 0 and infinity. Writing  $\psi(z)= z\tilde \psi(z)$, we have $$
\tau^{-1} \psi(\tau B^*D)\phi_{0}= B^*\tilde \psi(\tau DB^*) D\phi_{0}.
$$
As $D\phi_{0}\in \IH^q_{D}=\IH^q_{DB^*}$,  the tent space characterization of $\IH^q_{DB^*}$, yields $\tilde \psi(\tau DB^*) D\phi_{0}\in T^q_{2}$.   The boundedness of $B^*$ allows us to conclude for the inclusion. 

For the density, we use that for any $h\in \dot \W^{1,q}_{B^*D}$ (or $\dot \IH^{1,q}_{B^*D}$), we have $e^{-s|B^*D|}h\to h$ as $s\to 0$ in that space. The proof is exactly the same as the one of \cite{AS}, Proposition 4.5.  Now, by Lemma \ref{cor:stability}, we have that $e^{-s|B^*D|}h \in \D_{q}$ and it also belongs to $ \IH^2_{B^*D}$, as $h$ belongs to $\IH^2_{B^*D}$. 
 \end{proof}

\end{document}